\numberwithin{equation}{section}
\numberwithin{figure}{section}
\theoremstyle{plain}
\newtheorem{thm}{\protect\theoremname}[section]
\theoremstyle{remark}
\newtheorem{rem}[thm]{\protect\remarkname}
\theoremstyle{plain}
\newtheorem{question}[thm]{\protect\questionname}
\theoremstyle{definition}
\newtheorem{convention}[thm]{Convention}
\theoremstyle{definition}
\newtheorem{defn}[thm]{\protect\definitionname}
\theoremstyle{plain}
\newtheorem{lem}[thm]{\protect\lemmaname}
\theoremstyle{plain}
\newtheorem{prop}[thm]{\protect\propositionname}
\theoremstyle{plain}
\newtheorem{cor}[thm]{\protect\corollaryname}
\theoremstyle{definition}
\newtheorem{example}[thm]{\protect\examplename}
\theoremstyle{remark}
\newtheorem{claim}[thm]{\protect\claimname}
\theoremstyle{definition}
\newtheorem{condition}[thm]{\protect\conditionname}
\DeclareRobustCommand*\cal{\@fontswitch\relax\mathcal}
\providecommand{\claimname}{Claim}
\providecommand{\conditionname}{Condition}
\providecommand{\corollaryname}{Corollary}
\providecommand{\definitionname}{Definition}
\providecommand{\examplename}{Example}
\providecommand{\lemmaname}{Lemma}
\providecommand{\propositionname}{Proposition}
\providecommand{\questionname}{Question}
\providecommand{\remarkname}{Remark}
\providecommand{\theoremname}{Theorem}
\begin{document}
\title{Rational surgery exact triangles in Heegaard Floer homology}
\author{Gheehyun Nahm}
\thanks{The author was partially supported by the ILJU Academy and Culture
Foundation, the Simons collaboration \emph{New structures in low-dimensional
topology}, and a Princeton Centennial Fellowship.}
\address{Department of Mathematics, Princeton University, Princeton, New Jersey
08544, USA}
\email{gn4470@math.princeton.edu}
\begin{abstract}
We construct a new family of surgery exact triangles in Heegaard Floer
theory over the field with two elements. This family generalizes both
Ozsv\'{a}th and Szab\'{o}'s $n$- and $1/n$-surgery exact triangles
for positive integers $n$ and the author's recent $2$-surgery exact
triangle to all positive rational slopes.

The construction reduces to a combinatorial problem that involves
triangle and quadrilateral counting maps in a genus $1$ Heegaard
diagram. The main contribution of this paper is solving this combinatorial
problem, which is particularly tricky for slopes $r\neq n,1/n$; one
key idea is to use an involution that is closely related to the ${\rm Spin}^{c}$
conjugation symmetry.
\end{abstract}

\maketitle
\tableofcontents{}

\section{Introduction}

Heegaard Floer homology is a powerful invariant of closed, oriented
$3$-manifolds defined by Ozsv\'{a}th and Szab\'{o} \cite{MR2113019}.
An important property of Heegaard Floer homology is that it admits
surgery exact triangles: in \cite[Section 9]{MR2113020}, Ozsv\'{a}th
and Szab\'{o} constructed surgery exact triangles for slopes $r=n$
or $1/n$ for positive integers $n$, that involve the Heegaard Floer
homology groups of the $0$-surgery of a knot, the $r$-surgery of
a knot, and the underlying $3$\nobreakdash-manifold; compare \cite{floer1988instanton,MR2299739}.

There is a closely related invariant for knots, called knot Floer
homology, defined by Ozsv\'{a}th and Szab\'{o} \cite{MR2065507}
and independently by Rasmussen \cite{MR2704683}. Ozsv\'{a}th, Stipsicz,
and Szab\'{o} \cite{1407.1795} defined a variant of knot Floer homology
called $t$-modified knot Floer homology and used it to define an
infinite family of knot concordance invariants. In \cite{MR3694597},
they used the special case $t=1$ and called it unoriented knot Floer
homology, to give a lower bound for the smooth $4$-dimensional crosscap
number.

Recently, the author \cite[Theorem~1.3]{nahm2025unorientedskeinexacttriangle}
constructed a new surgery exact triangle for slope~$2$. This surgery
exact triangle involves the Heegaard Floer homology groups of the
$0$-surgery of a knot, the $2$-surgery of a knot, and the unoriented
knot Floer homology of the knot in the underlying $3$-manifold; this
is a Heegaard Floer analogue of Bhat's recent $2$-surgery exact triangle
\cite{2311.04242} in instanton Floer homology.

In this paper, we construct a new family of surgery exact triangles
in Heegaard Floer theory over the field $\mathbb{F}=\mathbb{Z}/2\mathbb{Z}$
with two elements (Theorem~\ref{thm:rational-surgery-kgeneral}).
For each triple $(p,q,k)$ where $p$ and $q$ are coprime positive
integers and $k=0,\cdots,p-1$, we construct a surgery exact triangle
for slope $p/q$. This family contains Ozsv\'{a}th and Szab\'{o}'s
$n$- and $1/n$-surgery exact triangles and the author's $2$-surgery
exact triangle: they correspond to $(p,q,k)=(n,1,0)$, $(1,n,0)$,
and $(2,1,1)$.

These surgery exact triangles for $k=0$ only involve Heegaard Floer
homology groups, and they directly generalize Ozsv\'{a}th and Szab\'{o}'s
$n$- and $1/n$-surgery exact triangles to all positive rational
slopes $p/q$. Note that even for $k=0$, constructing these surgery
exact triangles is particularly tricky for general $(p,q)\neq(n,1),(1,n)$:
see Subsubsection~\ref{subsec:intro-k-0} and Remark~\ref{rem:pq53}
for further discussion.

For general $(p,q,k)$, two out of the three homology groups in the
surgery exact triangle are Heegaard Floer homology groups, but the
third group is a variant $\boldsymbol{HFK}_{p,q,k}^{-}(Y,K)$ of knot
Floer homology that we define in Definition~\ref{def:hfkpqk}. This
variant of knot Floer homology should be thought of as a version of
$2k/p$\nobreakdash-modified knot Floer homology: see Subsection~\ref{subsec:The-modified-knot}
for further discussion.

For clarity, we first state the main theorem for $k=0$ separately
in Theorem~\ref{thm:rational-surgery-k=00003D0}. For additional
clarity, Theorem~\ref{thm:rational-surgery-k=00003D0} is divided
into two cases, $p\ge q$ and $p<q$. In Subsection~\ref{subsec:530},
we restate Theorem~\ref{thm:rational-surgery-k=00003D0} for the
special case where $p\ge q$, $Y=S^{3}$, and the framing $\lambda$
is the Seifert framing (Corollary~\ref{cor:seifert}), and we describe
this surgery exact triangle for the unknot and the right handed trefoil
for $(p,q)=(5,3)$.
\begin{thm}[Main theorem, special case $k=0$]
\label{thm:rational-surgery-k=00003D0}Let $K$ be a knot in a closed,
oriented $3$\nobreakdash-manifold~$Y$ with framing $\lambda$,
and let $\mu$ be the meridian. Then, for all rational $p/q>0$ where
$p$ and $q$ are coprime positive integers, there is an $\mathbb{F}\llbracket U\rrbracket$-linear
exact triangle that involves the Heegaard Floer homology groups of
the $\lambda$-surgery $Y_{\lambda}(K)$, the $p\mu+q\lambda$-surgery
$Y_{p\mu+q\lambda}(K)$, and the underlying manifold $Y$. More precisely:
\begin{enumerate}
\item If $p\ge q$, then the exact triangle involves (A) $q$ copies of
$\boldsymbol{HF}^{-}(Y_{\lambda}(K))$, (B) $\boldsymbol{HF}^{-}(Y_{p\mu+q\lambda}(K))$,
and (C) the Heegaard Floer homology group with twisted coefficients
$\underline{\boldsymbol{HF}^{-}}(Y;\mathbb{F}[\mathbb{Z}/p\mathbb{Z}])$
where the twist is given by the homology class $[K]\in H_{1}(Y)$
of the knot, i.e.\ there is an $\mathbb{F}\llbracket U\rrbracket$-linear
exact triangle 
\[\begin{tikzcd}[ampersand replacement=\&]
	{\bigoplus_{j=0}^{q-1} \boldsymbol{HF}^{-}(Y_{\lambda}(K))} \&\& {\boldsymbol{HF}^-(Y_{p\mu+q\lambda}(K))} \\
	\& {\underline{\boldsymbol{HF}^{-}}(Y;\mathbb{F}[\mathbb{Z}/p\mathbb{Z}])}
	\arrow[from=1-1, to=1-3]
	\arrow[from=1-3, to=2-2]
	\arrow[from=2-2, to=1-1]
\end{tikzcd}\]
\item \label{enu:p<q}If $p<q$, then the exact triangle involves (A) a
direct sum of the Heegaard Floer homology groups with twisted coefficients
$\underline{\boldsymbol{HF}^{-}}(Y_{\lambda}(K);\mathbb{F}[\mathbb{Z}/s_{i}\mathbb{Z}])$
for $i=0,\cdots,p-1$ where the twist is given by the homology class
$[K_{\lambda}]\in H_{1}(Y_{\lambda}(K))$ of the dual knot and 
\begin{equation}
s_{i}=\begin{cases}
\left\lceil q/p\right\rceil  & i=0,\cdots,(q\bmod p)-1\\
\text{\ensuremath{\left\lfloor q/p\right\rfloor }} & i=(q\bmod p),\cdots,p-1
\end{cases}\label{eq:si}
\end{equation}
where $q\bmod p\in\{0,\cdots,p-1\}$ is the residue of $q$ modulo
$p$, (B)~$\boldsymbol{HF}^{-}(Y_{p\mu+q\lambda}(K))$, and (C) the
Heegaard Floer homology group with twisted coefficients $\underline{\boldsymbol{HF}^{-}}(Y;\mathbb{F}[\mathbb{Z}/p\mathbb{Z}])$
where the twist is given by the homology class $[K]\in H_{1}(Y)$
of the knot, i.e.\ there is an $\mathbb{F}\llbracket U\rrbracket$-linear
exact triangle 
\[\begin{tikzcd}[ampersand replacement=\&]
	{\bigoplus_{i=0}^{p-1}\underline{\boldsymbol{HF}^{-}}(Y_{\lambda}(K);\mathbb{F}[\mathbb{Z}/s_{i}\mathbb{Z}])} \&\& {\boldsymbol{HF}^-(Y_{p\mu+q\lambda}(K))} \\
	\& {\underline{\boldsymbol{HF}^{-}}(Y;\mathbb{F}[\mathbb{Z}/p\mathbb{Z}])}
	\arrow[from=1-1, to=1-3]
	\arrow[from=1-3, to=2-2]
	\arrow[from=2-2, to=1-1]
\end{tikzcd}\]
\end{enumerate}
\end{thm}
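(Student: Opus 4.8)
The plan is to realize both exact triangles as instances of a general "holomorphic triangle / surgery exact sequence" machinery, following the strategy Ozsv\'ath and Szab\'o used for the $n$- and $1/n$-surgery triangles, but applied to a carefully chosen multi-pointed genus $1$ Heegaard triple diagram that records all the relevant surgery coefficients simultaneously. Concretely, I would fix a doubly-pointed genus $1$ diagram for $(Y,K,\lambda)$ and build a triple $(\Sigma,\boldsymbol\alpha,\boldsymbol\beta,\boldsymbol\gamma)$ whose three boundary Heegaard diagrams compute (A), (B), and (C) after stabilization; here the $\boldsymbol\gamma$ curves should be chosen of slope $p\mu+q\lambda$, the $\boldsymbol\beta$ curves of slope $\lambda$, and the $\boldsymbol\alpha$ curves the original meridian, with the number of basepoints and the winding dictated by $p$ and $q$. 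The direct sums of $q$ copies of $\boldsymbol{HF}^-(Y_\lambda(K))$ in part (1), and the twisted groups $\underline{\boldsymbol{HF}^-}$ over $\mathbb F[\mathbb Z/s_i\mathbb Z]$ in part (2), should emerge exactly as the homology of the relevant $\alpha$-$\beta$ (resp.\ $\beta$-$\gamma$, $\gamma$-$\alpha$) Floer complex in this enlarged diagram: the group ring factors come from the extra intersection points forced by the basepoints and the winding number of the curves around $K$, so the combinatorial identification of the vertices of the diagram with the generators of these twisted/summed groups is the first technical step.

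Second, I would invoke the standard exact-triangle criterion: given three Lagrangians $\boldsymbol\alpha,\boldsymbol\beta,\boldsymbol\gamma$ in a (stabilized) symmetric product, the triangle counting maps $f_{\alpha\beta\gamma}$, $f_{\beta\gamma\alpha}$, $f_{\gamma\alpha\beta}$ fit into an exact triangle provided the relevant composites are null-homotopic via the quadrilateral maps and the "small triangle" / filtered mapping cone argument applies. This is where the main work of the paper — the combinatorial computation announced in the abstract — is used: I would apply the author's solution of the genus $1$ triangle- and quadrilateral-counting problem to show that the composite $f_{\gamma\alpha}\circ f_{\beta\gamma}$ is chain-homotopic to zero by an explicit quadrilateral-counting homotopy, and that the associated mapping cone is quasi-isomorphic to the third complex. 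The involution related to $\mathrm{Spin}^c$ conjugation, mentioned in the introduction, should enter precisely here, pairing up holomorphic quadrilaterals so that their signed (here, mod $2$) count is controlled; over $\mathbb F=\mathbb Z/2\mathbb Z$ this pairing argument is what makes the otherwise-intractable counts collapse.

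Third, I would handle the bookkeeping that separates case (1) from case (2): when $p\ge q$ the winding region contributes $q$ "parallel" copies at the $\alpha$-$\beta$ vertex and an untwisted-but-multiply-pointed count at $Y$, giving the group ring $\mathbb F[\mathbb Z/p\mathbb Z]$ via the $p$ basepoints; when $p<q$ the roles of the two surgeries are interchanged in the diagram, so the winding is concentrated on the $\beta$ side, producing the twisted groups $\underline{\boldsymbol{HF}^-}(Y_\lambda(K);\mathbb F[\mathbb Z/s_i\mathbb Z])$ with the floor/ceiling distribution \eqref{eq:si} coming from distributing $q$ strands among $p$ basepoints as evenly as possible. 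In both cases one must check that the $\mathrm{Spin}^c$ structures group correctly, that $U$-equivariance is respected by the triangle maps (so the sequence is $\mathbb F\llbracket U\rrbracket$-linear), and that the twist is by the stated homology class $[K]$ or $[K_\lambda]$ — this follows from tracking how the periodic domains in the triple diagram intersect the extra basepoints.

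The main obstacle, as flagged in the abstract, is the middle step: proving the null-homotopy of the composite and the cone identification for general coprime $(p,q)\neq(n,1),(1,n)$. For those exceptional slopes the genus $1$ triangle counts degenerate to Ozsv\'ath--Szab\'o's classical nearest-point computation, but for general $p/q$ the holomorphic triangles in the winding region are genuinely more complicated, and a naive count does not vanish; the resolution is the conjugation-type involution on the moduli of quadrilaterals, and verifying that this involution is fixed-point-free (or has even-order fixed sets) on the relevant configurations — so that the mod $2$ count is zero — is the heart of the argument. Everything else is assembling the filtered mapping cone from the iterated exact triangles and reading off the stated groups.
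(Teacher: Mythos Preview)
Your high-level architecture is close to the paper's: one does reduce to a local computation in a genus $1$ Heegaard diagram and then applies the triangle detection lemma, with the triangle maps $\mu_2(-\otimes\psi_i)$ playing the role of $f_i$ and the quadrilateral maps $\mu_3(-\otimes\psi_i\otimes\psi_{i+1})$ furnishing the null-homotopies $H_i$. Two points, however, are off.

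First, a methodological difference: the paper does not use multi-pointed diagrams. Instead it equips the attaching curves $\beta_0$ and $\beta_\infty$ with local systems of ranks $\sum_i s_i=q$ and $p$ respectively, and the twisted or summed groups at the vertices arise from an identification of the form $\boldsymbol{CF}^-(\boldsymbol\alpha,\boldsymbol\beta^{E})\cong \mathcal{CFK}^-(\boldsymbol\alpha,\boldsymbol\beta)\otimes_{\mathbb F\llbracket W,Z\rrbracket}E$ (Lemma~\ref{lem:local-system-simple-lemma}), not from extra basepoints. Your picture of the $s_i$'s as ``distributing $q$ strands among $p$ basepoints'' would need a genuine translation to match this.

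Second, and this is the real gap, you have the role of the involution backwards. The quadrilateral counts are \emph{not} the obstacle: they are computed directly and shown to give the identity modulo $U$ whenever the cycles $\psi$ satisfy certain mod-$U$ congruences (Subsection~\ref{subsec:The-quadrilateral-counting}). The hard part is showing that cycles $\psi_{0r},\psi_{r\infty},\psi_{\infty0}$ exist making the \emph{triangle} maps $\mu_2(\psi\otimes\psi')$ vanish exactly over $\mathbb F\llbracket U\rrbracket$; for $p/q\neq n,1/n$ these cycles cannot be sums of basis elements with unit coefficients---their coefficients are genuinely complicated power series in $U$ (Remark~\ref{rem:pq53}). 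After restricting to a subspace of ``standard'' basis elements, this vanishing problem reduces to finding a nonzero element of $\ker F$ for a certain linear map $F$ between free rank-$q$ modules. The involution (geometrically, a $\pi$-rotation of the genus $1$ cover) is used not to pair off holomorphic polygons, but to make $F$ equivariant for an involution $\iota$ on domain and codomain; one then computes $\operatorname{rank}X^\iota=\operatorname{rank}Y^\iota+1$, which forces $\ker F\neq 0$. So the involution enters as a linear-algebraic existence argument for the cycles, not as a fixed-point-free cancellation mechanism on moduli spaces.
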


\begin{rem}
If $K\subset Y$ is nullhomologous, then $\underline{\boldsymbol{HF}^{-}}(Y;\mathbb{F}[\mathbb{Z}/p\mathbb{Z}])$
is simply $p$ copies of $\boldsymbol{HF}^{-}(Y)$.
\end{rem}

The following theorem is our main theorem, which is a generalization
of Theorem~\ref{thm:rational-surgery-k=00003D0}. In Subsection~\ref{subsec:5312},
we describe this surgery exact triangle for the unknot and the right
handed trefoil with the Seifert framing for $(p,q,k)=(5,3,1)$ and
$(5,3,2)$.
\begin{thm}[Main theorem, general case]
\label{thm:rational-surgery-kgeneral}Let $K$ be a knot in a closed,
oriented $3$\nobreakdash-manifold~$Y$ with framing $\lambda$,
and let $\mu$ be the meridian. Then, for all rational $p/q>0$ where
$p$ and $q$ are coprime positive integers and $k=0,\cdots,p-1$,
there is an $\mathbb{F}\llbracket U\rrbracket$-linear exact triangle
\[\begin{tikzcd}[ampersand replacement=\&]
	{\bigoplus_{i = 0} ^ {\min(p,q) - 1}\underline{\boldsymbol{HF}^{-}}(Y_{\lambda}(K);\mathbb{F}[\mathbb{Z}/s_{i}\mathbb{Z}])} \&\& {\boldsymbol{HF}^-(Y_{p\mu+q\lambda}(K))} \\
	\& {\boldsymbol{HFK}_{p,q,k}^{-}(Y,K)}
	\arrow[from=1-1, to=1-3]
	\arrow[from=1-3, to=2-2]
	\arrow[from=2-2, to=1-1]
\end{tikzcd}\]Here, $s_{0},\cdots,s_{p-1}$ are given by Equation~(\ref{eq:si})
and $\underline{\boldsymbol{HF}^{-}}(Y_{\lambda}(K);\mathbb{F}[\mathbb{Z}/s_{i}\mathbb{Z}])$
is the Heegaard Floer homology group with twisted coefficients where
the twist is given by the homology class $[K_{\lambda}]\in H_{1}(Y_{\lambda}(K))$
of the dual knot, and $\boldsymbol{HFK}_{p,q,k}^{-}(Y,K)$ is the
\emph{modified knot Floer homology group} that we define in Definition~\ref{def:hfkpqk}.
\end{thm}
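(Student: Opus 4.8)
The plan is to realize all three terms of the claimed exact triangle as the homology of chain complexes coming from a single Heegaard multi\nobreakdash-diagram with a distinguished genus~$1$ piece, and to extract the exact triangle from holomorphic polygon counts in that diagram. First I would build a Heegaard diagram for $(Y,K)$ in which a tubular neighborhood of $K$ appears as a punctured torus $T\subset\Sigma$ disjoint from the basepoint. Inside $T$, I would draw, as embedded straight curves, the meridian $\mu$, the longitude $\lambda$, and a curve of slope $p\mu+q\lambda$, together with a chain of intermediate curves coming from a Farey/continued\nobreakdash-fraction expansion of $p/q$ so that consecutive curves meet minimally, while everything outside $T$ stays fixed. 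Filling $T$ along $\mu$, along $\lambda$, and along $p\mu+q\lambda$ recovers $Y$, $Y_{\lambda}(K)$, and $Y_{p\mu+q\lambda}(K)$ respectively. The $p$ intersection points of the slope\nobreakdash-$(p/q)$ curve with $\mu$ and its $q$ intersection points with $\lambda$ are what produce the $\mathbb{Z}/p\mathbb{Z}$\nobreakdash-twisting on $Y$ and the $q$\nobreakdash-fold contribution of $Y_{\lambda}(K)$; decomposing by the relative $\mathrm{Spin}^{c}$ (Alexander\nobreakdash-type) grading splits the latter into the summands $\underline{\boldsymbol{HF}^{-}}(Y_{\lambda}(K);\mathbb{F}[\mathbb{Z}/s_{i}\mathbb{Z}])$ with $s_{i}$ as in~(\ref{eq:si}).

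Next, to the three fillings I would attach connecting maps given by sums of holomorphic triangle counting maps, using a small perturbation of the intermediate curve system. Compositions of consecutive maps are chain homotopic to zero by the usual argument: the homotopy counts holomorphic quadrilaterals, and one uses admissibility and positivity of domains together with the fact that the relevant intermediate curve system bounds a connected sum of copies of $S^{1}\times S^{2}$, so that only the top\nobreakdash-grading generator contributes. Feeding this into the standard detection mechanism for surgery exact triangles (iterated mapping cones) reduces the theorem to a single statement: a specified composite of triangle counting maps, from $\bigoplus_{i}\underline{\boldsymbol{CF}^{-}}(Y_{\lambda}(K);\mathbb{F}[\mathbb{Z}/s_{i}\mathbb{Z}])$ to the mapping cone whose homology is $\boldsymbol{HFK}^{-}_{p,q,k}(Y,K)$, is a quasi\nobreakdash-isomorphism, where that mapping cone is identified with the complex from Definition~\ref{def:hfkpqk} by unwinding the definitions. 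The cases $(p,q,k)=(n,1,0)$, $(1,n,0)$, $(2,1,1)$ then recover Ozsv\'{a}th and Szab\'{o}'s and the author's earlier triangles, which is a useful consistency check on the combinatorial normalizations.

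The quasi\nobreakdash-isomorphism statement is local: filtering all complexes by area (equivalently, by the multiplicity of domains at the basepoint region), the associated graded of the distinguished map counts only small triangles and quadrilaterals supported in the punctured torus $T$, and these are governed by a purely combinatorial model of straight lines of slopes $0$, $\infty$, and $p/q$ in $\mathbb{R}^{2}$. I would then need to show that this combinatorial map is an isomorphism on associated graded pieces. For $r=n$ or $1/n$ this is the classical lattice count, but for general $p/q$ the matrix of triangle counts is not transparent; the key device is the involution of $T$ closely related to the $\mathrm{Spin}^{c}$ conjugation symmetry, which I would use to pair up generators and domains so that the matrix becomes block\nobreakdash-triangular with identity diagonal blocks, the diagonal consisting of the conjugation\nobreakdash-symmetric triangles; this forces invertibility. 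I expect this last step to be the main obstacle. Once invertibility holds and the cone is matched with the complex defining $\boldsymbol{HFK}^{-}_{p,q,k}(Y,K)$, the exact triangle follows, and $\mathbb{F}\llbracket U\rrbracket$\nobreakdash-linearity is automatic because every map in sight is a $U$\nobreakdash-equivariant polygon count.
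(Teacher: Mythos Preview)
Your outline has the right global architecture (genus~$1$ local model, triangle detection lemma, neck-stretching), but it misidentifies the combinatorial core and omits the mechanism that actually produces the three groups.

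First, the paper does \emph{not} use a Farey chain of intermediate curves. It uses exactly three curves $\beta_0,\beta_r,\beta_\infty$ of slopes $0,p/q,\infty$ in the torus, but equips $\beta_0$ and $\beta_\infty$ with rank~$q$ and rank~$p$ \emph{local systems} $(E_0^k,\phi_0^k)$ and $(E_\infty^k,\phi_\infty^k)$, where $\phi_\infty^k$ may involve negative powers of~$U$. It is via Lemma~\ref{lem:local-system-simple-lemma} that $\boldsymbol{CF}^-(\boldsymbol{\alpha},\boldsymbol{\beta}_0^{E_0^k})$ becomes the direct sum of twisted complexes and $\boldsymbol{CF}^-(\boldsymbol{\alpha},\boldsymbol{\beta}_\infty^{E_\infty^k})$ becomes $\boldsymbol{CFK}^-_{p,q,k}(Y,K)$ (which is a tensor product, not a mapping cone). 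Without local systems you cannot realize these groups as Floer groups of single attaching curves, and your ``chain of intermediate curves'' would instead produce an iterated cone whose identification with Definition~\ref{def:hfkpqk} is itself a hard theorem.

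Second, the local problem is not ``show the associated graded map is an isomorphism''. The actual task (Theorem~\ref{thm:gen-local-comp}) is to find cycles $\psi_{0r}^k,\psi_{r\infty}^k,\psi_{\infty0}^k$ with $\mu_2$ of each consecutive pair \emph{equal to zero} over $\mathbb{F}\llbracket U\rrbracket$, and $\mu_3$ congruent to the identity mod~$U$. Remark~\ref{rem:minus-hard} shows that the naive cycles (sums of the obvious basis elements) already fail for $(p,q)=(5,3)$: the $\mu_2$'s are nonzero in positive $U$-degree, so a filtration argument that only checks the leading term cannot close the gap. One must solve a genuine system over $\mathbb{F}\llbracket U\rrbracket$, and the solutions involve theta-like power series (Remark~\ref{rem:pq53}).

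Third, the involution is not used to block-triangularize a matrix. After restricting to the span of the ``standard'' basis elements (motivated by a covering-space reinterpretation, Section~\ref{sec:Local-systems-and}), the $\mu_2$-vanishing reduces to finding a nonzero element of $\ker F$ for a certain $\mathbb{F}\llbracket U\rrbracket$-linear map $F$ between free modules of equal rank~$q$. The involution $n\mapsto p+q-1-n$ makes $F$ equivariant, and a parity count gives $\operatorname{rank}X^\iota=\operatorname{rank}Y^\iota+1$ on fixed subspaces, forcing $\ker F\neq 0$; there is no triangularity and no identity diagonal.
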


\begin{rem}[Theorems~\ref{thm:rational-surgery-k=00003D0}~and~\ref{thm:rational-surgery-kgeneral}
for the hat, plus, and infinity versions]
\label{rem:hpi}The chain complexes that give rise to the homology
groups in our exact triangles (Theorems~\ref{thm:rational-surgery-k=00003D0}~and~\ref{thm:rational-surgery-kgeneral})
are $\mathbb{F}\llbracket U\rrbracket$-modules. Hence, we define
the hat, plus, and infinity versions in the standard way, i.e.\ by
the following algebraic modification: if $\boldsymbol{C}^{-}$ is
a chain complex of $\mathbb{F}\llbracket U\rrbracket$-modules, define
\[
\widehat{C}:=\boldsymbol{C}^{-}\otimes_{\mathbb{F}\llbracket U\rrbracket}\mathbb{F}\llbracket U\rrbracket/U,\ C^{+}:=\boldsymbol{C}^{-}\otimes_{\mathbb{F}\llbracket U\rrbracket}(U^{-1}\mathbb{F}\llbracket U\rrbracket/\mathbb{F}\llbracket U\rrbracket),\ C^{\infty}:=\boldsymbol{C}^{-}\otimes_{\mathbb{F}\llbracket U\rrbracket}U^{-1}\mathbb{F}\llbracket U\rrbracket.
\]
Our main theorems, Theorems~\ref{thm:rational-surgery-k=00003D0}~and~\ref{thm:rational-surgery-kgeneral},
hold in the hat, plus, and infinity versions as well (see Appendix~\ref{subsec:Proof-of-Theorem}).
\end{rem}

Since the author's $2$-surgery exact triangle \cite[Theorem~1.3]{nahm2025unorientedskeinexacttriangle}
(Theorem~\ref{thm:rational-surgery-kgeneral} for $(p,q,k)=(2,1,1)$)
is a Heegaard Floer analogue of Bhat's $2$-surgery exact triangle
\cite{2311.04242} in instanton Floer homology, we ask the following
question.
\begin{question}
Are there analogous surgery exact triangles to Theorem~\ref{thm:rational-surgery-kgeneral}
in instanton Floer homology? What is the instanton Floer theoretic
analogue of $\boldsymbol{HFK}_{p,q,k}^{-}(Y,K)$?
\end{question}

\subsection{\label{subsec:Idea-of-the}Idea of the proof}

To construct our rational surgery exact triangles (Theorems~\ref{thm:rational-surgery-k=00003D0}~and~\ref{thm:rational-surgery-kgeneral}),
we use the strategy of \cite{MR2141852} (compare \cite{MR2350128,2308.15658,nahm2025unorientedskeinexacttriangle}),
of using the triangle detection lemma \cite[Lemma~4.2]{MR2141852}
(Lemma~\ref{lem:triangle-det}) and reducing to a local computation
in a genus $1$ Heegaard diagram, and hence to a purely combinatorial
problem (Theorem~\ref{thm:gen-local-comp}): we recall this in Subsection~\ref{subsec:inter-local}
and Appendix~\ref{sec:Reduction-of-Theorem}. The main contribution
of this paper is solving this purely combinatorial problem, which
is particularly tricky for general $p/q\neq n,1/n$; we discuss this
below and in Remark~\ref{rem:pq53}.

\begin{figure}[h]
\begin{centering}
\includegraphics{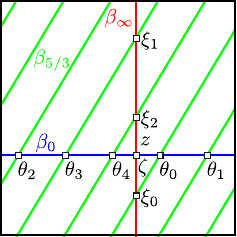}
\par\end{centering}
\caption{\label{fig:diagramt2-infto}A genus $1$ Heegaard diagram $(\mathbb{T}^{2},\beta_{0},\beta_{r},\beta_{\infty},z)$
for $r=5/3$. The intersection points $\{\theta_{0},\theta_{1},\theta_{2},\theta_{3},\theta_{4}\}=\beta_{0}\cap\beta_{r}$,
$\{\xi_{0},\xi_{1},\xi_{2}\}=\beta_{r}\cap\beta_{\infty}$, and $\{\zeta\}=\beta_{\infty}\cap\beta_{0}$
are labelled.}
\end{figure}

\subsubsection{\label{subsec:intro-k-0}Case $k=0$}

Let $p,q$ be coprime positive integers, let $r=p/q$, and let us
focus on the $k=0$ case for now (see Subsubsection~\ref{subsec:General}
for general $k$). When we construct the rational surgery exact triangle
for slope $r$, the relevant genus $1$ Heegaard diagram $(\mathbb{T}^{2},\beta_{0},\beta_{r},\beta_{\infty},z)$
consists of three attaching curves $\beta_{0},\beta_{r},\beta_{\infty}$,
with slope $0,r,\infty$, respectively: see Figure~\ref{fig:diagramt2-infto}
for $r=p/q=5/3$. We equip $\beta_{0}$ and $\beta_{\infty}$ with
local systems as follows (see Remark~\ref{rem:necessity-local-system}
for why we need local systems): (A) equip $\beta_{0}$ with a rank
$q$ local system $(E_{0},\phi_{0})$ where $E_{0}:=\bigoplus_{j=0}^{q-1}x_{j}\mathbb{F}\llbracket U\rrbracket$
has basis $x_{0},\cdots,x_{q-1}$ and $\phi_{0}$ is the monodromy
whose definition we postpone to Section~\ref{sec:The-main-local},
and (B) equip $\beta_{\infty}$ with a rank $p$ local system $(E_{\infty},\phi_{\infty})$
where $E_{\infty}:=\bigoplus_{i=0}^{p-1}y_{i}\mathbb{F}\llbracket U\rrbracket$
and $\phi_{\infty}$ is the monodromy whose definition we also postpone
to Section~\ref{sec:The-main-local}.

Now, the combinatorial problem is to find cycles 
\[
\psi_{0r}\in\boldsymbol{CF}^{-}(\beta_{0}^{(E_{0},\phi_{0})},\beta_{r}),\ \psi_{r\infty}\in\boldsymbol{CF}^{-}(\beta_{r},\beta_{\infty}^{(E_{\infty},\phi_{\infty})}),\ \psi_{\infty0}\in\boldsymbol{CF}^{-}(\beta_{\infty}^{(E_{\infty},\phi_{\infty})},\beta_{0}^{(E_{0},\phi_{0})})
\]
such that the triangle counting maps vanish, i.e.
\begin{equation}
\mu_{2}(\psi_{0r}\otimes\psi_{r\infty})=0,\ \mu_{2}(\psi_{r\infty}\otimes\psi_{\infty0})=0,\ \mu_{2}(\psi_{\infty0}\otimes\psi_{0r})=0,\label{eq:mu2-intro}
\end{equation}
and the quadrilateral counting maps are the identity\footnote{\label{fn:Note-that-Equation()}Note that Equation~(\ref{eq:mu3-imprecise})
is imprecise: in Heegaard Floer homology, we usually avoid defining
$\boldsymbol{CF}^{-}(\beta,\beta)$, let alone the unit ${\rm Id}\in\boldsymbol{CF}^{-}(\beta,\beta)$.
Instead, we consider standard translates of the attaching curves (see
Subsection~\ref{subsec:Standard-translates}): for instance, instead
of showing $\mu_{3}(\psi_{0r}\otimes\psi_{r\infty}\otimes\psi_{\infty0})\equiv{\rm Id}\bmod U$,
we let $\beta_{0}'$ be a standard translate of $\beta_{0}$ (see
Figure~\ref{fig:diagramt2}), define the corresponding cycle $\psi_{\infty0}'\in\boldsymbol{CF}^{-}(\beta_{\infty}^{(E_{\infty},\phi_{\infty})},{\beta_{0}'}^{(E_{0},\phi_{0})})$
(this is the image of $\psi_{\infty0}$ under the nearest point map
from Subsection~\ref{subsec:Standard-translates}), let $\Theta_{0}^{+}\in\beta_{0}\cap\beta_{0}'$
be the top grading intersection point, and show that
\[
\mu_{3}(\psi_{0r}\otimes\psi_{r\infty}\otimes\psi_{\infty0}')\equiv{\rm Id}_{E_{0}}\Theta_{0}^{+}\mod U.
\]
} modulo $U$, i.e.
\begin{equation}
\mu_{3}(\psi_{0r}\otimes\psi_{r\infty}\otimes\psi_{\infty0})\equiv{\rm Id},\ \mu_{3}(\psi_{r\infty}\otimes\psi_{\infty0}\otimes\psi_{0r})\equiv{\rm Id},\ \mu_{3}(\psi_{\infty0}\otimes\psi_{0r}\otimes\psi_{r\infty})\equiv{\rm Id}\mod U.\label{eq:mu3-imprecise}
\end{equation}

The chain complexes 
\begin{equation}
\boldsymbol{CF}^{-}(\beta_{0}^{(E_{0},\phi_{0})},\beta_{r}),\ \boldsymbol{CF}^{-}(\beta_{r},\beta_{\infty}^{(E_{\infty},\phi_{\infty})}),\ \boldsymbol{CF}^{-}(\beta_{\infty}^{(E_{\infty},\phi_{\infty})},\beta_{0}^{(E_{0},\phi_{0})})\label{eq:chain-cpxes-intro}
\end{equation}
are all free, rank $pq$ modules over $\mathbb{F}\llbracket U\rrbracket$,
with trivial differential $\partial=0$. Moreover, the basis of $E_{0},E_{\infty}$
and the intersection points $\beta_{0}\cap\beta_{r}$, $\beta_{r}\cap\beta_{\infty}$,
and $\beta_{\infty}\cap\beta_{0}$ give rise to bases of these chain
complexes.\footnote{Let us denote the intersection points $\beta_{0}\cap\beta_{r}$ as
$\theta_{0},\cdots,\theta_{p-1}$, $\beta_{r}\cap\beta_{\infty}$
as $\xi_{0},\cdots,\xi_{q-1}$, and $\beta_{\infty}\cap\beta_{0}$
as $\zeta$. Then, the chain complex $\boldsymbol{CF}^{-}(\beta_{0}^{(E_{0},\phi_{0})},\beta_{r})$
is freely generated over the $\mathbb{F}\llbracket U\rrbracket$-module
${\rm Hom}_{\mathbb{F}\llbracket U\rrbracket}(E_{0},\mathbb{F}\llbracket U\rrbracket)$
by the intersection points $\theta_{i}$, and hence has basis $x_{j}^{\ast}\theta_{i}$
for $i=0,\cdots,p-1$ and $j=0,\cdots,q-1$. Similarly, $\boldsymbol{CF}^{-}(\beta_{r},\beta_{\infty}^{(E_{\infty},\phi_{\infty})})$
has basis $\{y_{i}\xi_{j}\}_{i,j}$, and $\boldsymbol{CF}^{-}(\beta_{\infty}^{(E_{\infty},\phi_{\infty})},\beta_{0}^{(E_{0},\phi_{0})})$
has basis $\{x_{j}y_{i}^{\ast}\zeta\}_{i,j}$.}

Writing the cycles $\psi_{0r},\psi_{r\infty},\psi_{\infty0}$ as $\mathbb{F}\llbracket U\rrbracket$-linear
combinations of the above basis elements, Equation~(\ref{eq:mu2-intro})
becomes a system of $3pq$ quadratic equations in $3pq$ variables
over $\mathbb{F}\llbracket U\rrbracket$, and Equation~(\ref{eq:mu3-imprecise})
becomes an additional system of cubic equations. Our main result is
that there always exists a solution that satisfies both these systems
of equations. Showing this is particularly tricky for general $p/q\neq n,1/n$:
indeed, the cycles we find are \emph{not} simply the sum of some of
the basis elements; the coefficients of the $\mathbb{F}\llbracket U\rrbracket$-linear
combination are in general complicated. See Remark~\ref{rem:pq53}
for a concrete example.

There are two main ideas: first, we restrict our attention to particular
subspaces of the chain complexes (\ref{eq:chain-cpxes-intro}); these
are the subspaces spanned by the \emph{standard basis elements} (Definition~\ref{def:standard-basis}).
Restricting to these subspaces makes the triangle counting maps more
symmetric, and reduces Equation~(\ref{eq:mu2-intro}) to a more tractable
system of linear equations. More precisely, we define an $\mathbb{F}\llbracket U\rrbracket$-linear
map $F:X\to Y$ between two free, rank $q$ modules $X,Y$ over $\mathbb{F}\llbracket U\rrbracket$,
and also consider the adjoint $F^{\ast}:Y^{\ast}\to X^{\ast}$ of
$F$. Then, an element $(x,y)\in{\rm ker}F\oplus{\rm ker}F^{\ast}$
gives rise to a solution $\psi_{0r},\psi_{r\infty},\psi_{\infty0}$
of Equation~(\ref{eq:mu2-intro}); see Section~\ref{sec:The-minus-version}.

This first main idea is motivated by that if we restrict to these
particular subspaces, then the polygon counting maps for $\beta_{0}^{(E_{0},\phi_{0})},\beta_{r},\beta_{\infty}^{(E_{\infty},\phi_{\infty})}$
in $\mathbb{T}^{2}$ have clean interpretations in terms of the polygon
counting maps for curves in a cover of $\mathbb{T}^{2}$, without
local systems; see Section~\ref{sec:Local-systems-and}. Also motivated
by this interpretation, we first find a solution modulo $U$ in Section~\ref{sec:The-hat-version},
i.e.\ we find $\psi_{0r},\psi_{r\infty},\psi_{\infty0}$ modulo $U$,
which we denote as $\widehat{\psi}_{0r},\widehat{\psi}_{r\infty},\widehat{\psi}_{\infty0}$,
that satisfy Equation~(\ref{eq:mu2-intro}) modulo $U$ and Equation~(\ref{eq:mu3-imprecise}).
It turns out that if both $x\in{\rm ker}F$ and $y\in{\rm ker}F^{\ast}$
are nonzero, then the corresponding $\psi_{0r},\psi_{r\infty},\psi_{\infty0}$
reduce to $\widehat{\psi}_{0r},\widehat{\psi}_{r\infty},\widehat{\psi}_{\infty0}$
modulo $U$. Hence, we are left to show ${\rm ker}F\neq0$.

The second main idea is to use a symmetry of the genus $1$ Heegaard
diagram (Figure~\ref{fig:diagramt2-infto}) to show that ${\rm ker}F\neq0$.
This symmetry is closely related to the ${\rm Spin}^{c}$ conjugation
symmetry of Heegaard Floer homology, and it induces an involution
$\iota$ of $X$ and $Y$ such that $F$ is $\iota$-equivariant.
Computing the ranks of the $\iota$-fixed subspaces, we find ${\rm rank}X^{\iota}={\rm rank}Y^{\iota}+1$,
which implies ${\rm rank}({\rm ker}F)\ge1$; see Section~\ref{sec:The-minus-version}.

\begin{rem}[Necessity of local systems]
\label{rem:necessity-local-system}To apply the triangle detection
lemma, we would like to find a Heegaard quadruple diagram $(\Sigma,\boldsymbol{\alpha},\boldsymbol{\beta}_{0},\boldsymbol{\beta}_{r},\boldsymbol{\beta}_{\infty},z)$
such that the homology groups in the exact triangles are $\boldsymbol{HF}^{-}(\boldsymbol{\alpha},\boldsymbol{\beta}_{i})$
for $i\in\{0,r,\infty\}$. (See Subsection~\ref{subsec:inter-local}
for further discussion on how this relates to the above genus $1$
diagram.) However, since we consider twisted coefficients in Theorem~\ref{thm:rational-surgery-k=00003D0},
instead of the above, we should equip $\boldsymbol{\beta}_{0}$ and
$\boldsymbol{\beta}_{\infty}$ with local systems and require
\begin{gather*}
\bigoplus_{i=0}^{\min(p,q)-1}\underline{\boldsymbol{HF}^{-}}(Y_{\lambda}(K);\mathbb{F}[\mathbb{Z}/s_{i}\mathbb{Z}])\cong\boldsymbol{HF}^{-}(\boldsymbol{\alpha},\boldsymbol{\beta}_{0}^{(E_{0},\phi_{0})}),\\
\boldsymbol{HF}^{-}(Y_{p\mu+q\lambda}(K))\cong\boldsymbol{HF}^{-}(\boldsymbol{\alpha},\boldsymbol{\beta}_{r}),\\
\underline{\boldsymbol{HF}^{-}}(Y;\mathbb{F}[\mathbb{Z}/p\mathbb{Z}])\cong\boldsymbol{HF}^{-}(\boldsymbol{\alpha},\boldsymbol{\beta}_{\infty}^{(E_{\infty},\phi_{\infty})}).
\end{gather*}
\end{rem}

\subsubsection{\label{subsec:General}General $k$}

For general $k$, we would like to similarly equip $\boldsymbol{\beta}_{\infty}$
with some local system $(E_{\infty}^{k},\phi_{\infty}^{k})$ such
that
\[
\boldsymbol{HFK}_{p,q,k}^{-}(Y,K)\cong\boldsymbol{HF}^{-}(\boldsymbol{\alpha},\boldsymbol{\beta}_{\infty}^{(E_{\infty}^{k},\phi_{\infty}^{k})}).
\]
For this, we need to allow the monodromy $\phi_{\infty}^{k}$ to a
priori involve negative powers of $U$, as in \cite{2308.15658,nahm2025unorientedskeinexacttriangle}.

We would like to solve the combinatorial problem of finding cycles
\[
\psi_{0r}\in\boldsymbol{CF}^{-}(\beta_{0}^{(E_{0},\phi_{0})},\beta_{r}),\ \psi_{r\infty}\in\boldsymbol{CF}^{-}(\beta_{r},\beta_{\infty}^{(E_{\infty}^{k},\phi_{\infty}^{k})}),\ \psi_{\infty0}\in\boldsymbol{CF}^{-}(\beta_{\infty}^{(E_{\infty}^{k},\phi_{\infty}^{k})},\beta_{0}^{(E_{0},\phi_{0})})
\]
that satisfy Equations~(\ref{eq:mu2-intro})~and~(\ref{eq:mu3-imprecise}).
The main idea here is that it is possible to define the local system
$(E_{\infty}^{k},\phi_{\infty}^{k})$, or in other words $\boldsymbol{HFK}_{p,q,k}^{-}(Y,K)$,
such that this combinatorial problem is similar to that for $k=0$.
Concretely, if we restrict to the subspaces spanned by the standard
basis elements as in Subsubsection~\ref{subsec:intro-k-0}, then
the triangle counting maps $\mu_{2}$ do not depend on $k$ (Proposition~\ref{prop:general-k-same}).

\subsection{\label{subsec:The-modified-knot}The modified knot Floer homology
groups \texorpdfstring{$\boldsymbol{HFK}_{p,q,k}^{-}(Y,K)$}{HFKp,q,k(Y,K)}}

To define and motivate these modified knot Floer homology groups $\boldsymbol{HFK}_{p,q,k}^{-}(Y,K)$,
we first recall the definition of the knot Floer chain complex ${\cal CFK}^{-}(Y,K)$,
the Heegaard Floer chain complex with twisted coefficients, and the
$t$-modified knot Floer chain complex $t\boldsymbol{CFK}^{-}(Y,K)$.
\begin{convention}
Throughout, $\mathbb{F}\llbracket W,Z\rrbracket$ is viewed as an
$\mathbb{F}\llbracket U\rrbracket$-algebra via $U=WZ$.
\end{convention}

\begin{defn}[Knot Floer chain complex \cite{MR2065507,MR2704683}]
\label{def:knotfloer}Let $K$ be a knot in a closed, oriented $3$-manifold
$Y$. Let $(\Sigma,\boldsymbol{\alpha},\boldsymbol{\beta},w,z)$ be
a doubly pointed Heegaard diagram that represents this knot $K\subset Y$.
Then the \emph{knot Floer chain complex ${\cal CFK}^{-}(Y,K)$} is
freely generated over $\mathbb{F}\llbracket W,Z\rrbracket$ by intersection
points ${\bf x}\in\mathbb{T}_{\alpha}\cap\mathbb{T}_{\beta}$:
\[
{\cal CFK}^{-}(Y,K):=\bigoplus_{{\bf x}\in\mathbb{T}_{\boldsymbol{\alpha}}\cap\mathbb{T}_{\boldsymbol{\beta}}}\mathbb{F}\llbracket W,Z\rrbracket{\bf x},
\]
and the differential is given by counting the basepoint $w$ with
weight $W$ and $z$ with weight $Z$:
\[
\partial{\bf x}:=\sum_{{\bf y}\in\mathbb{T}_{\boldsymbol{\alpha}}\cap\mathbb{T}_{\boldsymbol{\beta}}}\sum_{\phi\in D({\bf x},{\bf y}),\ \mu(\phi)=1}\#\mathcal{M}(\phi)W^{n_{w}(\phi)}Z^{n_{z}(\phi)}{\bf y},
\]
where $D({\bf x},{\bf y})$ is the set of two-chains from ${\bf x}$
to ${\bf y}$ (Definition~\ref{def:Given-intersection-points}).
\end{defn}

\begin{defn}[{Heegaard Floer chain complex with $\mathbb{F}[\mathbb{Z}/N\mathbb{Z}]$-twisted
coefficients \cite[Section~8]{MR2113020}}]
\label{def:twisted-coeff}Let $Y$ be a closed, oriented $3$-manifold,
let $N$ be a positive integer, and let $\gamma\in H_{1}(Y)$. Let
$K\subset Y$ be any knot with homology class $\gamma$. Let $E_{N}:=\bigoplus_{i=0}^{N-1}e_{i}\mathbb{F}\llbracket U\rrbracket$
be the free, rank $N$ module over $\mathbb{F}\llbracket U\rrbracket$
with basis $e_{0},\cdots,e_{N-1}$. Define $\phi_{N}:E_{N}\to E_{N}$
as the $\mathbb{F}\llbracket U\rrbracket$-linear map such that $\phi_{N}(e_{i})=e_{i+1}$,
where the indices of $e$ are interpreted modulo $N$. View $E_{N}$
as an $\mathbb{F}\llbracket W,Z\rrbracket$-module by letting $W$
act as $\phi_{N}$ and $Z$ act as $U\phi_{N}^{-1}$. Then, the \emph{Heegaard
Floer chain complex with $\mathbb{F}[\mathbb{Z}/N\mathbb{Z}]$-twisted
coefficients, twisted by $\gamma$} is defined as 
\[
\underline{\boldsymbol{CF}^{-}}(Y;\mathbb{F}[\mathbb{Z}/N\mathbb{Z}]):={\cal CFK}^{-}(Y,K)\otimes_{\mathbb{F}\llbracket W,Z\rrbracket}E_{N},
\]
and denote its homology as $\underline{\boldsymbol{HF}^{-}}(Y;\mathbb{F}[\mathbb{Z}/N\mathbb{Z}])$.
\end{defn}

\begin{defn}[$t$-modified knot Floer chain complex \cite{1407.1795}]
\label{def:t-knotfloer}Let $p$ be a positive integer and $k=0,\cdots,p-1$.
Let $D_{p,k}:=\mathbb{F}\llbracket U^{1/p}\rrbracket$, and view it
as an $\mathbb{F}\llbracket W,Z\rrbracket$-module by letting $W$
act as $U^{k/p}$ and $Z$ act as $U^{1-(k/p)}$. Let $K$ be a knot
in a closed, oriented $3$-manifold $Y$, let $t=2k/p$, and further
assume that $p$ and $k$ are coprime. Then, the\emph{ $t$-modified
knot Floer chain complex} is 
\[
t\boldsymbol{CFK}^{-}(Y,K):={\cal CFK}^{-}(Y,K)\otimes_{\mathbb{F}\llbracket W,Z\rrbracket}D_{p,k},
\]
and denote its homology as $t\boldsymbol{HFK}^{-}(Y,K)$.
\end{defn}

\begin{rem}
The author's $2$-surgery exact triangle \cite[Theorem~1.3]{nahm2025unorientedskeinexacttriangle}
involves the unoriented knot Floer homology group, which is defined
as $t\boldsymbol{HFK}^{-}(Y,K)$ for $t=1$.
\end{rem}

Now, we define our modified knot Floer chain complex $\boldsymbol{CFK}_{p,q,k}^{-}(Y,K)$
and compare it with Heegaard Floer homology with twisted coefficients
and $2k/p$-modified knot Floer homology.
\begin{defn}[Modified knot Floer chain complex $\boldsymbol{CFK}_{p,q,k}^{-}(Y,K)$]
\label{def:hfkpqk}Let $K$ be a knot in a closed, oriented $3$-manifold
$Y$, let $p,q$ be coprime positive integers, and let $k=0,\cdots,p-1$.
Let $E_{p,q,k}:=\bigoplus_{i=0}^{p-1}e_{i}\mathbb{F}\llbracket U\rrbracket$
be the free, rank $p$ module over $\mathbb{F}\llbracket U\rrbracket$
with basis $e_{0},\cdots,e_{p-1}$. Define $\phi_{p,q,k}:E_{p,q,k}\to E_{p,q,k}$
as the $\mathbb{F}\llbracket U\rrbracket$-linear map such that 
\[
\phi_{p,q,k}(e_{i})=\begin{cases}
Ue_{i+q} & {\rm if}\ i+q\in\{0,\cdots,k-1\}\mod p\\
e_{i+q} & {\rm if}\ i+q\in\{k,\cdots,p-1\}\mod p
\end{cases},
\]
where the indices of $e$ are interpreted modulo $p$. View $E_{p,q,k}$
as an $\mathbb{F}\llbracket W,Z\rrbracket$-module by letting $W$
act as $\phi_{p,q,k}$ and $Z$ act as $U\phi_{p,q,k}^{-1}$. Let
\[
\boldsymbol{CFK}_{p,q,k}^{-}(Y,K):={\cal CFK}^{-}(Y,K)\otimes_{\mathbb{F}\llbracket W,Z\rrbracket}E_{p,q,k},
\]
and denote its homology as $\boldsymbol{HFK}_{p,q,k}^{-}(Y,K)$.
\end{defn}

In the following sequence of remarks, we compare the modified knot
Floer chain complex $\boldsymbol{CFK}_{p,q,k}^{-}(Y,K)$ with the
Heegaard Floer chain complex with twisted coefficients and the $2k/p$-modified
knot Floer chain complex in various cases.
\begin{rem}[Case $k=0$]
\label{rem:hfkpqk}If $k=0$, then 
\[
\boldsymbol{CFK}_{p,q,0}^{-}(Y,K)\simeq\underline{\boldsymbol{CF}^{-}}(Y;\mathbb{F}[\mathbb{Z}/p\mathbb{Z}])
\]
where we twist by $[K]\in H_{1}(Y)$.
\end{rem}

\begin{rem}[Case $q=k$]
\label{rem:hfkpqk-1}If $q=k$, then we have
\begin{equation}
\boldsymbol{CFK}_{p,q,q}^{-}(Y,K)\simeq\frac{2q}{p}\boldsymbol{CFK}^{-}(Y,K).\label{eq:t-modified-qk}
\end{equation}
Indeed, view $D_{p,k}$ as a free, rank $p$ $\mathbb{F}\llbracket U\rrbracket$-module
with basis $d_{i}:=U^{i/p}$, $i=0,\cdots,p-1$. Then,
\[
Wd_{i}=\begin{cases}
Ud_{i+k} & {\rm if}\ i+k\in\{0,\cdots,k-1\}\mod p\\
d_{i+k} & {\rm if}\ i+k\in\{k,\cdots,p-1\}\mod p
\end{cases},
\]
where the indices of $d$ are interpreted modulo $p$. Hence, the
$\mathbb{F}\llbracket U\rrbracket$-linear map 
\[
D_{p,q}\to E_{p,q,q}:d_{i}\mapsto e_{i}
\]
is an $\mathbb{F}\llbracket W,Z\rrbracket$-module isomorphism, and
so Equation~(\ref{eq:t-modified-qk}) follows.
\end{rem}

\begin{rem}[Case $(p,k)=1$]
\label{rem:hfkpqk-2}More generally, let us consider the case where
$q$ and $k$ are not necessarily equal, but $p$ and $k$ are still
coprime. In this case, it is no longer true that $\boldsymbol{HFK}_{p,q,k}^{-}(Y,K)$
is isomorphic to $\frac{2k}{p}\boldsymbol{HFK}^{-}(Y,K)$, but we
can say something similar. As a concrete example, let us consider
the case $(p,q,k)=(5,1,2)$. Consider the $\mathbb{F}\llbracket W,Z\rrbracket$-subalgebra
\[
R_{5,1,2}:=\mathbb{F}\llbracket U^{2/5},U^{3/5}\rrbracket\subset D_{5,2}=\mathbb{F}\llbracket U^{1/5}\rrbracket.
\]
View $\mathbb{F}\llbracket U^{2/5},U^{3/5}\rrbracket$ as a free $\mathbb{F}\llbracket U\rrbracket$-module
with basis $1,U^{6/5},U^{2/5},U^{3/5},U^{4/5}$; then the $\mathbb{F}\llbracket U\rrbracket$-linear
map 
\[
R_{5,1,2}\to E_{5,1,2}:1\mapsto e_{1},U^{6/5}\mapsto e_{4},U^{2/5}\mapsto e_{2},U^{3/5}\mapsto e_{0},U^{4/5}\mapsto e_{3}
\]
is an $\mathbb{F}\llbracket W,Z\rrbracket$-module isomorphism. Hence,
$\boldsymbol{HFK}_{5,1,2}^{-}(Y,K)$ is the $4/5$-modified knot Floer
homology group defined over $\mathbb{F}\llbracket U^{2/5},U^{3/5}\rrbracket$
instead of $\mathbb{F}\llbracket U^{1/5}\rrbracket$.

In general, if $p$ and $k$ are coprime, then we show in Lemma~\ref{lem:interpret-local-system}
that there exists some $\mathbb{F}\llbracket W,Z\rrbracket$-submodule
$R_{p,q,k}\subset D_{p,k}=\mathbb{F}\llbracket U^{1/p}\rrbracket$
that is isomorphic to $E_{p,q,k}$ as an $\mathbb{F}\llbracket W,Z\rrbracket$-module.
Hence, $\boldsymbol{HFK}_{p,q,k}^{-}(Y,K)$ is the $2k/p$-modified
knot Floer homology defined over the module $R_{p,q,k}$ instead of
$\mathbb{F}\llbracket U^{1/p}\rrbracket$.
\end{rem}

\begin{rem}[Interpretation of $\boldsymbol{HFK}_{p,q,k}^{-}(Y,K)$]
\label{rem:hfkpqk-3}In view of the above remarks, we should think
of the modified knot Floer homology group $\boldsymbol{HFK}_{p,q,k}^{-}(Y,K)$
for general $(p,q,k)$ as a version of $2k/p$-modified knot Floer
homology with twisted coefficients.
\end{rem}

\subsection{Organization}

To prove Theorem~\ref{thm:rational-surgery-kgeneral}, we need to
consider Heegaard Floer homology with local systems whose monodromy
may involve negative powers of $U$, as in \cite{2308.15658,nahm2025unorientedskeinexacttriangle}.
In Section~\ref{sec:Heegaard-Floer-homology}, we review Heegaard
Floer homology in this setting. In particular, in Subsection~\ref{subsec:Weak-admissibility-and},
we review the additional subtleties that these local systems introduce;
we address these in Appendix~\ref{sec:Weak-admissibility-and}.

By standard arguments that we recall in Appendix~\ref{sec:Reduction-of-Theorem},
Theorems~\ref{thm:rational-surgery-k=00003D0}~and~\ref{thm:rational-surgery-kgeneral}
reduce to combinatorial local computations that involve polygon counting
maps; the goal of Section~\ref{sec:The-main-local} is to state the
main local computation (Theorem~\ref{thm:gen-local-comp}) precisely.
Roughly speaking, this says that we can find certain cycles such that
the triangle counting maps ($\mu_{2}$) vanish, and the quadrilateral
counting maps ($\mu_{3}$) are the identity modulo $U$ (recall Footnote~\ref{fn:Note-that-Equation()}).
In Subsection~\ref{subsec:The-quadrilateral-counting}, we find the
cycles modulo $U$ and show that the quadrilateral counting maps are
the identity modulo $U$. That we can find lifts of these cycles such
that the triangle counting maps vanish is more involved; we show this
in Section~\ref{sec:The-minus-version}.

Recall from Subsection~\ref{subsec:Idea-of-the} that there are three
main ideas, two for $k=0$ plus an additional idea for general $k$:
first, we define\emph{ standard basis elements }and restrict to the
subspaces spanned by them. Second, for $k=0$, we consider an involution
to show that we can make the triangle counting maps vanish. Third,
for general $k$, we define the local systems so that the triangle
counting maps are the same as the case $k=0$.

In Section~\ref{sec:Local-systems-and}, we explain the first main
idea. We define standard basis elements, and show that polygon counting
maps for attaching curves with local systems can be interpreted as
polygon counting maps in a cover, if we restrict to the subspaces
spanned by the standard basis elements.

In Section~\ref{sec:The-hat-version}, we motivate the definition
of the local systems (and hence explain some parts of the third main
idea), find the cycles modulo $U$, and prove that the quadrilateral
counting maps are the identity modulo $U$. For completeness, we also
prove that the triangle counting maps vanish modulo $U$ and hence
prove the main local computation for the hat version; the proof of
the main local computation for the minus version does not depend on
this.

In Section~\ref{sec:The-minus-version}, we complete the proof of
the main local computation. We explain the second and third main ideas
in Subsections~\ref{subsec:triangle-k=00003D0}~and~\ref{subsec:Triangle-maps-for},
respectively, and show for $k=0$ and general $k$, respectively,
that there exist cycles that reduce to the cycles from Section~\ref{sec:The-hat-version}
modulo $U$ and such that the triangle counting maps vanish. These
involve some technical lemmas which we prove in Appendix~\ref{sec:Checks-for-Subsubsection}.
These proofs are purely algebraic; in Subsection~\ref{subsec:Picture-proofs},
we give a ``picture interpretation'' of the involution of the second
main idea and a ``picture proof'' of the third main idea, which
we find more illuminating.

In Section~\ref{sec:Examples}, we describe Theorems~\ref{thm:rational-surgery-k=00003D0}~and~\ref{thm:rational-surgery-kgeneral}
for $(p,q)=(5,3)$ and $k=0,1,2$, for the unknot $O$ and the right
handed trefoil $T$ with the Seifert framing in $S^{3}$.

\subsection{\label{subsec:Conventions}Conventions}

Define $a\bmod n\in\{0,\cdots,n-1\}$ for $n\in\mathbb{Z}_{>0}$ and
$a\in\mathbb{Z}$ or $a\in\mathbb{Z}/n\mathbb{Z}$ as follows: if
$a\in\mathbb{Z}$, then $a\bmod n$ denotes the remainder of $a$
modulo $n$; if $a\in\mathbb{Z}/n\mathbb{Z}$, then $a\bmod n$ denotes
the representative of $a$ in $\{0,\cdots,n-1\}$. If $a$ is coprime
to $n$, then $a^{-1}\bmod n$ is the integer $r\in\{0,\cdots,n-1\}$
such that $ra\equiv1\bmod n$.

For Heegaard diagrams drawn on the plane, the almost complex structures
rotate in a counterclockwise direction. For instance, the triangles
in Figures~\ref{fig:zigzags53}~and~\ref{fig:tildet2-1-1} have
holomorphic representatives for the order red, blue, green.

We use the notation $]x,y[$ to denote the open interval between $x$
and $y$ to avoid confusion between $]x,y[$ and the point $(x,y)\in\mathbb{R}^{2}$.

\subsection{Acknowledgements}

Theorem~\ref{thm:rational-surgery-k=00003D0} was motivated by the
question of whether there exist rational surgery exact triangles in
Heegaard Floer homology, and Theorem~\ref{thm:rational-surgery-kgeneral}
was further motivated by Bhat's $2$-surgery exact triangle in instanton
Floer homology \cite{2311.04242} which led to \cite[Theorem~1.3]{nahm2025unorientedskeinexacttriangle}.
We thank Peter Ozsv\'{a}th for introducing this question to the author,
encouraging the author to work on it, explaining a lot of the arguments
in this paper, and for helpful discussions. We also thank Ian Zemke
for his continuous support, teaching the author a lot of previous
works, especially \cite{2308.15658}, and for helpful discussions.
We thank Deeparaj Bhat for sharing his work on the 2-surgery exact
triangle back in March 2023. We thank Jae Hee Lee, Robert Lipshitz,
Jacob Rasmussen, and Fan Ye for helpful discussions.

\section{\label{sec:Heegaard-Floer-homology}Heegaard Floer homology with
local systems}

In this paper, we consider attaching curves equipped with local systems
that a priori involve negative powers of $U$, which were first considered
recently by Zemke \cite{2308.15658} (compare \cite{nahm2025unorientedskeinexacttriangle}).
In this section, we review Heegaard Floer homology in this new setting.
\begin{defn}[{Heegaard diagram \cite[Definition 3.1]{MR2443092}}]
Given a closed, oriented, genus $g$ surface $\Sigma$, an \emph{attaching
curve} $\boldsymbol{\alpha}=\alpha^{1}\cup\cdots\cup\alpha^{g}$ is
the union of $g$ many pairwise disjoint, simple closed curves $\alpha^{1},\cdots,\alpha^{g}$
on $\Sigma$ whose images span a $g$-dimensional subspace of $H_{1}(\Sigma)$.
A\emph{ Heegaard diagram} is a collection $(\Sigma,\boldsymbol{\alpha}_{0},\cdots,\boldsymbol{\alpha}_{m},z)$
of such a surface $\Sigma$, attaching curves $\boldsymbol{\alpha}_{0},\cdots,\boldsymbol{\alpha}_{m}$,
and a point $z$ on $\Sigma$ away from the attaching curves, which
we call the \emph{basepoint}. We assume that attaching curves intersect
transversely, and that there are no triple intersections. 
\end{defn}

\begin{defn}[Two-chains]
Given a Heegaard diagram $(\Sigma,\boldsymbol{\alpha}_{0},\boldsymbol{\alpha}_{1},\cdots,\boldsymbol{\alpha}_{m},z)$,
an\emph{ elementary two-chain }is a connected component of $\Sigma\setminus\left(\boldsymbol{\alpha}_{0}\cup\cdots\cup\boldsymbol{\alpha}_{m}\right)$,
and a \emph{two-chain} is a formal sum of elementary two-chains.

Denote the \emph{$\boldsymbol{\alpha}_{i}$-boundary of ${\cal D}$}
as $\partial_{\boldsymbol{\alpha}_{i}}{\cal D}$ (which is a one-chain).
A \emph{cornerless two-chain} is a two-chain ${\cal D}$ such that
$\partial_{\boldsymbol{\alpha}_{i}}{\cal D}$ is a cycle for all $i$.

If ${\cal D}$ is a two-chain and $x$ is a point away from the attaching
curves, then the \emph{local multiplicity $n_{x}({\cal D})$ of ${\cal D}$
at $x$} is the coefficient of the elementary two-chain containing
$x$ in ${\cal D}$. By abuse of notation, if $x=\{x_{1},\cdots,x_{n}\}$
is a set of such points, then let $n_{x}({\cal D}):=n_{x_{1}}({\cal D})+\cdots+n_{x_{n}}({\cal D})$.

A two-chain is \emph{nonnegative} if all its local multiplicities
are nonnegative.
\end{defn}

\begin{defn}[Set of two-chains $D({\bf x}_{1},\cdots,{\bf x}_{k+1})$ with vertices
${\bf x}_{1},\cdots,{\bf x}_{k+1}$]
\label{def:Given-intersection-points}Let $k\ge1$ be an integer,
and let ${\bf x}_{j}\in\mathbb{T}_{\boldsymbol{\alpha}_{i_{j-1}}}\cap\mathbb{T}_{\boldsymbol{\alpha}_{i_{j}}}$
be intersection points for $j=1,\cdots,k+1$ (interpret the indices
of $i$ and ${\bf x}$ modulo $k+1$). Define the \emph{set of two-chains
$D({\bf x}_{1},\cdots,{\bf x}_{k+1})$ with vertices ${\bf x}_{1},\cdots,{\bf x}_{k+1}$}
as the set of two-chains ${\cal D}$ such that 
\[
\partial(\partial_{\boldsymbol{\alpha}_{i_{j}}}{\cal D})={\bf x}_{j+1}-{\bf x}_{j},\ \partial_{\boldsymbol{\alpha}_{i}}{\cal D}=0
\]
for $j=1,\cdots,k+1$ and $i\neq i_{1},\cdots,i_{k+1}$.
\end{defn}

\subsection{Local systems}

We equip attaching curves with local systems of the following form.
\begin{defn}[Local systems]
Given an attaching curve $\boldsymbol{\alpha}$ of a Heegaard diagram,
a \emph{local system on $\boldsymbol{\alpha}$} is a triple $(E,\phi,A)$
where (1) $E$ is a finite, free $\mathbb{F}\llbracket U\rrbracket$-module,
(2) $\phi$ is an element of ${\rm Hom}_{\mathbb{F}\llbracket U\rrbracket}(E,E)$
which we call the \emph{monodromy}, and (3) $A$ is an oriented arc
on the Heegaard surface such that it intersects $\boldsymbol{\alpha}$
transversely, at a single point.\footnote{Of course, it is sufficient to consider an oriented point $p$ on
$\boldsymbol{\alpha}$ instead of an oriented arc. We chose this definition
to simplify the exposition for standard translates (Subsection~\ref{subsec:Standard-translates}).} Write $\boldsymbol{\alpha}^{(E,\phi,A)}$, or simply $\boldsymbol{\alpha}^{(E,\phi)}$
or $\boldsymbol{\alpha}^{E}$, to signify that $\boldsymbol{\alpha}$
is equipped with the local system $(E,\phi,A)$.

If $E=\mathbb{F}\llbracket U\rrbracket$ and $\phi={\rm Id}_{\mathbb{F}\llbracket U\rrbracket}$,
then we say that $(E,\phi,A)$ is a\emph{ trivial local system}. In
this case, we may omit specifying the oriented arc $A$ and also simply
write $\boldsymbol{\alpha}$ instead of $\boldsymbol{\alpha}^{(E,\phi,A)}$.
\end{defn}

Roughly speaking, whenever the $\boldsymbol{\alpha}$-boundary $\partial_{\boldsymbol{\alpha}}{\cal D}$
of a two-chain ${\cal D}$ intersects $A$, we get a contribution
of $\phi$ or $\phi^{-1}$ depending on the sign of the intersection.
More precisely, we have Definitions~\ref{def:hf-chain-cpx}~and~\ref{def:Define-(higher)-composition}.

The cautious reader might have noticed that $\phi^{-1}$ is a priori
not defined; however, as we will explain in Subsection~\ref{subsec:Weak-admissibility-and}
and Appendix~\ref{sec:Weak-admissibility-and}, Equations~(\ref{eq:differential})~and~(\ref{eq:higher-multiplication})
are well-defined (after a minor modification) for all the Heegaard
diagrams and local systems in this paper.
\begin{defn}[Heegaard Floer chain complex]
\label{def:hf-chain-cpx}Given a Heegaard diagram $(\Sigma,\boldsymbol{\alpha},\boldsymbol{\beta},z)$
and local systems $(E_{\boldsymbol{\alpha}},\phi_{\boldsymbol{\alpha}},A_{\boldsymbol{\alpha}}),(E_{\boldsymbol{\beta}},\phi_{\boldsymbol{\beta}},A_{\boldsymbol{\beta}})$
on $\boldsymbol{\alpha},\boldsymbol{\beta}$, respectively, define
the $\mathbb{F}\llbracket U\rrbracket$-module $\boldsymbol{CF}^{-}(\boldsymbol{\alpha}^{E_{\boldsymbol{\alpha}}},\boldsymbol{\beta}^{E_{\boldsymbol{\beta}}})$
as a direct sum of copies of ${\rm Hom}_{\mathbb{F}\llbracket U\rrbracket}(E_{\boldsymbol{\alpha}},E_{\boldsymbol{\beta}})$:
\[
\boldsymbol{CF}^{-}(\boldsymbol{\alpha}^{E_{\boldsymbol{\alpha}}},\boldsymbol{\beta}^{E_{\boldsymbol{\beta}}}):=\bigoplus_{{\bf x}\in\mathbb{T}_{\boldsymbol{\alpha}}\cap\mathbb{T}_{\boldsymbol{\beta}}}{\rm Hom}_{\mathbb{F}\llbracket U\rrbracket}(E_{\boldsymbol{\alpha}},E_{\boldsymbol{\beta}}){\bf x},
\]
and define the differential $\partial$ as the $\mathbb{F}\llbracket U\rrbracket$-linear
map such that 
\begin{equation}
\partial(e{\bf x})=\sum_{{\cal D}\in D({\bf x},{\bf y}),\ \mu({\cal D})=1}\#{\cal M}({\cal D})U^{n_{z}({\cal D})}\rho({\cal D})(e){\bf y},\label{eq:differential}
\end{equation}
where $e\in{\rm Hom}_{\mathbb{F}\llbracket U\rrbracket}(E_{\boldsymbol{\alpha}},E_{\boldsymbol{\beta}})$
and $\rho({\cal D}):{\rm Hom}_{\mathbb{F}\llbracket U\rrbracket}(E_{\boldsymbol{\alpha}},E_{\boldsymbol{\beta}})\to{\rm Hom}_{\mathbb{F}\llbracket U\rrbracket}(E_{\boldsymbol{\alpha}},E_{\boldsymbol{\beta}})$
is the \emph{monodromy of ${\cal D}$}: 
\[
\rho({\cal D})=\phi_{\boldsymbol{\beta}}^{\#(A_{\boldsymbol{\beta}}\cap\partial_{\boldsymbol{\beta}}{\cal D})}\circ e\circ\phi_{\boldsymbol{\alpha}}^{\#(A_{\boldsymbol{\alpha}}\cap\partial_{\boldsymbol{\alpha}}{\cal D})},
\]
where the intersection numbers $\#(A_{\boldsymbol{\alpha}}\cap\partial_{\boldsymbol{\alpha}}{\cal D})$
and $\#(A_{\boldsymbol{\beta}}\cap\partial_{\boldsymbol{\beta}}{\cal D})$
are counted with sign.
\end{defn}

\begin{defn}[Polygon counting maps $\mu_{n}$]
\label{def:Define-(higher)-composition}Define (higher) composition
maps $\mu_{d}$ similarly: if $\boldsymbol{\alpha}_{0}^{(E_{0},\phi_{0},A_{0})},\cdots,\boldsymbol{\alpha}_{d}^{(E_{d},\phi_{d},A_{d})}$
are attaching curves with local systems, then the composition map
is given by 
\begin{multline}
\mu_{d}:\boldsymbol{CF}^{-}(\boldsymbol{\alpha}_{0}^{E_{0}},\boldsymbol{\alpha}_{1}^{E_{1}})\otimes\cdots\otimes\boldsymbol{CF}^{-}(\boldsymbol{\alpha}_{d-1}^{E_{d-1}},\boldsymbol{\alpha}_{d}^{E_{d}})\to\boldsymbol{CF}^{-}(\boldsymbol{\alpha}_{0}^{E_{0}},\boldsymbol{\alpha}_{d}^{E_{d}}):\\
e_{1}{\bf x}_{1}\otimes\cdots\otimes e_{d}{\bf x}_{d}\mapsto\sum_{{\cal D}\in D({\bf x}_{1},\cdots,{\bf x}_{d},{\bf y}),\ \mu({\cal D})=2-d}\#{\cal M}({\cal D})U^{n_{z}({\cal D})}\rho({\cal D})(e_{1}\otimes\cdots\otimes e_{d}){\bf y},\label{eq:higher-multiplication}
\end{multline}
where $\rho({\cal D})$ is the \emph{monodromy of ${\cal D}$} defined
as follows: 
\begin{multline*}
\rho({\cal D}):{\rm Hom}_{\mathbb{F}\llbracket U\rrbracket}(E_{0},E_{1})\otimes\cdots\otimes{\rm Hom}_{\mathbb{F}\llbracket U\rrbracket}(E_{d-1},E_{d})\to{\rm Hom}_{\mathbb{F}\llbracket U\rrbracket}(E_{0},E_{d}):\\
e_{1}\otimes\cdots\otimes e_{d}\mapsto\phi_{d}^{\#(A_{d}\cap\partial_{\boldsymbol{\alpha}_{d}}({\cal D}))}\circ e_{d}\circ\phi_{d-1}^{\#(A_{d-1}\cap\partial_{\boldsymbol{\alpha}_{d-1}}({\cal D}))}\circ e_{d-1}\circ\cdots\circ\phi_{1}^{\#(A_{1}\cap\partial_{\boldsymbol{\alpha}_{1}}({\cal D}))}\circ e_{1}\circ\phi_{0}^{\#(A_{0}\cap\partial_{\boldsymbol{\alpha}_{0}}({\cal D}))}.
\end{multline*}
\end{defn}

\subsection{\label{subsec:Weak-admissibility-and}Positivity and weak admissibility}

Given a Heegaard diagram $(\Sigma,\boldsymbol{\alpha}_{0},\cdots,\boldsymbol{\alpha}_{m},z)$
together with local systems $(E_{i},\phi_{i},A_{i})$ on $\boldsymbol{\alpha}_{i}$,
there are two technical conditions that need to be satisfied in order
for the attaching curves $\boldsymbol{\alpha}_{i}^{(E_{i},\phi_{i},A_{i})}$
with local systems to form an $A_{\infty}$-category, and for changing
the almost complex structure to induce an $A_{\infty}$-functor. In
this subsection, we consider a simple case of this in Lemma~\ref{lem:local-system-simple-lemma};
see Appendix~\ref{sec:Weak-admissibility-and} for a complete treatment.

First, note that we did not require $\phi_{i}\in{\rm Hom}_{\mathbb{F}\llbracket U\rrbracket}(E_{i},E_{i})$
to be invertible; in fact, for some of the $\phi_{i}$'s that we consider
in this paper, their inverses will a priori only be elements of ${\rm Hom}_{\mathbb{F}\llbracket U\rrbracket}(U^{-1}E_{i},U^{-1}E_{i})$.
Hence, roughly speaking, we have to check that negative powers of
$U$ do not appear in the $\mu_{d}$'s and the maps induced by changing
the almost complex structure, for all the Heegaard diagrams and local
systems that we consider in this paper. We call this \emph{positivity}.

Second, the sum (\ref{eq:higher-multiplication}) may a priori be
infinite, and so we need to check that it is well-defined, and also
that the map induced by changing the almost complex structure is well-defined.
We call this \emph{weak admissibility}, following the standard convention.
It will turn out that for the local systems that we consider in this
paper, the usual definition of weak admissibility (Definition~\ref{def:weakly-admissible})
works.
\begin{defn}[Weak admissibility]
\label{def:weakly-admissible}A Heegaard diagram $(\Sigma,\boldsymbol{\alpha}_{0},\cdots,\boldsymbol{\alpha}_{m},z)$
is \emph{weakly admissible} if every cornerless two-chain ${\cal D}$
such that $n_{z}({\cal D})=0$ has both positive and negative local
multiplicities.
\end{defn}

\begin{figure}[h]
\begin{centering}
\includegraphics[scale=1.5]{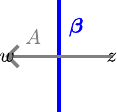}
\par\end{centering}
\caption{\label{fig:local-system-simple-lemma}A local diagram for the Heegaard
diagram near $A$ for Lemma~\ref{lem:local-system-simple-lemma}.
If a path $\gamma$ in $\boldsymbol{\beta}$ goes from top to bottom
in this diagram, then $\#(A\cap\gamma)=1$.}
\end{figure}

If there are only two attaching curves and one of them is equipped
with the trivial local system, then Lemma~\ref{lem:local-system-simple-lemma}
gives a simple condition that guarantees positivity and weak admissibility.
\begin{lem}[Chain complex is well-defined, simple case]
\label{lem:local-system-simple-lemma}Let $(\Sigma,\boldsymbol{\alpha},\boldsymbol{\beta},z)$
be a weakly admissible Heegaard diagram and let $(E,\phi,A)$ be a
local system on $\boldsymbol{\beta}$. If (1) $U\phi^{-1}\in{\rm Hom}_{\mathbb{F}\llbracket U\rrbracket}(E,E)$,
(2) the oriented arc $A$ is disjoint from $\boldsymbol{\alpha}$,
and (3) the initial point of $A$ is $z$ (see Figure~\ref{fig:local-system-simple-lemma}),
then $(\boldsymbol{CF}^{-}(\boldsymbol{\alpha},\boldsymbol{\beta}^{E}),\partial)$
is a well-defined chain complex. Moreover, if we let $w$ be the terminal
point of $A$, then 
\[
\boldsymbol{CF}^{-}(\boldsymbol{\alpha},\boldsymbol{\beta}^{E})\cong{\cal CFK}^{-}(\boldsymbol{\alpha},\boldsymbol{\beta})\otimes_{\mathbb{F}\llbracket W,Z\rrbracket}E,
\]
where ${\cal CFK}^{-}(\boldsymbol{\alpha},\boldsymbol{\beta})$ is
the knot Floer chain complex (Definition~\ref{def:knotfloer}) defined
by the doubly pointed Heegaard diagram $(\Sigma,\boldsymbol{\alpha},\boldsymbol{\beta},w,z)$,
and $E$ is viewed as an $\mathbb{F}\llbracket W,Z\rrbracket$-module
by letting $W$ act as $\phi$ and $Z$ act as $U\phi^{-1}$.
\end{lem}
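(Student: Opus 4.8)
The plan is to set up a correspondence between the two-chains (domains) that contribute to the differential on $\boldsymbol{CF}^{-}(\boldsymbol{\alpha},\boldsymbol{\beta}^{E})$ and the two-chains that contribute to the differential on ${\cal CFK}^{-}(\boldsymbol{\alpha},\boldsymbol{\beta})$, and to check that, under the stated hypotheses, the weights match. First I would recall that for a genus $g$ Heegaard diagram with two attaching curves $\boldsymbol{\alpha},\boldsymbol{\beta}$, a two-chain ${\cal D}\in D({\bf x},{\bf y})$ with $\mu({\cal D})=1$ that admits a holomorphic representative is automatically nonnegative, and that its $\boldsymbol{\beta}$-boundary $\partial_{\boldsymbol{\beta}}{\cal D}$ is a path from (the $\boldsymbol{\beta}$-component of) ${\bf x}$ to that of ${\bf y}$. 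The key combinatorial input is that, because $A$ is an \emph{oriented} arc running from $z$ to $w$ disjoint from $\boldsymbol{\alpha}$, and because ${\cal D}$ is nonnegative, the signed count $\#(A\cap\partial_{\boldsymbol{\beta}}{\cal D})$ equals $n_{w}({\cal D})-n_{z}({\cal D})$: indeed, each time $\partial_{\boldsymbol{\beta}}{\cal D}$ crosses $A$ it changes the local multiplicity of ${\cal D}$ by $\pm1$ on the two sides of $\boldsymbol{\beta}$ along $A$, so the algebraic crossing number telescopes to the difference of multiplicities at the two endpoints of $A$. This is exactly the local picture of Figure~\ref{fig:local-system-simple-lemma}.

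Granting this, the monodromy weight $\rho({\cal D})$ attached to ${\cal D}$ in Equation~(\ref{eq:differential}) is $\phi^{\#(A\cap\partial_{\boldsymbol{\beta}}{\cal D})}=\phi^{\,n_{w}({\cal D})-n_{z}({\cal D})}$, and the $U$-power is $U^{n_{z}({\cal D})}$, so the total weight is $U^{n_{z}({\cal D})}\phi^{\,n_{w}({\cal D})-n_{z}({\cal D})}=(U\phi^{-1})^{n_{z}({\cal D})}\phi^{\,n_{w}({\cal D})}$. Under the identification of $E$ as an $\mathbb{F}\llbracket W,Z\rrbracket$-module with $W$ acting by $\phi$ and $Z$ acting by $U\phi^{-1}$, this is precisely the action of $W^{n_{w}({\cal D})}Z^{n_{z}({\cal D})}$, which is the weight appearing in the knot Floer differential of Definition~\ref{def:knotfloer} for the diagram $(\Sigma,\boldsymbol{\alpha},\boldsymbol{\beta},w,z)$. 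Hypothesis (1), $U\phi^{-1}\in{\rm Hom}_{\mathbb{F}\llbracket U\rrbracket}(E,E)$, guarantees that $Z$ acts by an honest (non-negatively-$U$-graded) endomorphism; combined with $n_{z}({\cal D})\ge 0$ and $n_{w}({\cal D})\ge 0$ for nonnegative ${\cal D}$, this is exactly \emph{positivity}: no negative powers of $U$ can appear. Assembling these identifications term by term over all ${\bf y}$ and all ${\cal D}$ gives the claimed isomorphism $\boldsymbol{CF}^{-}(\boldsymbol{\alpha},\boldsymbol{\beta}^{E})\cong{\cal CFK}^{-}(\boldsymbol{\alpha},\boldsymbol{\beta})\otimes_{\mathbb{F}\llbracket W,Z\rrbracket}E$ of $\mathbb{F}\llbracket U\rrbracket$-modules intertwining the differentials, and in particular shows $\partial^{2}=0$, so that the left-hand side is a well-defined chain complex.

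For \emph{well-definedness} of the sum (finiteness), I would invoke weak admissibility of $(\Sigma,\boldsymbol{\alpha},\boldsymbol{\beta},z)$: the standard argument shows that for fixed ${\bf x},{\bf y}$ there are only finitely many nonnegative ${\cal D}\in D({\bf x},{\bf y})$ with $n_{z}({\cal D})=0$, and the $U^{n_{z}({\cal D})}$ (equivalently $W^{n_{w}}Z^{n_{z}}$) weighting ensures that the total sum converges in the $U$-adic (equivalently $(W,Z)$-adic) topology; since $Z$ acts on $E$ by $U\phi^{-1}$ which is $U$ times an endomorphism, $U$-adic convergence on $\mathbb{F}\llbracket U\rrbracket$ is not disturbed by tensoring with $E$. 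The one point requiring a little care — and the step I expect to be the main obstacle — is the crossing-number identity $\#(A\cap\partial_{\boldsymbol{\beta}}{\cal D})=n_{w}({\cal D})-n_{z}({\cal D})$: one must be careful that $A$ is disjoint from $\boldsymbol{\alpha}$ (hypothesis (2)) so that $\partial_{\boldsymbol{\alpha}}{\cal D}$ never meets $A$ and only the $\boldsymbol{\beta}$-boundary contributes, and that the endpoints $z,w$ of $A$ lie in the interiors of elementary two-chains so the telescoping is literally the difference of the two local multiplicities; for this one reduces to the local model of Figure~\ref{fig:local-system-simple-lemma} and argues that each transverse intersection of $\partial_{\boldsymbol{\beta}}{\cal D}$ with $A$ contributes $\pm 1$ to both sides consistently. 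With that identity in hand the rest is bookkeeping.
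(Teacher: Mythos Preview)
Your proposal is correct and follows essentially the same route as the paper: the paper's proof also hinges on the identity $\#(A\cap\partial_{\boldsymbol{\beta}}{\cal D})=n_{w}({\cal D})-n_{z}({\cal D})$ (obtained, in the paper's phrasing, ``by considering the ends of $A\cap{\cal D}$''), rewrites the weight as $(U\phi^{-1})^{n_{z}({\cal D})}\phi^{n_{w}({\cal D})}$, and concludes that the obvious module isomorphism intertwines the differentials. Your discussion of positivity and finiteness is somewhat more explicit than the paper's, but the content is the same.
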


\begin{proof}
First, the weak admissibility condition ensures that the differential
on ${\cal CFK}^{-}(\boldsymbol{\alpha},\boldsymbol{\beta})$ is well-defined.
Both the underlying modules of $\boldsymbol{CF}^{-}(\boldsymbol{\alpha},\boldsymbol{\beta}^{E})$
and ${\cal CFK}^{-}(\boldsymbol{\alpha},\boldsymbol{\beta})\otimes_{\mathbb{F}\llbracket W,Z\rrbracket}E$
are freely generated over $E$ by intersection points $\mathbb{T}_{\boldsymbol{\alpha}}\cap\mathbb{T}_{\boldsymbol{\beta}}$;
this gives a module isomorphism, say $\Psi$, between them. Hence,
for any ${\bf x},{\bf y}\in\mathbb{T}_{\boldsymbol{\alpha}}\cap\mathbb{T}_{\boldsymbol{\beta}}$
and two-chain ${\cal D}\in D({\bf x},{\bf y})$, by considering the
ends of $A\cap{\cal D}$, we obtain 
\[
\#(A\cap\partial_{\boldsymbol{\beta}}{\cal D})=n_{w}({\cal D})-n_{z}({\cal D}),
\]
and so 
\[
U^{n_{z}({\cal D})}\phi^{\#(A\cap\partial_{\boldsymbol{\beta}}{\cal D})}=(U\phi^{-1})^{n_{z}({\cal D})}\phi^{n_{w}({\cal D})}.
\]
This shows that the module isomorphism $\Psi$ is a chain map and
that in particular, $\partial$ on $\boldsymbol{CF}^{-}(\boldsymbol{\alpha},\boldsymbol{\beta}^{E})$
is a well-defined differential.
\end{proof}

\subsection{\label{subsec:Standard-translates}Standard translates}

In Heegaard Floer homology, instead of defining $\boldsymbol{HF}^{-}(\boldsymbol{\alpha},\boldsymbol{\alpha})$
and the identity in $\boldsymbol{HF}^{-}(\boldsymbol{\alpha},\boldsymbol{\alpha})$,
we consider a \emph{standard translate} $\boldsymbol{\alpha}'$ of
$\boldsymbol{\alpha}$ and define $\Theta_{\boldsymbol{\alpha}}^{+}\in\boldsymbol{CF}^{-}(\boldsymbol{\alpha},\boldsymbol{\alpha}')$
that serves as the identity. In this subsection, we review this and
set up notations for attaching curves equipped with local systems.

\begin{figure}[h]
\begin{centering}
\includegraphics[scale=1.5]{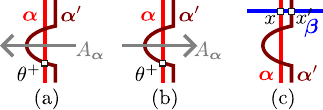}
\par\end{centering}
\caption{\label{fig:standard-translate}(a), (b): Two cases for a standard
translate $\boldsymbol{\alpha}'$ of $\boldsymbol{\alpha}$, near
the oriented arc $A_{\boldsymbol{\alpha}}$. The basepoint is not
drawn. (c): $x'$ is the nearest point to $x$.}
\end{figure}

Let $\boldsymbol{\alpha}=\alpha^{1}\cup\cdots\cup\alpha^{g}$ be an
attaching curve in some Heegaard diagram. Another attaching curve
$\boldsymbol{\alpha}'={\alpha'}^{1}\cup\cdots\cup{\alpha'}^{g}$ is
a \emph{standard translate} of $\boldsymbol{\alpha}$ if each ${\alpha'}^{i}$
is given by pushing off $\alpha^{i}$ slightly and then modifying
in a small region so that $|\alpha^{i}\cap{\alpha'}^{j}|=2\delta_{ij}$.
Assume that these small regions are disjoint from all the attaching
curves that are not $\boldsymbol{\alpha}$ or standard translates
of $\boldsymbol{\alpha}$. Define $\Theta_{\boldsymbol{\alpha}}^{+}\in\mathbb{T}_{\boldsymbol{\alpha}}\cap\mathbb{T}_{\boldsymbol{\alpha}'}$
as the intersection point in the top homological grading of $\boldsymbol{CF}^{-}(\boldsymbol{\alpha},\boldsymbol{\alpha}')$,
i.e.\ it is given by the intersection points $\theta^{+}$ in Figure~\ref{fig:standard-translate}~(a)~and~(b).

If $\boldsymbol{\alpha}$ is equipped with a local system $(E_{\boldsymbol{\alpha}},\phi_{\boldsymbol{\alpha}},A_{\boldsymbol{\alpha}})$,
then further assume that the Heegaard diagram looks like Figure~\ref{fig:standard-translate}~(a)~or~(b)
near the oriented arc $A_{\boldsymbol{\alpha}}$ (in fact, in this
paper, it is possible to only work with the case Figure~\ref{fig:standard-translate}~(a)).
In particular, $A_{\boldsymbol{\alpha}}$ is disjoint from all the
other attaching curves $\boldsymbol{\beta}\neq\boldsymbol{\alpha},\boldsymbol{\alpha}'$.

Equip $\boldsymbol{\alpha}'$ with the same local system $(E_{\boldsymbol{\alpha}},\phi_{\boldsymbol{\alpha}},A_{\boldsymbol{\alpha}})$.
Then,
\[
{\rm Id}_{E_{\boldsymbol{\alpha}}}\Theta_{\boldsymbol{\alpha}}^{+}\in\boldsymbol{CF}^{-}(\boldsymbol{\alpha}^{E_{\boldsymbol{\alpha}}},{\boldsymbol{\alpha}'}^{E_{\boldsymbol{\alpha}}})
\]
is a cycle, and for any other attaching curve $\boldsymbol{\beta}$
with local system $(E_{\boldsymbol{\beta}},\phi_{\boldsymbol{\beta}},A_{\boldsymbol{\beta}})$,
\[
\mu_{2}(-\otimes{\rm Id}_{E_{\boldsymbol{\alpha}}}\Theta_{\boldsymbol{\alpha}}^{+}):\boldsymbol{CF}^{-}(\boldsymbol{\beta}^{E_{\boldsymbol{\beta}}},{\boldsymbol{\alpha}}^{E_{\boldsymbol{\alpha}}})\to\boldsymbol{CF}^{-}(\boldsymbol{\beta}^{E_{\boldsymbol{\beta}}},{\boldsymbol{\alpha}'}^{E_{\boldsymbol{\alpha}}})
\]
is a quasi-isomorphism by the same argument as \cite[Proposition~9.8]{MR2113019}.

A closely related module isomorphism is the \emph{nearest point map}
\[
\Phi:\boldsymbol{CF}^{-}(\boldsymbol{\beta}^{E_{\boldsymbol{\beta}}},{\boldsymbol{\alpha}}^{E_{\boldsymbol{\alpha}}})\to\boldsymbol{CF}^{-}(\boldsymbol{\beta}^{E_{\boldsymbol{\beta}}},{\boldsymbol{\alpha}'}^{E_{\boldsymbol{\alpha}}}),
\]
defined as follows: for each ${\bf x}=\{x^{1},\cdots,x^{g}\}\in\mathbb{T}_{\boldsymbol{\beta}}\cap\mathbb{T}_{\boldsymbol{\alpha}}$,
define\emph{ its nearest point }${\bf x}'=\{{x^{1}}',\cdots,{x^{g}}'\}\in\mathbb{T}_{\boldsymbol{\beta}}\cap\mathbb{T}_{\boldsymbol{\alpha}'}$
by letting each ${x^{i}}'\in\boldsymbol{\beta}\cap\boldsymbol{\alpha}'$
be the nearest point to $x^{i}\in\boldsymbol{\beta}\cap\boldsymbol{\alpha}$
(see Figure~\ref{fig:standard-translate}~(c)). Similarly, if $\boldsymbol{\beta}'$
is a standard translate of $\boldsymbol{\beta}$, define 
\[
\Phi:\boldsymbol{CF}^{-}({\boldsymbol{\beta}}^{E_{\boldsymbol{\beta}}},{\boldsymbol{\alpha}}^{E_{\boldsymbol{\alpha}}})\to\boldsymbol{CF}^{-}({\boldsymbol{\beta}'}^{E_{\boldsymbol{\beta}}},{\boldsymbol{\alpha}}^{E_{\boldsymbol{\alpha}}}),\ \Phi:\boldsymbol{CF}^{-}({\boldsymbol{\beta}}^{E_{\boldsymbol{\beta}}},{\boldsymbol{\alpha}}^{E_{\boldsymbol{\alpha}}})\to\boldsymbol{CF}^{-}({\boldsymbol{\beta}'}^{E_{\boldsymbol{\beta}}},{\boldsymbol{\alpha}'}^{E_{\boldsymbol{\alpha}}})
\]
similarly. Given an element $\psi\in\boldsymbol{CF}^{-}(\boldsymbol{\beta}^{E_{\boldsymbol{\beta}}},{\boldsymbol{\alpha}}^{E_{\boldsymbol{\alpha}}})$,
we denote $\Phi(\psi)$ as $\psi'$ or even $\psi$ by abuse of notation.
Note that $\Phi$ is in general not a chain map.

\section{\label{sec:The-main-local}The main local computation}

By a standard argument involving the triangle detection lemma \cite[Lemma 4.2]{MR2141852}
and a neck-stretching argument, constructing surgery exact triangles
(Theorem~\ref{thm:rational-surgery-kgeneral}) reduces to a combinatorial
local computation in a genus $1$ Heegaard diagram. The main theorem
of this paper is this combinatorial local computation, and the goal
of this section is to state this precisely (Theorem~\ref{thm:gen-local-comp}).
We recall how Theorem~\ref{thm:rational-surgery-kgeneral} follows
from Theorem~\ref{thm:gen-local-comp} in Subsection~\ref{subsec:inter-local}
and Appendix~\ref{sec:Reduction-of-Theorem}.

Note that the indices of the variables should often be interpreted
modulo $p$ or $q$ depending on the context, and some variables have
indices in the index set $I$ (Definition~\ref{def:index-set}).
As a rule of thumb, $i\in\mathbb{Z}/p\mathbb{Z}$ and $j\in\mathbb{Z}/q\mathbb{Z}$.
\begin{defn}
\label{def:index-set}The \emph{index set} $I$ is the set of $(i,\ell)$
for $i\in\mathbb{Z}/p\mathbb{Z}$ such that $c_{i}\neq0$, and $\ell\in\mathbb{Z}/c_{i}\mathbb{Z}$.
\end{defn}

\subsection{\label{subsec:The-setup}The genus $1$ Heegaard diagram}

\begin{figure}[h]
\begin{centering}
\raisebox{-0.5\height}{\includegraphics{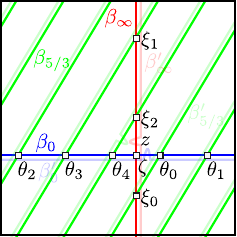}}\qquad{}\raisebox{-0.5\height}{\includegraphics[scale=2.5]{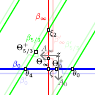}}
\par\end{centering}
\caption{\label{fig:diagramt2}Curves and intersection points on $\mathbb{T}^{2}$
for $(p,q)=(5,3)$. Right: zoomed in to show the intersection points
$\Theta_{0}^{+},\Theta_{r}^{+},\Theta_{\infty}^{+}$ and the oriented
arcs $A_{\infty},A_{0}$. If a path $\gamma$ in $\beta_{\infty}$
goes from top to bottom, then $\#(A_{\infty}\cap\gamma)=1$, and if
a path $\gamma$ in $\beta_{0}$ goes from left to right, then $\#(A_{0}\cap\gamma)=1$.}
\end{figure}

Let $p,q\in\mathbb{Z}_{>0}$ be coprime, and let $r=p/q$. Let us
first describe the genus $1$ Heegaard diagram that we work with (see
Figure~\ref{fig:diagramt2}); here, the Heegaard surface is $\mathbb{T}^{2}=\mathbb{R}^{2}/\mathbb{Z}^{2}$.
Note that this genus $1$ Heegaard diagram is weakly admissible.

\subsubsection*{The attaching curves $\beta_{0},\beta_{r},\beta_{\infty}$}

The circle $\beta_{0}$ is the projection of the $x$-axis of $\mathbb{R}^{2}$
to $\mathbb{T}^{2}$, and $\beta_{\infty}$ is the projection of the
$y$-axis of $\mathbb{R}^{2}$ to $\mathbb{T}^{2}$. Consider the
line in $\mathbb{R}^{2}$ with slope $r$ that intersects the $x$-axis
at the point $(1/(2p),0)$. The circle $\beta_{r}$ is the projection
of this line to $\mathbb{T}^{2}$.

\subsubsection*{Standard translates}

Let $\beta_{0}',\beta_{r}',\beta_{\infty}'$ be the standard translate
of $\beta_{0},\beta_{r},\beta_{\infty}$, respectively, as in Figure~\ref{fig:diagramt2},
and let $\Theta_{0}^{+},\Theta_{r}^{+},\Theta_{\infty}^{+}$ be the
intersection points in $\beta_{0}\cap\beta_{0}'$, $\beta_{r}\cap\beta_{r}'$,
$\beta_{\infty}\cap\beta_{\infty}'$, respectively, in the top homological
grading. 

Note that we are free to swap $\beta_{0}$ and $\beta_{0}'$, $\beta_{r}$
and $\beta_{r}'$, and $\beta_{\infty}$ and $\beta_{\infty}'$: the
position of $\Theta_{0}^{+},\Theta_{r}^{+},\Theta_{\infty}^{+}$ in
the Heegaard diagram will change, but the statement and the proof
of the main local computation, Theorem~\ref{thm:gen-local-comp},
are unaffected.

\subsubsection*{The intersection points $\theta_{i},\xi_{j},\zeta$}

Let $\overline{\pi}:\mathbb{R}^{2}\to\mathbb{T}^{2}=\mathbb{R}^{2}/\mathbb{Z}^{2}$
be the quotient map. In $\mathbb{R}^{2}$, 
\[
\overline{\pi}^{-1}(\beta_{r})\cap([0,1]\times\{0\})
\]
consists of $p$ points. Denote their images in $\mathbb{T}^{2}$
as $\theta_{0},\cdots,\theta_{p-1}$, from left to right. Similarly,
\[
\overline{\pi}^{-1}(\beta_{r})\cap(\{0\}\times[-1,0])
\]
consists of $q$ points. Denote their images in $\mathbb{T}^{2}$
as $\xi_{0},\cdots,\xi_{q-1}$, from top to bottom. Finally, denote
the unique intersection point $\beta_{\infty}\cap\beta_{0}$ as $\zeta$.

The indices of $\theta$ are interpreted modulo $p$, and the indices
of $\xi$ are interpreted modulo $q$.

\subsubsection*{The basepoint $z$}

The basepoint $z$ is at $(\varepsilon,\varepsilon)$ for some $\varepsilon>0$
sufficiently small such that no attaching curves except $\beta_{\infty}$
and $\beta_{\infty}'$ intersect $[-\varepsilon,\varepsilon]\times\{\varepsilon\}$.

\subsubsection*{The oriented arcs $A_{\infty}$ and $A_{0}$}

The oriented arc $A_{\infty}$ is $[-\varepsilon,\varepsilon]\times\{\varepsilon\}$
and is oriented from right to left. The oriented arc $A_{0}$ is $\{\varepsilon\}\times[-\varepsilon,\varepsilon]$
and is oriented from top to bottom.

\subsection{\label{subsec:Local-systems}Local systems}

Let us fix coprime positive integers $p,q$. In this paper, we will
consider particular kinds of local systems on our attaching curves.
They are determined by two sequences: $u=(u_{0},\cdots,u_{p-1})$
where $u_{i}\in\{1,U\}$, and $c=(c_{0},\cdots,c_{p-1})$ where $c_{i}\in\mathbb{Z}_{\ge0}$.
The indices of $u$ and $c$ are interpreted modulo $p$.

Given such $u$ and $c$, we consider the following local systems.
\begin{enumerate}
\item Equip $\beta_{0}$ with $(E_{0},\phi_{0},A_{0})$, where 
\[
E_{0}:=\bigoplus_{i=0}^{p-1}\bigoplus_{\ell=0}^{c_{i}-1}x_{i,\ell}\mathbb{F}\llbracket U\rrbracket,\ \phi_{0}=\sum_{i=0}^{p-1}\sum_{\ell=0}^{c_{i}-1}x_{i,\ell+1}x_{i,\ell}^{\ast}:E_{0}\to E_{0},
\]
i.e.\ $E_{0}$ is a free, rank $\sum_{i=0}^{p-1}c_{i}$ $\mathbb{F}\llbracket U\rrbracket$-module
and $\phi_{0}:x_{i,\ell}\mapsto x_{i,\ell+1}$. Here, the indices
of $x$ lie in the index set $I$ from Definition~\ref{def:index-set}.
\item Equip $\beta_{r}$ with the trivial local system.
\item Equip $\beta_{\infty}$ with $(E_{\infty}^{u},\phi_{\infty}^{u},A_{\infty})$,
where $E_{\infty}^{u}$ is a free, rank $p$ $\mathbb{F}\llbracket U\rrbracket$-module
$E_{\infty}^{u}:=\bigoplus_{i=0}^{p-1}y_{i}\mathbb{F}\llbracket U\rrbracket$
and the monodromy is
\[
\phi_{\infty}^{u}=\sum_{i=0}^{p-1}u_{i}y_{i+1}y_{i}^{\ast}:E_{\infty}^{u}\to E_{\infty}^{u}.
\]
Here, the indices of $y$ are interpreted modulo $p$.
\end{enumerate}

\subsection{The statement}

Let $p,q\in\mathbb{Z}_{>0}$ be coprime, $r=p/q$, and let $k=0,\cdots,p-1$.
\begin{defn}
\label{def:si}Let $s_{0},\cdots,s_{p-1}$ be the unique weakly decreasing
sequence with values $\left\lceil q/p\right\rceil ,\left\lfloor q/p\right\rfloor $,
such that $\sum_{i=0}^{p-1}s_{i}=q$, i.e.\ if we let $q=up+v$ for
$u,v\in\mathbb{Z}$, $v\in\{0,\cdots,p-1\}$, then 
\[
s_{0}=\cdots=s_{v-1}=u+1,\ s_{v}=\cdots=s_{p-1}=u.
\]
\end{defn}

We consider the local systems on the attaching curves from Subsection~\ref{subsec:Local-systems},
where $u$ and $c$ are as follows. See Section~\ref{sec:The-hat-version}
for the motivation behind this definition. Recall the definitions
of $a\bmod n,a^{-1}\bmod n\in\{0,\cdots,n-1\}$ from Subsection~\ref{subsec:Conventions}.
\begin{defn}[Local systems for Theorem~\ref{thm:gen-local-comp}]
\label{def:actual-local-system}Define sequences $u=(u_{0},\cdots,u_{p-1})$
and $c=(c_{0},\cdots,c_{p-1})$ as follows:
\begin{equation}
u_{(-iq^{-1}-1)\bmod p}=\begin{cases}
U & {\rm if}\ i=0,\cdots,k-1\\
1 & {\rm otherwise}
\end{cases},\ c_{i}=s_{(i-k)\bmod p}.\label{eq:uc}
\end{equation}
Let $(E_{0}^{k},\phi_{0}^{k},A_{0})$ and $(E_{\infty}^{k},\phi_{\infty}^{k},A_{\infty})$
be the corresponding local systems on $\beta_{0}$ and $\beta_{\infty}$,
respectively, defined as in Subsection~\ref{subsec:Local-systems}.
We suppress $p$ and $q$ from the notation for simplicity.
\end{defn}

The following is our main theorem, which we prove in Sections~\ref{sec:Local-systems-and},~\ref{sec:The-hat-version},~and~\ref{sec:The-minus-version}.
See Sections~\ref{sec:Local-systems-and}~and~\ref{sec:The-hat-version}
for the motivation behind the definitions of the cycles $\psi_{0r}^{k},\psi_{r\infty}^{k},\psi_{\infty0}^{k}$
and Equation~(\ref{eq:uvw-modu}), respectively. Note that the indices
of $s$, $y$, and $\theta$ are interpreted modulo $p$, the indices
of $v$ and $\xi$ are interpreted modulo $q$, and the indices of
$u$, $w$, and $x$ lie in the index set $I$ (Definition~\ref{def:index-set}).
\begin{thm}[Main local computation]
\label{thm:gen-local-comp}Let $p$ and $q$ be coprime positive
integers. There exist elements $u_{i,\ell},v_{j},w_{i,\ell}$ in $\mathbb{F}\llbracket U\rrbracket$
for $i=0,\cdots,p-1$, $\ell=0,\cdots,s_{i}-1$, $j=0,\cdots,q-1$,
such that 
\begin{equation}
u_{i,\ell},w_{i,\ell}\equiv\begin{cases}
1 & {\rm if}\ \ell=0\\
0 & {\rm otherwise}
\end{cases}\mod U,\ v_{j}\equiv\begin{cases}
1 & {\rm if}\ j\equiv-1,-2,\cdots,-\min(p,q)\mod q\\
0 & {\rm otherwise}
\end{cases}\mod U,\label{eq:uvw-modu}
\end{equation}
and if we define
\begin{gather*}
\psi_{0r}^{k}=\sum_{i=0}^{p-k-1}\sum_{\ell=0}^{s_{i}-1}u_{i,\ell}x_{i+k,\ell}^{\ast}\theta_{i+k}+\sum_{p-k}^{p-1}\sum_{\ell=0}^{s_{i}-1}u_{i,\ell}x_{i+k,\ell+1}^{\ast}\theta_{i+k}\in\boldsymbol{CF}^{-}(\beta_{0}^{E_{0}^{k}},\beta_{r}),\\
\psi_{r\infty}^{k}=\sum_{j=0}^{q-1}v_{j}y_{-((j+k)\bmod q)q^{-1}\bmod p}\xi_{(j+k)\bmod q}\in\boldsymbol{CF}^{-}(\beta_{r},\beta_{\infty}^{E_{\infty}^{k}}),\\
\psi_{\infty0}^{k}=\sum_{i=0}^{p-1}\sum_{\ell=0}^{s_{i}-1}w_{i,\ell}x_{i+k,\ell}y_{-(i+k)q^{-1}\bmod p}^{\ast}\zeta\in\boldsymbol{CF}^{-}(\beta_{\infty}^{E_{\infty}^{k}},\beta_{0}^{E_{0}^{k}}),
\end{gather*}
then the following hold:
\begin{enumerate}
\item \label{enu:bigon}The elements $\psi_{0r}^{k}$, $\psi_{r\infty}^{k}$,
and $\psi_{\infty0}^{k}$ are cycles.
\item \label{enu:triangle}The triangle counting maps are zero: 
\[
\mu_{2}(\psi_{\infty0}^{k}\otimes\psi_{0r}^{k})=0,\ \mu_{2}(\psi_{0r}^{k}\otimes\psi_{r\infty}^{k})=0,\ \mu_{2}(\psi_{r\infty}^{k}\otimes\psi_{\infty0}^{k})=0.
\]
\item \label{enu:quadrilateral}The quadrilateral counting maps are the
``identity'' modulo $U$:
\begin{gather*}
\mu_{3}(\psi_{0r}^{k}\otimes\psi_{r\infty}^{k}\otimes{\psi_{\infty0}^{k}}')\equiv{\rm Id}_{E_{0}^{k}}\Theta_{0}^{+}\mod U,\\
\mu_{3}(\psi_{r\infty}^{k}\otimes\psi_{\infty0}^{k}\otimes{\psi_{0r}^{k}}')\equiv\Theta_{r}^{+}\mod U,\\
\mu_{3}(\psi_{\infty0}^{k}\otimes\psi_{0r}^{k}\otimes{\psi_{r\infty}^{k}}')\equiv{\rm Id}_{E_{\infty}^{k}}\Theta_{\infty}^{+}\mod U.
\end{gather*}
Here, $\psi_{\infty0}^{k}{}'\in\boldsymbol{CF}^{-}(\beta_{\infty}^{E_{\infty}^{k}},{\beta_{0}'}^{E_{0}^{k}})$,
$\psi_{0r}^{k}{}'\in\boldsymbol{CF}^{-}(\beta_{0}^{E_{0}^{k}},\beta_{r}')$,
resp.\ $\psi_{r\infty}^{k}{}'\in\boldsymbol{CF}^{-}(\beta_{r},{\beta_{\infty}'}^{E_{\infty}^{k}})$
are the images of $\psi_{\infty0}^{k}$, $\psi_{0r}^{k}$, resp.\ $\psi_{r\infty}^{k}$
under the nearest point map (Subsection~\ref{subsec:Standard-translates}).
\end{enumerate}
Moreover, we can let either
\begin{equation}
u_{i,\ell}=\begin{cases}
1 & {\rm if}\ \ell=0\\
0 & {\rm otherwise}
\end{cases}\ \forall i,\ell,\ {\rm or}\ w_{i,\ell}=\begin{cases}
1 & {\rm if}\ \ell=0\\
0 & {\rm otherwise}
\end{cases}\ \forall i,\ell.\label{eq:options}
\end{equation}
\end{thm}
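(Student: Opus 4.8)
The plan is to set up the three chain complexes in (\ref{eq:chain-cpxes-intro}) explicitly in terms of their standard bases, and reduce everything to linear algebra over $\mathbb{F}\llbracket U\rrbracket$. First I would establish the ``interpretation in a cover'' from Section~\ref{sec:Local-systems-and}: restricting $\psi_{0r}^{k},\psi_{r\infty}^{k},\psi_{\infty0}^{k}$ to the subspaces spanned by the standard basis elements, the polygon counting maps $\mu_2,\mu_3$ for $\beta_0^{E_0^k},\beta_r,\beta_\infty^{E_\infty^k}$ become honest polygon counts for lifted curves in a $pq$-fold cover of $\mathbb{T}^2$ with no local systems. This is where the precise exponents of $U$ in the definitions of $\phi_0^k,\phi_\infty^k$ (Definition~\ref{def:actual-local-system}, Equation~(\ref{eq:uc})) and the precise shifts in the ansatz for the $\psi$'s get used; the bookkeeping of the indices $(-iq^{-1}-1)\bmod p$, $c_i = s_{(i-k)\bmod p}$, and $-(i+k)q^{-1}\bmod p$ is what makes the triangle maps symmetric and $k$-independent (Proposition~\ref{prop:general-k-same}), so that it suffices to treat $k=0$ and then transport.

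Next I would solve the $k=0$ problem modulo $U$ (this is Section~\ref{sec:The-hat-version}): the triangle counts modulo $U$ are given by counting small triangles in the cover, and I would check by direct inspection of the genus~$1$ diagram (Figure~\ref{fig:diagramt2-infto}) that the ansatz (\ref{eq:uvw-modu}) makes $\mu_2 \equiv 0$ and $\mu_3 \equiv \mathrm{Id}\bmod U$ --- the nonzero coefficients $v_j$ for $j\equiv -1,\dots,-\min(p,q)$ are exactly the ones needed for the quadrilateral count to pick out $\Theta^+$ with multiplicity one and no $U$-factor. For part~(\ref{enu:bigon}), the bigon count is immediate since the genus~$1$ diagram has trivial differential on each of the three complexes (they are rank-$pq$ free $\mathbb{F}\llbracket U\rrbracket$-modules with $\partial = 0$), and the local-system contributions don't change this; I would just note this. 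The quadrilateral statement (\ref{enu:quadrilateral}) then follows from the mod-$U$ computation together with the observation that $\mu_3$ is the identity mod $U$ and we only ever claim it mod $U$.

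The hard part, and the heart of the proof, is part~(\ref{enu:triangle}): lifting the mod-$U$ cycles to honest cycles over $\mathbb{F}\llbracket U\rrbracket$ with all three $\mu_2$'s vanishing exactly. Following Section~\ref{sec:The-minus-version}, I would encode the three triangle equations (\ref{eq:mu2-intro}) as: an $\mathbb{F}\llbracket U\rrbracket$-linear map $F\colon X\to Y$ between two free rank-$q$ modules, such that any pair $(x,y)\in \ker F\oplus \ker F^\ast$ produces a solution $(\psi_{0r}^k,\psi_{r\infty}^k,\psi_{\infty0}^k)$, and moreover if both $x$ and $y$ are nonzero the solution reduces mod $U$ to the hat solution already found. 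The task is thus reduced to $\ker F \neq 0$ (then by symmetry/adjunction $\ker F^\ast\neq 0$ too, since $F$ and $F^\ast$ have the same rank over the PID $\mathbb{F}\llbracket U\rrbracket$ --- actually one gets $\operatorname{rank}\ker F = \operatorname{rank}\ker F^\ast$ automatically from $X,Y$ both being rank $q$). To show $\ker F\neq 0$ I would use the involution $\iota$ coming from the $\mathrm{Spin}^c$-conjugation symmetry of Figure~\ref{fig:diagramt2-infto}: verify that $\iota$ acts on both $X$ and $Y$ and that $F$ is $\iota$-equivariant, then compute $\operatorname{rank} X^\iota = \operatorname{rank} Y^\iota + 1$ by explicitly identifying the fixed subspaces (counting the $\iota$-fixed standard basis elements, which comes down to a parity count among the indices $0,\dots,q-1$). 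Equivariance forces $F(X^\iota)\subseteq Y^\iota$, so a rank-count gives $\operatorname{rank}\ker(F|_{X^\iota}) \geq 1$, hence $\ker F\neq 0$. Finally, the ``moreover'' clause (\ref{eq:options}) is the statement that one can choose the solution with $x$ equal to a distinguished generator of $\ker F$ (so $u_{i,\ell}$ is the hat value exactly) at the cost of letting $w_{i,\ell}$ be complicated, or vice versa --- this follows since we are free to scale $x$ or $y$ and the hat reduction only needs both to be nonzero.

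I expect the main obstacle to be the rank computation $\operatorname{rank} X^\iota = \operatorname{rank} Y^\iota + 1$ and, upstream of it, pinning down the map $F$ precisely enough that $\iota$-equivariance is visible: this requires carefully tracking how the triangle counts in the diagram interact with the standard-basis subspaces and with the conjugation involution, and it is exactly the ``particularly tricky for $r\neq n,1/n$'' point flagged in the introduction. The algebraic verifications that the ansatz cycles satisfy the bigon equations and that the mod-$U$ quadrilateral count is the identity are routine diagram chases by comparison; I would defer the most tedious of these (and the technical lemmas about $F$, $F^\ast$, and $\iota$) to the appendix as the paper does.
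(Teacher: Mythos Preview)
Your outline matches the paper's approach closely: the cover interpretation (Section~\ref{sec:Local-systems-and}), the hat computation (Section~\ref{sec:The-hat-version}), the reduction to $k=0$ via Proposition~\ref{prop:general-k-same}, and the involution argument on $F$ (Section~\ref{sec:The-minus-version}) are all as you describe. A few points are misstated and would need correction in execution.

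First, the cover used is $\widetilde{\mathbb{T}^{2}}=\mathbb{R}^{2}/(0,p)\mathbb{Z}$ (infinite cyclic in the horizontal direction), not a $pq$-fold cover. Second, and more substantively, you have the roles of $\ker F$ and $\ker F^{\ast}$ scrambled. In the paper's setup (Subsection~\ref{subsec:triangle-k=00003D0}), an element of $\ker F$ supplies the coefficients $(v_{j})$, and this single choice already kills \emph{two} of the three $\mu_{2}$'s (Equations~(\ref{eq:0rinf}) and~(\ref{eq:rinf0})), independently of $(u_{i,\ell})$ and $(w_{i,\ell})$. An element $\sum t_{i,\ell}f_{i,\ell}^{\ast}\in\ker F^{\ast}$ then handles the remaining $\mu_{2}$ (Equation~(\ref{eq:inf0r})), but it does not give $u$ and $w$ separately: it fixes only their cyclic convolution $\sum_{d}u_{i,d+\ell}w_{i,d}=t_{i,\ell}$. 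The ``moreover'' clause~(\ref{eq:options}) is exactly the freedom in this convolution factorization (Equation~(\ref{eq:options6})): set $u$ to the delta and $w_{i,\ell}=t_{i,-\ell}$, or vice versa. It is not a matter of scaling $x$ or $y$. Finally, the involution is not literally a map on $X$ and $Y$ with $F$ equivariant; rather one builds explicit subspaces $A\le\operatorname{dom}F$ and $B\le\operatorname{codom}F$ (spans of symmetric sums $e_{j}+e_{-p-j-1}$, etc.) with $F(A)\le B$, and the rank computation $\operatorname{rank}A=\operatorname{rank}B+1$ proceeds by a case analysis on the parities of $p$ and $q$ (Subsubsection~\ref{subsec:triangle0rinf-rinf0}). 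With these corrections your plan is the paper's proof.
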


\begin{proof}
(\ref{enu:bigon}) holds for all $u_{i,\ell},v_{j},w_{i,\ell}$: in
fact $\mu_{1}\equiv0$, since there are no two-chains that can contribute
to $\mu_{1}$. Also, we can check directly that (\ref{enu:quadrilateral})
holds whenever Equation~(\ref{eq:uvw-modu}) is satisfied; we show
this in Subsection~\ref{subsec:The-quadrilateral-counting}. The
hardest part is showing that there exist $u_{i,\ell},v_{j},w_{i,\ell}$
such that (\ref{enu:triangle}) and Equation~(\ref{eq:uvw-modu})
are satisfied; we show this in Section~\ref{sec:The-minus-version}.
We also give a less algebra-heavy proof for the hat version in Section~\ref{sec:The-hat-version}.
\end{proof}
\begin{rem}[Case $p=1$ or $q=1$]
Of course, for any $u,v,w\in\mathbb{F}\llbracket U\rrbracket$ such
that $u,v,w\equiv1$~mod~$U$, if Theorem~\ref{thm:gen-local-comp}~(\ref{enu:bigon}),
(\ref{enu:triangle}), (\ref{enu:quadrilateral}), and Equation~(\ref{eq:uvw-modu})
hold for $u_{i,\ell}$, $v_{j}$, $w_{i,\ell}$, then they also hold
for $uu_{i,\ell}$, $vv_{j}$, $ww_{i,\ell}$. 

For the special case $(p,q,k)=(p,1,k)$, there are only three variables
$u_{0,0},v_{0},$ and $w_{0,0}$, and Equation~(\ref{eq:uvw-modu})
says that they are all $1$ mod $U$. Hence, by the above observation,
we can let
\[
u_{0,0}=v_{0}=w_{0,0}=1.
\]
Also, \cite[Section 9]{MR2113020} shows that for the special case
$(p,q,k)=(1,q,0)$, we can let 
\[
u_{0,\ell}=w_{0,\ell}=\begin{cases}
1 & {\rm if}\ \ell=0\\
0 & {\rm otherwise}
\end{cases},\ v_{j}=\begin{cases}
1 & {\rm if}\ j=q-1\\
0 & {\rm otherwise}
\end{cases}.
\]
\end{rem}

\begin{rem}[Case $(p,q)=(5,3)$ is not as simple]
\label{rem:pq53}Unlike the case $p=1$ or $q=1$, the elements $u_{i,\ell},v_{j},w_{i,\ell}$
have to be complicated in general. For instance, for $(p,q)=(5,3)$,
some of them must not be $0$ or $1$ (compare Remark~\ref{rem:minus-hard}):
we show in Examples~\ref{exa:61example}~and~\ref{exa:53uv} that
we must have
\begin{equation}
u_{0,0}w_{0,0}=tc,\ u_{1,0}w_{1,0}=tb,\ u_{2,0}w_{2,0}=tc\label{eq:36}
\end{equation}
for some unit $t\in\mathbb{F}\llbracket U\rrbracket$, and that $(u_{0,0},u_{1,0},u_{2,0})=(1,1,1)$,
$(v_{0},v_{1},v_{2})=(b,a,a)$, $(w_{0,0},w_{1,0},w_{2,0})=(c,b,c)$
works, where 
\begin{equation}
a:=\sum_{m\in\mathbb{Z}}U^{\frac{15m^{2}+27m+12}{2}},\ b:=\sum_{m\in\mathbb{Z}}(U^{\frac{15m^{2}+7m}{2}}+U^{\frac{15m^{2}+13m+2}{2}}),\ c:=\sum_{m\in\mathbb{Z}}U^{\frac{15m^{2}+25m+10}{2}}.\label{eq:abc}
\end{equation}
More concretely, for instance for $k=0$, the following work: 
\[
\psi_{0r}^{0}=x_{0,0}^{\ast}\theta_{0}+x_{1,0}^{\ast}\theta_{1}+x_{2,0}^{\ast}\theta_{2},\ \psi_{r\infty}^{0}=by_{0}\xi_{0}+ay_{3}\xi_{1}+ay_{1}\xi_{2},\ \psi_{\infty0}^{0}=cx_{0,0}y_{0}^{\ast}\zeta_{0}+bx_{1,0}y_{3}^{\ast}\zeta_{1}+cx_{2,0}y_{1}^{\ast}\zeta_{2}.
\]
\end{rem}

\begin{rem}[Simplification using Nakayama's lemma]
\label{rem:nakayama}If we do not care about identifying the three
maps in the exact triangles (Theorems~\ref{thm:rational-surgery-k=00003D0}~and~\ref{thm:rational-surgery-kgeneral}),
then by Corollary~\ref{cor:triangle-detection-nakayama}, the following
weaker version of Theorem~\ref{thm:gen-local-comp} is sufficient:
instead of Theorem~\ref{thm:gen-local-comp}~(\ref{enu:triangle}),
we only have that one of the $\mu_{2}$'s is zero and the other two
are zero modulo $U$. Interestingly, this does not significantly simplify
our solution to the combinatorial problem: for instance for $k=0$,
it reduces Subsubsections~\ref{subsec:triangle0rinf-rinf0}~and~\ref{subsec:triangleinf0r}
to only one of them, but they are essentially the same.
\end{rem}

\subsection{\label{subsec:inter-local}Recovering Theorems~\ref{thm:rational-surgery-k=00003D0}~and~\ref{thm:rational-surgery-kgeneral}}

As we mentioned earlier, Theorems~\ref{thm:rational-surgery-k=00003D0}~and~\ref{thm:rational-surgery-kgeneral}
follow from Theorem~\ref{thm:gen-local-comp} by standard arguments.
In this subsection, we explain a part of this, focusing on how the
local systems on $\beta_{0},\beta_{r},\beta_{\infty}$ correspond
to the three homology groups of the surgery exact triangles. We postpone
recalling the rest of the standard argument to Appendix~\ref{sec:Reduction-of-Theorem}.
Compare~\cite[Section~9]{MR2113020},~\cite[Section~12]{2011.00113}.

For this, we introduce the following definition.
\begin{defn}[Genus stabilization]
\label{def:genus-stabilization}A genus $g$ Heegaard diagram $(\Sigma,\boldsymbol{\alpha}_{0},\cdots,\boldsymbol{\alpha}_{m},z)$
is a \emph{genus stabilization of a genus $1$ Heegaard diagram $(\mathbb{T}^{2},\alpha_{0},\cdots,\alpha_{m},z)$}
if there exists a genus $(g-1)$ Heegaard diagram $(\overline{\Sigma},\overline{\boldsymbol{\alpha}}_{0},\cdots,\overline{\boldsymbol{\alpha}}_{m},\overline{z})$
such that (A) $\overline{\boldsymbol{\alpha}}_{0},\cdots,\overline{\boldsymbol{\alpha}}_{m}$
are pairwise standard translates, (B) $\Sigma=\mathbb{T}^{2}\#\overline{\Sigma}$
is obtained by connected summing $\mathbb{T}^{2}$ and $\overline{\Sigma}$
along points near $z$ and $\overline{z}$, respectively, and (C)
$\boldsymbol{\alpha}_{i}=\alpha_{i}\cup\overline{\boldsymbol{\alpha}}_{i}$
for all $i$.

Denote the intersection point of $\mathbb{T}_{\overline{\boldsymbol{\alpha}}_{i}}\cap\mathbb{T}_{\overline{\boldsymbol{\alpha}}_{j}}$
in the top homological grading as $\Theta_{ij}^{+}$. If $x\in\alpha_{i}\cap\alpha_{j}$,
then define its \emph{stabilization} $S(x)\in\mathbb{T}_{\boldsymbol{\alpha}_{i}}\cap\mathbb{T}_{\boldsymbol{\alpha}_{j}}$
as the intersection point given by $x$ and $\Theta_{ij}^{+}$.

If the attaching curve $\alpha_{i}$ is equipped with a local system
$(E_{i},\phi_{i},A_{i})$, then equip $\boldsymbol{\alpha}_{i}$ with
the same local system $(E_{i},\phi_{i},A_{i})$. Let 
\[
S:\boldsymbol{CF}^{-}(\alpha_{i}^{E_{i}},\alpha_{j}^{E_{j}})\to\boldsymbol{CF}^{-}(\boldsymbol{\alpha}_{i}^{E_{i}},\boldsymbol{\alpha}_{j}^{E_{j}})
\]
be the $\mathbb{F}\llbracket U\rrbracket$-linear map induced by the
above stabilization map. Note that $S$ is in general not a chain
map.
\end{defn}

Let $K$ be a knot in a closed, oriented $3$-manifold $Y$ with framing
$\lambda$, and let $\mu$ be the meridian. Then, as in \cite[Lemma~9.2]{MR2113020},
we can find a weakly admissible, genus $g$ Heegaard diagram $(\Sigma,\boldsymbol{\alpha},\boldsymbol{\beta}_{0},\boldsymbol{\beta}_{r},\boldsymbol{\beta}_{\infty},z)$
such that the following conditions are satisfied:
\begin{enumerate}
\item $(\Sigma,\boldsymbol{\beta}_{0},\boldsymbol{\beta}_{r},\boldsymbol{\beta}_{\infty},z)$
is a genus stabilization of the Heegaard diagram $(\mathbb{T}^{2},\beta_{0},\beta_{r},\beta_{\infty},z)$.
\item Consider the oriented arc $A_{0}\subset\Sigma$. Recall that the initial
point of $A_{0}$ is $z$; let $w_{0}\in\Sigma$ be the terminal point
of $A_{0}$. Then, $\boldsymbol{\alpha}$ is disjoint from $A_{0}$,
and the doubly pointed Heegaard diagram $(\Sigma,\boldsymbol{\alpha},\boldsymbol{\beta}_{0},w_{0},z)$
represents $(Y_{\lambda},K_{\lambda})$ where $K_{\lambda}\subset Y_{\lambda}$
is the dual knot of $K$.
\item The Heegaard diagram $(\Sigma,\boldsymbol{\alpha},\boldsymbol{\beta}_{r},z)$
represents $Y_{p\mu+q\lambda}(K)$.
\item \label{enu:infty}Consider the oriented arc $A_{\infty}\subset\Sigma$.
Recall that the initial point of $A_{\infty}$ is $z$; let $w\in\Sigma$
be the terminal point of $A_{\infty}$. Then, $\boldsymbol{\alpha}$
is disjoint from $A_{\infty}$, and the doubly pointed Heegaard diagram
$(\Sigma,\boldsymbol{\alpha},\boldsymbol{\beta}_{\infty},w,z)$ represents
$(Y,K)$.
\end{enumerate}
Let $p$ and $q$ be coprime positive integers, and let $k=0,\cdots,p-1$.
As in Definition~\ref{def:genus-stabilization}, equip $\boldsymbol{\beta}_{0}$
with the local system $(E_{0}^{k},\phi_{0}^{k},A_{0})$ and equip
$\boldsymbol{\beta}_{\infty}$ with the local system $(E_{\infty}^{k},\phi_{\infty}^{k},A_{\infty})$.
Then, by Lemma~\ref{lem:local-system-simple-lemma}, the above conditions
imply 
\begin{gather*}
\boldsymbol{CF}^{-}(\boldsymbol{\alpha},\boldsymbol{\beta}_{0}^{E_{0}^{k}})\simeq\bigoplus_{i=0}^{\min(p,q)-1}\underline{\boldsymbol{CF}^{-}}(Y_{\lambda}(K);\mathbb{F}[\mathbb{Z}/s_{i}\mathbb{Z}]),\\
\boldsymbol{CF}^{-}(\boldsymbol{\alpha},\boldsymbol{\beta}_{r})\simeq\boldsymbol{CF}^{-}(Y_{p\mu+q\lambda}(K)),\\
\boldsymbol{CF}^{-}(\boldsymbol{\alpha},\boldsymbol{\beta}_{\infty}^{E_{\infty}^{k}})\simeq{\cal CFK}^{-}(Y,K)\otimes_{\mathbb{F}\llbracket W,Z\rrbracket}E_{\infty}^{k},
\end{gather*}
where the twist of $\underline{\boldsymbol{CF}^{-}}(Y_{\lambda}(K);\mathbb{F}[\mathbb{Z}/s_{i}\mathbb{Z}])$
is given by the homology class of the dual knot of $K$, and we view
$E_{\infty}^{k}$ as an $\mathbb{F}\llbracket W,Z\rrbracket$-module
by letting $W$ act as $\phi_{\infty}^{k}$ and $Z$ act as $U(\phi_{\infty}^{k})^{-1}$.
\begin{rem}[Recover $\boldsymbol{HFK}_{p,q,k}^{-}(Y,K)$]
\label{rem:recover}Indeed, ${\cal CFK}^{-}(Y,K)\otimes_{\mathbb{F}\llbracket W,Z\rrbracket}E_{\infty}^{k}\simeq\boldsymbol{CFK}_{p,q,k}^{-}(Y,K)$
since $E_{\infty}^{k}$ is isomorphic to $E_{p,q,k}$ from Definition~\ref{def:hfkpqk}
as $\mathbb{F}\llbracket W,Z\rrbracket$-modules: let $\Phi:E_{\infty}^{k}\to E_{p,q,k}$
be the $\mathbb{F}\llbracket U\rrbracket$-module isomorphism given
by $\Phi(y_{i})=e_{qi+k-1}$. Then, we can check that
\[
\Phi\phi_{\infty}^{k}\Phi^{-1}=\sum_{i=0}^{p-1}u_{q^{-1}(i-k+1)}e_{i+q}e_{i}^{\ast},\ \mathrm{and}\ u_{q^{-1}(i-k+1)}=\begin{cases}
U & {\rm if}\ i+q\in\{0,\cdots,k-1\}\\
1 & {\rm otherwise}
\end{cases}.
\]
\end{rem}

We will show a slightly stronger statement than Theorems~\ref{thm:rational-surgery-k=00003D0}~and~\ref{thm:rational-surgery-kgeneral}:
we additionally identify the maps in the exact triangles. Indeed,
we show in Appendix~\ref{sec:Reduction-of-Theorem} that the stabilizations
of $\psi_{0r}^{k},\psi_{r\infty}^{k},\psi_{\infty0}^{k}$ are cycles
that the following triangle is exact: 
\begin{equation}\label{eq:exact-triangle34}
\begin{tikzcd}[ampersand replacement=\&]
	{\boldsymbol{HF}^{-}(\boldsymbol{\alpha},\boldsymbol{\beta}_{0}^{E_0 ^k} )} \&\& {\boldsymbol{HF}^{-}(\boldsymbol{\alpha},\boldsymbol{\beta}_{r})} \\
	\& {\boldsymbol{HF}^{-}(\boldsymbol{\alpha},\boldsymbol{\beta}_{\infty}^{E_\infty ^k})}
	\arrow["{\mu_{2}(-\otimes S(\psi_{0r}^k))}", from=1-1, to=1-3]
	\arrow["{\mu_{2}(-\otimes S(\psi_{r\infty }^k))}"{description}, from=1-3, to=2-2]
	\arrow["{\mu_{2}(-\otimes S(\psi_{\infty 0}^k))}"{description}, from=2-2, to=1-1]
\end{tikzcd}
\end{equation}

Finally, we record the following lemma which was used in Remark~\ref{rem:hfkpqk-2}.
\begin{lem}[$\boldsymbol{HFK}_{p,q,k}^{-}$ as the $2k/p$-modified knot Floer
homology over a submodule $R\subset\mathbb{F}\llbracket U^{1/p}\rrbracket$,
when $(p,k)=1$]
\label{lem:interpret-local-system}Let $p$ and $q$ be coprime positive
integers, and let $k=0,\cdots,p-1$ be such that $p$ and $k$ are
coprime. View $\mathbb{F}\llbracket U^{1/p}\rrbracket$ as an $\mathbb{F}\llbracket W,Z\rrbracket$-module
by letting $W$ act as $U^{k/p}$ and $Z$ act as $U^{1-(k/p)}$.
Then, there exists an $\mathbb{F}\llbracket W,Z\rrbracket$-submodule
$R\subset\mathbb{F}\llbracket U^{1/p}\rrbracket$ that is isomorphic
to $E_{\infty}^{k}$ as an $\mathbb{F}\llbracket W,Z\rrbracket$-module.
\end{lem}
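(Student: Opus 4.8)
The plan is to exhibit $R$ concretely as the $\mathbb{F}\llbracket W,Z\rrbracket$-submodule of $\mathbb{F}\llbracket U^{1/p}\rrbracket$ generated by a single well-chosen monomial $U^{a/p}$, and then to match it up with $E_\infty^k$ via the explicit monodromy description from Definition~\ref{def:hfkpqk} and Remark~\ref{rem:recover}. Concretely, recall that on $D_{p,k}=\mathbb{F}\llbracket U^{1/p}\rrbracket$, viewed with its $\mathbb{F}\llbracket U\rrbracket$-basis $d_i=U^{i/p}$, the element $W$ acts by $d_i\mapsto U^{\lfloor (i+k)/p\rfloor}d_{(i+k)\bmod p}$ (with $Z=U\phi^{-1}$ acting correspondingly); this is the $t$-modified picture of Definition~\ref{def:t-knotfloer}. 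I would take $R:=\mathbb{F}\llbracket W,Z\rrbracket\cdot U^{a/p}\subseteq D_{p,k}$ for an exponent $a$ to be pinned down in terms of $k^{-1}\bmod p$. Since $\gcd(p,k)=1$, the subgroup of $\mathbb{Z}/p\mathbb{Z}$ generated by $k$ is everything, so the $W$-orbit of the residue class $a\bmod p$ cycles through all of $\{0,\dots,p-1\}$; this shows $R$ is a free $\mathbb{F}\llbracket U\rrbracket$-module of rank $p$, with basis the first $p$ elements of the orbit $U^{a/p}, W U^{a/p}, W^2 U^{a/p}, \dots$ (dropping excess powers of $U$ at each step to keep the exponent in a fixed set of residues).

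The next step is to check that this rank-$p$ free $\mathbb{F}\llbracket U\rrbracket$-module, with the induced $W$-action, is isomorphic as an $\mathbb{F}\llbracket W,Z\rrbracket$-module to $E_\infty^k$. For this I would compare monodromies directly. On the $R$-side, applying $W$ to the basis element sitting in ``orbit position'' $m$ sends it to the element in position $m+1$, picking up a factor of $U$ exactly when the naive exponent $a/p + (\text{accumulated }k/p)$ crosses an integer — equivalently, exactly when the residue wraps around past $p$, which happens at precisely one value of $m$ per full cycle, governed by where $0$ sits in the orbit. On the $E_\infty^k$-side, Definition~\ref{def:hfkpqk} (as reformulated in Remark~\ref{rem:recover}) says $W=\phi_\infty^k$ sends $y_i\mapsto u_{(\cdots)}y_{i+1}$ with the single $U$-weight occurring when $i+q$ lands in $\{0,\dots,k-1\}\bmod p$ — again one ``jump'' per cycle, in a location determined by $k$ and $q$. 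So the task reduces to choosing $a$ so that these two distinguished jump locations line up; the natural choice is $a\equiv (k-1)\cdot k^{-1}\bmod p$ or a small variant thereof, mirroring the isomorphism $\Phi(y_i)=e_{qi+k-1}$ of Remark~\ref{rem:recover}. Once the jump positions agree, the obvious $\mathbb{F}\llbracket U\rrbracket$-linear bijection sending the $m$-th orbit basis element of $R$ to $y_{\sigma(m)}$ (for the appropriate relabelling permutation $\sigma$) intertwines the $W$-actions, hence also the $Z=U\phi^{-1}$ actions, and is therefore an $\mathbb{F}\llbracket W,Z\rrbracket$-module isomorphism.

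The main obstacle — though it is bookkeeping rather than a conceptual difficulty — is getting the index arithmetic exactly right: one must track simultaneously (i) the permutation of $\mathbb{Z}/p\mathbb{Z}$ given by $m\mapsto a + mk \bmod p$, (ii) the corresponding floor-function weights $\lfloor (a+mk)/p\rfloor$ that record where powers of $U$ accumulate, and (iii) the matching bookkeeping on the $E_\infty^k$ side involving $q^{-1}\bmod p$ and the shift by $k-1$. I would structure this by first handling the ``baseline'' case and then observing, as in the $(p,q,k)=(5,1,2)$ computation of Remark~\ref{rem:hfkpqk-2}, that the general case is obtained by the same substitution pattern; in fact one can sidestep some of the arithmetic by invoking Remark~\ref{rem:recover}, which already identifies $E_\infty^k\cong E_{p,q,k}$, and then proving the cleaner statement that $E_{p,q,k}$ embeds as an $\mathbb{F}\llbracket W,Z\rrbracket$-submodule of $\mathbb{F}\llbracket U^{1/p}\rrbracket$ when $\gcd(p,k)=1$. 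A convenient sanity check is to verify that $\dim_{\mathbb{F}} \big(\mathbb{F}\llbracket U^{1/p}\rrbracket / R\big)$, computed from the exponents omitted from the orbit, matches the analogous quantity for $E_{p,q,k}\hookrightarrow D_{p,k}$ read off from the monodromy weights — both should equal the number of indices $i$ with the $U$-weight, namely $k$ — which confirms the embedding has the right ``size'' before one even checks it is $W$- and $Z$-equivariant.
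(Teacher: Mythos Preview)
Your approach has a genuine gap: you propose to realize $R$ as a \emph{cyclic} $\mathbb{F}\llbracket W,Z\rrbracket$-submodule $\mathbb{F}\llbracket W,Z\rrbracket\cdot U^{a/p}$, but $E_\infty^k$ is not always cyclic over $\mathbb{F}\llbracket W,Z\rrbracket$, so no choice of $a$ can work. Concretely, take $(p,q,k)=(5,3,2)$. From Definition~\ref{def:actual-local-system} one computes $u=(1,1,U,1,U)$. For any module $E_u$ with basis $y_0,\dots,y_{p-1}$ and $Wy_i=u_iy_{i+1}$, one has $WE_u+ZE_u=\bigoplus_i(u_{i-1},Uu_i^{-1})y_i\mathbb{F}\llbracket U\rrbracket$, so
\[
\dim_{\mathbb{F}}\bigl(E_u/(WE_u+ZE_u)\bigr)=\#\{i:u_{i-1}=U\text{ and }u_i=1\}.
\]
For $u=(1,1,U,1,U)$ this equals $2$ (at $i=0,3$), whereas any cyclic module $R=\mathbb{F}\llbracket W,Z\rrbracket\cdot x$ satisfies $\dim_{\mathbb{F}}(R/(WR+ZR))\le\dim_{\mathbb{F}}\mathbb{F}\llbracket W,Z\rrbracket/(W,Z)=1$. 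Hence $E_\infty^k$ cannot be isomorphic to any cyclic submodule of $\mathbb{F}\llbracket U^{1/p}\rrbracket$ in this case. (Relatedly, your claim that the $U$-weight ``happens at precisely one value of $m$ per full cycle'' is off: there are $k$ such wraparounds, and more importantly their \emph{pattern} within the cycle depends on $q$, which a single generator $U^{a/p}$ cannot capture.)

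The paper's argument avoids this by not insisting on cyclicity. It defines exponents $m_i\in\tfrac{1}{p}\mathbb{Z}$ directly from the monodromy: $m_0=0$ and $m_{i+1}=m_i+\tfrac{k}{p}$ if $u_i=1$, $m_{i+1}=m_i+\tfrac{k}{p}-1$ if $u_i=U$. After shifting so the minimum is $0$, the map $y_i\mapsto U^{m_i'}$ is visibly $W$-equivariant by construction (each step matches the weight $u_i$), hence also $Z$-equivariant. The resulting $R$ is the $\mathbb{F}\llbracket U\rrbracket$-span of $\{U^{m_i'}\}$, which need not be generated by a single element; for $(5,3,2)$ it is in fact all of $\mathbb{F}\llbracket U^{1/5}\rrbracket$. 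If you drop the cyclic-submodule ansatz and instead write down this recursive assignment of exponents, your sketch collapses to essentially the paper's proof.
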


\begin{proof}
Let us define a sequence $\{m_{i}\}_{i\in\mathbb{Z}/p\mathbb{Z}}$.
First, we recursively define $\{m_{i}\}_{i\in\mathbb{Z}}$ and show
that $m_{p+i}=m_{i}$. Let $m_{0}=0$, and let 
\[
m_{i+1}:=\begin{cases}
m_{i}+\frac{k}{p} & {\rm if}\ u_{i}=1\\
m_{i}+\frac{k}{p}-1 & {\rm if}\ u_{i}=U
\end{cases}.
\]
Then since exactly $k$ of $u_{0},\cdots,u_{p-1}$ are $U$, we have
$m_{p+i}=m_{i}$ for all $i$. Also note that since $p$ and $k$
are coprime, $m_{0},\cdots,m_{p-1}$ are pairwise distinct modulo
$1$.

Let $m_{n}$ be the minimum of $m_{0},\cdots,m_{p-1}$, and define
$m_{i}'=m_{i}-m_{n}$. Let $R\subset\mathbb{F}\llbracket U^{1/p}\rrbracket$
be the free, rank $p$ $\mathbb{F}\llbracket U\rrbracket$-submodule
with basis $U^{m_{0}'},\cdots,U^{m_{p-1}'}$: indeed, they are linearly
independent since the $m_{i}'$'s modulo $1$ are pairwise distinct.
Let $f:E_{\infty}^{k}\to R$ be the $\mathbb{F}\llbracket U\rrbracket$-linear
isomorphism given by $y_{i}\mapsto U^{m_{i}'}$. Then it is routine
to check that $f$ is $\mathbb{F}\llbracket W,Z\rrbracket$-linear.
\end{proof}

\section{\label{sec:Local-systems-and}Local systems and covering spaces}

Our main theorem, Theorem~\ref{thm:gen-local-comp}, is a statement
about the polygon counting maps $\mu_{1}$, $\mu_{2}$, and $\mu_{3}$
of attaching curves $\beta_{0}^{E_{0}},\beta_{r},\beta_{\infty}^{E_{\infty}^{u}}$
and their standard translates equipped with local systems in $\mathbb{T}^{2}$.
Note that the cycles $\psi_{0r}^{k},\psi_{r\infty}^{k},\psi_{\infty0}^{k}$
only involve some of the basis elements $x_{a,b}^{\ast}\theta_{i},y_{c}\xi_{j},x_{a,b}y_{c}^{\ast}\zeta$
of the chain complexes; we call these particular basis elements \emph{standard}
(see Definition~\ref{def:standard-basis} for a general definition).

The main goal of this section is to motivate our definition of standard
basis elements, and hence our choice of the cycles $\psi_{0r}^{k},\psi_{r\infty}^{k},\psi_{\infty0}^{k}$.
The key point is that if we only consider standard basis elements,
then the polygon counting maps $\mu_{2}$ and $\mu_{3}$ can be nicely
described in terms of the corresponding polygon counting maps of attaching
curves without local systems on the cover $\widetilde{\mathbb{T}^{2}}:=\mathbb{R}^{2}/(0,p)\mathbb{Z}$
of $\mathbb{T}^{2}$ (Propositions~\ref{prop:lift-mu2}~and~\ref{prop:main-mu3});
compare \cite[Section 3]{MR1957829}.

In this section, we fix sequences $u=(u_{0},\cdots,u_{p-1})$ and
$c=(c_{0},\cdots,c_{p-1})$, and work in the setting of Subsection~\ref{subsec:Local-systems}.

\subsection{\label{subsec:cover}A cover of the genus $1$ Heegaard diagram}

\begin{figure}[h]
\begin{centering}
\raisebox{-0.5\height}{\includegraphics[scale=0.5]{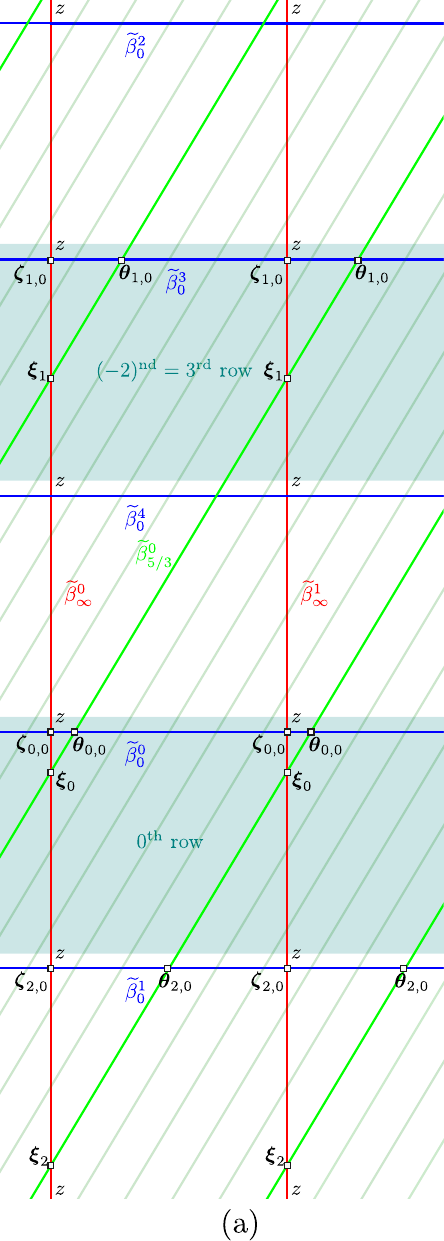}}\quad{}\raisebox{-0.5\height}{\includegraphics[scale=0.5]{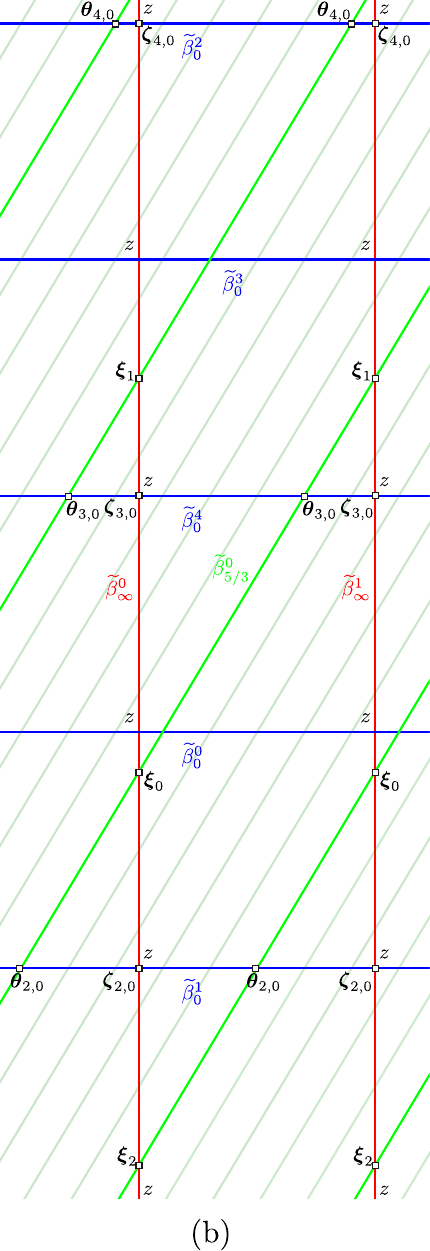}}\quad{}\raisebox{-0.5\height}{\includegraphics[scale=0.5]{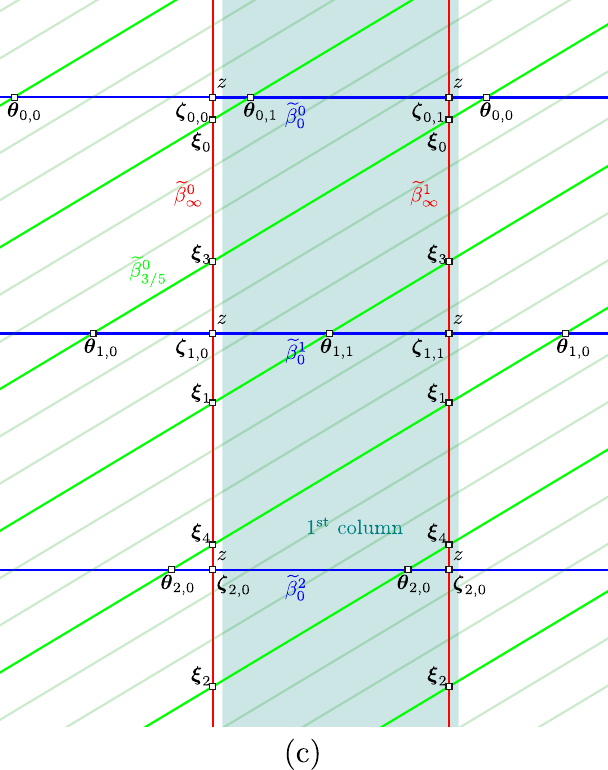}}
\par\end{centering}
\caption{\label{fig:tildet2}Some examples for the cover $\widetilde{\mathbb{T}^{2}}$
of the genus $1$ Heegaard diagram. See Subsection~\ref{subsec:An-explicit-formula}
for the meaning of $\boldsymbol{\theta}_{i,\ell}$, $\boldsymbol{\xi}_{j}$,
and $\boldsymbol{\zeta}_{i,\ell}$. The intersection point labelled
$\boldsymbol{\zeta}_{0,0}$ is the origin. In all three cases, the
lifts of standard basis elements are labelled, and the set $\widetilde{z(u)}$
consists of the points $z$ in the figure. (a): $(p,q,k)=(5,3,0)$,
$u=(1,1,1,1,1)$, $c=(1,1,1,0,0)$. (b): $(p,q,k)=(5,3,2)$, $u=(1,1,U,1,U)$,
$c=(0,0,1,1,1)$. (c): $(p,q,k)=(3,5,0)$, $u=(1,1,1)$, $c=(2,2,1)$.}
\end{figure}

We consider the covering space $\pi:\widetilde{\mathbb{T}^{2}}:=\mathbb{R}^{2}/(0,p)\mathbb{Z}\to\mathbb{T}^{2}$.
We introduce some notions to describe objects in $\widetilde{\mathbb{T}^{2}}$:
see Figure~\ref{fig:tildet2} for some examples.

Recall that the basepoint $z\in\mathbb{T}^{2}$ is at $(\varepsilon,\varepsilon)$
for small $\varepsilon>0$. Let the \emph{$a$th column }be
\[
]0,1[\times\mathbb{R}+(a+\varepsilon-1,0),
\]
let the \emph{$b$th row} ($b\in\mathbb{Z}/p\mathbb{Z}$) be
\[
\mathbb{R}\times]0,1[+(0,-b+\varepsilon-1),
\]
and let the \emph{$(a,b)$th square} be the intersection of the $a$th
column and the $b$th row, i.e.\ 
\[
]0,1[^{2}+(a+\varepsilon-1,-b+\varepsilon-1).
\]

Let $\widetilde{\beta}_{0}^{b}$ be the connected component of $\pi^{-1}(\beta_{0})$
that is in the $b$th row, let $\widetilde{\beta}_{r}^{i}$ ($i=0,\cdots,p-1$)
be the union of the connected components of $\pi^{-1}(\beta_{r})$
that intersect $\pi^{-1}(\theta_{i})$ in the $0$th row, and let
$\widetilde{\beta}_{\infty}^{a}$ be the connected component of $\pi^{-1}(\beta_{\infty})$
that is in the $a$th column.

Define $\widetilde{z(u)}\subset\widetilde{\mathbb{T}^{2}}$ as 
\begin{align*}
\widetilde{z(u)} & :=\{(a+\varepsilon,-b+\varepsilon-1):a\in\mathbb{Z},\ b\in\mathbb{Z}/p\mathbb{Z},\ u_{b}=1\}\\
 & \cup\{(a-\varepsilon,-b+\varepsilon-1):a\in\mathbb{Z},\ b\in\mathbb{Z}/p\mathbb{Z},\ u_{b}=U\}.
\end{align*}
Note that if $u_{i}=1$ for all $i$, then $\widetilde{z(u)}=\pi^{-1}(z)$.

\subsection{\label{subsec:Lifts-and-standard-mu2}Lifts of standard basis elements
and triangle counting maps}

We want to state the main result of this subsection, Proposition~\ref{prop:lift-mu2},
for each cyclic permutation of $\beta_{0}^{E_{0}},\beta_{r}^{\mathbb{F}\llbracket U\rrbracket},\beta_{\infty}^{E_{\infty}^{u}}$.
Hence, for simplicity of notation, let $\gamma_{0}^{F_{0}},\gamma_{1}^{F_{1}},\gamma_{2}^{F_{2}}$
be a cyclic permutation of $\beta_{0}^{E_{0}},\beta_{r}^{\mathbb{F}\llbracket U\rrbracket},\beta_{\infty}^{E_{\infty}^{u}}$.
In this subsection, the indices of $\gamma$ and $F$ are interpreted
mod~$3$.

Recall that we considered the basis $\{x_{i,\ell}\}$ for $E_{0}$
and the basis $\{y_{j}\}$ for $E_{\infty}^{u}$. These bases induce
a basis for ${\rm Hom}_{\mathbb{F}\llbracket U\rrbracket}(F_{i},F_{j})$,
and hence a basis for $\boldsymbol{CF}^{-}(\gamma_{i}^{F_{i}},\gamma_{j}^{F_{j}})$:
more concretely, say $\boldsymbol{b}$ is a \emph{basis element} of
$\boldsymbol{CF}^{-}(\gamma_{i}^{F_{i}},\gamma_{j}^{F_{j}})$ if $\boldsymbol{b}=ea$
for some basis element $e$ of ${\rm Hom}_{\mathbb{F}\llbracket U\rrbracket}(F_{i},F_{j})$
and $a\in\gamma_{i}\cap\gamma_{j}$.

We define lifts of basis elements such that Proposition~\ref{prop:lift-be}
holds.
\begin{defn}[Lifts of basis elements]
\label{def:lift-of-basis}Let $ea$ be a basis element of $\boldsymbol{CF}^{-}(\gamma_{i}^{F_{i}},\gamma_{j}^{F_{j}})$
($i\neq j$), i.e.\ $e$ is a basis element of ${\rm Hom}_{\mathbb{F}\llbracket U\rrbracket}(F_{i},F_{j})$
and $a\in\gamma_{i}\cap\gamma_{j}$. Let us define what it means for
a point $\widetilde{a}\in\pi^{-1}(a)$ to be a \emph{lift }of $ea$.
\begin{enumerate}
\item Let $\theta\in\beta_{0}\cap\beta_{r}$. A point $\widetilde{\theta}\in\pi^{-1}(\theta)$
is a \emph{lift} of $x_{i,\ell}^{\ast}\theta$ or $x_{i,\ell}\theta$
if it lies in the $(a,b)$th square for 
\[
a\equiv\ell\mod c_{i},\ b\equiv-iq^{-1}\mod p.
\]
\item Let $\xi\in\beta_{r}\cap\beta_{\infty}$. A point $\widetilde{\xi}\in\pi^{-1}(\xi)$
is a \emph{lift} of $y_{i'}\xi$ or $y_{i'}^{\ast}\xi$ if it lies
in the $b$th row for 
\[
b\equiv i'\mod p.
\]
\item A point $\widetilde{\zeta}\in\pi^{-1}(\zeta)$ is a \emph{lift} of
$x_{i,\ell}y_{i'}^{\ast}\zeta$ or $y_{i'}x_{i,\ell}^{\ast}\zeta$
if it lies in the $(a,b)$th square for
\[
a\equiv\ell\mod c_{i},\ b\equiv i'\equiv-iq^{-1}\mod p.
\]
In particular, there is no lift of $x_{i,\ell}y_{i'}^{\ast}\zeta$
or $y_{i'}x_{i,\ell}^{\ast}\zeta$ if $i'\not\equiv-iq^{-1}$ modulo
$p$. 
\end{enumerate}
\end{defn}

Recall that $\mu_{2}$ counts holomorphic triangles with Maslov index
$0$. We study $\mu_{2}$ by studying the \emph{contribution ${\rm Cont}(T)$
of each triangle $T$} (Definition~\ref{def:contribution-triangle})
separately. Indeed, $\mu_{2}$ is the sum of ${\rm Cont}(T)$ along
all Maslov index $0$ triangles $T$. From now on, triangles refer
to Maslov index $0$ triangles. 
\begin{defn}
\label{def:contribution-triangle}If $T$ is a triangle with vertices
$c_{t}\in\gamma_{t}\cap\gamma_{t+1}$ ($t=0,1,2$), define the \emph{contribution
of $T$ to $\mu_{2}$} as
\begin{multline*}
{\rm Cont}(T):\boldsymbol{CF}^{-}(\gamma_{0}^{F_{0}},\gamma_{1}^{F_{1}})\otimes\boldsymbol{CF}^{-}(\gamma_{1}^{F_{1}},\gamma_{2}^{F_{2}})\to\boldsymbol{CF}^{-}(\gamma_{0}^{F_{0}},\gamma_{2}^{F_{2}}):\\
e_{1}a_{1}\otimes e_{2}a_{2}\mapsto\begin{cases}
U^{n_{z}(T)}\rho(T)(e_{1}\otimes e_{2})c_{3} & {\rm if}\ (a_{1},a_{2})=(c_{1},c_{2})\\
0 & {\rm otherwise}
\end{cases}
\end{multline*}
where $e_{1}\in{\rm Hom}(F_{0},F_{1})$, $e_{2}\in{\rm Hom}(F_{1},F_{2})$,
$a_{1}\in\gamma_{0}\cap\gamma_{1}$, and $a_{2}\in\gamma_{1}\cap\gamma_{2}$.
\end{defn}

\begin{defn}
\label{def:triangle-lift}Let $T$ be a triangle in $\mathbb{T}^{2}$.
A triangle $\widetilde{T}$ in $\widetilde{\mathbb{T}^{2}}$ is a
\emph{lift} of $T$ if it is topologically a lift of the triangle
$T$ to the covering space $\pi:\widetilde{\mathbb{T}^{2}}\to\mathbb{T}^{2}$.
Let $\boldsymbol{b}_{t}$ be a basis element of $\boldsymbol{CF}^{-}(\gamma_{t}^{F_{t}},\gamma_{t+1}^{F_{t+1}})$
for $t=0,1$, and let $\boldsymbol{b}_{2}$ be a basis element of
$\boldsymbol{CF}^{-}(\gamma_{0}^{F_{0}},\gamma_{2}^{F_{2}})$. A lift
$\widetilde{T}$ of $T$ is a\emph{ $\{\boldsymbol{b}_{0},\boldsymbol{b}_{1},\boldsymbol{b}_{2}\}$-lift
of $T$} if its three vertices are lifts of $\boldsymbol{b}_{0}$,
$\boldsymbol{b}_{1}$, and $\boldsymbol{b}_{2}$.
\end{defn}

\begin{prop}
\label{prop:lift-be}Let $T$ be a triangle in $\mathbb{T}^{2}$,
 let $\boldsymbol{b}_{t}$ be a basis element of $\boldsymbol{CF}^{-}(\gamma_{t}^{F_{t}},\gamma_{t+1}^{F_{t+1}})$
for $t=0,1$, and let $\boldsymbol{b}_{2}$ be a basis element of
$\boldsymbol{CF}^{-}(\gamma_{0}^{F_{0}},\gamma_{2}^{F_{2}})$. If
$T$ has a $\{\boldsymbol{b}_{0},\boldsymbol{b}_{1},\boldsymbol{b}_{2}\}$-lift,
then 
\[
{\rm Cont}(T)(\boldsymbol{b}_{0}\otimes\boldsymbol{b}_{1})=U^{n_{\widetilde{z(u)}}(\widetilde{T})}\boldsymbol{b}_{2}.
\]
\end{prop}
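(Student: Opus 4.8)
The plan is to unwind the algebraic definition of $\mathrm{Cont}(T)$ and match it term by term with the geometry of the lift $\widetilde{T}$. Write $\boldsymbol{b}_t=e^{(t)}c_t$ for $t=0,1,2$, where $e^{(t)}$ is a basis element of the relevant $\mathrm{Hom}$-module and $c_0\in\gamma_0\cap\gamma_1$, $c_1\in\gamma_1\cap\gamma_2$, $c_2\in\gamma_0\cap\gamma_2$. Since $T$ admits a $\{\boldsymbol{b}_0,\boldsymbol{b}_1,\boldsymbol{b}_2\}$-lift, its three vertices are exactly $c_0,c_1,c_2$, so Definition~\ref{def:contribution-triangle} together with the definition of the monodromy (Definition~\ref{def:Define-(higher)-composition} with $d=2$) gives
\[
\mathrm{Cont}(T)(\boldsymbol{b}_0\otimes\boldsymbol{b}_1)=U^{n_z(T)}\bigl(\varphi_2^{\,n_2}\circ e^{(1)}\circ\varphi_1^{\,n_1}\circ e^{(0)}\circ\varphi_0^{\,n_0}\bigr)c_2 ,
\]
where $n_t:=\#(A_t\cap\partial_{\gamma_t}T)$ is the signed intersection number and $\varphi_t$ denotes the monodromy of $\gamma_t$, with $\varphi_t=\mathrm{Id}$ and no such term when $\gamma_t=\beta_r$. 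Thus the proposition reduces to the identity
\[
\varphi_2^{\,n_2}\circ e^{(1)}\circ\varphi_1^{\,n_1}\circ e^{(0)}\circ\varphi_0^{\,n_0}=U^{\,n_{\widetilde{z(u)}}(\widetilde{T})-n_z(T)}\,e^{(2)} .
\]

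To prove this I would pass to the cover $\pi\colon\widetilde{\mathbb{T}^2}\to\mathbb{T}^2$. The lifts of $A_0$ are the short slits on the vertical edges separating horizontally adjacent squares, and the lifts of $A_\infty$ the short slits on the horizontal edges separating vertically adjacent rows; since $\pi$ is a covering, the signed count of intersections of $\partial_{\gamma_t}T$ with $A_t$ equals the signed count of intersections of the lifted edge $\partial_{\gamma_t}\widetilde{T}$ with the corresponding family of slits. Definition~\ref{def:lift-of-basis} is arranged so that the square a lift sits in records the full index of the corresponding basis element---the pair $(i,\ell)$ of $x_{i,\ell}$ for a $\beta_0$-vertex, the subscript $j$ of $y_j$ for a $\beta_\infty$-vertex---in such a way that a one-slit horizontal move shifts $\ell$ by one (matching $\phi_0\colon x_{i,\ell}\mapsto x_{i,\ell+1}$) and a one-slit vertical move shifts $j$ by one (matching $\phi_\infty^u\colon y_j\mapsto u_j y_{j+1}$, up to the factor $u_j$), while $\beta_r$ contributes neither a shift nor a factor. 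Tracing the three edges of $\partial\widetilde{T}$ in cyclic order, the compatibility of the three vertices---they are, by hypothesis, lifts of $\boldsymbol{b}_0,\boldsymbol{b}_1,\boldsymbol{b}_2$---forces the composite to equal $U^{N}e^{(2)}$, where $N$ is the total power of $U$ accumulated from the $u_j$'s, i.e.\ the signed number of crossings of $\partial_{\beta_\infty}\widetilde{T}$ with a lift of $A_\infty$ at a row boundary across which $\phi_\infty^u$ picks up a factor $U$.

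It then remains to identify $N$ with $n_{\widetilde{z(u)}}(\widetilde{T})-n_z(T)$. Each point of $\widetilde{z(u)}$ lies on a lift of $A_\infty$: it is the $z$-endpoint of that arc for a row with $u_b=1$, and the $w$-endpoint (where $w$ is the terminal point of $A_\infty$) for a row with $u_b=U$, the two endpoints being related by sliding across the slit. Running over all lifts of $A_\infty$, the $z$-endpoints are precisely $\pi^{-1}(z)$; and since $\pi$ restricts to a homeomorphism of domains $\widetilde{T}\to T$, we have $n_z(T)=n_{\pi^{-1}(z)}(\widetilde{T})$. Subtracting, the contributions of the rows with $u_b=1$ cancel, leaving $\sum\bigl(n_{\widetilde{w}}(\widetilde{T})-n_{\widetilde{z}}(\widetilde{T})\bigr)$ over the lifts of $A_\infty$ attached to $u_b=U$ rows; counting the ends of the $1$-manifold $\widetilde{A}\cap\widetilde{T}$ for each such lift $\widetilde{A}$ identifies each summand with $\#(\widetilde{A}\cap\partial_{\beta_\infty}\widetilde{T})$, and the total is $N$. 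Combined with the previous paragraph, this gives the displayed identity, hence the proposition.

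The step I expect to be the main obstacle is the covering-space bookkeeping of the middle paragraph: verifying that the composite closes up to exactly $e^{(2)}$---not merely to a $U$-power times \emph{some} basis element, which is precisely where the compatibility of the three chosen lifts is used---and keeping the orientation and sign conventions consistent throughout (which vertex of each edge is the source being dictated by the counterclockwise-rotation convention of Subsection~\ref{subsec:Conventions}). By contrast, once the positions of the points of $\widetilde{z(u)}$ relative to the slits $\pi^{-1}(A_\infty)$ are pinned down, the counting identity is an elementary ``ends of an arc'' computation.
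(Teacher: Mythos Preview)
Your proposal is correct and is essentially the same approach as the paper's: the paper's proof is the single line ``Direct from the definitions'' (plus the remark that every Maslov index $0$ triangle here has a unique holomorphic representative), and what you have written is exactly the natural unpacking of that sentence. Your identification of the accumulated $U$-power with $n_{\widetilde{z(u)}}(\widetilde{T})-n_z(T)$ via the ``ends of an arc'' argument, together with the column/row bookkeeping matching $\phi_0$ and $\phi_\infty^u$, is precisely the verification the paper leaves to the reader.
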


\begin{proof}
Direct from the definitions. Note that in our Heegaard diagram, every
triangle with Maslov index $0$ has a unique holomorphic representative.
\end{proof}
Now, the key observation is that if we only consider the case where
$\boldsymbol{b}_{0}$ and $\boldsymbol{b}_{1}$ are standard (Definition~\ref{def:standard-basis}),
then we can upgrade Proposition~\ref{prop:lift-be} to Proposition~\ref{prop:lift-mu2}.

\begin{defn}[Standard basis element]
\label{def:standard-basis}Define \emph{standard basis elements }of
$\boldsymbol{CF}^{-}(\gamma_{i}^{F_{i}},\gamma_{j}^{F_{j}})$ ($i\neq j$)
as follows.
\begin{enumerate}
\item For $\theta\in\beta_{0}\cap\beta_{r}$, the basis element $x_{i,\ell}^{\ast}\theta$
or $x_{i,\ell}\theta$ is \emph{standard} if all its lifts are in
$\widetilde{\beta}_{r}^{0}$.
\item For $\xi\in\beta_{r}\cap\beta_{\infty}$, the basis element $y_{i'}\xi_{j}$
or $y_{i'}^{\ast}\xi_{j}$ is \emph{standard} if all its lifts are
in~$\widetilde{\beta}_{r}^{0}$.
\item The basis element $x_{i,\ell}y_{i'}^{\ast}\zeta$ or $y_{i'}x_{i,\ell}^{\ast}\zeta$
is \emph{standard} if it admits a lift.
\end{enumerate}
\end{defn}

Note that if $\boldsymbol{b}$ is a basis element, then there exists
a lift of $\boldsymbol{b}$ that is in $\widetilde{\beta}_{r}^{0}$
if and only if every lift of $\boldsymbol{b}$ is in $\widetilde{\beta}_{r}^{0}$,
since if $\widetilde{b_{0}}$ and $\widetilde{b}_{1}$ are lifts of
$\boldsymbol{b}$, then $\widetilde{b}_{1}=\widetilde{b}_{0}+(n,0)$
for some $n\in\mathbb{Z}$, and $\widetilde{\beta}_{r}^{0}=\widetilde{\beta}_{r}^{0}+(n,0)$
for all $n\in\mathbb{Z}$.

Before we prove Proposition~\ref{prop:lift-mu2}, let us record a
few lemmas.
\begin{defn}
Say a basis element $\boldsymbol{b}$ of $\boldsymbol{CF}^{-}(\gamma_{i}^{F_{i}},\gamma_{j}^{F_{j}})$
\emph{comes from an intersection point }$a\in\gamma_{i}\cap\gamma_{j}$
if $\boldsymbol{b}=ea$ for some basis element $e$ of ${\rm Hom}_{\mathbb{F}\llbracket U\rrbracket}(F_{i},F_{j})$.
\end{defn}

\begin{lem}
\label{lem:theta-unique}For each $\theta_{i}\in\beta_{0}\cap\beta_{r}$,
there exists a standard basis element that comes from $\theta_{i}$
if and only if $c_{i}\neq0$. If $c_{i}\neq0$, the standard basis
elements that come from $\theta_{i}$ are $x_{i,\ell}^{\ast}\theta_{i}$
and $x_{i,\ell}\theta_{i}$.
\end{lem}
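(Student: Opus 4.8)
The plan is to pass to the covering space $\pi\colon\widetilde{\mathbb{T}^2}\to\mathbb{T}^2$ of Subsection~\ref{subsec:cover} and, for each basis element coming from $\theta_i$, determine which row of $\widetilde{\mathbb{T}^2}$ its lifts lie in and whether the component of $\pi^{-1}(\beta_r)$ running through them is $\widetilde{\beta}_r^0$.

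First I would unravel the structure of $\pi^{-1}(\beta_r)$. Lifting $\beta_r$ to $\mathbb{R}^2$ gives the parallel slope-$r$ lines $L_m\colon y=\frac pq\bigl(x-\frac{2m+1}{2p}\bigr)$, $m\in\mathbb{Z}$; the line $L_m$ meets the horizontal line of height $-b$ at a point projecting to $\theta_{(m-bq)\bmod p}$, and in particular meets the $x$-axis at a lift of $\theta_{m\bmod p}$. Descending to $\widetilde{\mathbb{T}^2}=\mathbb{R}^2/(0,p)\mathbb{Z}$, the $L_m$ give $pq$ distinct components $\widetilde{L}_m$ indexed by $m\in\mathbb{Z}/pq\mathbb{Z}$. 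Since the $0$th row of $\widetilde{\mathbb{T}^2}$ contains only $\widetilde{\beta}_0^0$, the component $\widetilde{L}_m$ meets $\pi^{-1}(\beta_0)$ in the $0$th row only at lifts of $\theta_{m\bmod p}$; hence $\widetilde{\beta}_r^i=\bigcup_{n=0}^{q-1}\widetilde{L}_{i+np}$, and the $\widetilde{\beta}_r^i$ for $i\in\mathbb{Z}/p\mathbb{Z}$ are disjoint and exhaust $\pi^{-1}(\beta_r)$. Combining this partition with the computation of where each $\widetilde{L}_m$ crosses the lifts of $\beta_0$, specialized to $m\equiv0\pmod p$, yields the key fact: a lift of $\theta_i$ lies on $\widetilde{\beta}_r^0$ if and only if it lies in the $b_i$th row of $\widetilde{\mathbb{T}^2}$, where $b_i:=(-iq^{-1})\bmod p$.

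Given this, the lemma follows by unwinding the definitions. The basis elements coming from $\theta_i$ are the $x_{a,\ell}^{\ast}\theta_i\in\boldsymbol{CF}^-(\beta_0^{E_0},\beta_r)$ and the $x_{a,\ell}\theta_i\in\boldsymbol{CF}^-(\beta_r,\beta_0^{E_0})$ for $(a,\ell)\in I$, and Definition~\ref{def:lift-of-basis}(1) — which treats the starred and unstarred cases alike — shows that all of their lifts (which exist, since every square contains a lift of $\theta_i$) lie in the $b_a$th row, $b_a:=(-aq^{-1})\bmod p$. By the remark following Definition~\ref{def:standard-basis}, such an element is standard precisely when one, equivalently every, of its lifts lies on $\widetilde{\beta}_r^0$; by the key fact this forces $b_a=b_i$, hence $a\equiv i\pmod p$, i.e.\ $a=i$, and then $(i,\ell)\in I$ forces $c_i\neq0$. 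Conversely, if $c_i\neq0$ then $(i,\ell)\in I$ for every $\ell\in\mathbb{Z}/c_i\mathbb{Z}$, and every lift of $x_{i,\ell}^{\ast}\theta_i$ (resp.\ $x_{i,\ell}\theta_i$) is a lift of $\theta_i$ in the $b_i$th row, hence lies on $\widetilde{\beta}_r^0$, so these elements are standard. This yields both halves of the statement.

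I expect the only delicate point — the main obstacle — to be the bookkeeping in the second paragraph: matching the indexing conventions of Subsection~\ref{subsec:cover} and pinning down exactly which component $\widetilde{L}_m$ contains a given lift of $\theta_j$ and in which row, so as to see at once that $\widetilde{\beta}_r^0$ meets $\pi^{-1}(\theta_i)$ only in the $b_i$th row and that it meets every lift of $\theta_i$ there. Once set up, this is a short modular-arithmetic check, and the remainder of the argument is a formal consequence of Definitions~\ref{def:lift-of-basis} and~\ref{def:standard-basis}.
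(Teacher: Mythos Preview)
Your proof is correct and follows essentially the same approach as the paper: both identify which row of $\widetilde{\mathbb{T}^2}$ contains the lifts of $\theta_i$ that lie on $\widetilde{\beta}_r^0$, and then unwind Definitions~\ref{def:lift-of-basis} and~\ref{def:standard-basis}. The paper compresses your second paragraph into the single observation $\widetilde{\beta}_r^0\cap\widetilde{\beta}_0^b=\pi^{-1}(\theta_{-bq})\cap\widetilde{\beta}_0^b$, leaving the remaining formal deductions implicit; your version simply makes the covering-space bookkeeping and the application of the definitions explicit.
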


\begin{proof}
Since $\beta_{r}^{0}\cap\beta_{0}^{0}=\pi^{-1}(\theta_{0})\cap\beta_{0}^{0}$,
we have $\beta_{r}^{0}\cap\beta_{0}^{b}=\pi^{-1}(\theta_{-bq})\cap\beta_{0}^{b}$
for all $b\in\mathbb{Z}$.
\end{proof}
\begin{lem}
\label{lem:unique}For each $\xi_{j}\in\beta_{r}\cap\beta_{\infty}$,
$i'\equiv-jq^{-1}\bmod p$ is the unique $i'=0,\cdots,p-1$ such that
$y_{i'}\xi_{j}$ (or $y_{i'}^{\ast}\xi_{j}$) is standard.
\end{lem}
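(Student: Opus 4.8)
The plan is to make the word ``standard'' fully explicit in the coordinates of the genus $1$ diagram and then read off the answer, exactly in the spirit of the proof of Lemma~\ref{lem:theta-unique}. By part~(2) of Definition~\ref{def:standard-basis}, the element $y_{i'}\xi_j$ is standard precisely when every lift of it lies in $\widetilde{\beta}_{r}^{0}$, and by part~(2) of Definition~\ref{def:lift-of-basis} these lifts are the points of $\pi^{-1}(\xi_j)$ sitting in the rows $b\equiv i'\bmod p$. As noted right after Definition~\ref{def:standard-basis}, any two such lifts differ by an integer horizontal translation and $\widetilde{\beta}_{r}^{0}$ is invariant under such translations, so it suffices to test a single lift. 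I would take the one lying on $\widetilde{\beta}_{\infty}^{0}$, i.e.\ with $x$-coordinate $0$; using the normalizations of Subsection~\ref{subsec:The-setup} (where $\xi_j=\overline{\pi}(0,-\tfrac{2j+1}{2q})$, and the $b$-th row contains the horizontal line $y=-b$), this lift is the point $\widetilde{\xi}_{j}=\bigl(0,\ c-\tfrac{2j+1}{2q}\bigr)$ with $c:=(-i')\bmod p$, and a one-line check confirms that it indeed lies in row $i'$.

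Next I would determine which $\widetilde{\beta}_{r}^{i}$ contains $\widetilde{\xi}_{j}$. The component of $\pi^{-1}(\beta_r)$ through $\widetilde{\xi}_{j}$ is the line of slope $r=p/q$ through that point; intersecting it with the $0$-th row, that is with $\widetilde{\beta}_{0}^{0}=\{y=0\}$, one finds a point with $x$-coordinate $\tfrac{2j+1-2cq}{2p}$, which by the normalization $\theta_i=\overline{\pi}(\tfrac{2i+1}{2p},0)$ is a lift of $\theta_i$ for $i\equiv j-cq\equiv j+i'q\bmod p$. By the very definition of $\widetilde{\beta}_{r}^{(j+i'q)\bmod p}$ in Subsection~\ref{subsec:cover}, this forces $\widetilde{\xi}_{j}\in\widetilde{\beta}_{r}^{(j+i'q)\bmod p}$. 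Hence $y_{i'}\xi_j$ is standard if and only if $j+i'q\equiv0\bmod p$, i.e.\ $i'\equiv-jq^{-1}\bmod p$; since $q$ is invertible modulo $p$, there is exactly one such $i'\in\{0,\dots,p-1\}$, which gives both existence and uniqueness.

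The argument is conceptually routine, so the only thing needing care is the bookkeeping: the sign conventions relating the row and column indices to $\mathbb{R}^{2}$-coordinates, the simultaneous appearance of ``$\bmod p$'' (for indices of $\theta$ and $y$) and ``$\bmod q$'' (for indices of $\xi$), and the observation that a line of slope $r$ meets the $0$-th row in a single point. With these pinned down, the computation is a two-line check completely parallel to Lemma~\ref{lem:theta-unique}, and there is no real obstacle.
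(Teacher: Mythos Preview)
Your proposal is correct and follows essentially the same approach as the paper: pick a lift of $\xi_j$ on $\widetilde{\beta}_\infty^0$, determine which component of $\pi^{-1}(\beta_r)$ it lies on by tracing the line of slope $p/q$ to $\widetilde{\beta}_0^0$, and read off the congruence on $i'$. The paper phrases the same computation as a congruence $ap+i-p\equiv bq+j\bmod pq$ for two points to lie on the same component, whereas you compute explicit coordinates; the content is identical. One small wording issue: a component of $\pi^{-1}(\beta_r)$ actually meets $\{y=0\}$ in infinitely many points (spaced $q$ apart in $x$), not a single point, but they are all lifts of the same $\theta_i$, so your conclusion is unaffected.
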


\begin{proof}
Let $a\in\mathbb{Z}$, $b\in\mathbb{Z}/p\mathbb{Z}$, and let $\widetilde{\theta}\in\pi^{-1}(\theta_{i})\cap\widetilde{\beta}_{0}^{0}$
be such that it is in the $a$th column, and let $\widetilde{\xi}\in\pi^{-1}(\xi_{j})\cap\widetilde{\beta}_{\infty}^{0}$
be such that it is in the $b$th row. Then, $\widetilde{\theta}$
and $\widetilde{\xi}$ are on the same connected component of $\pi^{-1}(\beta_{r})$
if and only if $ap+i-p\equiv bq+j\bmod pq$. Hence, $\widetilde{\xi}$
is standard if and only if $bq+j\equiv0\bmod pq$, which is equivalent
to $b\equiv-jq^{-1}\bmod p$.
\end{proof}
\begin{lem}
\label{lem:Let--be}Let $\widetilde{T}$ be a triangle in $\widetilde{\mathbb{T}^{2}}$
with vertices $\widetilde{\theta}\in\pi^{-1}(\beta_{0}\cap\beta_{r})$,
$\widetilde{\xi}\in\pi^{-1}(\beta_{r}\cap\beta_{\infty})$, and $\widetilde{\zeta}\in\pi^{-1}(\beta_{\infty}\cap\beta_{0})$.
The vertex $\widetilde{\theta}$ is a lift of a standard basis element
if and only if both $\widetilde{\xi}$ and $\widetilde{\zeta}$ are
lifts of standard basis elements.
\end{lem}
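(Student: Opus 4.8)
The plan is to convert the phrase ``lift of a standard basis element'' at each of the three vertices into a purely combinatorial condition, and then to exploit that the three boundary arcs of $\widetilde{T}$ force the three vertices into one row, one component of $\pi^{-1}(\beta_{r})$, and one column; the three conditions will then turn out to be equivalent.

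First I would record the local condition at each vertex. Write $\widetilde{\theta}\in\pi^{-1}(\theta_{i})$ and let $b$ be the row of $\widetilde{\theta}$. By Definition~\ref{def:lift-of-basis}(1), Lemma~\ref{lem:theta-unique}, and the observation following Definition~\ref{def:standard-basis}, the point $\widetilde{\theta}$ is a lift of a standard basis element if and only if $c_{i}\neq0$ and $\widetilde{\theta}\in\widetilde{\beta}_{r}^{0}$; and since $\widetilde{\beta}_{r}^{0}\cap\widetilde{\beta}_{0}^{b}=\pi^{-1}(\theta_{-bq\bmod p})\cap\widetilde{\beta}_{0}^{b}$ (as shown in the proof of Lemma~\ref{lem:theta-unique}), the membership $\widetilde{\theta}\in\widetilde{\beta}_{r}^{0}$ already forces $i\equiv-bq\bmod p$. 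Next, $\widetilde{\xi}$ is always a lift of exactly one basis element coming from its intersection point $\xi_{j}$ (there is no ``$c$''-obstruction here, since $E_{\infty}^{u}$ has full rank $p$), so by Definition~\ref{def:standard-basis}(2), Lemma~\ref{lem:unique}, and the same observation, $\widetilde{\xi}$ is a lift of a standard basis element if and only if $\widetilde{\xi}\in\widetilde{\beta}_{r}^{0}$. Finally, write $\widetilde{\zeta}\in\pi^{-1}(\zeta)$ and let $b'$ be its row; by Definition~\ref{def:lift-of-basis}(3) and Definition~\ref{def:standard-basis}(3), a basis element coming from $\zeta$ admits a lift exactly when it has the form $x_{i,\ell}y_{i'}^{\ast}\zeta$ (or $y_{i'}x_{i,\ell}^{\ast}\zeta$) with $i'\equiv-iq^{-1}\bmod p$, and such an element has $\widetilde{\zeta}$ among its lifts precisely when $i\equiv-b'q\bmod p$ and an admissible $\ell$ exists, i.e.\ precisely when $c_{-b'q\bmod p}\neq0$; hence $\widetilde{\zeta}$ is a lift of a standard basis element if and only if $c_{-b'q\bmod p}\neq0$.

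Next I would invoke the triangle. Since $\widetilde{T}$ is a lift of a Maslov index $0$ triangle in $\mathbb{T}^{2}$, it is a small embedded triangle (as in the proof of Proposition~\ref{prop:lift-be}), so each of its three boundary arcs lies on a single connected component of $\pi^{-1}(\beta_{0})$, of $\pi^{-1}(\beta_{r})$, and of $\pi^{-1}(\beta_{\infty})$, respectively. In particular $\widetilde{\theta}$ and $\widetilde{\zeta}$ lie on the same $\widetilde{\beta}_{0}^{b}$, hence share one row $b$, and $\widetilde{\theta}$ and $\widetilde{\xi}$ lie on one component $C$ of $\pi^{-1}(\beta_{r})$ (the third edge puts $\widetilde{\xi}$ and $\widetilde{\zeta}$ in a common column, which is not needed below). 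Since $\widetilde{\beta}_{r}^{0}$ is, by definition, a union of full components of $\pi^{-1}(\beta_{r})$, membership in $\widetilde{\beta}_{r}^{0}$ depends only on the component, so $\widetilde{\theta}\in\widetilde{\beta}_{r}^{0}$ if and only if $\widetilde{\xi}\in\widetilde{\beta}_{r}^{0}$.

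Assembling these: if $\widetilde{\theta}$ is a lift of a standard basis element, then $c_{i}\neq0$, $\widetilde{\theta}\in\widetilde{\beta}_{r}^{0}$, and $i\equiv-bq\bmod p$; therefore $\widetilde{\xi}\in\widetilde{\beta}_{r}^{0}$, so $\widetilde{\xi}$ is a lift of a standard basis element, and since $\widetilde{\zeta}$ also lies in row $b$ and $c_{-bq\bmod p}=c_{i}\neq0$, so is $\widetilde{\zeta}$. Conversely, if $\widetilde{\xi}$ and $\widetilde{\zeta}$ are both lifts of standard basis elements, then $\widetilde{\xi}\in\widetilde{\beta}_{r}^{0}$, hence $\widetilde{\theta}\in\widetilde{\beta}_{r}^{0}$, hence $i\equiv-bq\bmod p$, and the standardness of $\widetilde{\zeta}$ gives $c_{i}=c_{-bq\bmod p}\neq0$; thus $\widetilde{\theta}$ is a lift of a standard basis element. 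I do not expect a genuine obstacle; the only points needing care are keeping the two index dictionaries straight ($i\equiv-bq\bmod p$ at the $\theta$-vertex and $i'\equiv-iq^{-1}\bmod p$ at the $\zeta$-vertex), which is exactly what Lemmas~\ref{lem:theta-unique} and~\ref{lem:unique} are designed to package, and checking that each boundary arc of $\widetilde{T}$ indeed lies on a single component — immediate here, since a finite $1$-chain on $\pi^{-1}(\beta_{r})$ (a disjoint union of lines) with boundary $\widetilde{\xi}-\widetilde{\theta}$ forces $\widetilde{\theta}$ and $\widetilde{\xi}$ onto one component.
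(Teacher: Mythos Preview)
Your proof is correct and follows essentially the same approach as the paper's: the paper lists the three local characterizations (that $\widetilde{\theta}$ being a lift of a standard basis element amounts to $\widetilde{\theta}\in\widetilde{\beta}_{r}^{0}$ together with $c_{i}\neq0$ for the row-determined $i$, while the $\widetilde{\xi}$- and $\widetilde{\zeta}$-conditions are exactly these two clauses separately) and leaves the gluing via the triangle implicit, whereas you spell out explicitly that the $\beta_{r}$-edge forces a common $\pi^{-1}(\beta_{r})$-component and the $\beta_{0}$-edge forces a common row. The only cosmetic point is that your reference ``as in the proof of Proposition~\ref{prop:lift-be}'' does not quite match the paper's one-line proof there; the fact you need (each boundary arc of $\widetilde{T}$ lies on a single component) is exactly what you justify in your final sentence anyway.
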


\begin{proof}
This follows from the following three statements.
\begin{enumerate}
\item A point $\widetilde{\theta}\in\pi^{-1}(\beta_{0}\cap\beta_{r})$ is
a lift of a standard basis element if and only if $\widetilde{\theta}\in\widetilde{\beta}_{r}^{0}$
and whenever $i\in\mathbb{Z}$ is such that $\widetilde{\theta}\in\widetilde{\beta}_{0}^{(-iq^{-1}\bmod p)}$,
we have $c_{i}\neq0$.
\item A point $\widetilde{\xi}\in\pi^{-1}(\beta_{r}\cap\beta_{\infty})$
is a lift of a standard basis element if and only if $\widetilde{\xi}\in\widetilde{\beta}_{r}^{0}$.
\item A point $\widetilde{\zeta}\in\pi^{-1}(\beta_{\infty}\cap\beta_{0})$
is a lift of a standard basis element if and only if whenever $i\in\mathbb{Z}$
is such that $\widetilde{\zeta}\in\widetilde{\beta}_{0}^{(-iq^{-1}\bmod p)}$,
we have $c_{i}\neq0$.\qedhere
\end{enumerate}
\end{proof}
\begin{prop}[Main proposition for $\mu_{2}$]
\label{prop:lift-mu2}Let $T$ be a triangle in $\mathbb{T}^{2}$,
and let $\boldsymbol{b}_{t}$ be a standard basis element of $\boldsymbol{CF}^{-}(\gamma_{t}^{F_{t}},\gamma_{t+1}^{F_{t+1}})$
for $t=0,1$. If $T$ does not admit a $\{\boldsymbol{b}_{0},\boldsymbol{b}_{1},\boldsymbol{b}_{2}\}$-lift
$\widetilde{T}$ for any standard basis element $\boldsymbol{b}_{2}$
of $\boldsymbol{CF}^{-}(\gamma_{0}^{F_{0}},\gamma_{2}^{F_{2}})$,
then ${\rm Cont}(T)(\boldsymbol{b}_{0}\otimes\boldsymbol{b}_{1})=0$.
Otherwise, there is at most one such $\boldsymbol{b}_{2}$, and 
\[
{\rm Cont}(T)(\boldsymbol{b}_{0}\otimes\boldsymbol{b}_{1})=U^{n_{\widetilde{z(u)}}(\widetilde{T})}\boldsymbol{b}_{2}.
\]
\end{prop}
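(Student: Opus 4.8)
The plan is to bootstrap from Proposition~\ref{prop:lift-be} and Lemmas~\ref{lem:theta-unique}, \ref{lem:unique}, and~\ref{lem:Let--be}, treating the three cyclic permutations of $(\beta_0^{E_0},\beta_r,\beta_\infty^{E_\infty^u})$ uniformly. Writing $\boldsymbol{b}_0=e_0a_0$ and $\boldsymbol{b}_1=e_1a_1$, I would first dispose of the trivial case: if $a_0$ and $a_1$ are not the first two vertices of $T$, then $\mathrm{Cont}(T)(\boldsymbol{b}_0\otimes\boldsymbol{b}_1)=0$ by Definition~\ref{def:contribution-triangle}, and no lift of $T$ can have its first two vertices project to $a_0,a_1$, so the statement holds. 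Thus I may assume the underlying intersection points of $\boldsymbol{b}_0$, of $\boldsymbol{b}_1$, and of the output slot are exactly the three vertices of $T$, one of each type $\theta,\xi,\zeta$, with $a_0$ and $a_1$ joined by the side of $T$ lying on $\gamma_1$.

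The uniqueness of $\boldsymbol{b}_2$ and the displayed formula are then formal consequences of Proposition~\ref{prop:lift-be}: any $\{\boldsymbol{b}_0,\boldsymbol{b}_1,\boldsymbol{b}_2\}$-lift $\widetilde T$ yields $\mathrm{Cont}(T)(\boldsymbol{b}_0\otimes\boldsymbol{b}_1)=U^{n_{\widetilde{z(u)}}(\widetilde T)}\boldsymbol{b}_2$, which, $\widetilde T$ being compact, is a finite power of $U$ times a basis element and hence nonzero; since two basis elements of a free $\mathbb{F}\llbracket U\rrbracket$-module that agree after multiplying by powers of $U$ must coincide, $\boldsymbol{b}_2$ and $n_{\widetilde{z(u)}}(\widetilde T)$ are pinned down by $\mathrm{Cont}(T)(\boldsymbol{b}_0\otimes\boldsymbol{b}_1)$ (and if the latter vanishes there is no lift at all, again by Proposition~\ref{prop:lift-be}). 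Moreover, when $\boldsymbol{b}_0,\boldsymbol{b}_1$ are standard such a $\boldsymbol{b}_2$ is automatically standard: the $\theta$-type vertex of $\widetilde T$ is a lift of a standard basis element --- it is the $\boldsymbol{b}_0$- or $\boldsymbol{b}_1$-vertex, or, if it is the output vertex, this follows from Lemma~\ref{lem:Let--be} applied to the standard $\xi$- and $\zeta$-type vertices --- so Lemma~\ref{lem:Let--be} makes all three vertices lifts of standard basis elements, and as each vertex is a lift of at most one basis element, $\boldsymbol{b}_2$ is that standard one.

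It thus remains to show that if $\boldsymbol{b}_0,\boldsymbol{b}_1$ are standard and $\mathrm{Cont}(T)(\boldsymbol{b}_0\otimes\boldsymbol{b}_1)\neq 0$, then $T$ admits a $\{\boldsymbol{b}_0,\boldsymbol{b}_1,\boldsymbol{b}_2\}$-lift. I would fix a lift $\widetilde v_0$ of $\boldsymbol{b}_0$ --- which exists since $\boldsymbol{b}_0$ is standard, by the definition of standard together with Lemmas~\ref{lem:theta-unique} and~\ref{lem:unique} --- and take $\widetilde T$ to be the unique lift of $T$ whose $\boldsymbol{b}_0$-vertex is $\widetilde v_0$; write $\widetilde v_1$ for its $\boldsymbol{b}_1$-vertex, which lies on the same component of $\pi^{-1}(\gamma_1)$ as $\widetilde v_0$. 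In the permutation with $\gamma_1=\beta_r$ (so $\boldsymbol{b}_0$ is of type $\theta$ and $\boldsymbol{b}_1$ of type $\xi$), $\widetilde v_0\in\widetilde{\beta}_r^0$ forces $\widetilde v_1\in\widetilde{\beta}_r^0$ ($\widetilde{\beta}_r^0$ being a union of components of $\pi^{-1}(\beta_r)$), and Lemma~\ref{lem:unique} then identifies $\widetilde v_1$ as a lift of $\boldsymbol{b}_1$, so $\widetilde T$ is the desired lift. In the other two permutations $\boldsymbol{b}_1$ is of type $\theta$ or $\zeta$, and I would argue as follows: unwinding Definition~\ref{def:Define-(higher)-composition}, $\rho(T)(e_0\otimes e_1)$ is a composite of $e_0,e_1$ with powers of the monodromies $\phi_0,\phi_\infty^u$ and their inverses over $U^{-1}\mathbb{F}\llbracket U\rrbracket$, hence is $0$ or $U^m$ times a single basis element; via the identification of the intersection numbers $\#(A_0\cap\partial_{\beta_0}T)$ and $\#(A_\infty\cap\partial_{\beta_\infty}T)$ with column- and row-shifts in $\widetilde{\mathbb{T}^2}$ that underlies the proof of Proposition~\ref{prop:lift-be}, non-vanishing of $\rho(T)(e_0\otimes e_1)$ --- equivalently, by positivity, of $\mathrm{Cont}(T)(\boldsymbol{b}_0\otimes\boldsymbol{b}_1)$ --- says precisely that the row of $\widetilde v_1$ is already compatible with $\boldsymbol{b}_1$; and since the standard basis elements from the underlying point of $\widetilde v_1$ differ only in the column index modulo the relevant $c_i$, applying a horizontal deck transformation $(n,0)\in\mathbb{Z}\times\mathbb{Z}/p\mathbb{Z}$ to $\widetilde T$ (which fixes rows, hence keeps $\widetilde v_0$ a lift of $\boldsymbol{b}_0$) with suitable $n$ makes $\widetilde v_1$ a lift of $\boldsymbol{b}_1$. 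In all cases Lemma~\ref{lem:Let--be} finally gives that the third vertex of the resulting $\widetilde T$ is a lift of a standard basis element $\boldsymbol{b}_2$, so $\widetilde T$ is a $\{\boldsymbol{b}_0,\boldsymbol{b}_1,\boldsymbol{b}_2\}$-lift and Proposition~\ref{prop:lift-be} gives the formula.

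I expect the main obstacle to be exactly this last point --- carrying out uniformly, over the three cyclic permutations, the bookkeeping that matches the index congruences coming from $\rho(T)$ against the row- and column-positions prescribed by Definition~\ref{def:lift-of-basis}, so that non-vanishing of $\mathrm{Cont}(T)(\boldsymbol{b}_0\otimes\boldsymbol{b}_1)$ and the existence of a $\{\boldsymbol{b}_0,\boldsymbol{b}_1,\boldsymbol{b}_2\}$-lift coincide exactly. The permutations with $\boldsymbol{b}_1$ of type $\theta$ or $\zeta$, where Lemma~\ref{lem:unique} is unavailable and the deck-transformation adjustment is needed, are the delicate ones.
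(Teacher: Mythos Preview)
Your proposal is correct and follows essentially the same route as the paper: reduce via Proposition~\ref{prop:lift-be} to showing that nonvanishing of $\mathrm{Cont}(T)(\boldsymbol{b}_0\otimes\boldsymbol{b}_1)$ produces a $\{\boldsymbol{b}_0,\boldsymbol{b}_1,\boldsymbol{b}_2\}$-lift, then split into the three cyclic permutations and use Lemma~\ref{lem:Let--be} together with the index congruences coming from $\rho(T)$.

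The one organizational difference worth noting is that the paper avoids your horizontal deck-transformation step entirely: rather than always anchoring at a lift of $\boldsymbol{b}_0$, it anchors at whichever of $\boldsymbol{b}_0,\boldsymbol{b}_1$ carries both a row and a column constraint (the $\theta$- or $\zeta$-type input), and then verifies that the remaining input vertex is a lift of the other one directly from the nonvanishing congruence. This matters because your parenthetical ``which fixes rows, hence keeps $\widetilde v_0$ a lift of $\boldsymbol{b}_0$'' is only valid when $\boldsymbol{b}_0$ is of type $\xi$; in the permutation $(\beta_\infty,\beta_0,\beta_r)$ the element $\boldsymbol{b}_0$ is of type $\zeta$ and carries a column constraint, so a nontrivial shift would break it. Your argument still goes through there, but only because no shift is actually needed: the nonvanishing condition $(i_1,\ell_1)=(i_0,\ell_0+\#(A_0\cap\partial_{\beta_0}T))$ already forces both the row and the column of $\widetilde v_1$ to match $\boldsymbol{b}_1$. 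The paper's choice of anchor sidesteps this wrinkle and makes each case a one-line check.
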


\begin{proof}
By Proposition~\ref{prop:lift-be}, we only have to show the second
sentence: let us assume ${\rm Cont}(T)(\boldsymbol{b}_{0}\otimes\boldsymbol{b}_{1})\neq0$
and find a lift $\widetilde{T}$ of $T$ such that there exists some
standard basis element $\boldsymbol{b}_{2}$ such that $\widetilde{T}$
is a $\{\boldsymbol{b}_{0},\boldsymbol{b}_{1},\boldsymbol{b}_{2}\}$-lift
of $T$.

Say $T$ has vertices $\theta\in\beta_{0}\cap\beta_{r}$, $\xi\in\beta_{r}\cap\beta_{\infty}$,
and $\zeta\in\beta_{\infty}\cap\beta_{0}$. First, since ${\rm Cont}(T)(\boldsymbol{b}_{0}\otimes\boldsymbol{b}_{1})\neq0$,
$\boldsymbol{b}_{0}$ and $\boldsymbol{b}_{1}$ must come from $\theta$,
$\xi$, or $\zeta$. Let us consider the three cases separately.

\textbf{Case $(\gamma_{0},\gamma_{1},\gamma_{2})=(\beta_{0},\beta_{r},\beta_{\infty})$:}
let $\boldsymbol{b}_{0}=x_{i_{0},\ell_{0}}^{\ast}\theta$ and $\boldsymbol{b}_{1}=y_{i_{1}}\xi$
(note that ${\rm Cont}(T)(\boldsymbol{b}_{0}\otimes\boldsymbol{b}_{1})$
is always nonzero). Let $\widetilde{T}$ be any lift of $T$ to $\widetilde{\mathbb{T}^{2}}$
such that one of its vertices is a lift of $\boldsymbol{b}_{0}$.
Say its vertices are $\widetilde{v}_{0}$, $\widetilde{v}_{1}$, and
$\widetilde{v}_{2}$, in clockwise order, where $\widetilde{v}_{0}$
is a lift of $\boldsymbol{b}_{0}$. By Lemma~\ref{lem:Let--be},
$\widetilde{v}_{1}$ and $\widetilde{v}_{2}$ are lifts of standard
basis elements. By Lemma~\ref{lem:unique}, there is a unique standard
basis element that comes from $\xi$, and so $\widetilde{v}_{1}$
is a lift of $\boldsymbol{b}_{1}$.

\textbf{Case $(\gamma_{0},\gamma_{1},\gamma_{2})=(\beta_{r},\beta_{\infty},\beta_{0})$:}
let $\boldsymbol{b}_{0}=y_{i_{0}}\xi$ and $\boldsymbol{b}_{1}=x_{i_{1},\ell_{1}}y_{i_{1}'}^{\ast}\zeta$.
Let $\widetilde{T}$ be any lift of $T$ to $\widetilde{\mathbb{T}^{2}}$
such that one of its vertices is a lift of $\boldsymbol{b}_{1}$.
Say its vertices are $\widetilde{v}_{0}$, $\widetilde{v}_{1}$, and
$\widetilde{v}_{2}$, in clockwise order, where $\widetilde{v}_{1}$
is a lift of $\boldsymbol{b}_{1}$. By Lemma~\ref{lem:Let--be},
we only need to show that $\widetilde{v}_{0}$ is a lift of $\boldsymbol{b}_{0}$.
This follows since $i_{1}'\equiv i_{0}+\#(A_{\infty}\cap\partial_{\beta_{\infty}}T)\bmod p$,
which is equivalent to ${\rm Cont}(T)(\boldsymbol{b}_{0}\otimes\boldsymbol{b}_{1})\neq0$.

\textbf{Case $(\gamma_{0},\gamma_{1},\gamma_{2})=(\beta_{\infty},\beta_{0},\beta_{r})$:}
let $\boldsymbol{b}_{0}=x_{i_{0},\ell_{0}}y_{i_{0}'}^{\ast}\zeta$
and $\boldsymbol{b}_{1}=x_{i_{1},\ell_{1}}^{\ast}\theta$. Let $\widetilde{T}$
be any lift of $T$ to $\widetilde{\mathbb{T}^{2}}$ such that one
of its vertices is a lift of $\boldsymbol{b}_{0}$. Say its vertices
are $\widetilde{v}_{0}$, $\widetilde{v}_{1}$, and $\widetilde{v}_{2}$,
in clockwise order, where $\widetilde{v}_{0}$ is a lift of $\boldsymbol{b}_{0}$.
By Lemma~\ref{lem:Let--be}, we only need to show that $\widetilde{v}_{1}$
is a lift of $\boldsymbol{b}_{1}$. This follows since $(i_{0},\ell_{0}+\#(A_{0}\cap\partial_{\beta_{0}}T))=(i_{1},\ell_{1})$
as elements of the index set $I$ from Definition~\ref{def:index-set},
which is equivalent to ${\rm Cont}(T)(\boldsymbol{b}_{0}\otimes\boldsymbol{b}_{1})\neq0$.
\end{proof}

\subsection{\label{subsec:An-explicit-formula}Explicit formulas for triangle
counting maps}
\begin{defn}
\label{def:theta-tilde}Let $n\in\mathbb{Z}$, and let $i,\ell\in\mathbb{Z}$
be such that $i=n\bmod p$ and $n=\ell p+i$. Let $\widetilde{\theta}_{n}$
be the intersection point in $\pi^{-1}(\beta_{0})\cap\widetilde{\beta}_{r}^{0}\cap\pi^{-1}(\theta_{i})$
that is in the $\ell$th column. Let $\widetilde{T}_{n}$ be the triangle
in $\widetilde{\mathbb{T}^{2}}$ such that two of its vertices are
$\widetilde{\theta}_{n}$ and the origin $(0,0)$. Let $T_{n}$ be
the image of $\widetilde{T}_{n}$ under the covering map $\pi:\widetilde{\mathbb{T}^{2}}\to\mathbb{T}^{2}$.
Let $z_{n}:=n_{z}(T_{n})=n_{\pi^{-1}(z)}(\widetilde{T}_{n})$.
\end{defn}

\begin{convention}
\label{conv:identification}We sometimes identify the triangles $\widetilde{T}_{n}$
and $\widetilde{T}_{n}+(c_{n}m,0)$ for all $m\in\mathbb{Z}$, since
they are both the $\{\boldsymbol{b}_{0},\boldsymbol{b}_{1},\boldsymbol{b}_{2}\}$-lift
of $T_{n}$ for the same standard basis elements $\boldsymbol{b}_{0},\boldsymbol{b}_{1},\boldsymbol{b}_{2}$.
\end{convention}

See Figures~\ref{fig:zigzags53}~and~\ref{fig:tildet2-1-1} for
some examples of $\widetilde{T}_{n}$. Let us record the following
lemma.
\begin{lem}
\label{lem:thetanrow}The intersection point $\widetilde{\theta}_{n}$
is in the $(-nq^{-1}\bmod p)$th row.
\end{lem}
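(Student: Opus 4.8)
The plan is to carry out a short coordinate computation in the cover $\widetilde{\mathbb{T}^{2}}=\mathbb{R}^{2}/(0,p)\mathbb{Z}$, tracking only the $y$-coordinate of $\widetilde{\theta}_{n}$. Recall that $\beta_{r}$ is the image of the line $\{px-qy=1/2\}\subset\mathbb{R}^{2}$; since $\gcd(p,q)=1$, the full preimage $\overline{\pi}^{-1}(\beta_{r})\subset\mathbb{R}^{2}$ is $\{(x,y):px-qy\in\tfrac12+\mathbb{Z}\}$, and after passing to $\widetilde{\mathbb{T}^{2}}$ the quantity $px-qy$ is well-defined modulo $pq$ (shifting $y$ by $p$ changes it by $-pq$). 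The connected components of $\pi^{-1}(\beta_{r})$ are thus indexed by the value of $px-qy$ in $\tfrac12+\mathbb{Z}/pq\mathbb{Z}$.

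First I would identify $\widetilde{\beta}_{r}^{0}$ in these terms. By definition it is the union of the components of $\pi^{-1}(\beta_{r})$ meeting $\pi^{-1}(\theta_{0})$ in the $0$th row; since $\theta_{0}=\overline{\pi}(\tfrac{1}{2p},0)$ and the $0$th row meets $\pi^{-1}(\beta_{0})$ only in the line $\{y=0\}$, the relevant points are $(\tfrac{1}{2p}+m,0)$ with $m\in\mathbb{Z}$, at which $px-qy=\tfrac12+pm$. Hence $\widetilde{\beta}_{r}^{0}$ is exactly the union of the $q$ components on which $px-qy\equiv\tfrac12\pmod{p}$ (this is the $i=0$ instance of the component-labeling already used implicitly in the proof of Lemma~\ref{lem:unique}). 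Similarly I would record that $\widetilde{\beta}_{0}^{b}$ is the component $\{y\equiv-b\pmod{p}\}$ of $\pi^{-1}(\beta_{0})$, because the unique integer in the length-one interval $]-b+\varepsilon-1,-b+\varepsilon[$ is $-b$; so an intersection point lying on $\pi^{-1}(\beta_{0})$ whose $y$-coordinate reduces to the integer residue $y_{0}$ is in the $b$th row precisely when $b=(-y_{0})\bmod p$.

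Now for $\widetilde{\theta}_{n}$ itself, with $i=n\bmod p$: it lies on $\pi^{-1}(\theta_{i})$, so its $x$-coordinate is $\equiv\tfrac{2i+1}{2p}\pmod 1$ and hence $px\equiv i+\tfrac12\pmod p$; it lies on $\pi^{-1}(\beta_{0})$, so its $y$-coordinate is an integer $y_{0}$; and it lies on $\widetilde{\beta}_{r}^{0}$, so $px-qy_{0}\equiv\tfrac12\pmod p$. Combining these gives $qy_{0}\equiv i\pmod p$, i.e.\ $y_{0}\equiv iq^{-1}\pmod p$, which in particular does not involve the column index $\ell$ (that index only selects which of the infinitely many lifts we take, i.e.\ fixes the $x$-coordinate). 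By the previous paragraph $\widetilde{\theta}_{n}$ is then in the $b$th row with $b=(-y_{0})\bmod p=(-iq^{-1})\bmod p=(-nq^{-1})\bmod p$, using $i\equiv n\pmod p$. That is the asserted row.

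I don't anticipate a genuine obstacle here: the content is ultimately a one-line congruence. The only thing that needs care is correctly translating the definitions of ``row'', ``column'' and $\widetilde{\beta}_{r}^{0}$ (with their $\varepsilon$-shifts, and remembering that $y$ lives in $\mathbb{R}/p\mathbb{Z}$) into the $(px-qy)$-bookkeeping before substituting, and keeping the sign straight in $-nq^{-1}$.
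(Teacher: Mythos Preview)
Your proof is correct and is essentially the same as the paper's: the paper simply refers back to Lemma~\ref{lem:theta-unique}, whose one-line proof establishes $\widetilde{\beta}_{r}^{0}\cap\widetilde{\beta}_{0}^{b}=\pi^{-1}(\theta_{-bq})\cap\widetilde{\beta}_{0}^{b}$, which is exactly the congruence $qy_{0}\equiv i\pmod p$ you derive in coordinates. Your version is more explicit and self-contained, but the underlying idea is identical.
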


\begin{proof}
Same as the proof of Lemma~\ref{lem:theta-unique}.
\end{proof}
Definition~\ref{def:notation-sb} lists all the standard basis elements
and our shorthands for them. See Figure~\ref{fig:tildet2}: for each
standard basis element $\boldsymbol{b}$, we labelled its lifts with
our shorthand for $\boldsymbol{b}$.
\begin{defn}[Shorthands for standard basis elements]
\label{def:notation-sb}Write
\[
\boldsymbol{\theta}_{i,\ell}:=x_{i,\ell}^{\ast}\theta_{i},\ \boldsymbol{\xi}_{j}:=y_{-jq^{-1}\bmod p}\xi_{j},\ \boldsymbol{\zeta}_{i,\ell}:=x_{i,\ell}y_{-iq^{-1}\bmod p}^{\ast}\zeta
\]
for $i=0,\cdots,p-1$, $j=0,\cdots,q-1$, $\ell=0,\cdots,c_{i}-1$.
By abuse of notation, also let
\[
\boldsymbol{\theta}_{i,\ell}:=x_{i,\ell}\theta_{i},\ \boldsymbol{\xi}_{j}:=y_{-jq^{-1}\bmod p}^{\ast}\xi_{j},\ \boldsymbol{\zeta}_{i,\ell}:=y_{-iq^{-1}\bmod p}x_{i,\ell}^{\ast}\zeta.
\]
Recall that the indices of $y$ are interpreted modulo $p$, and the
indices of $x$ lie in the index set $I$ (Definition~\ref{def:index-set}).
The indices of $\boldsymbol{\xi}$ are interpreted modulo $q$, and
the indices of $\boldsymbol{\theta}$ and $\boldsymbol{\zeta}$ lie
in the index set $I$.
\end{defn}

Note that if $n=\ell p+i$, $i\in\{0,\cdots,p-1\}$, and $c_{i}\neq0$,
then $\widetilde{\theta}_{n}$ is a lift of $\boldsymbol{\theta}_{i,\ell}$.
\begin{prop}[Formulas for $\mu_{2}$ (Theorem~\ref{thm:gen-local-comp}~(\ref{enu:triangle})),
case $k=0$]
\label{prop:triangle}Let us specialize to $k=0$, i.e.\ as in Definition~\ref{def:actual-local-system},
equip $\beta_{0}$ with $E_{0}^{0}$ and $\beta_{\infty}$ with $E_{\infty}^{0}$.
Given $n$, let $\ell,i,j\in\mathbb{Z}$ denote the integers such
that $i=n\bmod p\in\{0,\cdots,p-1\}$, $j=n\bmod q\in\{0,\cdots,q-1\}$,
and $n=\ell p+i$. Let $\psi_{0r}^{0},\psi_{r\infty}^{0},\psi_{\infty0}^{0}$
be as in the statement of Theorem~\ref{thm:gen-local-comp}. Then,
we have
\begin{gather}
\mu_{2}(\psi_{0r}^{0}\otimes\psi_{r\infty}^{0})=\sum_{n\in\mathbb{Z}}U^{z_{n}}\sum_{d=0}^{s_{i}-1}u_{i,d+\ell}v_{j-p}\boldsymbol{\zeta}_{i,d}\label{eq:0rinf}\\
\mu_{2}(\psi_{r\infty}^{0}\otimes\psi_{\infty0}^{0})=\sum_{n\in\mathbb{Z}}U^{z_{n}}\sum_{d=0}^{s_{i}-1}v_{j-p}w_{i,d}\boldsymbol{\theta}_{i,d+\ell}\label{eq:rinf0}\\
\mu_{2}(\psi_{\infty0}^{0}\otimes\psi_{0r}^{0})=\sum_{n\in\mathbb{Z}}U^{z_{n}}\sum_{d=0}^{s_{i}-1}u_{i,d+\ell}w_{i,d}\boldsymbol{\xi}_{j-p}.\label{eq:inf0r}
\end{gather}
\end{prop}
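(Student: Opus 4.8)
The plan is to expand each cycle in the standard basis of Definition~\ref{def:notation-sb}, reduce by $\mathbb{F}\llbracket U\rrbracket$-bilinearity to computing $\mu_2$ on pairs of standard basis elements, and then read off each $\mu_2$ from the covering-space description of triangle counting maps in Proposition~\ref{prop:lift-mu2}.

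First I would rewrite the cycles. When $k=0$ we have $c_i=s_i$ for all $i$ and the second double sum defining $\psi_{0r}^k$ is empty, so in the shorthand of Definition~\ref{def:notation-sb},
\[
\psi_{0r}^{0}=\sum_{i}\sum_{\ell}u_{i,\ell}\,\boldsymbol{\theta}_{i,\ell},\qquad \psi_{r\infty}^{0}=\sum_{j}v_{j}\,\boldsymbol{\xi}_{j},\qquad \psi_{\infty0}^{0}=\sum_{i}\sum_{\ell}w_{i,\ell}\,\boldsymbol{\zeta}_{i,\ell}.
\]
These are $\mathbb{F}\llbracket U\rrbracket$-linear combinations of standard basis elements, so by $\mathbb{F}\llbracket U\rrbracket$-bilinearity of $\mu_2$ it suffices to evaluate $\mu_2$ on each pair of standard basis elements. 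Moreover $k=0$ forces $u_i=1$ for all $i$ in Definition~\ref{def:actual-local-system}, so $\widetilde{z(u)}=\pi^{-1}(z)$ (Subsection~\ref{subsec:cover}); hence Proposition~\ref{prop:lift-mu2} says that each relevant (Maslov index $0$) triangle $T$ contributes $U^{n_{\pi^{-1}(z)}(\widetilde{T})}=U^{n_z(T)}$ times the appropriate output standard basis element, and $0$ if $T$ admits no lift all of whose corners are lifts of standard basis elements; each such $T$ has a unique holomorphic representative, as recorded in the proof of Proposition~\ref{prop:lift-be}.

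Second I would enumerate the relevant triangles via Definition~\ref{def:theta-tilde}: the Maslov index $0$ triangles with one corner on each of $\beta_0\cap\beta_r$, $\beta_r\cap\beta_\infty$, $\beta_\infty\cap\beta_0$ are precisely the $T_n$, $n\in\mathbb{Z}$, with $n_z(T_n)=z_n$. For fixed $n$ put $i=n\bmod p$, $j=n\bmod q$, and $n=\ell p+i$. Combining Lemma~\ref{lem:thetanrow} (which places the $\beta_0\cap\beta_r$ corner of a lift of $T_n$ in the $(-nq^{-1}\bmod p)$th row), Lemma~\ref{lem:Let--be}, and the slope-$p/q$ geometry of $\beta_r$ relative to the coordinate axes (which controls how the $\beta_r$- and $\beta_0$-edges of the lift run), one finds which standard basis elements the corners of a lift of $T_n$ are lifts of: the $\beta_r\cap\beta_\infty$ corner is a lift of $\boldsymbol{\xi}_{(j-p)\bmod q}$, while the $\beta_0\cap\beta_r$ corner and the $\beta_\infty\cap\beta_0$ corner are lifts of $\boldsymbol{\theta}_{i,\cdot}$ and $\boldsymbol{\zeta}_{i,\cdot}$ with the \emph{same} first index $i$, their second indices differing by $\ell$. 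Since translating a lift by $(m,0)$ is a deck transformation, it produces another lift of the same $T_n$; modulo the identification of Convention~\ref{conv:identification}, these translates realize, for each $d\in\mathbb{Z}/c_i\mathbb{Z}=\{0,\dots,s_i-1\}$, a $\{\boldsymbol{\theta}_{i,(d+\ell)\bmod c_i},\boldsymbol{\xi}_{(j-p)\bmod q},\boldsymbol{\zeta}_{i,d}\}$-lift of $T_n$, and these exhaust the relevant lifts. Summing $U^{z_n}$ over all $n$ and $d$ and inserting the coefficients $u_{i,\cdot},v_{\cdot},w_{i,\cdot}$ of the cycles at the two input corners gives Equation~(\ref{eq:0rinf}). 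Equations~(\ref{eq:rinf0})~and~(\ref{eq:inf0r}) follow identically: Proposition~\ref{prop:lift-mu2} and the enumeration are symmetric under the three cyclic permutations $(\gamma_0,\gamma_1,\gamma_2)$ of $(\beta_0,\beta_r,\beta_\infty)$, which only permute the roles of $\boldsymbol{\theta}$, $\boldsymbol{\xi}$, $\boldsymbol{\zeta}$.

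The step I expect to be the main obstacle is the index bookkeeping in the second paragraph: pinning down the precise shifts — the $-p$ in $\boldsymbol{\xi}_{(j-p)\bmod q}$, the $+\ell$ relating the two second indices, and the coincidence of the first indices at the $\beta_0\cap\beta_r$ and $\beta_\infty\cap\beta_0$ corners — by carefully tracing lifts through $\pi$, and verifying that the deck-translates $\widetilde{T}_n+(m,0)$ account for every contributing lift exactly once, with no omissions and no double counting. This rests only on Lemma~\ref{lem:thetanrow} together with the combinatorics of the slope-$p/q$ line $\beta_r$ crossing the axes, as fixed in Subsection~\ref{subsec:The-setup}; once those are granted the rest is formal. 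One should also note the small but essential point that the index range $d=0,\dots,s_i-1$ in the statement is exactly $\mathbb{Z}/c_i\mathbb{Z}$, because $c_i=s_i$ when $k=0$.
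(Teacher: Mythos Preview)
Your proposal is correct and follows essentially the same approach as the paper: expand the cycles in the standard basis, invoke Proposition~\ref{prop:lift-mu2}, enumerate the triangles $T_n$, and read off their $\{\boldsymbol{\theta}_{i,d+\ell},\boldsymbol{\xi}_{j-p},\boldsymbol{\zeta}_{i,d}\}$-lifts. The only cosmetic difference is that where you propose to pin down the index shifts via Lemma~\ref{lem:thetanrow} and Lemma~\ref{lem:Let--be}, the paper instead cites Lemma~\ref{lem:theta-unique} directly to conclude that the translates $\widetilde{T}_n+(d,0)$, $d\in\mathbb{Z}$, are exactly the lifts of $T_n$ with all three corners standard; either route gives the same bookkeeping.
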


\begin{proof}
The triangle $T_{n}$ in $\mathbb{T}^{2}$ has vertices $\zeta$,
$\theta_{i}$, and $\xi_{j-p}$. By Lemma~\ref{lem:theta-unique},
the triangles $\widetilde{T}_{n}+(d,0)$ for $d\in\mathbb{Z}$ are
the lifts of $T_{n}$ to $\widetilde{\mathbb{T}^{2}}$ such that all
its three vertices are lifts of standard basis elements. The vertices
of $\widetilde{T}_{n}+(d,0)$ are $\boldsymbol{\theta}_{i,d+\ell},\boldsymbol{\xi}_{j-p},\boldsymbol{\zeta}_{i,d}$.
Hence, $T_{n}$ has a $\{\boldsymbol{b}_{0},\boldsymbol{b}_{1},\boldsymbol{b}_{2}\}$-lift
for standard basis elements $\boldsymbol{b}_{0},\boldsymbol{b}_{1},\boldsymbol{b}_{2}$
if and only if
\[
\{\boldsymbol{b}_{0},\boldsymbol{b}_{1},\boldsymbol{b}_{2}\}=\{\boldsymbol{\theta}_{i,d+\ell},\boldsymbol{\xi}_{j-p},\boldsymbol{\zeta}_{i,d}\}
\]
for some $d=0,\cdots,c_{i}-1$. Hence, the proposition follows.
\end{proof}

\subsection{\label{subsec:Lifts-quad}Lifts of standard basis elements and quadrilateral
counting maps}

This subsection is analogous to Subsection~\ref{subsec:Lifts-and-standard-mu2},
but here we have the standard translate $\gamma_{3}$ of $\gamma_{0}$
from Subsection~\ref{subsec:The-setup} and we consider Maslov index
$-1$ quadrilaterals.

Let us first describe all the Maslov index $-1$ quadrilaterals. To
do so, we find it easy to work in the universal cover $\mathbb{R}^{2}$.
Let $\overline{\pi}:\mathbb{R}^{2}\to\mathbb{T}^{2}$ be the covering
map. For each triangle $\widetilde{T}_{n}$ in $\widetilde{\mathbb{T}^{2}}$,
fix a lift $\overline{T}_{n}$ of $\widetilde{T}_{n}$ to $\mathbb{R}^{2}$.

The Maslov index $-1$ quadrilaterals $Q$ are in one-to-one correspondence
with pairs $(T_{n},\overline{v})$ of a triangle $T_{n}$ and a point
$\overline{v}\in\partial_{\overline{\pi}^{-1}(\gamma_{0})}\overline{T}_{n}\cap\overline{\pi}^{-1}(\Theta^{+})$,
where $\Theta^{+}\in\gamma_{0}\cap\gamma_{3}$ is the intersection
point in the top homological grading. Given $(T_{n},\overline{v})$,
let us describe the corresponding quadrilateral $Q$. Let $\overline{v}_{i}\in\overline{\pi}^{-1}(\gamma_{i})\cap\overline{\pi}^{-1}(\gamma_{(i+1)\bmod3})$
for $i=0,1,2$ be the vertices of $\overline{T}_{n}$. Then, let $\overline{Q}\subset\mathbb{R}^{2}$
be the quadrilateral with sides in $\overline{\pi}^{-1}(\gamma_{i})$
for $i=0,1,2,3$ and vertices $\overline{v}_{0},\overline{v}_{1},\overline{v}_{2}',\overline{v}$,
where $\overline{v}_{2}'\in\overline{\pi}^{-1}(\gamma_{2})\cap\overline{\pi}^{-1}(\gamma_{3})$
is the nearest point to $\overline{v}_{2}$. The quadrilateral $Q$
is given by the image of $\overline{Q}$ under $\overline{\pi}$.
See Figure~\ref{fig:quadrilaterals} for the lifts of some quadrilaterals
for $(p,q,k)=(5,3,0)$. Note that $Q$ and $T_{n}$ only differ in
the small region between $\gamma_{0}$ and $\gamma_{3}$; we say that
\emph{$Q$ is a small perturbation of $T_{n}$}.

\begin{defn}[More lifts of standard basis elements]
\label{def:lift-of-basis-1}Call a basis element $\boldsymbol{b}$
of $\boldsymbol{CF}^{-}(\gamma_{0}^{F_{0}},\gamma_{3}^{F_{0}})$ \emph{standard}
if it is of the form $ee^{\ast}\Theta^{+}$, where $e$ is a basis
element of $F_{0}$ and $\Theta^{+}\in\gamma_{0}\cap\gamma_{3}$ is
the intersection point in the top homological grading.
\begin{enumerate}
\item A point $\widetilde{\Theta}\in\pi^{-1}(\Theta_{0}^{+})$ is a \emph{lift}
of $x_{i,\ell}x_{i,\ell}^{\ast}\Theta_{0}^{+}$ if it lies in the
$(a,b)$th square for 
\[
a\equiv\ell\mod c_{i},\ b\equiv-iq^{-1}\mod p.
\]
\item Every point $\widetilde{\Theta}\in\pi^{-1}(\Theta_{r}^{+})$ is a
\emph{lift} of $\Theta_{r}^{+}$.
\item A point $\widetilde{\Theta}\in\pi^{-1}(\Theta_{\infty}^{+})$ is a
\emph{lift} of $y_{i'}y_{i'}^{\ast}\Theta_{\infty}^{+}$ if it lies
in the $b$th row for 
\[
b\equiv i'\mod p.
\]
\end{enumerate}
\end{defn}

As in Subsection~\ref{subsec:Lifts-and-standard-mu2}, we study $\mu_{3}$
by studying the \emph{contribution }${\rm Cont}(Q)$\emph{ of each
quadrilateral} $Q$ separately. Note that $\mu_{3}$ is the sum of
${\rm Cont}(Q)$ for all Maslov index $-1$ quadrilaterals $Q$.
\begin{defn}
If $Q$ is a Maslov index $-1$ quadrilateral with vertices $c_{t}\in\gamma_{t}\cap\gamma_{(t+1)\bmod4}$
($t=0,1,2,3$), define the \emph{contribution of $Q$ to $\mu_{3}$}
as
\begin{multline*}
{\rm Cont}(Q):\boldsymbol{CF}^{-}(\gamma_{0}^{F_{0}},\gamma_{1}^{F_{1}})\otimes\boldsymbol{CF}^{-}(\gamma_{1}^{F_{1}},\gamma_{2}^{F_{2}})\otimes\boldsymbol{CF}^{-}(\gamma_{2}^{F_{2}},\gamma_{3}^{F_{0}})\to\boldsymbol{CF}^{-}(\gamma_{0}^{F_{0}},\gamma_{3}^{F_{0}}):\\
e_{1}a_{1}\otimes e_{2}a_{2}\otimes e_{3}a_{3}\mapsto\begin{cases}
U^{n_{z}(Q)}\rho(Q)(e_{1}\otimes e_{2}\otimes e_{3})c_{4} & {\rm if}\ (a_{1},a_{2},a_{3})=(c_{1},c_{2},c_{3})\\
0 & {\rm otherwise}
\end{cases}.
\end{multline*}
\end{defn}

We have the following analogue of Proposition~\ref{prop:lift-be}.
\begin{defn}
Let $Q$ be a Maslov index $-1$ quadrilateral in $\mathbb{T}^{2}$.
Let $\boldsymbol{b}_{t}$ be a basis element of $\boldsymbol{CF}^{-}(\gamma_{t}^{F_{t}},\gamma_{t+1}^{F_{t+1}})$
for $t=0,1,2$, and let $\boldsymbol{b}_{3}$ be a standard basis
element of $\boldsymbol{CF}^{-}(\gamma_{0}^{F_{0}},\gamma_{3}^{F_{0}})$.
A lift $\widetilde{Q}$ of $Q$ to $\widetilde{\mathbb{T}^{2}}$ is
a\emph{ $\{\boldsymbol{b}_{0},\boldsymbol{b}_{1},\boldsymbol{b}_{2},\boldsymbol{b}_{3}\}$-lift
of $Q$} if its three vertices are lifts of $\boldsymbol{b}_{0}$,
$\boldsymbol{b}_{1}$, $\boldsymbol{b}_{2}$, and $\boldsymbol{b}_{3}$.
\end{defn}

\begin{prop}
\label{prop:lift-quad-easy}Let $Q$ be a Maslov index $-1$ quadrilateral
in $\mathbb{T}^{2}$, let $\boldsymbol{b}_{t}$ be a basis element
of $\boldsymbol{CF}^{-}(\gamma_{t}^{F_{t}},\gamma_{t+1}^{F_{t+1}})$
for $t=0,1,2$, and let $\boldsymbol{b}_{3}$ be a standard basis
element of $\boldsymbol{CF}^{-}(\gamma_{0}^{F_{0}},\gamma_{3}^{F_{0}})$.
If $Q$ has a $\{\boldsymbol{b}_{0},\boldsymbol{b}_{1},\boldsymbol{b}_{2},\boldsymbol{b}_{3}\}$-lift,
then 
\[
{\rm Cont}(Q)(\boldsymbol{b}_{0}\otimes\boldsymbol{b}_{1}\otimes\boldsymbol{b}_{2})=U^{n_{\widetilde{z(u)}}(\widetilde{Q})}\boldsymbol{b}_{3}.
\]
\end{prop}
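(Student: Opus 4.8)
\textbf{Proof proposal for Proposition~\ref{prop:lift-quad-easy}.}

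The plan is to mirror the proof of Proposition~\ref{prop:lift-be}, where the analogous statement for triangles was ``direct from the definitions.'' The key structural fact to exploit is the correspondence established just before Definition~\ref{def:lift-of-basis-1}: every Maslov index $-1$ quadrilateral $Q$ in $\mathbb{T}^{2}$ is a small perturbation of some triangle $T_{n}$, obtained by replacing one vertex $\overline{v}_{2}$ by its nearest point $\overline{v}_{2}'$ on $\gamma_{3}$ and inserting the top-grading intersection point $\Theta^{+}\in\gamma_{0}\cap\gamma_{3}$ as the fourth vertex. In our genus $1$ Heegaard diagram every Maslov index $0$ triangle and every Maslov index $-1$ quadrilateral has a unique holomorphic representative, so the count $\#\mathcal{M}(Q)$ contributing to ${\rm Cont}(Q)$ is exactly $1$, and by Definition of ${\rm Cont}(Q)$ we have ${\rm Cont}(Q)(\boldsymbol{b}_{0}\otimes\boldsymbol{b}_{1}\otimes\boldsymbol{b}_{2}) = U^{n_{z}(Q)}\rho(Q)(e_{1}\otimes e_{2}\otimes e_{3})\,c_{4}$ whenever $(a_{1},a_{2},a_{3})$ matches the first three vertices of $Q$.

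First I would unwind what it means for $Q$ to have a $\{\boldsymbol{b}_{0},\boldsymbol{b}_{1},\boldsymbol{b}_{2},\boldsymbol{b}_{3}\}$-lift $\widetilde{Q}$ to $\widetilde{\mathbb{T}^{2}}$: the four vertices of $\widetilde{Q}$ are lifts of $\boldsymbol{b}_{0},\boldsymbol{b}_{1},\boldsymbol{b}_{2},\boldsymbol{b}_{3}$ in the sense of Definitions~\ref{def:lift-of-basis}~and~\ref{def:lift-of-basis-1}. The content of the lift conditions is precisely a bookkeeping of which row and column (hence which basis vector $x_{i,\ell}$, $y_{i'}$) each corner corresponds to; tracking the row/column indices of the vertices of $\widetilde{Q}$ along its boundary arcs in $\pi^{-1}(\gamma_{0}), \pi^{-1}(\gamma_{1}), \pi^{-1}(\gamma_{2}), \pi^{-1}(\gamma_{3})$ is exactly the combinatorial encoding of how the monodromy maps $\phi_{i}$ act. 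Concretely, the exponent $\#(A_{t}\cap\partial_{\gamma_{t}}Q)$ appearing in $\rho(Q)$ records how many times the boundary arc of $\widetilde{Q}$ along $\pi^{-1}(\gamma_{t})$ shifts rows or columns, and the hypothesis that $\widetilde{Q}$ is a lift forces $\rho(Q)(e_{1}\otimes e_{2}\otimes e_{3})$ to be a single basis element of ${\rm Hom}_{\mathbb{F}\llbracket U\rrbracket}(F_{0},F_{0})$ times a power of $U$, namely the basis element $e_{4}$ with $c_{4}$-coefficient giving $\boldsymbol{b}_{3}$. The only subtlety beyond the triangle case is the bookkeeping for the $U$-weights coming from $\widetilde{z(u)}$ rather than $\pi^{-1}(z)$: since the basepoint-region data $\widetilde{z(u)}$ was defined (Subsection~\ref{subsec:cover}) so that passing over a square with $u_{b}=U$ contributes an extra factor of $U$, and since by construction $U^{n_{z}(Q)}\rho(Q) = U^{n_{\widetilde{z(u)}}(\widetilde{Q})}$ on the nose (this is exactly the identity $U^{n_{z}(\mathcal{D})}\phi^{\#(A\cap\partial \mathcal{D})} = (U\phi^{-1})^{n_{z}(\mathcal{D})}\phi^{n_{w}(\mathcal{D})}$ from the proof of Lemma~\ref{lem:local-system-simple-lemma}, applied corner by corner), the weight on $\boldsymbol{b}_{3}$ is $U^{n_{\widetilde{z(u)}}(\widetilde{Q})}$.

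So the key steps, in order, are: (1) note that $Q$ has a unique holomorphic representative and that $(a_{1},a_{2},a_{3})$ automatically equals the first three vertices of $Q$ once $\boldsymbol{b}_{0},\boldsymbol{b}_{1},\boldsymbol{b}_{2}$ come from those vertices (which is forced by the existence of the lift); (2) using the lift $\widetilde{Q}$, read off that $\rho(Q)(e_{1}\otimes e_{2}\otimes e_{3})$ equals a power of $U$ times the basis element dual-paired to $\boldsymbol{b}_{3}$; (3) combine the $U$-power from $\rho(Q)$ with $U^{n_{z}(Q)}$ and identify the total with $U^{n_{\widetilde{z(u)}}(\widetilde{Q})}$ via the corner-by-corner version of the Lemma~\ref{lem:local-system-simple-lemma} computation, now applied at the three attaching curves carrying nontrivial arcs $A_{0}, A_{\infty}$. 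I do not expect a genuine obstacle here; the ``hard part'' is only the careful index-chasing through Definitions~\ref{def:lift-of-basis}~and~\ref{def:lift-of-basis-1} to confirm that a lift of $Q$ pins down $\boldsymbol{b}_{3}$ and the $U$-exponent exactly — everything else is forced by transversality and the already-proven triangle statement, of which this is the perturbed analogue.
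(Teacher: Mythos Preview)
Your proposal is correct and takes essentially the same approach as the paper, which proves this by the single sentence ``Direct from the definitions. Note that in our Heegaard diagram, every Maslov index $-1$ quadrilateral has a unique holomorphic representative.'' You have simply unpacked what ``direct from the definitions'' entails: the unique holomorphic representative, the row/column index-chasing that pins down $\boldsymbol{b}_{3}$, and the identification of the combined $U$-weight with $n_{\widetilde{z(u)}}(\widetilde{Q})$ --- all of which is exactly the content the paper leaves implicit.
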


\begin{proof}
Direct from the definitions. Note that in our Heegaard diagram, every
Maslov index $-1$ quadrilateral has a unique holomorphic representative.
\end{proof}
Proposition~\ref{prop:main-mu3} is the main proposition of this
subsection. See Figure~\ref{fig:quadrilaterals} for the lifts of
some quadrilaterals for $(p,q,k)=(5,3,0)$, together with the corresponding
$\boldsymbol{b}_{0},\boldsymbol{b}_{1},\boldsymbol{b}_{2}',\boldsymbol{b}_{3}$.
\begin{prop}[Main proposition for $\mu_{3}$]
\label{prop:main-mu3}Let $Q$ be a Maslov index $-1$ quadrilateral
that corresponds to $(T_{n},\overline{v})$, let $\boldsymbol{b}_{t}$
be a standard basis element of $\boldsymbol{CF}^{-}(\gamma_{t}^{F_{t}},\gamma_{(t+1)\bmod3}^{F_{(t+1)\bmod3}})$
for $t=0,1,2$, and let $\boldsymbol{b}_{2}'\in\boldsymbol{CF}^{-}(\gamma_{2}^{F_{2}},\gamma_{3}^{F_{0}})$
be the image of $\boldsymbol{b}_{2}$ under the nearest point map
(Subsection~\ref{subsec:Standard-translates}). If $T_{n}$ does
not admit a $\{\boldsymbol{b}_{0},\boldsymbol{b}_{1},\boldsymbol{b}_{2}\}$-lift,
then ${\rm Cont}(Q)(\boldsymbol{b}_{0}\otimes\boldsymbol{b}_{1}\otimes\boldsymbol{b}_{2}')=0$.
Otherwise, there exists some $d\in\mathbb{Z}$ such that $\widetilde{T}_{n}+(d,0)$
is a $\{\boldsymbol{b}_{0},\boldsymbol{b}_{1},\boldsymbol{b}_{2}\}$-lift
of $T_{n}$. Let $\boldsymbol{b}_{3}$ be the standard basis element
of $\boldsymbol{CF}^{-}(\gamma_{0}^{F_{0}},\gamma_{3}^{F_{0}})$ such
that if $\widehat{\pi}:\mathbb{R}^{2}\to\widetilde{\mathbb{T}^{2}}$
is the covering map, then $\widehat{\pi}(\overline{v}+(d,0))$ is
a lift of $\boldsymbol{b}_{3}$. Then, we have 
\[
{\rm Cont}(Q)(\boldsymbol{b}_{0}\otimes\boldsymbol{b}_{1}\otimes\boldsymbol{b}_{2}')=U^{n_{\widetilde{z(u)}}(\widetilde{T}_{n})}\boldsymbol{b}_{3}.
\]
\end{prop}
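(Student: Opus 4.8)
The plan is to run the argument of Proposition~\ref{prop:lift-mu2} with triangles replaced by Maslov index $-1$ quadrilaterals, using Proposition~\ref{prop:lift-quad-easy} where the proof of Proposition~\ref{prop:lift-mu2} used Proposition~\ref{prop:lift-be}, and exploiting throughout that $Q$ is a small perturbation of $T_n$ (so the two differ only inside the tiny region between $\gamma_0$ and $\gamma_3$, which avoids $z$). I would reduce the statement to two assertions. \emph{Forward direction}: if $T_n$ admits a $\{\boldsymbol{b}_0,\boldsymbol{b}_1,\boldsymbol{b}_2\}$-lift, then, picking any $d$ with $\widetilde{T}_n+(d,0)$ such a lift (this exists by hypothesis and is unique modulo $c_i$ by Convention~\ref{conv:identification}) and defining $\boldsymbol{b}_3$ as in the statement, the perturbed quadrilateral $\widehat{\pi}(\overline{Q}+(d,0))$ is a $\{\boldsymbol{b}_0,\boldsymbol{b}_1,\boldsymbol{b}_2',\boldsymbol{b}_3\}$-lift of $Q$ with $n_{\widetilde{z(u)}}(\widetilde{Q})=n_{\widetilde{z(u)}}(\widetilde{T}_n)$; Proposition~\ref{prop:lift-quad-easy} then gives the formula. \emph{Reverse direction}: if ${\rm Cont}(Q)(\boldsymbol{b}_0\otimes\boldsymbol{b}_1\otimes\boldsymbol{b}_2')\neq 0$ then $T_n$ admits a $\{\boldsymbol{b}_0,\boldsymbol{b}_1,\boldsymbol{b}_2\}$-lift; this is the contrapositive of the first sentence of the Proposition.

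For the forward direction, by the construction of $Q$ from the pair $(T_n,\overline{v})$, the first two vertices of $\widehat{\pi}(\overline{Q}+(d,0))$ agree with the $\gamma_0\cap\gamma_1$- and $\gamma_1\cap\gamma_2$-vertices of $\widetilde{T}_n+(d,0)$ (hence are lifts of $\boldsymbol{b}_0$ and $\boldsymbol{b}_1$), its $\gamma_2\cap\gamma_3$-vertex is the nearest point of the $\gamma_2\cap\gamma_0$-vertex of $\widetilde{T}_n+(d,0)$ (hence a lift of $\boldsymbol{b}_2'=\Phi(\boldsymbol{b}_2)$), and its $\gamma_3\cap\gamma_0$-vertex is a lift of $\boldsymbol{b}_3$ by the definition of $\boldsymbol{b}_3$ and Definition~\ref{def:lift-of-basis-1}. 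One should also note that $\boldsymbol{b}_3$ does not depend on the choice of $d$, since replacing $d$ by $d+c_i$ translates $\overline{v}$ by $(c_i,0)$ and the $\ell$-index of a lift of $\Theta_0^+$ is read modulo $c_i$. Finally $n_{\widetilde{z(u)}}(\widetilde{Q})=n_{\widetilde{z(u)}}(\widetilde{T}_n)$ because $\widetilde{z(u)}$ is invariant under the deck translation $(1,0)$ (so we may take $d=0$) and, by the explicit positions in Subsections~\ref{subsec:The-setup} and \ref{subsec:cover}, is disjoint from the small region between $\gamma_0$ and $\gamma_3$, so $\widetilde{Q}$ and $\widetilde{T}_n$ have the same local multiplicity at every point of $\widetilde{z(u)}$.

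For the reverse direction, nonvanishing of ${\rm Cont}(Q)(\boldsymbol{b}_0\otimes\boldsymbol{b}_1\otimes\boldsymbol{b}_2')$ forces the three input corners of $Q$ to be the intersection points underlying $\boldsymbol{b}_0,\boldsymbol{b}_1,\boldsymbol{b}_2'$; since the first two of these coincide with the first two corners of $T_n$ and the third is the nearest point of the third corner of $T_n$, it follows that $\boldsymbol{b}_0,\boldsymbol{b}_1$ come from the first two corners of $T_n$ and $\boldsymbol{b}_2$ from the third, and --- modulo the monodromy bookkeeping discussed below, which shows $\rho(Q)$ differs from the monodromy of $T_n$ only by the trivial contribution of the $\Theta^+$-corner along $\gamma_3$ --- that ${\rm Cont}(T_n)(\boldsymbol{b}_0\otimes\boldsymbol{b}_1)\neq 0$. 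Proposition~\ref{prop:lift-mu2} then gives the unique standard $\boldsymbol{b}_2''$ for which $T_n$ admits a $\{\boldsymbol{b}_0,\boldsymbol{b}_1,\boldsymbol{b}_2''\}$-lift, and it remains to check $\boldsymbol{b}_2''=\boldsymbol{b}_2$. This I would do by the same case analysis on the cyclic permutation $(\gamma_0,\gamma_1,\gamma_2)$ as in the proof of Proposition~\ref{prop:lift-mu2}, using Lemmas~\ref{lem:theta-unique}, \ref{lem:unique}, and \ref{lem:Let--be} and the extra datum that $\boldsymbol{b}_2$ is a standard basis element coming from the third corner of $T_n$: concretely, $\boldsymbol{b}_0$ and $\boldsymbol{b}_1$ already pin down the translate $\widetilde{T}_n+(d,0)$ modulo $c_i$, hence its third vertex, which must then be a lift of $\boldsymbol{b}_2$.

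The step I expect to be the main obstacle is the bookkeeping at the $\Theta^+$-corner used in both directions: the small region between $\gamma_0$ and $\gamma_3$ is exactly where the oriented arc associated to $\gamma_0$'s local system (namely $A_0$ or $A_\infty$, recalling that $\beta_r$ carries the trivial local system) lies, so one must verify, separately for each cyclic permutation and with reference to Figure~\ref{fig:standard-translate}, that passing from $T_n$ to $Q$ alters neither the $z$-multiplicity $n_z$ nor the $\widetilde{z(u)}$-multiplicity --- equivalently, that the monodromy picked up along the new $\gamma_3$-side and at the $\Theta^+$-corner is trivial, which is the local-system refinement of the statement that $\Theta^+$ acts as the identity. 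Once this is in place, the rest is a routine transcription of the $\mu_2$ arguments.
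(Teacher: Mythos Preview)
Your proposal is correct and follows essentially the same approach as the paper: the forward direction via Proposition~\ref{prop:lift-quad-easy} together with the observation that $\widetilde{z(u)}$ is $(1,0)$-invariant and disjoint from the small region between $\pi^{-1}(\gamma_0)$ and $\pi^{-1}(\gamma_3)$ is exactly what the paper does, and the reverse direction is, as the paper says, ``a routine case check similar to the proof of Proposition~\ref{prop:lift-mu2}.''

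One minor point of comparison: for the reverse direction you propose to first deduce ${\rm Cont}(T_n)(\boldsymbol{b}_0\otimes\boldsymbol{b}_1)\neq 0$ and then invoke Proposition~\ref{prop:lift-mu2}, which is what generates your ``main obstacle'' about monodromy bookkeeping at the $\Theta^+$-corner. The paper sidesteps this entirely by running the case analysis of Proposition~\ref{prop:lift-mu2} \emph{directly} for the quadrilateral $Q$ rather than passing through the triangle; since the lift definitions in Definitions~\ref{def:lift-of-basis} and~\ref{def:lift-of-basis-1} are set up precisely so that Proposition~\ref{prop:lift-quad-easy} holds ``direct from the definitions,'' no separate comparison of $\rho(Q)$ with $\rho(T_n)$ is needed. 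Your route works too, but the detour through ${\rm Cont}(T_n)$ is unnecessary.
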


\begin{proof}
The second sentence follows from a routine case check similar to the
proof of Proposition~\ref{prop:lift-mu2}. The rest follows from
Proposition~\ref{prop:lift-quad-easy}: let $\widetilde{Q}$ be the
lift of $Q$ from the discussion before Definition~\ref{def:lift-of-basis-1}.
If $\widetilde{T}_{n}+(d,0)$ is a $\{\boldsymbol{b}_{0},\boldsymbol{b}_{1},\boldsymbol{b}_{2}\}$-lift
of $T_{n}$, then $\widetilde{Q}+(d,0)$ is a $\{\boldsymbol{b}_{0},\boldsymbol{b}_{1},\boldsymbol{b}_{2}',\boldsymbol{b}_{3}\}$-lift
of $Q$. Since $\widetilde{z(u)}=\widetilde{z(u)}+(d,0)$ and since
$\widetilde{T}_{n}$ and $\widetilde{Q}$ only differ in the small
regions between $\pi^{-1}(\gamma_{0})$ and $\pi^{-1}(\gamma_{3})$,
we have
\[
n_{\widetilde{z(u)}}(\widetilde{Q}+(d,0))=n_{\widetilde{z(u)}}(\widetilde{Q})=n_{\widetilde{z(u)}}(\widetilde{T}_{n}).\qedhere
\]
\end{proof}

\section{\label{sec:The-hat-version}The hat version}

In this section, we show that Theorem~\ref{thm:gen-local-comp}~(\ref{enu:quadrilateral})
holds whenever Equation~(\ref{eq:uvw-modu}) is satisfied (note that
Equation~(\ref{eq:uvw-modu}) determines the cycles $\psi_{0r}^{k},\psi_{r\infty}^{k},\psi_{\infty0}^{k}$
modulo $U$). Along the way, we motivate the definition of the local
systems $E_{0}^{k}$ and $E_{\infty}^{k}$, and Equation~(\ref{eq:uvw-modu}).

For completeness, we also prove that Theorem~\ref{thm:gen-local-comp}~(\ref{enu:triangle})
holds modulo $U$ whenever Equation~(\ref{eq:uvw-modu}) holds, thereby
proving the hat version of Theorem~\ref{thm:gen-local-comp}. The
proof of Theorem~\ref{thm:gen-local-comp}~(\ref{enu:triangle})
for the minus version in Section~\ref{sec:The-minus-version} does
not depend on this.

Recall the triangles $\widetilde{T}_{n}$ and the intersection points
$\widetilde{\theta}_{n}$ from Definition~\ref{def:theta-tilde}.

\begin{figure}[h]
\begin{centering}
\raisebox{-0.5\height}{\includegraphics[scale=0.5]{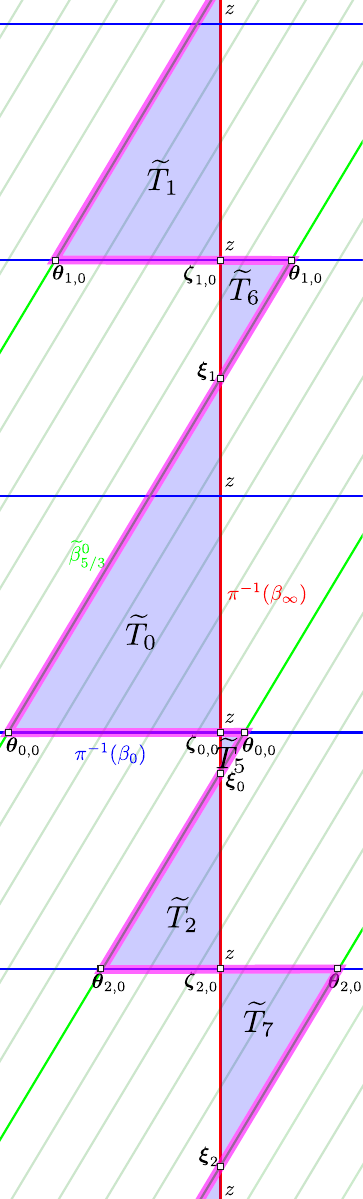}}\enspace{}\raisebox{-0.5\height}{\includegraphics[scale=0.5]{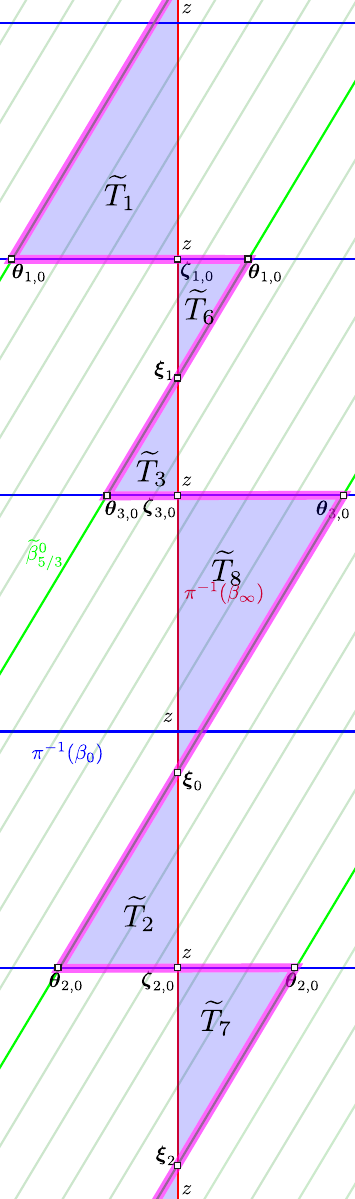}}\enspace{}\raisebox{-0.5\height}{\includegraphics[scale=0.5]{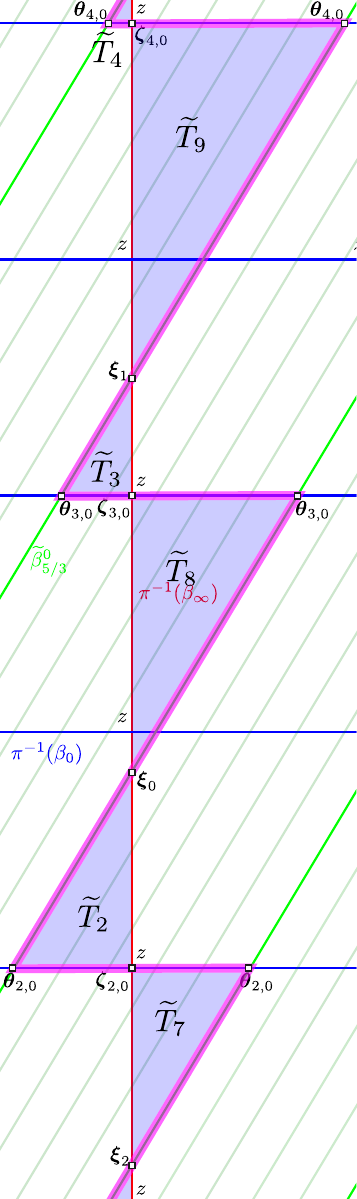}}\enspace{}\raisebox{-0.5\height}{\includegraphics[scale=0.5]{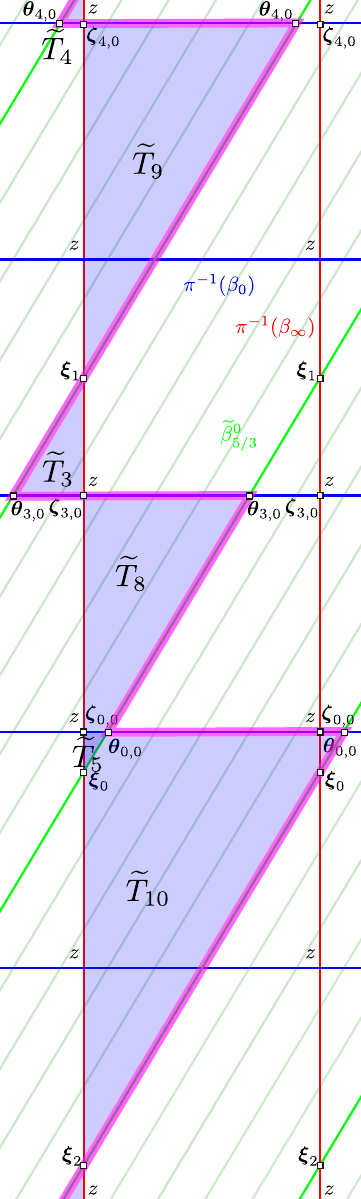}}\enspace{}\raisebox{-0.5\height}{\includegraphics[scale=0.5]{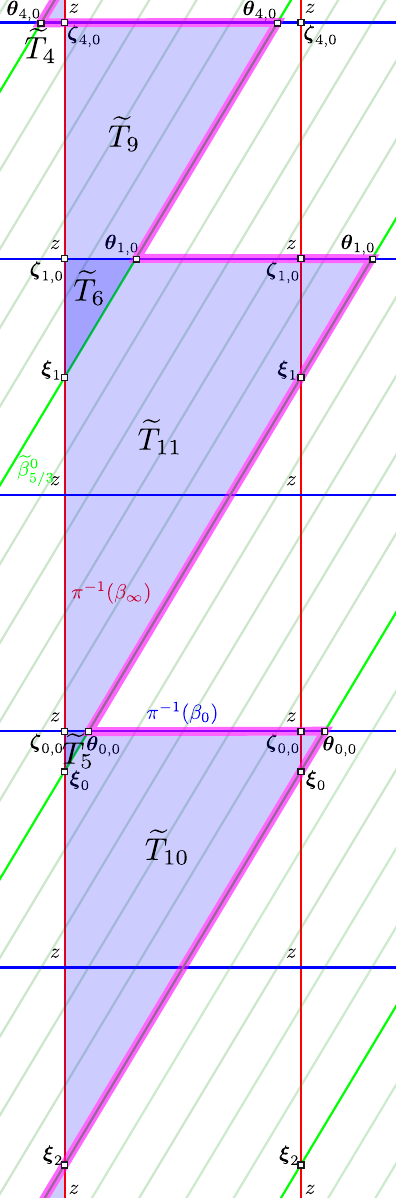}}
\par\end{centering}
\caption{\label{fig:zigzags53}Zig-zags and triangles for $(p,q)=(5,3)$, $k=0,1,2,3,4$,
respectively.}
\end{figure}
\begin{figure}[h]
\begin{centering}
\raisebox{-0.5\height}{\includegraphics[scale=0.45]{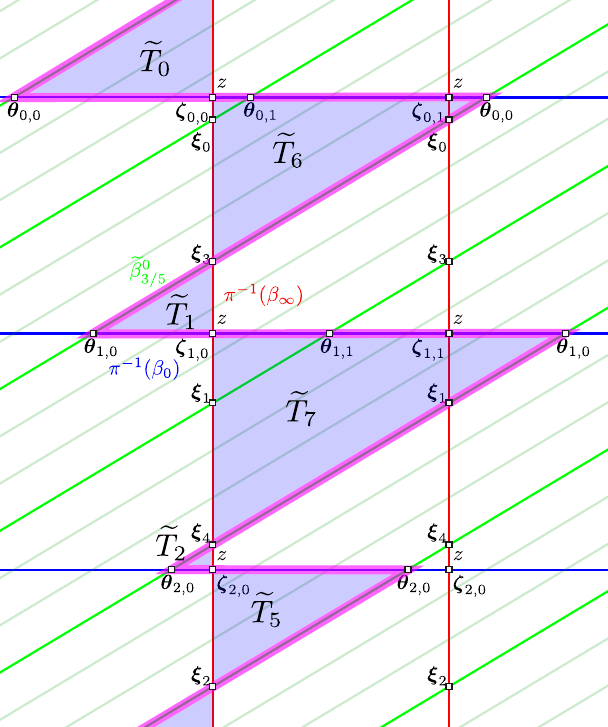}}\quad{}\raisebox{-0.5\height}{\includegraphics[scale=0.45]{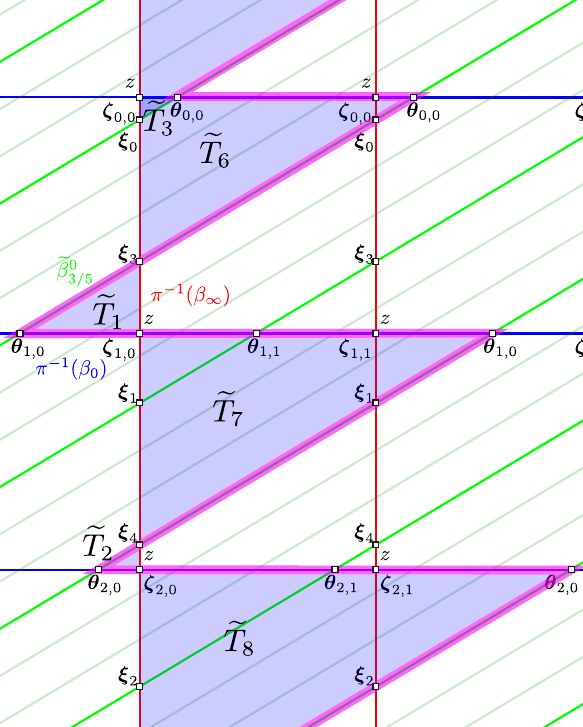}}\quad{}\raisebox{-0.5\height}{\includegraphics[scale=0.45]{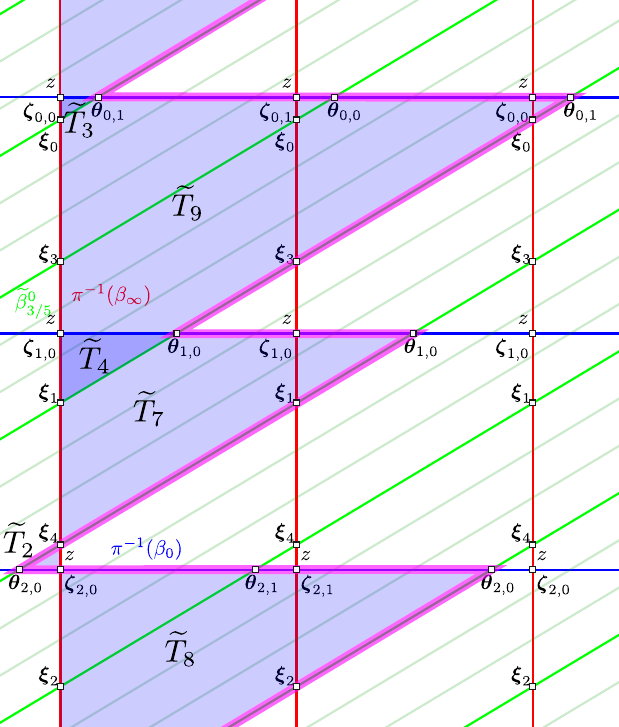}}
\par\end{centering}
\caption{\label{fig:tildet2-1-1}Zig-zags and triangles for $(p,q)=(3,5)$,
$k=0,1,2$, respectively.}
\end{figure}

\subsection{\label{subsec:The-zig-zag}The zig-zag $ZZ_{0}$}

One key observation that motivated the rational surgery exact triangles
(Theorem~\ref{thm:rational-surgery-k=00003D0}~and~\ref{thm:rational-surgery-kgeneral})
is that the boundaries of certain triangles $\widetilde{T}_{n}$ connect
and form a zig-zag. In this subsection, we define the \emph{zig-zag
$ZZ_{0}\subset\widetilde{\mathbb{T}^{2}}$} which corresponds to the
case $k=0$. These are drawn in the leftmost diagrams of Figures~\ref{fig:zigzags53}~and~\ref{fig:tildet2-1-1},
for $(p,q)=(5,3),(3,5)$, respectively.

Let $b\in\mathbb{Z}/p\mathbb{Z}$. Recall from Subsection~\ref{subsec:cover}
that $\widetilde{\beta}_{0}^{b}$ is the connected component of $\pi^{-1}(\beta_{0})$
that is in the $b$th row. Consider all the intersection points $\widetilde{\theta}_{n}\in\widetilde{\beta}_{0}^{b}$
such that $z_{n}:=n_{\pi^{-1}(z)}(\widetilde{T}_{n})=0$. If there
are at least two such intersection points $\widetilde{\theta}_{n}$,
then call $\widetilde{\beta}_{0}^{b}$ \emph{$0$-special}, and call
the leftmost and rightmost such $\widetilde{\theta}_{n}$'s\emph{
$0$-special}. Call the triangle $\widetilde{T}_{n}$ \emph{$0$-special}
if $\widetilde{\theta}_{n}$ is $0$-special (recall that $\widetilde{\theta}_{n}$
is a vertex of $\widetilde{T}_{n}$). We have the following lemma.
\begin{lem}
\label{lem:51}We have $z_{n}=0$ if and only if $n\in\{0,\cdots,p+q-1\}$.
Hence, for $b\in\mathbb{Z}/p\mathbb{Z}$, if $i$ is such that $b\equiv-iq^{-1}\bmod p$,
then there are $s_{i}+1$ (recall Definition~\ref{def:si}) many
$\widetilde{\theta}_{n}\in\widetilde{\beta}_{0}^{b}$ such that $z_{n}=0$.
Thus, $\widetilde{\beta}_{0}^{b}$ is $0$-special if and only if
$b\equiv-iq^{-1}\bmod p$ for some $i=0,1,\cdots,\min(p,q)$, and
the following are equivalent:
\begin{enumerate}
\item the intersection point $\widetilde{\theta}_{n}$ is $0$-special
\item the triangle $\widetilde{T}_{n}$ is $0$-special
\item $n\in\{0,1,\cdots,\min(p,q)-1,\max(p,q),\max(p,q)+1,\cdots,p+q-1\}$.
\end{enumerate}
\end{lem}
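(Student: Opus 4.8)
The plan is to compute $z_n = n_z(T_n)$ directly from the explicit combinatorial description of the triangle $\widetilde{T}_n$ in the cover $\widetilde{\mathbb{T}^2}$, or equivalently from the lift $\overline{T}_n$ to the universal cover $\mathbb{R}^2$. Recall from Definition~\ref{def:theta-tilde} that $\widetilde{T}_n$ is the Maslov index $0$ triangle with two of its vertices at the origin $(0,0)$ (the lift $\widetilde{\zeta}_{0,0}$ of $\zeta$) and at $\widetilde{\theta}_n \in \pi^{-1}(\beta_0)\cap\widetilde{\beta}_r^0\cap\pi^{-1}(\theta_i)$, where $n = \ell p + i$, $i\in\{0,\dots,p-1\}$. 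The third vertex lies on $\beta_\infty$, namely at the lift of some $\xi_{j-p}$. First I would set up coordinates: in $\mathbb{R}^2$, the lift $\overline{T}_n$ is the triangle bounded by a segment of the $x$-axis (lifting $\beta_0$), a segment of a vertical line (lifting $\beta_\infty$), and a segment of the slope-$r$ line through $(1/(2p),0)$ (lifting $\beta_r$); as $n$ increases by $1$, the vertex $\overline{\theta}_n$ moves one "step" further out along the slope-$r$ line, so that the triangle $\overline{T}_n$ is (roughly) a right triangle whose legs have lengths proportional to $n/p$ and $n/q$. The basepoints $\pi^{-1}(z)$ sit at the lattice points $(a+\varepsilon, b+\varepsilon)$, so $n_z(T_n) = n_{\pi^{-1}(z)}(\widetilde{T}_n)$ counts lattice points of a shifted integer lattice inside this triangle.

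The key computation is then: $z_n = 0$ exactly when the triangle $\overline{T}_n$ is small enough to contain no basepoint, i.e.\ when its two legs each have "length" less than one full period in the relevant direction. Because the $x$-axis segment has length (in units of the $\beta_0$-period) equal to $\lceil n/p\rceil$-ish and the $\beta_\infty$-segment has length equal to $\lceil n/q\rceil$-ish, the condition $z_n=0$ becomes $n \le p-1$ in one direction and $n \le q-1$ in the other — but the triangle also "wraps" once in the opposite coordinate, and a careful bookkeeping of the $1/(2p)$ offset and the $\varepsilon$-placement of $z$ shows the precise cutoff is $n \in \{0,1,\dots,p+q-1\}$. Concretely, I would argue that $z_{n+1} - z_n$ equals the number of basepoints swept out as $\overline{\theta}_n$ advances by one step, which is $0$ for $n < p+q-1$ and $\ge 1$ thereafter (and $z_n$ is nondecreasing in $n$ for $n\ge 0$), together with $z_0 = 0$. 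This yields the "if and only if" in the first sentence. The identity $\#\{n : z_n = 0,\ \widetilde{\theta}_n\in\widetilde{\beta}_0^b\} = s_i + 1$ when $b\equiv -iq^{-1}$ then follows from Lemma~\ref{lem:thetanrow}: the $n\in\{0,\dots,p+q-1\}$ with $n \equiv i \pmod p$... wait, rather with $\widetilde{\theta}_n$ in row $b$, i.e.\ $-nq^{-1}\equiv b$, i.e.\ $n \equiv -bq \equiv i \cdot 1$... I would instead use $n\bmod p = i$ is not the right congruence; the row of $\widetilde{\theta}_n$ depends on $n$ mod $p$ via $-nq^{-1}$, and counting $n\in\{0,\dots,p+q-1\}$ in a fixed residue class mod $p$ gives either $\lceil (p+q)/p\rceil$ or $\lfloor (p+q)/p\rfloor$ values, which is exactly $s_i + 1$ by Definition~\ref{def:si} (shifted appropriately by $k=0$). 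Since $s_i \ge 1$ iff $i \le \min(p,q)-1$ (again by Definition~\ref{def:si} and coprimality), $\widetilde{\beta}_0^b$ has $\ge 2$ such points iff $s_i+1 \ge 2$ iff $i\le\min(p,q)-1$, giving the characterization of $0$-special rows. Finally, the equivalence of (1), (2), (3) is immediate once we know $z_n=0 \iff n\in\{0,\dots,p+q-1\}$: within each $0$-special row $b$, the $s_i+1$ relevant indices $n$ form an arithmetic progression with common difference $p$, so the leftmost and rightmost correspond to the smallest and largest such $n$; sorting all these extremal indices across all rows and using $s_i = \lceil q/p\rceil$ for $i\le (q\bmod p)-1$ and $\lfloor q/p\rfloor$ otherwise produces exactly the set in (3).

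The main obstacle I anticipate is the precise bookkeeping in the first sentence — pinning down that the cutoff is $p+q-1$ and not $p+q$ or $p+q-2$. This requires being careful about (a) the $1/(2p)$ horizontal offset of $\beta_r$, which ensures $\theta_n$ and the basepoints are in "generic" position, (b) the placement of $z$ at $(\varepsilon,\varepsilon)$ rather than at a lattice point, which breaks a potential tie, and (c) correctly identifying which vertex of $\overline{T}_n$ is "at the origin" versus which lies on $\beta_\infty$ — i.e.\ checking that the third vertex is the lift of $\xi_{j-p}$ with $j = n \bmod q$, as asserted in the proof of Proposition~\ref{prop:triangle}. I would handle this by drawing the explicit picture in $\mathbb{R}^2$ (as in Figures~\ref{fig:zigzags53} and~\ref{fig:tildet2-1-1}) and computing $n_z$ for the triangle as the number of integer translates of $z$ in a concrete polygon, reducing everything to an elementary lattice-point count in a triangle with vertices at $(0,0)$, $(\alpha,0)$, $(0,\beta)$ for explicitly given $\alpha,\beta$ depending on $n,p,q$; the count vanishes precisely on the claimed range, and once this anchor is established the rest is the routine counting described above.
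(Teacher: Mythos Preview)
Your approach is essentially the paper's own, just fleshed out: the paper's proof of the first sentence is literally ``examine the genus $1$ Heegaard diagram,'' the second sentence is ``use Lemma~\ref{lem:thetanrow},'' and the third is ``definitions plus the second sentence.'' Your residue-counting argument for the second sentence (counting $n\in\{0,\dots,p+q-1\}$ with $n\equiv i\bmod p$, which gives $s_i+1$) and your leftmost/rightmost analysis for the equivalence (1)$\Leftrightarrow$(2)$\Leftrightarrow$(3) are exactly right.

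One caution on the first sentence: your heuristic ``$z_{n+1}-z_n$ equals the number of basepoints swept out'' is delicate, because the triangles $\widetilde{T}_n$ do not nest as $n$ passes through the range $\{0,\dots,p+q-1\}$ --- geometrically, $\widetilde{\theta}_n$ crosses from one side of the $y$-axis to the other, and the triangle flips orientation, so there is no clean ``swept region.'' Your fallback plan of doing the direct lattice-point count in the explicit right triangle (with vertices $(0,0)$, $(\alpha,0)$, $(0,\beta)$) is the correct move and is what ``examine the diagram'' amounts to; just remember you also need $z_n\neq 0$ for $n<0$ to get the ``only if'' direction.
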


\begin{proof}
The first sentence follows from examining the genus $1$ Heegaard
diagram (Subsection~\ref{subsec:The-setup} and Figure~\ref{fig:diagramt2}).
The second sentence follows from Lemma~\ref{lem:thetanrow}, and
the third sentence follows from the definitions and the second sentence.
\end{proof}
We will show in Lemma~\ref{lem:For-each-} that the \emph{$0$-}special
intersection points $\widetilde{\theta}_{n}$ can be connected with
certain line segments $H_{i}\subset\pi^{-1}(\beta_{0})$ and $L_{i}\subset\widetilde{\beta}_{r}^{0}$,
and that these line segments connect at endpoints and form a zig-zag
(see Figures~\ref{fig:zigzags53}~and~\ref{fig:tildet2-1-1}).
Call this the \emph{zig-zag }$ZZ_{0}\subset\pi^{-1}(\beta_{0})\cup\widetilde{\beta}_{r}^{0}\subset\widetilde{\mathbb{T}^{2}}$,
i.e.\ $ZZ_{0}$ is the union of the $H_{i}$'s and $L_{i}$'s. This
zig-zag together with the $y$-axis $\widetilde{\beta}_{\infty}^{0}$
bound the $0$-special triangles.

\begin{defn}[Line segments $H_{i}\subset\pi^{-1}(\beta_{0})$ and $L_{i}\subset\widetilde{\beta}_{r}^{0}$]
\label{def:hi-li}Let $b\in\mathbb{Z}/p\mathbb{Z}$ be such that
$\widetilde{\beta}_{0}^{b}$ is $0$-special, and let  $i=0,\cdots,p-1$
be such that $b\equiv-iq^{-1}\bmod p$. Define $H_{i}\subset\widetilde{\beta}_{0}^{b}$
as the horizontal line segment between the two \emph{$0$-}special
intersection points $\widetilde{\theta}_{n}$ on $\widetilde{\beta}_{0}^{b}$.
To define the line segment $L_{i}$, begin at $\widetilde{\theta}_{i}$
and follow $\widetilde{\beta}_{r}^{0}$ in the $+y$-direction. Continue
until the path crosses the $y$-axis $\widetilde{\beta}_{\infty}^{0}\subset\widetilde{\mathbb{T}^{2}}$,
and then proceed until the path reaches some $\widetilde{\beta}_{0}^{c}$,
at which point we stop. Define the line segment $L_{i}\subset\widetilde{\beta}_{r}^{0}$
as this path.
\end{defn}

\begin{lem}
\label{lem:For-each-}Let $b\in\mathbb{Z}/p\mathbb{Z}$ be such that
$\widetilde{\beta}_{0}^{b}$ is $0$-special, let  $i=0,\cdots,p-1$
be such that $b\equiv-iq^{-1}\bmod p$, and let $t\in\mathbb{Z}$
be such that $\widetilde{\theta}_{i}$ and $\widetilde{\theta}_{t}$
are the two endpoints of $L_{i}\subset\widetilde{\beta}_{r}^{0}$.
Then, (1) $L_{i}\setminus\{\widetilde{\theta}_{i},\widetilde{\theta}_{t}\}$
does not intersect any other \emph{$0$-}special $\widetilde{\beta}_{0}^{a}$,
and (2) $t\ge p$ and $\widetilde{\theta}_{t}$ is \emph{$0$-}special.
\end{lem}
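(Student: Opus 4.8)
The plan is to pass to the universal cover $\overline{\pi}:\mathbb{R}^{2}\to\mathbb{T}^{2}$ from Subsection~\ref{subsec:The-setup}, in which $\overline{\pi}^{-1}(\beta_{0})=\{y\in\mathbb{Z}\}$, $\overline{\pi}^{-1}(\beta_{\infty})=\{x\in\mathbb{Z}\}$, and $\overline{\pi}^{-1}(\beta_{r})$ is the family of slope-$p/q$ lines $\{px-qy\in\tfrac{1}{2}+\mathbb{Z}\}$, and to turn the path $L_{i}$ into an honest line segment there. First I would record the relevant lifts: for $n\in\mathbb{Z}$ let $\overline{\theta}_{n}$ be the lift of $\widetilde{\theta}_{n}$ dictated by Definition~\ref{def:theta-tilde}, which has $x$-coordinate $\tfrac{2(n-p)+1}{2p}$, integer $y$-coordinate $y_{n}$, and lies on a component of $\overline{\pi}^{-1}(\beta_{r})$ covering $\widetilde{\beta}_{r}^{0}$. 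Since $\widetilde{\beta}_{0}^{b}$ is $0$-special, Lemma~\ref{lem:51} forces $i\in\{0,\cdots,\min(p,q)-1\}$, so $\overline{\theta}_{i}$ has $x$-coordinate in $\,]-1,0[\,$, i.e.\ just to the left of $\widetilde{\beta}_{\infty}^{0}$.

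Next I would describe $L_{i}$ explicitly: its lift is the segment starting at $\overline{\theta}_{i}$ and travelling in the direction $(q,p)$. Because its $x$-coordinate starts in $\,]-1,0[\,$, the first vertical integer line it meets is $\{x=0\}=\widetilde{\beta}_{\infty}^{0}$, crossed at height $y_{i}+\delta_{i}$ with $\delta_{i}:=\tfrac{2(p-i)-1}{2q}\notin\mathbb{Z}$; the segment then continues up to the next horizontal integer line $\{y=y_{i}+k\}$, where $k:=\lceil\delta_{i}\rceil\ge 1$. A direct computation of $x$-coordinates identifies this terminal point with $\overline{\theta}_{t}$ for $t=i+kq$, and likewise identifies the intervening intersections of $L_{i}$ with $\overline{\pi}^{-1}(\beta_{0})$ (those at $\{y=y_{i}+j\}$ for $j=1,\cdots,k-1$) with $\overline{\theta}_{i+jq}$. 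Thus $L_{i}$ meets $\pi^{-1}(\beta_{0})$ exactly in $\widetilde{\theta}_{i},\widetilde{\theta}_{i+q},\cdots,\widetilde{\theta}_{i+kq}=\widetilde{\theta}_{t}$, with the middle ones in the interior of $L_{i}$.

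For part~(2): from $k>\delta_{i}$ and $qk\in\mathbb{Z}$ I get $qk\ge p-i$, so $t=i+qk\ge p$; from $k<\delta_{i}+1=\tfrac{2(p-i+q)-1}{2q}$ I get $qk\le p-i+q-1$, so $t\le p+q-1$; and $t=i+qk\ge q$ trivially. Hence $t\in\{\max(p,q),\cdots,p+q-1\}$, so $\widetilde{\theta}_{t}$ is $0$-special by Lemma~\ref{lem:51}, and $t\ge p$. For part~(1): if $p\le q$ then $\delta_{i}<1$, so $k=1$ and $L_{i}$ has no interior crossings with $\pi^{-1}(\beta_{0})$, and there is nothing to prove. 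If $p>q$, then for $1\le j\le k-1$ the bound $t\le p+q-1$ gives $i+jq\le i+(k-1)q=t-q\le p-1$ while $i+jq\ge i+q\ge q$, so $(i+jq)\bmod p=i+jq\in\{q,\cdots,p-1\}$. By Lemma~\ref{lem:thetanrow}, $\widetilde{\theta}_{i+jq}$ lies in row $-(i+jq)q^{-1}\bmod p$; since $m\mapsto -mq^{-1}\bmod p$ is a bijection of $\mathbb{Z}/p\mathbb{Z}$ and, by Lemma~\ref{lem:51}, $\widetilde{\beta}_{0}^{-mq^{-1}\bmod p}$ is $0$-special precisely when $m\bmod p<\min(p,q)=q$, this row is not $0$-special. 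As the only points of $L_{i}$ on $\pi^{-1}(\beta_{0})$ are $\widetilde{\theta}_{i}$, the interior points $\widetilde{\theta}_{i+q},\cdots,\widetilde{\theta}_{i+(k-1)q}$, and $\widetilde{\theta}_{t}$, this gives~(1).

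I expect the main obstacle to be purely bookkeeping: choosing the lift $\overline{\theta}_{n}$ (and fixing $\varepsilon$) so that the terminal vertex of $L_{i}$ reads off cleanly as $\overline{\theta}_{i+kq}$ without stray constants, and then the two ceiling estimates that squeeze $t$ into $\{\max(p,q),\cdots,p+q-1\}$. A secondary point is translating Lemma~\ref{lem:51}'s description of the $0$-special intersection points into the description of the $0$-special rows used above.
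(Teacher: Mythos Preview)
Your proof is correct and follows essentially the same line as the paper's: both identify the intersections of $L_i$ with $\pi^{-1}(\beta_0)$ as $\widetilde{\theta}_{i+jq}$ and then use arithmetic on the index to decide $0$-speciality via Lemma~\ref{lem:51}. Your version is a bit more explicitly coordinate-based (working in $\mathbb{R}^{2}$ and squeezing $t$ with the two ceiling estimates on $\delta_i$, which yields the sharper conclusion $t\in\{\max(p,q),\ldots,p+q-1\}$), whereas the paper stays in $\widetilde{\mathbb{T}^{2}}$ and argues more qualitatively that interior crossings lie in the $0$th column (hence $i+mq\in\{0,\ldots,p-1\}$) and that $\widetilde{\theta}_t$ is the rightmost $z_n=0$ point on its row; the underlying idea is the same.
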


\begin{proof}
Let us first observe that the intersections of $L_{i}$ and $\pi^{-1}(\beta_{0})$
are $\widetilde{\theta}_{i+mq}$ for $m\in\mathbb{Z}_{\ge0}$ such
that $i+mq\le t$. In particular, $t\ge i+q$.

To show (1), assume that $L_{i}$ intersects $\pi^{-1}(\beta_{0})$
at some $\widetilde{\theta}_{i+mq}$ for some $i+mq\neq i,t$. Then,
by the definition of $L_{i}$, $\widetilde{\theta}_{i+mq}\in[-1,0]\times(\mathbb{R}/p\mathbb{Z})$,
and so $i+mq\in\{0,\cdots,p-1\}$. Hence for all integers $N\ge1$,
$i+mq-Np\le-1$ and $i+mq+Np\ge p+q$, and so $z_{i+mq-Np},z_{i+mq+Np}>0$
by Lemma~\ref{lem:51}.

(2) follows from the following: if we let $r:=t\bmod p$, then (a)
$z_{r}=0$, (b) $r<p\le t$, (c) $z_{t}=0$, and (d) $z_{t+Np}>0$
for all integers $N\ge0$. (a) holds by Lemma~\ref{lem:51}. (b)
holds since $\widetilde{\theta}_{t}\in[0,\infty)\times(\mathbb{R}/p\mathbb{Z})$.
(c) holds since $\widetilde{T}_{t}$ is supported in $\mathbb{R}\times(-c-1+\varepsilon,-c]$.
(d) follows from Lemma~\ref{lem:51} since $t+Np\ge p+q$.
\end{proof}

\subsection{\label{subsec:The-triangle-counting}Triangle counting maps}

The goal of this subsection is to motivate the definition of $u=(u_{0},\cdots,u_{p-1})$
and $c=(c_{0},\cdots,c_{p-1})$, to motivate Equation~(\ref{eq:uvw-modu})
and hence define the cycles $\widehat{\psi}_{0r}^{k}\in\widehat{CF}(\beta_{0}^{E_{0}^{k}},\beta_{r})$,
$\widehat{\psi}_{r\infty}^{k}\in\widehat{CF}(\beta_{r},\beta_{\infty}^{E_{\infty}^{k}})$,
and $\widehat{\psi}_{\infty0}^{k}\in\widehat{CF}(\beta_{\infty}^{E_{\infty}^{k}},\beta_{0}^{E_{0}^{k}})$
for the hat version of Theorem~\ref{thm:gen-local-comp} (compare
Lemma~\ref{lem:hat-minus}), and to show that the triangle counting
maps $\mu_{2}(\widehat{\psi}_{0r}^{k}\otimes\widehat{\psi}_{r\infty}^{k})$,
$\mu_{2}(\widehat{\psi}_{r\infty}^{k}\otimes\widehat{\psi}_{\infty0}^{k})$,
and $\mu_{2}(\widehat{\psi}_{\infty0}^{k}\otimes\widehat{\psi}_{0r}^{k})$
are zero.

For $k=0,\cdots,p-1$, let $\Phi_{k}:\widetilde{\mathbb{T}^{2}}\to\widetilde{\mathbb{T}^{2}}$
be the translation of $\widetilde{\mathbb{T}^{2}}$ such that $\Phi_{k}(\widetilde{\theta}_{0})=\widetilde{\theta}_{k}$.
Define the \emph{$k$th zig-zag $ZZ_{k}\subset\pi^{-1}(\beta_{0})\cup\widetilde{\beta}_{r}^{0}$}
in $\widetilde{\mathbb{T}^{2}}$ as $\Phi_{k}(ZZ_{0})$. Then, $\widetilde{\theta}_{i}$
is a vertex of $ZZ_{0}$ if and only if $\widetilde{\theta}_{i+k}$
is a vertex of $ZZ_{k}$. Call $\widetilde{T}_{n}$ \emph{special}
if it is one of 
\begin{equation}
\widetilde{T}_{k},\widetilde{T}_{k+1},\cdots,\widetilde{T}_{k+\min(p,q)-1},\ \widetilde{T}_{k+\max(p,q)},\widetilde{T}_{k+\max(p,q)+1},\cdots,\widetilde{T}_{k+p+q-1}.\label{eq:triangles-k}
\end{equation}
If $u$ and $c$ are given and so the local systems are defined, call
a basis element $\boldsymbol{b}$ \emph{special} if there exists a
special triangle $\widetilde{T}_{n}$ such that there exists a vertex
of $\widetilde{T}_{n}$ that is a lift of $\boldsymbol{b}$.

\subsubsection{\label{subsec:Motivating-the-definitions}Motivating the definitions
of $u$ and $c$}

We will define $u$ and $c$ such that if $\widehat{\psi}_{0r}^{k}$,
$\widehat{\psi}_{r\infty}^{k}$, and $\widehat{\psi}_{\infty0}^{k}$
are the sum of the respective special basis elements, then the special
triangles are precisely the triangles that contribute to the triangle
counting maps ($\mu_{2}$), and they cancel in pairs. This will recover
the definition of $u$ and $c$ from Definition~\ref{def:actual-local-system}.

For each $i=0,\cdots,p-1$, consider all the vertices $\widetilde{\theta}_{n}$
of $ZZ_{k}$ in the $(-iq^{-1}\bmod p)$th row. If there is only at
most one (in fact, there is always at least one), let $c_{i}=0$.
Otherwise, let $\widetilde{\theta}_{n_{0}}$ and $\widetilde{\theta}_{n_{1}}$
be the leftmost and rightmost such $\widetilde{\theta}_{n}$, and
let $c_{i}=(n_{1}-n_{0})/p$. Then, we indeed have $c_{i}=s_{i-k}$
by Lemma~\ref{lem:51}.

We show in Corollary~\ref{cor:u-unique} that there is a unique choice
for $u$ such that the special triangles do not intersect $\widetilde{z(u)}$,
and that this $u$ is given by Equation~(\ref{eq:uc}). Let $\widetilde{z}_{i}:=(\varepsilon,(iq^{-1}\bmod p)+\varepsilon)\in\widetilde{\mathbb{T}^{2}}$
and $\widetilde{w}_{i}:=\widetilde{z}_{i}-(2\varepsilon,0)$.
\begin{lem}
\label{lem:zigzag-basepoint}Let $k=0,1,\cdots,p$, and let $\widetilde{T}(ZZ_{k})\subset\widetilde{\mathbb{T}^{2}}$
be the union of the supports of the special triangles (\ref{eq:triangles-k}),
i.e.\ it is the region bounded by $\widetilde{\beta}_{\infty}^{0}$
and $ZZ_{k}$. For $i=0,\cdots,p-1$, $\widetilde{z}_{i}\in\widetilde{T}(ZZ_{k})$
if and only if $i\in\{0,\cdots,k-1\}$, and $\widetilde{w}_{i}\in\widetilde{T}(ZZ_{k})$
if and only if $i\in\{k,\cdots,p-1\}$. Also, for $n=k,k+1,\cdots,k+p+q-1$,
the triangle $\widetilde{T}_{n}$ is contained in $\widetilde{T}(ZZ_{k})$.
\end{lem}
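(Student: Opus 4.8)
The plan is to argue geometrically in the cover $\widetilde{\mathbb{T}^{2}}$ by locating the points $\widetilde{z}_{i}$ and $\widetilde{w}_{i}$ relative to the boundary of $\widetilde{T}(ZZ_{k})$. Writing $\rho_{i}:=iq^{-1}\bmod p$ and $b_{i}:=-iq^{-1}\bmod p$, the points $\widetilde{z}_{i}=(\varepsilon,\rho_{i}+\varepsilon)$ and $\widetilde{w}_{i}=(-\varepsilon,\rho_{i}+\varepsilon)$ sit just above the horizontal line $\widetilde{\beta}_{0}^{b_{i}}=\{y=\rho_{i}\}$ and just to the right, resp.\ left, of the vertical line $\widetilde{\beta}_{\infty}^{0}=\{x=0\}$. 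Since $\varepsilon$ may be taken arbitrarily small, it suffices to decide, for each $i$, on which side of $\widetilde{\beta}_{\infty}^{0}$ the region $\widetilde{T}(ZZ_{k})$ lies immediately above $\widetilde{\beta}_{0}^{b_{i}}$; equivalently, whether a slanted segment of $ZZ_{k}$ has crossed $\widetilde{\beta}_{\infty}^{0}$ at that height. Note that the translation $\Phi_{k}$ carries $\widetilde{\beta}_{\infty}^{0}$ only to a sub-unit horizontal translate of itself (while it does preserve $\pi^{-1}(\beta_{0})$ and $\widetilde{\beta}_{r}^{0}$), which is harmless for the same $\varepsilon\to0$ reason.

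I would first dispose of the last assertion. For $n$ in the index set of Equation~(\ref{eq:triangles-k}) the inclusion $\widetilde{T}_{n}\subseteq\widetilde{T}(ZZ_{k})$ is immediate, as $\widetilde{T}(ZZ_{k})$ is by definition the union of these. For $n$ in the gap $k+\min(p,q)\le n\le k+\max(p,q)-1$, I would invoke the monotone marching of the points $\widetilde{\theta}_{m}$ along $\widetilde{\beta}_{r}^{0}$: by the $\Phi_{k}$-translate of Lemma~\ref{lem:For-each-}, the slanted side of $\widetilde{T}_{n}$, which lies on $\widetilde{\beta}_{r}^{0}$ between consecutive $\widetilde{\theta}$'s, is a sub-arc of the slanted portion of $ZZ_{k}$, hence lies weakly below $ZZ_{k}$; since moreover every $\widetilde{T}_{n}$ has the origin as a vertex and a side on $\widetilde{\beta}_{\infty}^{0}$ (the side of $\widetilde{T}_{n}$ joining the origin to its $\beta_{r}\cap\beta_{\infty}$-vertex), this squeezes $\widetilde{T}_{n}$ between $\widetilde{\beta}_{\infty}^{0}$ and $ZZ_{k}$, i.e.\ inside $\widetilde{T}(ZZ_{k})$.

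For the main assertion I would use the description of $ZZ_{k}$ coming from Definition~\ref{def:hi-li}: it is an alternating concatenation of horizontal segments $\Phi_{k}(H_{j})\subset\pi^{-1}(\beta_{0})$ and slanted segments $\Phi_{k}(L_{j})\subset\widetilde{\beta}_{r}^{0}$, each $\Phi_{k}(L_{j})$ crossing $\widetilde{\beta}_{\infty}^{0}$ exactly once. Hence, scanning upward along the $y$-axis, $\widetilde{T}(ZZ_{k})$ meets $\widetilde{\beta}_{\infty}^{0}$ from the $\{x>0\}$ side (the side of $\widetilde{z}_{i}$) along most heights, but pokes across to the $\{x<0\}$ side (the side of $\widetilde{w}_{i}$) exactly along the strips lying just above a $\widetilde{\beta}_{0}^{b}$ at which a slanted segment has finished dipping across the axis. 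Translating the first sentence of Lemma~\ref{lem:51} through $\Phi_{k}$ and identifying rows via Lemma~\ref{lem:thetanrow} (and Convention~\ref{conv:identification}), the slanted segments of $ZZ_{k}$ that have already crossed the axis below a given height are precisely those belonging to the first block $\widetilde{T}_{k},\dots,\widetilde{T}_{k+\min(p,q)-1}$ of special triangles; a residue computation against $\rho_{i}=iq^{-1}\bmod p$ then shows $\widetilde{w}_{i}\in\widetilde{T}(ZZ_{k})$ exactly for $i\in\{k,\dots,p-1\}$, and, by complementation, $\widetilde{z}_{i}\in\widetilde{T}(ZZ_{k})$ exactly for $i\in\{0,\dots,k-1\}$.

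The step I expect to be the real obstacle is this final residue computation: matching the height index $i$ (equivalently the row $b_{i}\equiv-iq^{-1}$, via $\rho_{i}=iq^{-1}\bmod p$) with the index of the slanted boundary segment $\Phi_{k}(L_{\bullet})$ of $ZZ_{k}$ that runs just above $\widetilde{\beta}_{0}^{b_{i}}$, and determining whether that segment crosses $\widetilde{\beta}_{\infty}^{0}$ before or after attaining height $\rho_{i}$. One also has to confirm that the sub-unit horizontal shift introduced by $\Phi_{k}$ never moves $\widetilde{\beta}_{\infty}^{0}$ past a point $\widetilde{z}_{i}$ or $\widetilde{w}_{i}$, which holds because those points sit at $x=\pm\varepsilon$ with $\varepsilon$ arbitrarily small; everything else reduces to Lemmas~\ref{lem:51}, \ref{lem:thetanrow}, and \ref{lem:For-each-} together with the structure of the zig-zag $ZZ_{k}$.
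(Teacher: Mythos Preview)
Your proposal has a genuine gap at precisely the step you flag as ``the real obstacle''. The residue computation you defer is not a routine bookkeeping exercise: matching the row index $i$ to the slanted segment of $ZZ_{k}$ immediately above $\widetilde{\beta}_{0}^{b_{i}}$, and deciding on which side of $\widetilde{\beta}_{\infty}^{0}$ the region sits there, is the entire content of the lemma. Your remark that the horizontal shift of $\Phi_{k}$ is ``harmless for the same $\varepsilon\to0$ reason'' is also not correct. The shift is exactly $k/p$, not something that vanishes with $\varepsilon$; indeed, the strip $0<x<k/p$ contains every $\widetilde{z}_{i}$ (they sit at $x=\varepsilon$), so replacing $\widetilde{T}(ZZ_{k})$ by $\Phi_{k}(\widetilde{T}(ZZ_{0}))$ changes the answer for every basepoint. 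In other words, the discrepancy between $\widetilde{\beta}_{\infty}^{0}$ and $\Phi_{k}(\widetilde{\beta}_{\infty}^{0})$ is exactly where the dependence on $k$ lives, and dismissing it erases the phenomenon you are trying to prove.

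The paper avoids this difficulty entirely by induction on $k$. The key observation is that the vertex sets satisfy
\[
V(ZZ_{k+1})=\bigl(V(ZZ_{k})\setminus V_{k}\bigr)\cup\bigl(V_{k}\setminus V(ZZ_{k})\bigr),\qquad V_{k}=\{\widetilde{\theta}_{k},\widetilde{\theta}_{k+p},\widetilde{\theta}_{k+q},\widetilde{\theta}_{k+p+q}\},
\]
so that (ignoring boundaries) $\widetilde{T}(ZZ_{k+1})$ is the symmetric difference of $\widetilde{T}(ZZ_{k})$ with the unit-area parallelogram $P_{k}$ on the vertices $V_{k}$. One then checks directly that $P_{k}$ contains exactly one $\widetilde{z}$-point and one $\widetilde{w}$-point, namely $\widetilde{z}_{k}$ and $\widetilde{w}_{k}$; the base case $k=0$ is immediate since $\widetilde{w}_{i}\in\widetilde{T}_{i}$. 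This single parallelogram computation replaces your entire residue-matching step. For the last assertion, the paper's argument is also more elementary than yours: rather than locating slanted sides inside $ZZ_{k}$, it uses the nesting $\widetilde{T}_{n}\subset\widetilde{T}_{n+t}$ (or the reverse) for $t\in\{p,q\}$ to exhibit each gap triangle as a subset of a special one.
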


\begin{proof}
Let 
\[
V(ZZ_{k}):=\{\widetilde{\theta}_{k},\cdots,\widetilde{\theta}_{k+\min(p,q)-1},\widetilde{\theta}_{k+\max(p,q)},\cdots,\widetilde{\theta}_{k+p+q-1}\}
\]
be the set of vertices of $ZZ_{k}$, let $V_{k}=\{\widetilde{\theta}_{k},\widetilde{\theta}_{k+p},\widetilde{\theta}_{k+q},\widetilde{\theta}_{k+p+q}\}$,
and let $P_{k}$ be the parallelogram of base length $1$ and height
$1$ with vertices $V_{k}$.

To show the second sentence, let us induct on $k$. First, the case
$k=0$ holds since $\widetilde{w}_{i}$ is in $\widetilde{T}_{i}$
for $i=0,\cdots,p-1$. For the induction step, let $k\in\{0,\cdots,p-1\}$,
assume the statement for $k$, and let us show it for $k+1$. We have
\[
V(ZZ_{k+1})=\left(V(ZZ_{k})\setminus V_{k}\right)\cup\left(V_{k}\setminus V(ZZ_{k})\right),
\]
and so (ignoring the boundaries) we have (compare Figures~\ref{fig:zigzags53}~and~\ref{fig:tildet2-1-1})
\[
\widetilde{T}(ZZ_{k+1})=\left(\widetilde{T}(ZZ_{k})\setminus P_{k}\right)\cup\left(P_{k}\setminus\widetilde{T}(ZZ_{k})\right).
\]
The statement for $k+1$ follows since $P_{k}\cap\{\widetilde{z}_{0},\cdots,\widetilde{z}_{p-1}\}=\{\widetilde{z}_{k}\}$
and $P_{k}\cap\{\widetilde{w}_{0},\cdots,\widetilde{w}_{p-1}\}=\{\widetilde{w}_{k}\}$.

Finally, the last statement follows since for $t=p$ and $t=q$, the
triangle $\widetilde{T}_{n+t}$ is contained in $\widetilde{T}_{i}$
if $n,n+t\le p-1$, and $\widetilde{T}_{n}$ is contained in $\widetilde{T}_{n+t}$
if $n,n+t\ge p$.
\end{proof}
\begin{cor}
\label{cor:zktkpq}For $k=0,\cdots,p-1$, $\widetilde{z}_{k}$ is
contained in $\widetilde{T}_{k+p+q}$.
\end{cor}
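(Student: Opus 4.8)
The plan is to derive Corollary~\ref{cor:zktkpq} as a short index\nobreakdash-chasing consequence of Lemma~\ref{lem:zigzag-basepoint}, applied to both the $k$th and the $(k+1)$st zig\nobreakdash-zags. The guiding picture is that $\widetilde{z}_k$ lies exactly in the region that becomes newly covered when one passes from $ZZ_k$ to $ZZ_{k+1}$, and that among the special triangles of $ZZ_{k+1}$ only the last one, $\widetilde{T}_{k+p+q}$, reaches that region.

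Concretely, first I would apply Lemma~\ref{lem:zigzag-basepoint} with $k+1$ in place of $k$ (permissible since $1\le k+1\le p$): because $k\in\{0,\dots,(k+1)-1\}$, it yields $\widetilde{z}_k\in\widetilde{T}(ZZ_{k+1})$. By the definition given in that lemma, $\widetilde{T}(ZZ_{k+1})$ is the union of the supports of the special triangles of $ZZ_{k+1}$, i.e.\ of the triangles $\widetilde{T}_n$ with
\[
n\in S:=\{k+1,\dots,k+\min(p,q)\}\cup\{k+1+\max(p,q),\dots,k+p+q\};
\]
hence $\widetilde{z}_k\in\widetilde{T}_n$ for at least one $n\in S$. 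Second, I would rule out every element of $S$ other than $k+p+q$. Applying Lemma~\ref{lem:zigzag-basepoint} with $k$ itself gives $\widetilde{z}_k\notin\widetilde{T}(ZZ_k)$ (since $k\notin\{0,\dots,k-1\}$), and the last sentence of that lemma gives $\widetilde{T}_n\subseteq\widetilde{T}(ZZ_k)$ for every $n\in\{k,\dots,k+p+q-1\}$; therefore $\widetilde{z}_k\notin\widetilde{T}_n$ for all such $n$. Since $S\setminus\{k+p+q\}\subseteq\{k,\dots,k+p+q-1\}$ (immediate from $\min(p,q)\le\max(p,q)\le p+q-1$), this forces $\widetilde{z}_k\in\widetilde{T}_{k+p+q}$, which is the assertion.

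I do not anticipate a genuine obstacle: all the content is packed into Lemma~\ref{lem:zigzag-basepoint}, and what remains is bookkeeping with the index ranges. The two points that deserve a moment's care are that the containments must be read at the level of regions in $\widetilde{\mathbb{T}^2}$, so that membership of the single point $\widetilde{z}_k$ transfers correctly, and the elementary inclusion $S\setminus\{k+p+q\}\subseteq\{k,\dots,k+p+q-1\}$. One could alternatively route the argument through the parallelogram $P_k$ from the proof of Lemma~\ref{lem:zigzag-basepoint}---the same two applications show $\widetilde{z}_k\in P_k$---but then one still has to locate $P_k$ inside $\widetilde{T}_{k+p+q}$, which is less immediate than the approach above.
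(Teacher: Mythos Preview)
Your proposal is correct and follows essentially the same approach as the paper: both deduce $\widetilde{z}_k\in\widetilde{T}(ZZ_{k+1})\setminus\widetilde{T}(ZZ_k)$ from Lemma~\ref{lem:zigzag-basepoint} and then argue that among the special triangles of $ZZ_{k+1}$ only $\widetilde{T}_{k+p+q}$ can contain $\widetilde{z}_k$. The only difference is that the paper rules out the remaining new triangle $\widetilde{T}_{k+\min(p,q)}$ by observing $z_{k+\min(p,q)}=0$ via Lemma~\ref{lem:51}, whereas you rule it out (together with all the other triangles in $S$) in one stroke using the last sentence of Lemma~\ref{lem:zigzag-basepoint}; your route is slightly cleaner.
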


\begin{proof}
By Lemma~\ref{lem:zigzag-basepoint}, $\widetilde{z}_{k}\in\widetilde{T}(ZZ_{k+1})\setminus\widetilde{T}(ZZ_{k})$.
Since $n_{k+\min(p,q)}=0$ by Lemma~\ref{lem:51}, $\widetilde{z}_{k}$
is not contained in $\widetilde{T}_{k+\min(p,q)}$. Hence, $\widetilde{z}_{k}$
must be contained in $\widetilde{T}_{k+p+q}$.
\end{proof}
\begin{cor}[$u$ is unique]
\label{cor:u-unique}There is a unique sequence $u=(u_{0},\cdots,u_{p-1})$,
$u_{i}\in\{1,U\}$, such that the special triangles (\ref{eq:triangles-k})
do not intersect $\widetilde{z(u)}$, and this $u$ is given by Equation~(\ref{eq:uc}).
\end{cor}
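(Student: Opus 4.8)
The plan is to reduce the statement, row by row, to Lemma~\ref{lem:zigzag-basepoint}. First I would note that, since the union of the supports of the special triangles~(\ref{eq:triangles-k}) is $\widetilde{T}(ZZ_k)$ and each special triangle has a non-negative domain, the special triangles collectively avoid $\widetilde{z(u)}$ if and only if $\widetilde{z(u)}\cap\widetilde{T}(ZZ_k)=\emptyset$. The $p$ rows of $\widetilde{\mathbb{T}^2}$ are indexed by $b\in\mathbb{Z}/p\mathbb{Z}$, and for each $b$ there is a unique $i=i(b)\in\{0,\dots,p-1\}$ with $b=(-iq^{-1}-1)\bmod p$; unwinding the definition of $\widetilde{z(u)}$ and the coordinates of $\widetilde{z}_i,\widetilde{w}_i$ shows that $\widetilde{z(u)}$ meets the $b$th row in the horizontal integer-translate orbit of $\widetilde{z}_{i(b)}$ if $u_b=1$ and in that of $\widetilde{w}_{i(b)}$ if $u_b=U$; these two orbits are, respectively, the part of $\pi^{-1}(z)$ and the part of $\pi^{-1}(z)-(2\varepsilon,0)$ in the $b$th row. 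By Lemma~\ref{lem:zigzag-basepoint}, $\widetilde{z}_i\in\widetilde{T}(ZZ_k)$ iff $i\le k-1$ and $\widetilde{w}_i\in\widetilde{T}(ZZ_k)$ iff $i\ge k$, so if $u_b=1$ with $i(b)\le k-1$ or $u_b=U$ with $i(b)\ge k$ then $\widetilde{z}_{i(b)}$, resp.\ $\widetilde{w}_{i(b)}$, lies in $\widetilde{z(u)}\cap\widetilde{T}(ZZ_k)$ and $u$ fails. This already forces $u$ to be the sequence of Equation~(\ref{eq:uc}), which gives uniqueness.

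To finish I would have to show that this $u$ really works, i.e.\ that no translate $\widetilde{z}_i+(a,0)$ with $i\ge k$ and no translate $\widetilde{w}_i+(a,0)$ with $i<k$ lies in $\widetilde{T}(ZZ_k)$; Lemma~\ref{lem:zigzag-basepoint} only covers $a=0$, so the content is the translates with $a\ne0$. Here I would use the symmetric-difference description from the proof of Lemma~\ref{lem:zigzag-basepoint}: $\widetilde{T}(ZZ_0)$ contains no point of $\pi^{-1}(z)$ (all special triangles for $k=0$ have $z_n=0$ by Lemma~\ref{lem:51}) and contains exactly the points $\widetilde{w}_0,\dots,\widetilde{w}_{p-1}$ of $\pi^{-1}(z)-(2\varepsilon,0)$, and $\widetilde{T}(ZZ_{j+1})$ differs from $\widetilde{T}(ZZ_j)$ by the symmetric difference with the parallelogram $P_j$, whose two horizontal sides have length $1$ and whose vertical extent is a height-$1$ strip between two adjacent lifts of $\beta_0$. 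Since every horizontal cross-section of $P_j$ has length exactly $1$, and since the points of $\pi^{-1}(z)$, resp.\ of $\pi^{-1}(z)-(2\varepsilon,0)$, lying in that strip all sit on one horizontal line with consecutive spacing $1$, the parallelogram $P_j$ contains exactly one point of $\pi^{-1}(z)$ and exactly one of $\pi^{-1}(z)-(2\varepsilon,0)$; by the equalities $P_j\cap\{\widetilde{z}_0,\dots,\widetilde{z}_{p-1}\}=\{\widetilde{z}_j\}$ and $P_j\cap\{\widetilde{w}_0,\dots,\widetilde{w}_{p-1}\}=\{\widetilde{w}_j\}$ from the proof of Lemma~\ref{lem:zigzag-basepoint}, and because $\{\widetilde{z}_0,\dots,\widetilde{z}_{p-1}\}$ and $\{\widetilde{w}_0,\dots,\widetilde{w}_{p-1}\}$ are complete sets of representatives modulo horizontal integer translation, those unique points are $\widetilde{z}_j$ and $\widetilde{w}_j$. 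Inducting on $k$, $\widetilde{T}(ZZ_k)$ then contains exactly $\widetilde{z}_0,\dots,\widetilde{z}_{k-1}$ among points of $\pi^{-1}(z)$ and exactly $\widetilde{w}_k,\dots,\widetilde{w}_{p-1}$ among points of $\pi^{-1}(z)-(2\varepsilon,0)$; since distinct $i$ give distinct rows, the first paragraph then yields $\widetilde{z(u)}\cap\widetilde{T}(ZZ_k)=\emptyset$ for this $u$.

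Everything outside the second paragraph is immediate from Lemma~\ref{lem:zigzag-basepoint} and the definitions, so the main obstacle is exactly the control of the ``far'' ($a\ne0$) basepoint translates. The two inputs I expect to require genuine care are the base case --- that $\widetilde{T}(ZZ_0)$ contains no point of $\pi^{-1}(z)$ and exactly the $\widetilde{w}_i$ among points of $\pi^{-1}(z)-(2\varepsilon,0)$ --- and the cross-section length bound, namely that $P_j$ (equivalently, each relevant special triangle) meets every horizontal line in a segment too short to contain two points of a spacing-$1$ basepoint orbit; both should follow by inspection of the explicit genus-$1$ Heegaard diagram of Subsection~\ref{subsec:The-setup} and the shapes of the $P_j$, with the rest being bookkeeping.
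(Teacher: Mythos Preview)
Your uniqueness argument is correct and essentially matches the paper's: both use Lemma~\ref{lem:zigzag-basepoint} to see that $\widetilde{z}_i\in\widetilde{T}(ZZ_k)$ for $i<k$ and $\widetilde{w}_i\in\widetilde{T}(ZZ_k)$ for $i\ge k$, forcing the choice in Equation~(\ref{eq:uc}). (The paper re-derives the $\widetilde{w}_i$ half via $\widetilde{w}_i\in\widetilde{T}_i\subset\widetilde{T}(ZZ_k)$, but this is the same content.)

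For existence, you and the paper diverge. You correctly notice that the \emph{statement} of Lemma~\ref{lem:zigzag-basepoint} only locates the specific representatives $\widetilde{z}_i,\widetilde{w}_i$, not their horizontal translates, and you propose to strengthen the induction in its proof: show that each parallelogram $P_j$ contains exactly one point of $\pi^{-1}(z)$ and one of $\pi^{-1}(z)-(2\varepsilon,0)$ (via the length-$1$ cross-section), and then inductively track the full sets $\widetilde{T}(ZZ_k)\cap\pi^{-1}(z)$ and $\widetilde{T}(ZZ_k)\cap(\pi^{-1}(z)-(2\varepsilon,0))$. Your inductive step is fine. The base case for $\pi^{-1}(z)$ is immediate from $z_n=0$ (Lemma~\ref{lem:51}); the base case for the $\widetilde{w}$-orbit --- that $\widetilde{T}(ZZ_0)$ contains \emph{only} $\widetilde{w}_0,\dots,\widetilde{w}_{p-1}$ and no far translates --- is the real work, as you acknowledge. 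It is true (one can bound the horizontal extent of the left and right halves of $\widetilde{T}(ZZ_0)$ and check no point $x=a-\varepsilon$ with $a\ne 0$ lands inside), but it does require the diagram inspection you flag.

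The paper instead bypasses this entirely. Its proof says ``Lemma~\ref{lem:zigzag-basepoint} shows that Equation~(\ref{eq:uc}) works (alternatively, see Lemma~\ref{lem:nzk})'', and it is Lemma~\ref{lem:nzk} that carries the weight: that lemma gives $n_{\widetilde{z(u)}}(\widetilde{T}_{n+k})=z_n$, hence $n_{\widetilde{z(u)}}(\widetilde{T}_m)=0$ for every $m\in\{k,\dots,k+p+q-1\}$, which directly says all special triangles miss $\widetilde{z(u)}$ --- translates and all. Lemma~\ref{lem:nzk} is proved algebraically in Appendix~\ref{sec:Checks-for-Subsubsection} and by a picture argument in Subsubsection~\ref{subsec:zk}. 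So the paper's route is cleaner because it has this stronger lemma available; your route is more self-contained within the zig-zag framework but leaves a geometric verification to be done.
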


\begin{proof}
Lemma~\ref{lem:zigzag-basepoint} shows that Equation~(\ref{eq:uc})
works (alternatively, see Lemma~\ref{lem:nzk}), and that $\widetilde{z}_{0},\cdots,\widetilde{z}_{k-1}$
must be moved to the left of the $y$-axis $\widetilde{\beta}_{\infty}^{0}\subset\widetilde{\mathbb{T}^{2}}$.
Since the triangles $\widetilde{T}_{k},\cdots,\widetilde{T}_{p-1}$
are contained in $\widetilde{T}(ZZ_{k})$, $\widetilde{z(u)}$ should
avoid them. Since $\widetilde{w}_{i}$ is contained in $\widetilde{T}_{i}$
for $i=0,\cdots,p-1$, we cannot move $\widetilde{z}_{k},\cdots,\widetilde{z}_{p-1}$
to the left of $\widetilde{\beta}_{\infty}^{0}$, and so the uniqueness
of $u$ follows.
\end{proof}

\subsubsection{The triangle counting maps vanish}

Now, we have defined $u$ and $c$, and hence the local systems. Define
$\widehat{\psi}_{0r}^{k}\in\widehat{CF}(\beta_{0}^{E_{0}^{k}},\beta_{r})$,
$\widehat{\psi}_{r\infty}^{k}\in\widehat{CF}(\beta_{r},\beta_{\infty}^{E_{\infty}^{k}})$,
and $\widehat{\psi}_{\infty0}^{k}\in\widehat{CF}(\beta_{\infty}^{E_{\infty}^{k}},\beta_{0}^{E_{0}^{k}})$
as the sum of the respective special basis elements. Let us record
the following lemma.
\begin{lem}
\label{lem:hat-minus}The cycles $\psi_{0r}^{k},\psi_{r\infty}^{k},\psi_{\infty0}^{k}$
reduce to $\widehat{\psi}_{0r}^{k},\widehat{\psi}_{r\infty}^{k},\widehat{\psi}_{\infty0}^{k}$
modulo $U$, respectively, if $u_{i,\ell},v_{j},w_{i,\ell}$ satisfy
Equation~(\ref{eq:uvw-modu}). In particular, special basis elements
are standard.
\end{lem}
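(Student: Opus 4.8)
The plan is to reduce the three cycles modulo $U$ using Equation~(\ref{eq:uvw-modu}) and to match the resulting sums, term by term, with the lists of special basis elements; since doing so requires writing special basis elements in the shorthand of Definition~\ref{def:notation-sb}, I would first dispose of the parenthetical claim that special basis elements are standard. If $\widetilde{T}_{n}$ is a special triangle (so $n$ lies in the range~(\ref{eq:triangles-k})), its $\theta$-vertex is $\widetilde{\theta}_{n}\in\widetilde{\beta}_{r}^{0}$, lying in the $(-nq^{-1}\bmod p)$th row (Definition~\ref{def:theta-tilde}, Lemma~\ref{lem:thetanrow}), so by statement~(1) in the proof of Lemma~\ref{lem:Let--be} it is a lift of a standard basis element provided $c_{n\bmod p}\neq0$, whereupon Lemma~\ref{lem:Let--be} forces the $\xi$- and $\zeta$-vertices of $\widetilde{T}_{n}$ to be lifts of standard basis elements too. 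So it suffices to check $c_{n\bmod p}\neq0$ for $n$ in~(\ref{eq:triangles-k}): for the first block $n-k\in\{0,\dots,\min(p,q)-1\}$ directly, and for the second block $n-k$ runs through $\min(p,q)$ consecutive integers in $\{\max(p,q),\dots,p+q-1\}$, which reduce modulo $p$ exactly onto $\{0,\dots,\min(p,q)-1\}$; since $c_{n\bmod p}=s_{(n-k)\bmod p}$ (Definition~\ref{def:actual-local-system}) and, unwinding Definition~\ref{def:si}, $s_{m}\neq0$ precisely for $m\in\{0,\dots,\min(p,q)-1\}$, we are done.

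Next I would substitute Equation~(\ref{eq:uvw-modu}) into the formulas of Theorem~\ref{thm:gen-local-comp}. Using $x_{i,\ell}^{\ast}\theta_{i}=\boldsymbol{\theta}_{i,\ell}$, $y_{-jq^{-1}\bmod p}\xi_{j}=\boldsymbol{\xi}_{j}$, $x_{i,\ell}y_{-iq^{-1}\bmod p}^{\ast}\zeta=\boldsymbol{\zeta}_{i,\ell}$, the identity $c_{i+k}=s_{i}$, and reindexing by $j\equiv i+k$ (mod $p$ for $\psi_{0r}^{k},\psi_{\infty0}^{k}$ and mod $q$ for $\psi_{r\infty}^{k}$), one finds the following congruences modulo $U$:
\begin{gather*}
\psi_{0r}^{k}\equiv\sum_{\substack{k\le j\le p-1\\ c_{j}\neq0}}\boldsymbol{\theta}_{j,0}+\sum_{\substack{0\le j\le k-1\\ c_{j}\neq0}}\boldsymbol{\theta}_{j,1},\qquad \psi_{\infty0}^{k}\equiv\sum_{j\equiv k,\dots,k+\min(p,q)-1\bmod p}\boldsymbol{\zeta}_{j,0},\\
\psi_{r\infty}^{k}\equiv\sum_{j\equiv k-1,\dots,k-\min(p,q)\bmod q}\boldsymbol{\xi}_{j},
\end{gather*}
where $\boldsymbol{\theta}_{j,1}$ is read modulo $c_{j}$; in particular each right-hand side is an honest sum of distinct standard basis elements, all with coefficient~$1$. (One checks this against Remark~\ref{rem:pq53} for $(p,q,k)=(5,3,0)$.)

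It then remains to recognize these right-hand sides as $\widehat{\psi}_{0r}^{k},\widehat{\psi}_{r\infty}^{k},\widehat{\psi}_{\infty0}^{k}$, the sums of the special basis elements. For this I would read off the three vertices of each special triangle $\widetilde{T}_{n}$ exactly as in the proof of Proposition~\ref{prop:triangle} (recall that $z_{n}=0$ for $n$ in~(\ref{eq:triangles-k}) by Lemma~\ref{lem:51}, so $\widetilde{T}_{n}$ avoids $\widetilde{z(u)}$): the $\theta$-vertex lifts $\boldsymbol{\theta}_{n\bmod p,\,\lfloor n/p\rfloor\bmod c_{n\bmod p}}$, the $\xi$-vertex lifts $\boldsymbol{\xi}_{(n-p)\bmod q}$, and the $\zeta$-vertex lifts $\boldsymbol{\zeta}_{n\bmod p,\ast}$; letting $n$ run over~(\ref{eq:triangles-k}) and collecting these into a set, Lemma~\ref{lem:zigzag-basepoint} pins down which columns $\ell$ occur (the ``$+1$'' shift in the second $\boldsymbol{\theta}$-sum above reflecting the portion of the zig-zag $ZZ_{k}$ lying to the left of the $y$-axis $\widetilde{\beta}_{\infty}^{0}$), and a comparison gives the three asserted equalities.

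The step I expect to be the main obstacle is this final matching. It is pure index bookkeeping, but one must carefully track $\lfloor n/p\rfloor$ modulo $c_{n\bmod p}$, handle the two regimes $p\ge q$ and $p<q$ (i.e.\ $\min(p,q)=q$ versus $=p$) uniformly, and verify that distinct special triangles never contribute the same standard basis element to a given complex, so that the set of special basis elements is genuinely in bijection with the terms displayed above. Everything else is routine given Lemmas~\ref{lem:51},~\ref{lem:zigzag-basepoint},~\ref{lem:Let--be},~\ref{lem:thetanrow} and Proposition~\ref{prop:triangle}.
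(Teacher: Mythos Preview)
Your approach is the same as the paper's: reduce the cycles modulo \(U\), list the special basis elements explicitly in the two cases \(p\ge q\) and \(p<q\), and match term by term; the paper does this more tersely by just recording the two lists, while you also spell out the ``special implies standard'' step and the mod-\(U\) reductions. One caveat: your parenthetical ``\(z_n=0\) for \(n\) in~(\ref{eq:triangles-k}) by Lemma~\ref{lem:51}'' is false once \(k>0\) (for example \(z_{k+p+q-1}\neq0\)), and the conclusion that \(\widetilde{T}_n\) avoids \(\widetilde{z(u)}\) would require Lemma~\ref{lem:We-have-} instead---but since identifying which standard basis elements the vertices lift does not use the basepoint count at all, this remark is unnecessary and the argument is unaffected.
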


\begin{proof}
Since (\ref{eq:triangles-k}) are all the special triangles $\widetilde{T}_{n}$,
it is possible to read off all the special basis elements. Let us
divide into two cases and list the special basis elements using the
shorthands of Definition~\ref{def:notation-sb}. If $p\ge q$, then
they are 
\[
\boldsymbol{\theta}_{k,0},\boldsymbol{\theta}_{k+1,0},\cdots,\boldsymbol{\theta}_{k+q-1,0},\ \boldsymbol{\xi}_{0},\cdots,\boldsymbol{\xi}_{q-1},\ \boldsymbol{\zeta}_{k,0},\boldsymbol{\zeta}_{k+1,0},\cdots,\boldsymbol{\zeta}_{k+q-1,0}.
\]
If $p<q$, then they are 
\[
\{\boldsymbol{\theta}_{i,1}:i=0,\cdots,k-1\}\cup\{\boldsymbol{\theta}_{i,0}:i=k,\cdots,p-1\},\ \boldsymbol{\xi}_{k-p},\cdots,\boldsymbol{\xi}_{k-1},\ \boldsymbol{\zeta}_{0,0},\boldsymbol{\zeta}_{1,0},\cdots,\boldsymbol{\zeta}_{q-1,0}.\qedhere
\]
\end{proof}
Proposition~\ref{prop:special-triangles} is our main observation.
To show this, let us first show Lemma~\ref{lem:We-have-}; note that
we show a stronger statement in Lemma~\ref{lem:nzk}.
\begin{lem}
\label{lem:We-have-}We have $n_{\widetilde{z(u)}}(\widetilde{T}_{n})=0$
if and only if $n\in\{k,k+1,\cdots,k+p+q-1\}$.
\end{lem}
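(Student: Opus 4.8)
The plan is to reduce the claim to a statement purely about the covering space $\widetilde{\mathbb{T}^2}$ and the positions of the basepoints in $\widetilde{z(u)}$ versus those in $\pi^{-1}(z)$. First I would recall from Definition~\ref{def:theta-tilde} that $z_n = n_{\pi^{-1}(z)}(\widetilde{T}_n)$, so by Lemma~\ref{lem:51} we already know $z_n = 0$ exactly when $n \in \{0,\dots,p+q-1\}$, i.e.\ $n_{\pi^{-1}(z)}(\widetilde{T}_n) = 0$ iff $n$ lies in a window of length $p+q$ starting at $0$. The set $\widetilde{z(u)}$ is obtained from $\pi^{-1}(z)$ by sliding the copies of $z$ in the rows $b$ with $u_b = U$ from the right side ($a+\varepsilon$) to the left side ($a-\varepsilon$). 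Recalling that $u$ is defined (Equation~(\ref{eq:uc})) precisely so that $u_b = U$ for exactly the $k$ values $b = (-iq^{-1}-1)\bmod p$, $i = 0,\dots,k-1$, and comparing with the translate $\Phi_k$, the claim becomes: $n_{\widetilde{z(u)}}(\widetilde{T}_n) = 0$ iff $n$ lies in the length-$(p+q)$ window starting at $k$ instead of at $0$.

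The key geometric input is Lemma~\ref{lem:zigzag-basepoint} together with Corollary~\ref{cor:u-unique}: the special triangles (\ref{eq:triangles-k}) are exactly $\widetilde{T}_k,\dots,\widetilde{T}_{k+p+q-1}$ and, by Corollary~\ref{cor:u-unique}, they are precisely the triangles disjoint from $\widetilde{z(u)}$ — this immediately gives the ``if'' direction, that $n \in \{k,\dots,k+p+q-1\}$ implies $n_{\widetilde{z(u)}}(\widetilde{T}_n) = 0$. For the ``only if'' direction I would argue that for $n$ outside this window the support of $\widetilde{T}_n$ must contain a point of $\widetilde{z(u)}$. The cleanest way is a bookkeeping/interval argument: translating by $\Phi_k$ identifies $\widetilde{T}_{n+k}$ with a $\Phi_k$-translate of $\widetilde{T}_n$, and $\Phi_k$ carries $\pi^{-1}(z) = \widetilde{z((1,\dots,1))}$ to exactly the shifted basepoint set which, by the explicit form of $u$ in (\ref{eq:uc}) and the row computation in Lemma~\ref{lem:thetanrow}, agrees with $\widetilde{z(u)}$ after the $k$ relevant rows are flipped left; so $n_{\widetilde{z(u)}}(\widetilde{T}_{n+k}) = n_{\pi^{-1}(z)}(\widetilde{T}_n) = z_n$, and Lemma~\ref{lem:51} finishes it. Alternatively one can use Corollary~\ref{cor:zktkpq}, which pins down that $\widetilde{z}_k \in \widetilde{T}_{k+p+q}$, to show $n_{\widetilde{z(u)}}(\widetilde{T}_{k+p+q}) \geq 1$, and a symmetric statement at the left end $\widetilde{T}_{k-1}$, then combine with monotonicity of $n_{\widetilde{z(u)}}(\widetilde{T}_n)$ in $n$ along each row (which follows because $\widetilde{T}_n \subset \widetilde{T}_{n+p}$ once we are on the ``growing'' side and $\widetilde{T}_{n+p}\subset\widetilde T_n$ on the other side, exactly as in the proof of Lemma~\ref{lem:zigzag-basepoint}).

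I expect the main obstacle to be making the identification $n_{\widetilde{z(u)}}(\widetilde{T}_{n+k}) = n_{\pi^{-1}(z)}(\widetilde{T}_n)$ fully rigorous: one must check that the translation $\Phi_k$ really does take the full basepoint set $\pi^{-1}(z)$ to $\widetilde{z(u)}$, row by row, which amounts to verifying that the $k$ rows whose $z$-copy gets flipped to the left under $u$ (namely rows $(-iq^{-1}-1)\bmod p$ for $i<k$, equivalently the rows picked up by $\Phi_k$) are exactly the rows through which $ZZ_k$ passes differently from $ZZ_0$ — this is implicitly contained in Lemma~\ref{lem:zigzag-basepoint} and Corollary~\ref{cor:u-unique} but needs to be spelled out. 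Everything else is the routine interval arithmetic of Lemma~\ref{lem:51} transported by a translation. Once this identification is in hand, the lemma is immediate:
\[
n_{\widetilde{z(u)}}(\widetilde{T}_n) = 0 \iff n_{\pi^{-1}(z)}(\widetilde{T}_{n-k}) = 0 \iff n - k \in \{0,\dots,p+q-1\} \iff n \in \{k,\dots,k+p+q-1\},
\]
using Lemma~\ref{lem:51} for the middle equivalence.
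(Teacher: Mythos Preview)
Your main approach has a genuine gap. The claim that ``translating by $\Phi_k$ identifies $\widetilde{T}_{n+k}$ with a $\Phi_k$-translate of $\widetilde{T}_n$'' and that ``$\Phi_k$ carries $\pi^{-1}(z)$ to $\widetilde{z(u)}$'' is false as stated. The triangle $\widetilde{T}_n$ has the origin $(0,0)$ as a vertex (Definition~\ref{def:theta-tilde}), and $\Phi_k$ does not fix the origin for $k\neq 0$; in particular $\Phi_k$ does not preserve the $y$-axis $\widetilde{\beta}_\infty^0$, so $\Phi_k(\widetilde{T}_n)\neq\widetilde{T}_{n+k}$. Likewise $\Phi_k(\pi^{-1}(z))$ is a lattice shifted by a non-integer horizontal amount and cannot coincide with $\widetilde{z(u)}$, whose points all have $x$-coordinate in $\mathbb{Z}\pm\varepsilon$. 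The identity $n_{\widetilde{z(u)}}(\widetilde{T}_{n+k})=z_n$ you are aiming for \emph{is} true (it is Equation~(\ref{eq:triangle-basepoint}), proved in Lemma~\ref{lem:nzk} and in Subsubsection~\ref{subsec:zk}), but its proof is genuinely more involved: one first moves $\pi^{-1}(\beta_\infty)$ and the basepoints to an intermediate position $\pi^{-1}(\beta_\infty)_k$, $\pi^{-1}(z)_k$, and only then does $\Phi_k$ give the desired identification (Claim~\ref{claim:There-exists-a}). What you flag as ``the main obstacle'' is therefore not a bookkeeping check but the heart of the matter, and your sketched resolution does not work.

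A minor additional error: you write that the special triangles (\ref{eq:triangles-k}) ``are exactly $\widetilde{T}_k,\dots,\widetilde{T}_{k+p+q-1}$'', but the special triangles are only the outer $2\min(p,q)$ of these; the full range $\{k,\dots,k+p+q-1\}$ is what sits inside $\widetilde{T}(ZZ_k)$ by the last sentence of Lemma~\ref{lem:zigzag-basepoint}, and that is what you actually need for the ``if'' direction.

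The paper's own proof of the ``only if'' direction avoids the identity entirely and instead does a short direct case analysis: for $n<0$ and $n\ge 2p+q$ both points $(\pm\varepsilon,\varepsilon)\in\mathbb{T}^2$ lie in $T_n$; for $n\in\{0,\dots,k-1\}$ the point $\widetilde{w}_n=\widetilde{z}_n-(2\varepsilon,0)\in\widetilde{z(u)}$ lies in $\widetilde{T}_n$; for $n\in\{k+p+q,\dots,2p+q-1\}$ Corollary~\ref{cor:zktkpq} (applied with $n-p-q$ in place of $k$) places $\widetilde{z}_{n-p-q}\in\widetilde{z(u)}$ inside $\widetilde{T}_n$. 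Your alternative sketch (Corollary~\ref{cor:zktkpq} plus a containment/monotonicity argument) is in the spirit of this and could be completed, but your primary $\Phi_k$ route does not go through without substantially more work.
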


\begin{proof}
$(\Leftarrow)$: This follows from Lemma~\ref{lem:zigzag-basepoint}
($\widetilde{T}(ZZ_{k})$ is disjoint from $\widetilde{z(u)}$).

$(\Rightarrow)$: Let us divide into several cases. For $n<0$, the
points $(-\varepsilon,\varepsilon),(\varepsilon,\varepsilon)\in\mathbb{T}^{2}$
are contained in $T_{n}$, and so $n_{\widetilde{z(u)}}(\widetilde{T}_{n})\ge1$.
For $n=0,\cdots,k-1$, the point $\widetilde{z}_{n}-(2\varepsilon,0)$
is contained in $\widetilde{T}_{n}$, and so $n_{\widetilde{z(u)}}(\widetilde{T}_{n})\ge1$.
For $n=k+p+q,\cdots,2p+q-1$, $\widetilde{z}_{n-p-q}$ is contained
in $\widetilde{T}_{n}$ by Corollary~\ref{cor:zktkpq}, and so $n_{\widetilde{z(u)}}(\widetilde{T}_{n})\ge1$.
For $n\ge2p+q$, the points $(-\varepsilon,\varepsilon),(\varepsilon,\varepsilon)\in\mathbb{T}^{2}$
are contained in $T_{n}$, and so $n_{\widetilde{z(u)}}(\widetilde{T}_{n})\ge1$.
\end{proof}
\begin{prop}
\label{prop:special-triangles}The special triangles (\ref{eq:triangles-k}),
up to the identification from Convention~\ref{conv:identification},
are precisely the triangles $\widetilde{T}$ in $\widetilde{\mathbb{T}^{2}}$
such that at least two of the vertices of $\widetilde{T}$ are lifts
of special basis elements and $n_{\widetilde{z(u)}}(\widetilde{T})=0$.
In fact, all three vertices of each of the special triangles are lifts
of special basis elements.
\end{prop}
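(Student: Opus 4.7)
The plan is to prove both directions using the zig-zag structure. For the forward direction, I would note that every special triangle $\widetilde{T}_n$ is contained in $\widetilde{T}(ZZ_k)$ by Lemma~\ref{lem:zigzag-basepoint}, and $\widetilde{T}(ZZ_k)$ is disjoint from $\widetilde{z(u)}$ by Corollary~\ref{cor:u-unique}, which gives $n_{\widetilde{z(u)}}(\widetilde{T}_n)=0$. Each vertex of $\widetilde{T}_n$ is an intersection point whose position, via Definition~\ref{def:lift-of-basis}, realizes it as a lift of some basis element, which is then special by the definition of ``special basis element.'' This simultaneously proves the main direction and the ``in fact'' statement.

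For the reverse direction, I would take any $\widetilde{T}$ satisfying the hypotheses. Since $\beta_\infty\cap\beta_0=\{\zeta\}$, every Maslov-index-zero triangle in $\mathbb{T}^2$ equals $T_n$ for some $n\in\mathbb{Z}$, and the deck group of $\pi$ acts transitively on lifts, so $\widetilde{T}=\widetilde{T}_n+(m,0)$ for some $n,m\in\mathbb{Z}$. Since $\widetilde{z(u)}$ is invariant under integer horizontal translation, $n_{\widetilde{z(u)}}(\widetilde{T})=n_{\widetilde{z(u)}}(\widetilde{T}_n)$, and by Lemma~\ref{lem:We-have-} this forces $n\in\{k,\ldots,k+p+q-1\}$. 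Setting $i=n\bmod p$ and $\ell=(n-i)/p$, Definition~\ref{def:lift-of-basis} identifies the vertex basis elements of $\widetilde{T}_n+(m,0)$ as $\boldsymbol{\theta}_{i,(\ell+m)\bmod c_i}$, $\boldsymbol{\xi}_{(n-p)\bmod q}$, and $\boldsymbol{\zeta}_{i,m\bmod c_i}$ when $c_i\neq 0$, while the $\theta$- and $\zeta$-vertices are not lifts of any basis element when $c_i=0$.

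The remaining work is to rule out the non-special range $n\in\{k+\min(p,q),\ldots,k+\max(p,q)-1\}$ and, in the special range, to force $m\equiv 0\pmod{c_i}$. I expect this case analysis to be the main obstacle. For $p\ge q$, the formula $c_i=s_{(i-k)\bmod p}$ with $s_j=0$ for $j\ge q$ forces $c_i=0$ whenever $n$ lies in the non-special range, which kills two of the three vertex contributions and leaves at most one special vertex-lift. For $p<q$, I would check directly against the explicit enumeration in Lemma~\ref{lem:hat-minus} that the $\xi$-index $(n-p)\bmod q$ falls outside the special $\xi$-range $\{k-p,\ldots,k-1\}\bmod q$, and separately that $\ell\not\equiv 0\pmod{c_i}$ throughout the non-special range, so the simultaneous constraints $(\ell+m)\equiv 0$ and $m\equiv 0$ modulo $c_i$ needed for the $\theta$- and $\zeta$-vertices both to be special cannot hold. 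Finally, for $n$ in the special range, I would verify $\ell\equiv 0\pmod{c_i}$ for each special $n$ by going through the two sub-ranges of (\ref{eq:triangles-k}); this implies that the $\theta$- and $\zeta$-vertices of $\widetilde{T}_n+(m,0)$ are simultaneously lifts of special basis elements iff $m\equiv 0\pmod{c_i}$, at which point Convention~\ref{conv:identification} identifies $\widetilde{T}_n+(m,0)$ with $\widetilde{T}_n$, completing the proof.
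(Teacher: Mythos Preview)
Your overall plan matches the paper's proof closely, and the forward direction together with the restriction to $n\in\{k,\ldots,k+p+q-1\}$ via Lemma~\ref{lem:We-have-} is correct. There are, however, two genuine gaps in the reverse direction.

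First, the reduction to $\widetilde{T}=\widetilde{T}_n+(m,0)$ does not follow from the deck-group action alone: the deck group of $\pi:\widetilde{\mathbb{T}^2}\to\mathbb{T}^2$ is $\mathbb{Z}\times\mathbb{Z}/p\mathbb{Z}$, so a generic lift has the form $\widetilde{T}_n+(a,b)$ with $b\in\mathbb{Z}/p\mathbb{Z}$, and the $\zeta$-vertex of $\widetilde{T}_n$ is always the origin, so different $b$'s give genuinely different triangles not of the form $\widetilde{T}_{n'}+(m,0)$. The paper handles this by first invoking Lemma~\ref{lem:Let--be} (two vertices standard implies all three standard) and then Lemma~\ref{lem:theta-unique}; standardness forces the $\theta$-vertex onto $\widetilde{\beta}_r^0$, which pins down the row and eliminates the $(0,b)$ ambiguity.

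Second, your characterization of when the $\theta$-vertex is special is incorrect for $p<q$. By Lemma~\ref{lem:hat-minus}, the special $\boldsymbol{\theta}$ elements are $\boldsymbol{\theta}_{i,1}$ for $i<k$ and $\boldsymbol{\theta}_{i,0}$ for $i\ge k$, so the relevant constraint is $(\ell+m)\equiv\delta_i\pmod{c_i}$ with $\delta_i\in\{0,1\}$, not $(\ell+m)\equiv 0$. Consequently your claim ``$\ell\not\equiv 0\pmod{c_i}$ throughout the non-special range'' is false: for $(p,q,k)=(3,5,2)$ and $n=6$ one gets $i=0$, $\ell=2$, $c_0=2$, so $\ell\equiv 0\pmod{c_0}$. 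The paper avoids computing $\delta_i$ explicitly: it observes that $\boldsymbol{\zeta}_{i,d}$ special forces $d\equiv 0\pmod{c_i}$, after which $\boldsymbol{\theta}_{i,d+\ell}=\boldsymbol{\theta}_{i,\ell}$; since the special $\ell$-value modulo $c_i$ is by construction exactly the $\ell$-value of the special $n$'s with that residue $i$, having $\boldsymbol{\theta}_{i,\ell}$ special forces $n$ itself to be special.
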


\begin{proof}
Since special basis elements are standard by Lemma~\ref{lem:hat-minus},
if at least two of the vertices of $\widetilde{T}$ are lifts of special
basis elements, then all three vertices of $\widetilde{T}$ are lifts
of standard basis elements by Lemma~\ref{lem:Let--be}. Hence, by
Lemma~\ref{lem:theta-unique} (compare the proof of Proposition~\ref{prop:triangle})
$\widetilde{T}=\widetilde{T}_{n}+(d,0)$ for some $n,d\in\mathbb{Z}$.
If $\ell,i,j\in\mathbb{Z}$ denote the integers $i=n\bmod p$, $j=n\bmod q$,
and such that $n=\ell p+i$, then the three vertices of $\widetilde{T}$
are lifts of $\boldsymbol{\theta}_{i,d+\ell}$, $\boldsymbol{\xi}_{j-p}$,
and $\boldsymbol{\zeta}_{i,d}$.

First, if $\widetilde{T}_{n}$ is a special triangle, then for at
least one of $\boldsymbol{\theta}_{i,d+\ell}$ and $\boldsymbol{\zeta}_{i,d}$
to be special, we must have $d\equiv0\bmod c_{i}$. Now, by Lemma~\ref{lem:We-have-},
it is sufficient to show that for any $d$ and $n\in\{k+\min(p,q),\cdots,k+\max(p,q)-1\}$,
only at most one of $\boldsymbol{\theta}_{i,d+\ell}$, $\boldsymbol{\xi}_{j-p}$,
and $\boldsymbol{\zeta}_{i,d}$ is special. Let us divide into two
cases: $p\ge q$ and $p<q$. If $p\ge q$, then $\boldsymbol{\theta}_{i,d+\ell}$
and $\boldsymbol{\zeta}_{i,d}$ are not special, since $i\notin\{k,\cdots,k+q-1\}\bmod p$.
If $p<q$, then $\boldsymbol{\xi}_{j-p}$ is not special, since $j-p\notin\{k-p,\cdots,k-1\}\bmod q$.
For both $\boldsymbol{\theta}_{i,d+\ell}$ and $\boldsymbol{\zeta}_{i,d}$
to be special, we need to have $d\equiv0\bmod c_{i}$, and that $\widetilde{T}_{n}$
is a special triangle.
\end{proof}
By Propositions~\ref{prop:lift-mu2}~and~\ref{prop:special-triangles},
for each of $\mu_{2}(\widehat{\psi}_{0r}^{k}\otimes\widehat{\psi}_{r\infty}^{k})$,
$\mu_{2}(\widehat{\psi}_{r\infty}^{k}\otimes\widehat{\psi}_{\infty0}^{k})$,
and $\mu_{2}(\widehat{\psi}_{\infty0}^{k}\otimes\widehat{\psi}_{0r}^{k})$,
the coefficient of each basis element $\boldsymbol{b}$ of $\widehat{CF}(\beta_{0}^{E_{0}^{k}},\beta_{\infty}^{E_{\infty}^{k}})$,
$\widehat{CF}(\beta_{r},\beta_{0}^{E_{0}^{k}})$, and $\widehat{CF}(\beta_{\infty}^{E_{\infty}^{k}},\beta_{r})$,
respectively, is zero if $\boldsymbol{b}$ is not special, and if
$\boldsymbol{b}$ is special, then it is the number of special triangles
$\widetilde{T}_{n}$ such that a vertex of $\widetilde{T}_{n}$ is
a lift of $\boldsymbol{b}$.

We claim that there are exactly two such triangles for each special
basis element $\boldsymbol{b}$. Let $H_{i}\subset\pi^{-1}(\beta_{0})$
and $L_{i}\subset\widetilde{\beta}_{r}^{0}$ be the line segments
from Definition~\ref{def:hi-li}. For $i\in\mathbb{Z}$, let $X_{i}$
be the connected component of $\pi^{-1}(\beta_{0})$ that $H_{i}$
is contained in, and let $C_{i}$ be the connected component of $\widetilde{\beta}_{r}^{0}$
that $L_{i}$ is contained in.

First, if $\boldsymbol{b}$ is a special basis element of $\widehat{CF}(\beta_{0}^{E_{0}^{k}},\beta_{\infty}^{E_{\infty}^{k}})$
or $\widehat{CF}(\beta_{r},\beta_{0}^{E_{0}^{k}})$, then let $i\in\mathbb{Z}$
be such that $\Phi_{k}(X_{i})$ contains a lift of $\boldsymbol{b}$.
Let $t\in\mathbb{Z}$ be such that the two endpoints of $H_{i}$ are
$\widetilde{\theta}_{i}$ and $\widetilde{\theta}_{t}$. Then the
two endpoints of $\Phi_{k}(H_{i})$ are $\widetilde{\theta}_{i+k}$
and $\widetilde{\theta}_{t+k}$, and the two wanted triangles are
$\widetilde{T}_{i+k}$ and $\widetilde{T}_{t+k}$.

Second, if $\boldsymbol{b}$ is a special basis element of $\widehat{CF}(\beta_{\infty}^{E_{\infty}^{k}},\beta_{r})$,
then let $i\in\mathbb{Z}$ be such that $\Phi_{k}(C_{i})$ contains
a lift of $\boldsymbol{b}$. Let $t\in\mathbb{Z}$ be such that the
two endpoints of $L_{i}$ are $\widetilde{\theta}_{i}$ and $\widetilde{\theta}_{t}$.
Then the two endpoints of $\Phi_{k}(L_{i})$ are $\widetilde{\theta}_{i+k}$
and $\widetilde{\theta}_{t+k}$, and the two wanted triangles are
$\widetilde{T}_{i+k}$ and $\widetilde{T}_{t+k}$.

This completes the proof that the triangle counting maps are zero
for the hat version.

\begin{rem}[Minus version]
\label{rem:minus-hard}In general, if $\psi_{0r}^{k},\psi_{r\infty}^{k},\psi_{\infty0}^{k}$
are simply the sum of the special basis elements, then the $\mu_{2}$'s
(Theorem~\ref{thm:gen-local-comp}~(\ref{enu:triangle})) do not
vanish for the minus version. In fact, this already fails for $(p,q,k)=(5,3,0)$
(compare Remark~\ref{rem:pq53}): let (recall the notations from
Definition~\ref{def:notation-sb})
\[
\psi_{0r}:=\boldsymbol{\theta}_{0,0}+\boldsymbol{\theta}_{1,0}+\boldsymbol{\theta}_{2,0},\ \psi_{r\infty}:=\boldsymbol{\xi}_{0}+\boldsymbol{\xi}_{1}+\boldsymbol{\xi}_{2},\ \psi_{\infty0}:=\boldsymbol{\zeta}_{0,0}+\boldsymbol{\zeta}_{1,0}+\boldsymbol{\zeta}_{2,0}
\]
be the sum of the special basis elements. Then, we have 
\[
\mu_{2}(\psi_{\infty0}\otimes\psi_{0r})\equiv U\boldsymbol{\xi}_{1}+U\boldsymbol{\xi}_{2}\mod{U^{2}}.
\]
Indeed, $T_{-3}$ contributes $U\boldsymbol{\xi}_{1}$ and $T_{10}$
contributes $U\boldsymbol{\xi}_{2}$.
\end{rem}

\subsection{\label{subsec:The-quadrilateral-counting}Quadrilateral counting
maps}

\begin{figure}[h]
\begin{centering}
\raisebox{-0.5\height}{\includegraphics[scale=0.7]{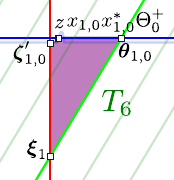}}\quad{}\raisebox{-0.5\height}{\includegraphics[scale=0.7]{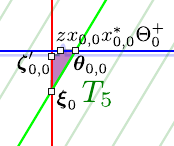}}\quad{}\raisebox{-0.5\height}{\includegraphics[scale=0.7]{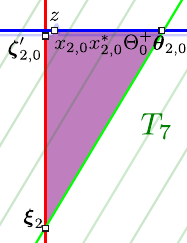}}\quad{}\quad{}\quad{}\quad{}\raisebox{-0.5\height}{\includegraphics[scale=0.7]{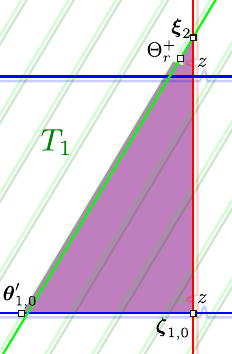}}\quad{}\raisebox{-0.5\height}{\includegraphics[scale=0.7]{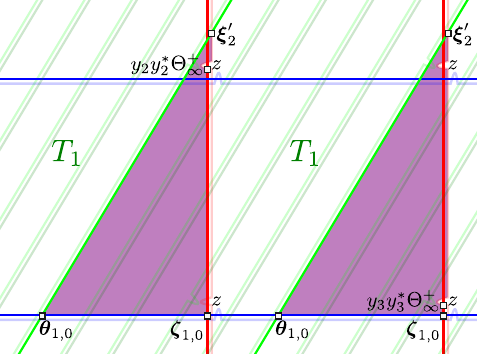}}\quad{}\raisebox{-0.5\height}{\includegraphics[scale=0.7]{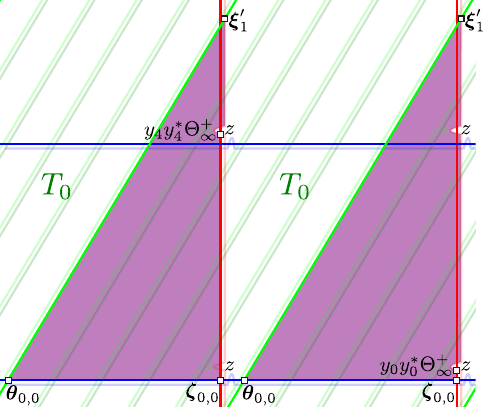}}\quad{}\raisebox{-0.5\height}{\includegraphics[scale=0.7]{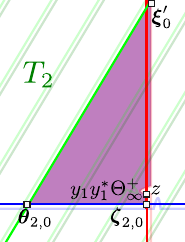}}\quad{}
\par\end{centering}
\caption{\label{fig:quadrilaterals}Lifts of the quadrilaterals that contribute
to $\mu_{3}$ for $(p,q,k)=(5,3,0)$. These quadrilaterals are small
perturbations of triangles; the corresponding triangles are written
in green. The first three quadrilaterals are for $\mu_{3}(\widehat{\psi}_{0r}^{0}\otimes\widehat{\psi}_{r\infty}^{0}\otimes\widehat{\psi}_{\infty0}^{0}{}')$;
the fourth quadrilateral is for $\mu_{3}(\widehat{\psi}_{r\infty}^{0}\otimes\widehat{\psi}_{\infty0}^{0}\otimes\widehat{\psi}_{0r}^{0}{}')$;
the last five quadrilaterals are for $\mu_{3}(\widehat{\psi}_{\infty0}^{0}\otimes\widehat{\psi}_{0r}^{0}\otimes\widehat{\psi}_{r\infty}^{0}{}')$.}
\end{figure}

In this subsection, we prove 
\begin{equation}
\mu_{3}(\widehat{\psi}_{0r}^{k}\otimes\widehat{\psi}_{r\infty}^{k}\otimes\widehat{\psi}_{\infty0}^{k}{}')={\rm Id}_{E_{0}^{k}}\Theta_{0}^{+},\ \mu_{3}(\widehat{\psi}_{r\infty}^{k}\otimes\widehat{\psi}_{\infty0}^{k}\otimes\widehat{\psi}_{0r}^{k}{}')=\Theta_{r}^{+},\ \mu_{3}(\widehat{\psi}_{\infty0}^{k}\otimes\widehat{\psi}_{0r}^{k}\otimes\widehat{\psi}_{r\infty}^{k}{}')={\rm Id}_{E_{\infty}^{k}}\Theta_{\infty}^{+}.\label{eq:mu3-hat}
\end{equation}
By Lemma~\ref{lem:hat-minus}, this shows that Theorem~\ref{thm:gen-local-comp}~(\ref{enu:quadrilateral})
holds whenever $u_{i,\ell},v_{j},w_{i,\ell}$ satisfy Equation~(\ref{eq:uvw-modu}).

Recall from Subsection~\ref{subsec:Lifts-quad} that Maslov index
$-1$ quadrilaterals correspond to pairs $(T_{n},\overline{v})$.
By Proposition~\ref{prop:special-triangles}, the special triangles
(\ref{eq:triangles-k}), up to the identification from Convention~\ref{conv:identification},
are precisely the triangles $\widetilde{T}$ in $\widetilde{\mathbb{T}^{2}}$
such that all its vertices are lifts of special basis elements and
$n_{\widetilde{z(u)}}(\widetilde{T})=0$. Hence, by Proposition~\ref{prop:main-mu3},
for each basis element $\boldsymbol{b}$ of $\boldsymbol{CF}^{-}(\beta_{0}^{E_{0}^{k}},{\beta_{0}'}^{E_{0}^{k}})$,
$\boldsymbol{CF}^{-}(\beta_{r},\beta_{r}')$, resp.\ $\boldsymbol{CF}^{-}(\beta_{\infty}^{E_{\infty}^{k}},{\beta_{\infty}'}^{E_{\infty}^{k}})$,
the coefficient of $\boldsymbol{b}$ in $\mu_{3}(\widehat{\psi}_{0r}^{k}\otimes\widehat{\psi}_{r\infty}^{k}\otimes\widehat{\psi}_{\infty0}^{k}{}')$,
$\mu_{3}(\widehat{\psi}_{r\infty}^{k}\otimes\widehat{\psi}_{\infty0}^{k}\otimes\widehat{\psi}_{0r}^{k}{}')$,
resp.\ $\mu_{3}(\widehat{\psi}_{\infty0}^{k}\otimes\widehat{\psi}_{0r}^{k}\otimes\widehat{\psi}_{r\infty}^{k}{}')$,
is zero if $\boldsymbol{b}$ is not standard, and if $\boldsymbol{b}$
is standard, then it is the number of $(T_{n},\overline{v})$ such
that $\widetilde{T}_{n}$ is a special triangle and $\widehat{\pi}(\overline{v})$
is a lift of $\boldsymbol{b}$. Thus, Equation~(\ref{eq:mu3-hat})
is equivalent to that there are an odd number of such $(T_{n},\overline{v})$.
Figure~\ref{fig:quadrilaterals} shows lifts of all the quadrilaterals
that contribute to $\mu_{3}$ for $(p,q,k)=(5,3,0)$, the corresponding
triangle $T_{n}$, and the standard basis element $\boldsymbol{b}$.

To show that there are an odd number of such $(T_{n},\overline{v})$,
it is convenient to consider the $\pi^{-1}(\beta_{0})$-, $\pi^{-1}(\beta_{r})$-,
and $\pi^{-1}(\beta_{\infty})$-boundaries of the triangles $\widetilde{T}_{n}$
as one-chains with $\mathbb{Z}/2\mathbb{Z}$-coefficients, i.e.\ as
elements of 
\[
H_{1}(\pi^{-1}(\beta_{0}\cup\beta_{r}\cup\beta_{\infty}),\pi^{-1}((\beta_{0}\cap\beta_{r})\cup(\beta_{r}\cap\beta_{\infty})\cup(\beta_{\infty}\cap\beta_{0}));\mathbb{Z}/2\mathbb{Z}).
\]
Let $\partial_{0}$, $\partial_{r}$, resp.\ $\partial_{\infty}$
be the sum of the $\pi^{-1}(\beta_{0})$-, $\pi^{-1}(\beta_{r})$-,
resp.\ $\pi^{-1}(\beta_{\infty})$-boundaries of the special triangles.
Then, for each standard basis element $\boldsymbol{b}=x_{i,\ell}x_{i,\ell}^{\ast}\Theta_{0}^{+}$,
$\Theta_{r}^{+}$, resp.\ $y_{i}y_{i}^{\ast}\Theta_{\infty}^{+}$,
if $\widetilde{\boldsymbol{b}}\subset\widetilde{\mathbb{T}^{2}}$
is the set of lifts of $\boldsymbol{b}$, then we are left to show
that $\widetilde{\boldsymbol{b}}\cap\partial_{0}$, $\widetilde{\boldsymbol{b}}\cap\partial_{r}$,
resp.\ $\widetilde{\boldsymbol{b}}\cap\partial_{\infty}$ is odd.
We consider these three cases separately.

\subsubsection*{Case $\boldsymbol{b}=x_{i,\ell}x_{i,\ell}^{\ast}\Theta_{0}^{+}$}

The set of lifts of $x_{i,\ell}x_{i,\ell}^{\ast}\Theta_{0}^{+}$ intersects
$ZZ_{k}\cap\pi^{-1}(\beta_{0})$ once. Let us show $\partial_{0}=ZZ_{k}\cap\pi^{-1}(\beta_{0})$.
For each $i$ such that $s_{i}\neq0$, if we let $t\in\mathbb{Z}$
be such that the two endpoints of $\Phi_{k}(H_{i})$ are $\widetilde{\theta}_{i+k}$
and $\widetilde{\theta}_{t+k}$, then $\pi^{-1}(\beta_{0})$-boundaries
of the triangles $\widetilde{T}_{i+k}$ and $\widetilde{T}_{t+k}$
add up to $\Phi_{k}(H_{i})$.

\subsubsection*{Case $\boldsymbol{b}=\Theta_{r}^{+}$}

Let us first show that $\partial_{r}=ZZ_{k}\cap\pi^{-1}(\beta_{r})$:
let $i=0,\cdots,p-1$ be such that $s_{i}\neq0$, and let $t\in\mathbb{Z}$
be such that the two endpoints of $\Phi_{k}(L_{i})$ are $\widetilde{\theta}_{i+k}$
and $\widetilde{\theta}_{t+k}$. Then the $\pi^{-1}(\beta_{r})$-boundaries
of the triangles $\widetilde{T}_{i+k}$ and $\widetilde{T}_{t+k}$
add up to $\Phi_{k}(L_{i})$.

Now, let us show that the image of $ZZ_{k}$ in $\mathbb{T}^{2}$
is one copy of $\beta_{r}$. It is sufficient to show this for $k=0$
since $ZZ_{k}$ is obtained by translating $ZZ_{0}$. In this case,
project the $L_{i}$'s to $\mathbb{R}^{2}/((p,0)\mathbb{Z}\oplus(0,1)\mathbb{Z})$:
since the endpoints of each $H_{i}$ are identified, the union of
the projections $L_{i}$'s is exactly the projection of $\widetilde{\beta}_{r}^{0}$,
which projects to one copy of $\beta_{r}$ in $\mathbb{T}^{2}$. Hence,
in particular, the set of lifts of $\Theta_{r}^{+}$ intersects $ZZ_{k}$
once.

\subsubsection*{Case $\boldsymbol{b}=y_{i}y_{i}^{\ast}\Theta_{\infty}^{+}$}

The set of lifts of $y_{i}y_{i}^{\ast}\Theta_{\infty}^{+}$ intersects
the $y$-axis $\widetilde{\beta}_{\infty}^{0}$ once. Let us show
$\partial_{\infty}=\widetilde{\beta}_{\infty}^{0}$. Let $i$ be such
that $s_{i}\neq0$, and let $t\in\mathbb{Z}$, $b,c\in\mathbb{Z}/p\mathbb{Z}$
be such that the two endpoints of $\Phi_{k}(L_{i})$ are $\widetilde{\theta}_{i+k}$
and $\widetilde{\theta}_{t+k}$, and that they are on $\widetilde{\beta}_{0}^{b}$
and $\widetilde{\beta}_{0}^{c}$, respectively. Let $V_{i}\subset\widetilde{\beta}_{\infty}^{0}$
be the line segment given by the path obtained by starting at $\widetilde{\beta}_{0}^{c}\cap\widetilde{\beta}_{\infty}^{0}$
and traversing in the $-y$-direction until $\widetilde{\beta}_{0}^{b}\cap\widetilde{\beta}_{\infty}^{0}$.
Then the $\pi^{-1}(\beta_{\infty})$-boundaries of the triangles $\widetilde{T}_{i+k}$
and $\widetilde{T}_{t+k}$ add up to $V_{i}$.

This completes the proof of Equation~(\ref{eq:mu3-hat}).

\section{\label{sec:The-minus-version}The minus version}

In this section, we prove our main local computation, Theorem~\ref{thm:gen-local-comp}.
More precisely, we show that there exist $u_{i,\ell},v_{j},w_{i,\ell}\in\mathbb{F}\llbracket U\rrbracket$
such that Equation~(\ref{eq:uvw-modu}) and Theorem~\ref{thm:gen-local-comp}~(\ref{enu:triangle})
hold. Recall that Theorem~\ref{thm:gen-local-comp} indeed follows
from this: Theorem~\ref{thm:gen-local-comp}~(\ref{enu:bigon})
holds since $\mu_{1}\equiv0$, and we have shown Theorem~\ref{thm:gen-local-comp}~(\ref{enu:quadrilateral})
in Subsection~\ref{subsec:The-quadrilateral-counting}.

Recall the triangles $\widetilde{T}_{n}$ and the intersection points
$\widetilde{\theta}_{n}$ from Definition~\ref{def:theta-tilde},
and the standard basis elements $\boldsymbol{\theta}_{i,\ell},\boldsymbol{\xi}_{j},\boldsymbol{\zeta}_{i,\ell}$
from Definition~\ref{def:notation-sb}.

\subsection{\label{subsec:triangle-k=00003D0}Triangle counting maps for $k=0$}

In Proposition~\ref{prop:triangle}, we wrote down explicit formulas
for the three triangle counting maps: Equations~(\ref{eq:0rinf}),
(\ref{eq:rinf0}), and (\ref{eq:inf0r}). In Subsubsection~\ref{subsec:triangle0rinf-rinf0},
we show that for any $(u_{i,\ell})$ and $(w_{i,\ell})$, there exist
$(v_{j})$ such that Equation~(\ref{eq:uvw-modu}) holds for $(v_{j})$
and Equations~(\ref{eq:0rinf})~and~(\ref{eq:rinf0}) are zero.
In Subsubsection~\ref{subsec:triangleinf0r}, we show that there
exist $(u_{i,\ell})$ and $(w_{i,\ell})$ such that Equation~(\ref{eq:uvw-modu})
holds for $(u_{i,\ell})$ and $(w_{i,\ell})$, and Equation~(\ref{eq:inf0r})
is zero. As an example, we carry out the argument for $(p,q)=(5,3)$
in Examples~\ref{exa:61example}~and~\ref{exa:53uv} and show Remark~\ref{rem:pq53}.

In this subsection, $\ell,i,j\in\mathbb{Z}$ always denote the integers
such that $i=n\bmod p\in\{0,\cdots,p-1\}$, $j=n\bmod q\in\{0,\cdots,q-1\}$,
and $n=\ell p+i$.

\subsubsection{\label{subsec:triangle0rinf-rinf0}The triangle counting maps (\ref{eq:0rinf})~and~(\ref{eq:rinf0})}

Consider the $\mathbb{F}\llbracket U\rrbracket$-linear map
\[
F:=\sum_{n\in\mathbb{Z}}U^{z_{n}}f_{i,\ell}e_{j-p}^{\ast}:\bigoplus_{j=0}^{q-1}e_{j}\mathbb{F}\llbracket U\rrbracket\to\bigoplus_{i=0}^{p-1}\bigoplus_{\ell=0}^{s_{i}-1}f_{i,\ell}\mathbb{F}\llbracket U\rrbracket.
\]
Here, the indices of $e$ are interpreted modulo $q$, the indices
of $f$ lie in the index set $I$ from Definition~\ref{def:index-set},
and let $f_{i,\ell}:=0$ if $s_{i}=0$. Then, Equation~(\ref{eq:0rinf}),
resp.\ (\ref{eq:rinf0}) can be written as $F(\sum_{j=0}^{q-1}v_{j}e_{j})$,
where we identify $f_{i,\ell}$ with 
\[
\sum_{d=0}^{s_{i}-1}u_{i,d+\ell}\boldsymbol{\zeta}_{i,d},\ \mathrm{resp.}\ \sum_{d=0}^{s_{i}-1}w_{i,d}\boldsymbol{\theta}_{i,d+\ell}.
\]

Let us show that $\ker F\neq0$. Equivalently, we show ${\rm rank}_{\mathbb{F}\llbracket U\rrbracket}\ {\rm ker}F\neq0$.
The trick is to use an involution to define $\mathbb{F}\llbracket U\rrbracket$-linear
subspaces 
\[
A\le{\rm dom}F,\ B\le{\rm codom}F
\]
such that $F(A)\le B$ and ${\rm rank}B<{\rm rank}A$. Although we
check everything algebraically (which we postpone to Appendix~\ref{sec:Checks-for-Subsubsection}),
this involution has a satisfying ``picture interpretation'': see
Subsubsection~\ref{subsec:The-involution}.

Consider the involution $n\mapsto p+q-1-n$ on $\mathbb{Z}$. This
involution has one fixed point if $p+q$ is odd, and has no fixed
points if $p+q$ is even. If $p+q$ is odd, let $\ell_{0},i_{0}\in\mathbb{Z}$
be such that $(p+q-1)/2=\ell_{0}p+i_{0}$ and $i_{0}\in\{0,\cdots,p-1\}$.
Then, we have (by Lemmas~\ref{lem:a0}~and~\ref{lem:a1})
\begin{multline*}
F=\sum_{n\ge\left\lfloor \frac{p+q-1}{2}\right\rfloor +1}U^{z_{n}}\left(f_{i,\ell}e_{j-p}^{\ast}+f_{q-i-1,-\ell}e_{-j-1}^{\ast}\right)\\
+\begin{cases}
U^{z_{(p+q-1)/2}}f_{i_{0},\ell_{0}}e_{(-p+q-1)/2}^{\ast} & {\rm if}\ p+q\ {\rm is}\ {\rm odd}\\
0 & {\rm if}\ p+q\ {\rm is}\ {\rm even}
\end{cases}.
\end{multline*}
Also, if $p+q$ is odd, then $U^{z_{(p+q-1)/2}}f_{i_{0},\ell_{0}}e_{(-p+q-1)/2}^{\ast}\neq0$
(i.e.\ $s_{i_{0}}\neq0$) if and only if $q>p$ by Lemma~\ref{lem:a3}.

Now, we define $A$ and $B$, and compute their ranks.

\subsubsection*{Defining $A$ and computing  ${\rm rank}A$}

Let 
\[
A:={\rm span}\left\langle e_{j}+e_{-p-j-1}:j=0,\cdots,q-1\right\rangle \oplus{\rm span}\left\langle e_{j}:j\equiv-p-j-1\mod q\right\rangle .
\]

\textbf{If $q=2q'+1$ is odd}, then the involution $j\mapsto-(j+p+1)$
on $\mathbb{Z}/q\mathbb{Z}$ has  one fixed point. Hence, ${\rm rank}A=q'+1$.

\textbf{If $q=2q'$ is even}, then $p$ is odd (since $p$ and $q$
are coprime), and so the above involution has  two fixed points. Hence,
${\rm rank}A=q'+1$.

\subsubsection*{Defining $B$ and computing ${\rm rank}B$}

We divide into several cases. First, let 
\[
B':={\rm span}\left\langle f_{i,\ell}+f_{q-i-1,-\ell}:i=0,\cdots,p-1,\ \ell=0,\cdots,s_{i}-1\right\rangle .
\]

\textbf{If $p+q$ is even}, then let $B=B'$. Since $p$ and $q$
are coprime, they are both odd. Let $q=2q'+1$. Then, the involution
$i\mapsto q-i-1$ on $\mathbb{Z}/p\mathbb{Z}$ has one fixed point,
$q'$. By Lemma~\ref{lem:a2}, $s_{q'}$ is odd, and so the involution
$(i,\ell)\mapsto(q-i-1,-\ell)$ on the index set $I$ has one fixed
point. Hence, ${\rm rank}B=(q-1)/2=q'$.

\textbf{If $p+q$ is odd}, let 
\[
B:=B'\oplus{\rm span}\left\langle f_{i_{0},\ell_{0}}\right\rangle .
\]
Recall that $f_{i_{0},\ell_{0}}\neq0$ (i.e.\ $s_{i_{0}}\neq0$)
if and only if $p<q$ (Lemma~\ref{lem:a3}). We further divide into
a few cases.
\begin{itemize}
\item $p$ is even, $q=2q'+1$ is odd: the involution $i\mapsto q-i-1$
on $\mathbb{Z}/p\mathbb{Z}$ has  two fixed points $q'$ and $i_{0}$.
We have the following by Lemmas~\ref{lem:a2}~and~\ref{lem:a3}.
\begin{itemize}
\item If $p>q$, the involution $(i,\ell)\mapsto(q-i-1,-\ell)$ on $I$
has  one fixed point (and its first coordinate is $q'$). Hence, ${\rm rank}B=(q-1)/2=q'$.
\item If $p<q$, the above involution on $I$ has  three fixed points, and
one of them is $(i_{0},\ell_{0})$: $q'$ contributes one and $i_{0}$
contributes two. Hence, ${\rm rank}B=(q-3)/2+1=q'$.
\end{itemize}
\item $p$ is odd, $q=2q'$ is even: the involution $i\mapsto q-i-1$ on
$\mathbb{Z}/p\mathbb{Z}$ has  one fixed point $i_{0}$. We have the
following by Lemma~\ref{lem:a3}.
\begin{itemize}
\item If $p>q$, the involution $(i,\ell)\mapsto(q-i-1,-\ell)$ on $I$
has no fixed points. Hence, ${\rm rank}B=q/2=q'$.
\item If $p<q$, the above involution on $I$ has  two fixed points, and
one of them is $(i_{0},\ell_{0})$. Hence, ${\rm rank}B=(q-2)/2+1=q'$.
\end{itemize}
\end{itemize}
In all of the cases, we have 
\[
{\rm rank}A={\rm rank}B+1>{\rm rank}B\ {\rm and}\ F(A)\le B,
\]
and so ${\rm rank}({\rm ker}F)\neq0$.

Now, let $\sum_{j=0}^{q-1}v_{j}e_{j}\neq0$ be any nonzero element
of $\ker F$. By dividing by some $U^{m}$ if necessary, we can let
$\sum_{j=0}^{q-1}v_{j}e_{j}\not\equiv0\bmod U$. Let us show that
Equation~(\ref{eq:uvw-modu}) holds for $(v_{j})$. Since $z_{n}=0$
if and only if $n\in\{0,\cdots,p+q-1\}$, we have 
\[
F\otimes_{\mathbb{F}\llbracket U\rrbracket}\mathbb{F}=\sum_{i=0}^{\min(p,q)-1}\sum_{\ell=0}^{s_{i}}f_{i,\ell}e_{(\ell-1)p+i}^{\ast}.
\]
Hence, $F(\sum_{j=0}^{q-1}v_{j}e_{j})\equiv0\bmod U$ if and only
if for all $i=0,\cdots,\min(p,q)-1$, we have $v_{-p+i}\equiv v_{(s_{i}-1)p+i}\bmod U$
and for all $t=0,\cdots,s_{i}-2$, we have $v_{tp+i}\equiv0\bmod U$.
By Lemma~\ref{lem:a4}, we have 
\[
\sum_{j=0}^{q-1}v_{j}e_{j}\equiv\sum_{i=0}^{\min(p,q)-1}e_{-p+i}=e_{-1}+e_{-2}+\cdots+e_{-\min(p,q)}\bmod U.
\]
Hence, Equation~(\ref{eq:uvw-modu}) holds for $(v_{j})$.
\begin{rem}
In fact, we have shown that ${\rm ker}(F\otimes_{\mathbb{F}\llbracket U\rrbracket}\mathbb{F})$
is one dimensional and that it is spanned by $e_{-1}+e_{-2}+\cdots+e_{-\min(p,q)}$.
Since 
\[
0<{\rm rank}_{\mathbb{F}\llbracket U\rrbracket}\ {\rm ker}F\le\dim_{\mathbb{F}}\ker\left(F\otimes_{\mathbb{F}\llbracket U\rrbracket}\mathbb{F}\right)=1,
\]
we have ${\rm rank}_{\mathbb{F}\llbracket U\rrbracket}\ {\rm ker}F=1$.
\end{rem}

\begin{example}
\label{exa:61example}Let us carry out the argument of this subsubsection
for $(p,q)=(5,3)$ and check that the $(v_{j})$ from Remark~\ref{rem:pq53}
works. With respect to the basis $(e_{0},e_{1},e_{2})$ and $(f_{0,0},f_{1,0},f_{2,0})$,
the linear map $F$ is given by the matrix
\begin{equation}
\begin{pmatrix}\sum_{m\in\mathbb{Z}}U^{z_{15m+5}} & \sum_{m\in\mathbb{Z}}U^{z_{15m}} & \sum_{m\in\mathbb{Z}}U^{z_{15m+10}}\\
\sum_{m\in\mathbb{Z}}U^{z_{15m+11}} & \sum_{m\in\mathbb{Z}}U^{z_{15m+6}} & \sum_{m\in\mathbb{Z}}U^{z_{15m+1}}\\
\sum_{m\in\mathbb{Z}}U^{z_{15m+2}} & \sum_{m\in\mathbb{Z}}U^{z_{15m+12}} & \sum_{m\in\mathbb{Z}}U^{z_{15m+7}}
\end{pmatrix}.\label{eq:matrix}
\end{equation}
By Lemma~\ref{lem:a0}, $z_{n}=z_{7-n}.$ Hence, $\sum_{m\in\mathbb{Z}}U^{z_{15m+r}}=\sum_{m\in\mathbb{Z}}U^{z_{15m+7-r}}$
and $\sum_{m\in\mathbb{Z}}U^{z_{15m+11}}=0$, and so if we let 
\[
a:=\sum_{m\in\mathbb{Z}}U^{z_{15m+5}},\ b:=\sum_{m\in\mathbb{Z}}(U^{z_{15m}}+U^{z_{15m+10}}),
\]
then we have 
\[
F(e_{0})=a(f_{0,0}+f_{2,0}),\ F(e_{1}+e_{2})=b(f_{0,0}+f_{2,0}),
\]
and so $F(be_{0}+ae_{1}+ae_{2})=0$. Now, using the description of
$z_{n}$ from the proof of Lemma~\ref{lem:a0}, we can check that
for $m\ge0$ and $r\in\{0,\cdots,14\}$, we have 
\begin{equation}
z_{15m+r+7}=\frac{15m^{2}+(2r+7)m}{2}+z_{r+7},\label{eq:z53}
\end{equation}
where $z_{7},\cdots,z_{21}$ are $0,1,1,1,2,2,3,4,4,5,6,7,8,9,10$,
respectively. Hence, the formulas for $a,b$ in (\ref{eq:abc}) follow.
\end{example}

\subsubsection{\label{subsec:triangleinf0r}The triangle counting map (\ref{eq:inf0r})}

Consider the adjoint $F^{\ast}$ of $F$, i.e.\ 
\[
F^{\ast}=\sum_{n\in\mathbb{Z}}U^{z_{n}}e_{j-p}^{\ast}f_{i,\ell}:\bigoplus_{i=0}^{p-1}\bigoplus_{\ell=0}^{s_{i}-1}f_{i,\ell}^{\ast}\mathbb{F}\llbracket U\rrbracket\to\bigoplus_{j=0}^{q-1}e_{j}^{\ast}\mathbb{F}\llbracket U\rrbracket.
\]
Then, Equation~(\ref{eq:inf0r}) can be written as
\[
F^{\ast}\left(\sum_{i=0}^{p-1}\sum_{\ell=0}^{s_{i}-1}\left(\sum_{d=0}^{s_{i}-1}u_{i,d+\ell}w_{i,d}\right)f_{i,\ell}^{\ast}\right),
\]
where we identify $e_{j}^{\ast}$ with $\boldsymbol{\xi}_{j}$.

Since the domain and codomain of $F^{\ast}$ have the same rank, 
\[
{\rm rank}_{\mathbb{F}\llbracket U\rrbracket}\ {\rm ker}F^{\ast}={\rm rank}_{\mathbb{F}\llbracket U\rrbracket}\ {\rm ker}F=1.
\]
Hence, there exists a nonzero $\sum_{i,\ell}t_{i,\ell}f_{i,\ell}^{\ast}\in{\rm ker}F^{\ast}$.
Now, let $(u_{i,\ell})$, $(w_{i,\ell})$ be such that 
\[
\sum_{d=0}^{s_{i}-1}u_{i,d+\ell}w_{i,d}=t_{i,\ell}.
\]
For instance, one can let
\begin{equation}
u_{i,\ell}=\begin{cases}
1 & {\rm if}\ \ell=0\\
0 & {\rm otherwise}
\end{cases},\ w_{i,\ell}=t_{i,-\ell},\ \mathbf{or}\ u_{i,\ell}=t_{i,\ell},\ w_{i,\ell}=\begin{cases}
1 & {\rm if}\ \ell=0\\
0 & {\rm otherwise}
\end{cases}.\label{eq:options6}
\end{equation}
These correspond to the two cases of Equation~(\ref{eq:options}).

Now, let $\sum_{i,\ell}t_{i,\ell}f_{i,\ell}^{\ast}\neq0$ be any nonzero
element of $\ker F^{\ast}$. By dividing by some $U^{m}$ if necessary,
we can let $\sum_{i,\ell}t_{i,\ell}f_{i,\ell}^{\ast}\not\equiv0\bmod U$.
Let us show that if we let $(u_{i,\ell})$ and $(w_{i,\ell})$ be
as in Equation~(\ref{eq:options6}), then Equation~(\ref{eq:uvw-modu})
holds for $(u_{i,\ell})$ and $(w_{i,\ell})$. Since $z_{n}=0$ if
and only if $n\in\{0,\cdots,p+q-1\}$, we have 
\[
F^{\ast}\otimes_{\mathbb{F}\llbracket U\rrbracket}\mathbb{F}=\sum_{i=0}^{\min(p,q)-1}\sum_{\ell=0}^{s_{i}}e_{(\ell-1)p+i}^{\ast}f_{i,\ell}.
\]
Hence, $F^{\ast}(\sum_{i,\ell}t_{i,\ell}f_{i,\ell}^{\ast})\equiv0\bmod U$
if and only if for all $i=0,\cdots,\min(p,q)-1$, we have $t_{i,1}=\cdots=t_{i,s_{i}-1}=0$,
and if $i'\in\{0,\cdots,\min(p,q)-1\}$ is such that $i'\equiv s_{i}p+i\bmod q$
(which exists by Lemma~\ref{lem:a4}), then $t_{i,0}=t_{i',0}$.
By Lemma~\ref{lem:a4}, we have
\[
\sum_{i,\ell}t_{i,\ell}f_{i,\ell}^{\ast}\equiv\sum_{i=0}^{\min(p,q)-1}f_{i,0}^{\ast}\bmod U,
\]
and so Equation~(\ref{eq:uvw-modu}) holds for $(u_{i,\ell})$ and
$(w_{i,\ell})$.
\begin{example}
\label{exa:53uv}Let us check that Equation~(\ref{eq:36}) must hold,
and that the $(u_{i,\ell})$ and $(w_{i,\ell})$ from Remark~\ref{rem:pq53}
work. Let $(p,q)=(5,3)$. Then, with respect to the bases $(f_{0,0}^{\ast},f_{1,0}^{\ast},f_{2,0}^{\ast})$
and $(e_{0}^{\ast},e_{1}^{\ast},e_{2}^{\ast})$, the linear map $F^{\ast}$
is given by the transpose of the matrix (\ref{eq:matrix}). Hence,
\[
F^{\ast}(f_{0,0}^{\ast}+f_{2,0}^{\ast})=b(e_{1}^{\ast}+e_{2}^{\ast}),\ F^{\ast}(f_{1,0}^{\ast})=c(e_{1}^{\ast}+e_{2}^{\ast}),
\]
where $c:=\sum_{m\in\mathbb{Z}}U^{z_{15m+6}}$. Hence, $F^{\ast}(cf_{0,0}^{\ast}+bf_{1,0}^{\ast}+cf_{2,0}^{\ast})=0$
and the formula for $c$ in (\ref{eq:abc}) follows from Equation~(\ref{eq:z53}).
\end{example}

\subsection{\label{subsec:Triangle-maps-for}Triangle counting maps for general
$k$}

In this subsection, we consider the triangle counting maps for general
$k=0,\cdots,p-1$. The point is that we are doing the exact same computation
as when $k=0$ (Proposition~\ref{prop:general-k-same}).

\begin{prop}
\label{prop:general-k-same}To simplify the notation, let $\gamma_{0}^{F_{0}^{t}},\gamma_{1}^{F_{1}^{t}},\gamma_{2}^{F_{2}^{t}}$
be a cyclic permutation of $\beta_{0}^{E_{0}^{t}},\beta_{r}^{\mathbb{F}\llbracket U\rrbracket},\beta_{\infty}^{E_{\infty}^{t}}$
for $t=0,k$. Let 
\[
\boldsymbol{CF}_{{\rm std}}^{-}(\gamma_{u}^{F_{u}^{t}},\gamma_{v}^{F_{v}^{t}})\le\boldsymbol{CF}^{-}(\gamma_{u}^{F_{u}^{t}},\gamma_{v}^{F_{v}^{t}})
\]
be the $\mathbb{F}\llbracket U\rrbracket$-linear subspace that is
spanned by the standard basis elements. Define 
\begin{multline*}
m:\boldsymbol{CF}_{{\rm std}}^{-}(\gamma_{u}^{F_{u}^{0}},\gamma_{v}^{F_{v}^{0}})\to\boldsymbol{CF}_{{\rm std}}^{-}(\gamma_{u}^{F_{u}^{k}},\gamma_{v}^{F_{v}^{k}}):\\
\boldsymbol{\theta}_{i,\ell}\mapsto\begin{cases}
\boldsymbol{\theta}_{i+k,\ell} & {\rm if}\ i\in\{0,\cdots,p-k-1\}\\
\boldsymbol{\theta}_{i+k,\ell+1} & {\rm if}\ i\in\{p-k,\cdots,p-1\}
\end{cases},\ \boldsymbol{\xi}_{j}\mapsto\boldsymbol{\xi}_{j+k},\ \boldsymbol{\zeta}_{i,\ell}\mapsto\boldsymbol{\zeta}_{i+k,\ell}.
\end{multline*}
Then, the following commutes: 
\[\begin{tikzcd}[ampersand replacement=\&]
	{\boldsymbol{CF}^{-} _{\mathrm{std}}(\gamma_{0}^{F_{0}^{0}},\gamma_{1}^{F_{1}^{0}}) \otimes \boldsymbol{CF}^{-} _{\mathrm{std}} (\gamma_{1}^{F_{1}^{0}},\gamma_{2}^{F_{2}^{0}}) } \& {\boldsymbol{CF}^{-}_{\mathrm{std}} (\gamma_{0}^{F_{0}^{0}},\gamma_{2}^{F_{2}^{0}})} \\
	{\boldsymbol{CF}^{-} _{\mathrm{std}}(\gamma_{0}^{F_{0}^{k}},\gamma_{1}^{F_{1}^{k}}) \otimes \boldsymbol{CF}^{-}_{\mathrm{std}} (\gamma_{1}^{F_{1}^{k}},\gamma_{2}^{F_{2}^{k}}) } \& {\boldsymbol{CF}^{-}_{\mathrm{std}} (\gamma_{0}^{F_{0}^{k}},\gamma_{2}^{F_{2}^{k}})}
	\arrow["{\mu_2}", from=1-1, to=1-2]
	\arrow["m", from=1-1, to=2-1]
	\arrow["m", from=1-2, to=2-2]
	\arrow["{\mu_2}", from=2-1, to=2-2]
\end{tikzcd}\]
\end{prop}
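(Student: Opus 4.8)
The plan is to reduce the commutativity of the square, via Proposition~\ref{prop:lift-mu2}, to a triangle-by-triangle comparison of lifted triangles in $\widetilde{\mathbb{T}^{2}}$ for the two local systems, and then to produce a bijection between the relevant lifts that induces $m$ on vertices and preserves the exponent of $U$. Since $\mu_{2}$ and $m$ are $\mathbb{F}\llbracket U\rrbracket$-(bi)linear, it is enough to evaluate both composites on a pair of standard basis elements $\boldsymbol{b}_{0}\otimes\boldsymbol{b}_{1}$. By Proposition~\ref{prop:lift-mu2}, $\mu_{2}(\boldsymbol{b}_{0}\otimes\boldsymbol{b}_{1})$ (computed with the $k=0$ local systems) and $\mu_{2}(m(\boldsymbol{b}_{0})\otimes m(\boldsymbol{b}_{1}))$ (computed with the general-$k$ local systems) are each a sum, over triangles $T$ in $\mathbb{T}^{2}$, of a term $U^{n_{\widetilde{z(u)}}(\widetilde{T})}\boldsymbol{b}_{2}$, one for each $T$ admitting an all-standard-vertex lift $\widetilde{T}$; in particular both outputs automatically lie in $\boldsymbol{CF}^{-}_{\mathrm{std}}$, so it suffices to match these contributions.

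Next I would recall, as in the proof of Proposition~\ref{prop:triangle}, that for either local system the triangles $T$ in $\mathbb{T}^{2}$ admitting a lift with all three vertices lifting standard basis elements are precisely the $T_{n}$, with such lifts given by $\widetilde{T}_{n}+(d,0)$, $d\in\mathbb{Z}$, up to the identification of Convention~\ref{conv:identification}; writing $n=\ell p+i$ with $i=n\bmod p$ and $j=n\bmod q$, the three vertices of $\widetilde{T}_{n}+(d,0)$ lift $\boldsymbol{\theta}_{i,d+\ell}$, $\boldsymbol{\xi}_{j-p}$, $\boldsymbol{\zeta}_{i,d}$, and this lift is admissible exactly when $c_{i}\neq0$. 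For the $k=0$ system $c_{i}=s_{i}$, whereas for the general-$k$ system $c_{i}=s_{(i-k)\bmod p}$ by Definition~\ref{def:actual-local-system}. Hence $\widetilde{T}_{n}+(d,0)$ is admissible for the $k=0$ system if and only if $s_{i}\neq0$, if and only if $\widetilde{T}_{n+k}+(d,0)$ is admissible for the general-$k$ system (since $(n+k)-k\equiv n\bmod p$, and the two identifications of Convention~\ref{conv:identification} use the same modulus $s_{i}$). The assignment $\widetilde{T}_{n}+(d,0)\mapsto\widetilde{T}_{n+k}+(d,0)$ is then the desired bijection: a short index computation, splitting on whether $i+k<p$ or $i+k\geq p$, shows that the vertices of $\widetilde{T}_{n+k}+(d,0)$ lift exactly $m(\boldsymbol{\theta}_{i,d+\ell})$, $m(\boldsymbol{\xi}_{j-p})$, $m(\boldsymbol{\zeta}_{i,d})$ — this is precisely why the formula for $m$ adds $1$ to the $\ell$-index of $\boldsymbol{\theta}$ exactly when $i\geq p-k$ but never does so for $\boldsymbol{\zeta}$, since the $\boldsymbol{\zeta}$-vertex of every $\widetilde{T}_{n}$ is the origin, whose column does not move. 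Geometrically this bijection is induced by the translation $\Phi_{k}$ of Subsection~\ref{subsec:The-triangle-counting}, which sends $\widetilde{\theta}_{n}$ to $\widetilde{\theta}_{n+k}$ and preserves $\pi^{-1}(\beta_{0})$ and $\widetilde{\beta}_{r}^{0}$.

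It then remains to check that corresponding lifts carry equal powers of $U$, i.e.\ that $n_{\widetilde{z(u)}}(\widetilde{T}_{n+k}+(d,0))$ for the general-$k$ basepoint set equals $n_{\pi^{-1}(z)}(\widetilde{T}_{n}+(d,0))=z_{n}$. Since $\pi^{-1}(z)$ (the $k=0$ basepoint set) and the general-$k$ set $\widetilde{z(u)}$ are both invariant under integer horizontal translations, this reduces to $n_{\widetilde{z(u)}}(\widetilde{T}_{n+k})=z_{n}$, which I would obtain from Lemma~\ref{lem:nzk}, the promised strengthening of Lemma~\ref{lem:We-have-}: it evaluates $n_{\widetilde{z(u)}}(\widetilde{T}_{m})$ explicitly and specializes at $k=0$ to $n_{\pi^{-1}(z)}(\widetilde{T}_{m})=z_{m}$, so the claimed equality is exactly the substitution $m=n+k$. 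Given this, summing the matched triangle contributions yields $\mu_{2}(m(\boldsymbol{b}_{0})\otimes m(\boldsymbol{b}_{1}))=m(\mu_{2}(\boldsymbol{b}_{0}\otimes\boldsymbol{b}_{1}))$, and the argument is verbatim the same for each of the three cyclic permutations of $(\beta_{0}^{E_{0}},\beta_{r},\beta_{\infty}^{E_{\infty}})$.

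The main obstacle is precisely this last $U$-exponent comparison. The translation $\Phi_{k}$ does \emph{not} preserve $\beta_{\infty}$ — it is exactly the vertical curve $\beta_{\infty}$ that fails to be translation-invariant — so $\widetilde{z(u)}$ for general $k$ is genuinely not the $\Phi_{k}$-image of $\pi^{-1}(z)$, and one cannot transport the basepoint count by a soft symmetry argument. One must instead carry out the explicit local-multiplicity bookkeeping behind Lemma~\ref{lem:nzk}, equivalently track how the relocated basepoints of $\widetilde{z(u)}$ cross the $\widetilde{\beta}_{\infty}^{0}$-edges of each $\widetilde{T}_{m}$ relative to $\pi^{-1}(z)$, using the zig-zag description of Subsection~\ref{subsec:The-zig-zag}.
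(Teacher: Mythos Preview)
Your proposal is correct and follows essentially the same approach as the paper: reduce via Proposition~\ref{prop:lift-mu2} to matching the lifted triangles $\widetilde{T}_{n}+(d,0)\leftrightarrow\widetilde{T}_{n+k}+(d,0)$, verify that $m$ tracks the vertices (the paper does the same index computation with $i',j',\ell'$), and then invoke the key basepoint identity $n_{\widetilde{z(k)}}(\widetilde{T}_{n+k})=z_{n}$ from Lemma~\ref{lem:nzk}. The paper also flags this last identity as the crux and offers both the algebraic proof (Lemma~\ref{lem:nzk}) and the picture proof in Subsubsection~\ref{subsec:zk}, exactly as you anticipate.
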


\begin{proof}
Let $\widetilde{z(k)}$ be the set $\widetilde{z(u)}\subset\widetilde{\mathbb{T}^{2}}$
from Subsection~\ref{subsec:cover} that corresponds to the monodromy
of $E_{\infty}^{k}$ (Definition~\ref{def:actual-local-system}).
Then, the key statement is 
\begin{equation}
n_{\widetilde{z(k)}}(\widetilde{T}_{n+k})=n_{\pi^{-1}(z)}(\widetilde{T}_{n}).\label{eq:triangle-basepoint}
\end{equation}
Equation~(\ref{eq:triangle-basepoint}) can be checked algebraically;
we write down an algebraic proof in Lemma~\ref{lem:nzk}. However,
there is a ``picture proof'' that we find more satisfying; we give
this proof in Subsubsection~\ref{subsec:zk}.

To show that the proposition follows from Equation~(\ref{eq:triangle-basepoint}),
we use Proposition~\ref{prop:lift-mu2}. Let $\ell,i,j,\ell',i',j'\in\mathbb{Z}$
be such that $i:=n\bmod p$, $j:=n\bmod q$, $n=\ell p+i$, $i':=(n+k)\bmod p$,
$j':=(n+k)\bmod q$, and $n+k=\ell'p+i'$. Then, as in the proof of
Proposition~\ref{prop:triangle}, the triangle $T_{n}$ in $\mathbb{T}^{2}$
has a $\{\boldsymbol{b}_{0},\boldsymbol{b}_{1},\boldsymbol{b}_{2}\}$-lift
for standard basis elements $\boldsymbol{b}_{0},\boldsymbol{b}_{1},\boldsymbol{b}_{2}$
for $t=0$ if and only if
\[
\{\boldsymbol{b}_{0},\boldsymbol{b}_{1},\boldsymbol{b}_{2}\}=\{\boldsymbol{\theta}_{i,d+\ell},\boldsymbol{\xi}_{j-p},\boldsymbol{\zeta}_{i,d}\}
\]
for some $d=0,\cdots,s_{i}-1$ and $T_{n+k}$ has a $\{\boldsymbol{b}_{0},\boldsymbol{b}_{1},\boldsymbol{b}_{2}\}$-lift
for standard basis elements $\boldsymbol{b}_{0},\boldsymbol{b}_{1},\boldsymbol{b}_{2}$
for $t=k$ if and only if
\[
\{\boldsymbol{b}_{0},\boldsymbol{b}_{1},\boldsymbol{b}_{2}\}=\{\boldsymbol{\theta}_{i',d+\ell'},\boldsymbol{\xi}_{j'-p},\boldsymbol{\zeta}_{i',d}\}
\]
for some $d=0,\cdots,s_{i}-1$. Hence, the proposition follows.
\end{proof}
Since $m(\psi_{0r}^{0})=\psi_{0r}^{k}$, $m(\psi_{r\infty}^{0})=\psi_{r\infty}^{k}$,
and $m(\psi_{\infty0}^{0})=\psi_{\infty0}^{k}$, the choice of $u_{i,\ell},v_{j},w_{i,\ell}$
from Subsection~\ref{subsec:triangle-k=00003D0} makes the triangle
counting maps vanish for general $k=0,\cdots,p-1$ as well.

\subsection{\label{subsec:Picture-proofs}Picture proofs}

\subsubsection{\label{subsec:The-involution}The involution}

The involution that we considered in Subsubsection~\ref{subsec:triangle0rinf-rinf0}
has a geometric interpretation which we find satisfying. We will modify
our Heegaard diagram a bit, and then exhibit an involution on $\widetilde{\mathbb{T}^{2}}$.
This in particular shows $z_{n}=z_{p+q-1-n}$ (Lemma~\ref{lem:a0}).

Recall the intersection points $\widetilde{\theta}_{t}$ from Definition~\ref{def:theta-tilde}.
Let $M$ be a midpoint of $\widetilde{\theta}_{0}$ and $\widetilde{\theta}_{p+q-1}$
(this is well defined up to shifting by $(0,p/2)$). Also, recall
from Subsection~\ref{subsec:cover} that we denote the $y$-axis
as $\widetilde{\beta}_{\infty}^{0}\subset\pi^{-1}(\beta_{\infty})\subset\widetilde{\mathbb{T}^{2}}$.

\begin{figure}[h]
\begin{centering}
\raisebox{-0.5\height}{\includegraphics[scale=0.6]{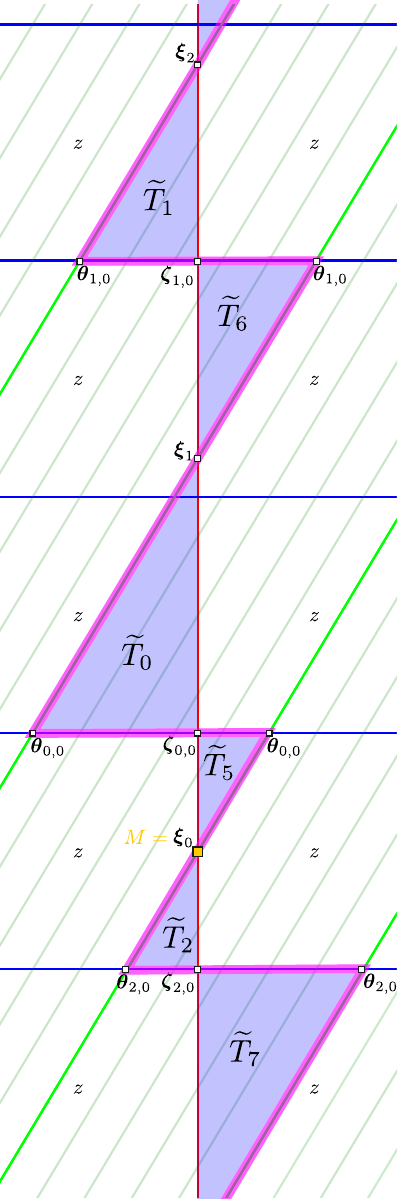}}\quad{}\raisebox{-0.5\height}{\includegraphics[scale=0.6]{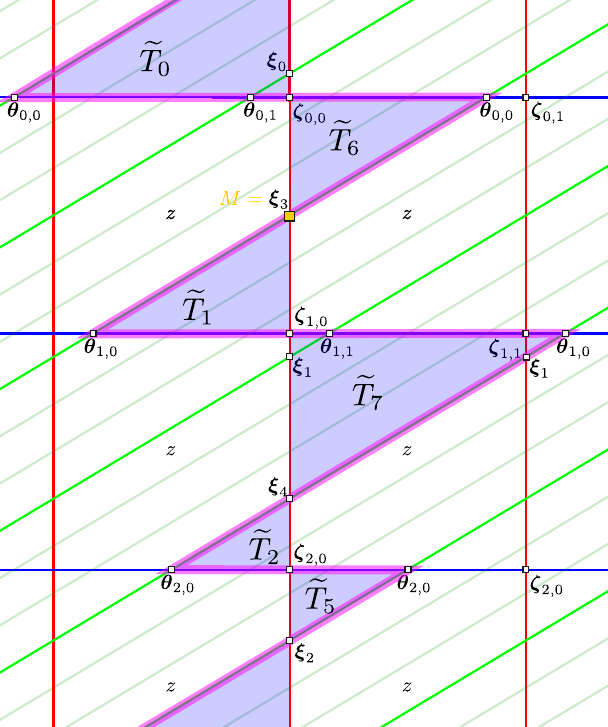}}\quad{}
\par\end{centering}
\caption{\label{fig:triangles}Diagrams for a geometric interpretation of the
involution, for $(p,q)=(5,3)$ and $(3,5)$. Rotation by $\pi$ about
$M$ induces the involution.}
\end{figure}

At first, $\widetilde{\beta}_{\infty}^{0}$ intersects the midpoints
of $\widetilde{\theta}_{p-1}$ and $\widetilde{\theta}_{p}$. Move
$\pi^{-1}(\beta_{\infty})$ horizontally until $\widetilde{\beta}_{\infty}^{0}$
goes through the point $M$ (see Figure~\ref{fig:triangles}). Note
that if $p+q$ is even, then this is equivalent to $\widetilde{\beta}_{\infty}^{0}$
going through the midpoints of $\widetilde{\theta}_{(p+q)/2-1}$ and
$\widetilde{\theta}_{(p+q)/2}$; if $p+q$ is odd, then this is equivalent
to $\widetilde{\beta}_{\infty}^{0}$ going through $\widetilde{\theta}_{(p+q-1)/2}$.
Also note that if $p+q$ is odd, then there is a degenerate triangle:
the three vertices of $\widetilde{T}_{(p+q-1)/2}$ become identical.

While moving $\pi^{-1}(\beta_{\infty})$, move the basepoints $\pi^{-1}(z)$
as in Claim~\ref{claim:While-moving-,}.
\begin{claim}
\label{claim:While-moving-,}While moving $\pi^{-1}(\beta_{\infty})$,
it is possible to move the basepoints $\pi^{-1}(z)$ such that the
basepoints do not cross $\pi^{-1}(\beta_{0}\cup\beta_{r}\cup\beta_{\infty})$
if and only if $\widetilde{\beta}_{\infty}^{0}$ stays strictly between
$\widetilde{\theta}_{-1}$ and $\widetilde{\theta}_{p+q}$.
\end{claim}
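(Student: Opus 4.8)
The plan is to reduce the Claim to a local analysis near the basepoints, pinpointing exactly when a basepoint gets trapped by the moving curve. During the isotopy the curves $\pi^{-1}(\beta_{0})$ and $\pi^{-1}(\beta_{r})$ are held fixed, so each basepoint of $\pi^{-1}(z)$ is confined to the connected component $R$ of $\widetilde{\mathbb{T}^{2}}\setminus\pi^{-1}(\beta_{0}\cup\beta_{r})$ that contains it, and as $\pi^{-1}(\beta_{\infty})$ sweeps by it must in addition avoid the moving vertical arcs of $\pi^{-1}(\beta_{\infty})$ inside $R$. By a routine general-position argument the simultaneous isotopy of $\pi^{-1}(\beta_{\infty})$ and $\pi^{-1}(z)$ can be performed with no basepoint ever meeting $\pi^{-1}(\beta_{0}\cup\beta_{r}\cup\beta_{\infty})$ if and only if, for every basepoint, the component of $R\setminus\pi^{-1}(\beta_{\infty})$ containing it never pinches off; and such a pinch can occur only when $\pi^{-1}(\beta_{\infty})$ passes through a corner of some such $R$, i.e.\ through a point of $\pi^{-1}(\beta_{0}\cap\beta_{r})$. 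So the question reduces to: which points of $\pi^{-1}(\beta_{0}\cap\beta_{r})$ is the translating line $\widetilde{\beta}_{\infty}^{0}$ forbidden to cross?

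I would show these are exactly $\widetilde{\theta}_{-1}$ and $\widetilde{\theta}_{p+q}$ of Definition~\ref{def:theta-tilde}, using Lemma~\ref{lem:51}. Since $\widetilde{\theta}_{n}$ is a vertex of $\widetilde{T}_{n}$ and $z_{n}=n_{\pi^{-1}(z)}(\widetilde{T}_{n})=0$ precisely for $n\in\{0,\dots,p+q-1\}$, for such $n$ the support of $\widetilde{T}_{n}$ contains no basepoint; as the region that pinches when $\widetilde{\beta}_{\infty}^{0}$ crosses $\widetilde{\theta}_{n}$ is a sliver of this support, no basepoint is caught and passing $\widetilde{\theta}_{n}$ is harmless. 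On the other hand $z_{-1}\ge1$ and $z_{p+q}\ge1$, so $\widetilde{T}_{-1}$ and $\widetilde{T}_{p+q}$ have positive multiplicity at a basepoint of $\pi^{-1}(z)$; reading off the local picture near the basepoints (cf.\ Figure~\ref{fig:triangles}, using the description of the $T_{n}$ near the basepoint from the proof of Lemma~\ref{lem:We-have-} together with Corollary~\ref{cor:zktkpq}) identifies that basepoint as exactly the one squeezed against the corner $\widetilde{\theta}_{-1}$, resp.\ $\widetilde{\theta}_{p+q}$, of its component $R$ as $\widetilde{\beta}_{\infty}^{0}$ sweeps leftward, resp.\ rightward, onto that corner --- so crossing $\widetilde{\theta}_{-1}$ or $\widetilde{\theta}_{p+q}$ is obstructed, while staying away from both lets every basepoint be carried freely within its component. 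Since the $\widetilde{\theta}_{n}$ occur in strictly increasing order of $x$-coordinate (left to right within each column, and the columns left to right), ``$\widetilde{\beta}_{\infty}^{0}$ crosses neither $\widetilde{\theta}_{-1}$ nor $\widetilde{\theta}_{p+q}$'' is the same as ``$\widetilde{\beta}_{\infty}^{0}$ stays strictly between $\widetilde{\theta}_{-1}$ and $\widetilde{\theta}_{p+q}$'', which yields both directions of the Claim.

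The step I expect to be the main obstacle is the matching in the second paragraph: verifying that it is precisely the corners $\widetilde{\theta}_{-1}$, $\widetilde{\theta}_{p+q}$ (and no $\widetilde{\theta}_{n}$ with $0\le n\le p+q-1$) whose passage traps a basepoint. This requires a sufficiently explicit description of the cell decomposition of $\widetilde{\mathbb{T}^{2}}$ by $\pi^{-1}(\beta_{0}\cup\beta_{r})$ near $\pi^{-1}(z)$, so as to locate the relevant components $R$ and their corners; everything else is standard. With the Claim in hand it applies to the isotopy used here --- $\widetilde{\beta}_{\infty}^{0}$ moving from between $\widetilde{\theta}_{p-1}$ and $\widetilde{\theta}_{p}$ to through $M$, a midpoint of $\widetilde{\theta}_{0}$ and $\widetilde{\theta}_{p+q-1}$ --- because the swept range of $x$-coordinates lies in $(x(\widetilde{\theta}_{0}),x(\widetilde{\theta}_{p+q-1}))\subset(x(\widetilde{\theta}_{-1}),x(\widetilde{\theta}_{p+q}))$ (writing $x(\cdot)$ for the $x$-coordinate); hence the basepoints can be carried along and the moved Heegaard diagram is isotopic to the original.
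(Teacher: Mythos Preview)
Your high-level reduction matches the paper's: the only obstruction to dragging the basepoints along is that some component of $R\setminus\pi^{-1}(\beta_\infty)$ pinches off, and this can only happen when a vertical line passes through a corner of $R$. The translation symmetry you invoke (so that it suffices to track $\widetilde{\beta}_\infty^0$) is also correct.

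The gap is exactly where you flag it, and the $z_n$ heuristic does not close it. Your key assertion is that ``the region that pinches when $\widetilde{\beta}_\infty^0$ crosses $\widetilde{\theta}_n$ is a sliver of the support of $\widetilde{T}_n$''. This fails because $\widetilde{\theta}_n$ lies on $\widetilde{\beta}_r^0$, whereas the corners of the parallelogram $R$ containing the basepoint need not lie on $\widetilde{\beta}_r^0$ at all---they only share the same $x$-coordinates as certain $\widetilde{\theta}_m$'s. So when $\widetilde{\beta}_\infty^0$ reaches $x(\widetilde{\theta}_n)$, the pinched sliver sits near a different intersection point (in a different row) and is not contained in $\widetilde{T}_n$; the vanishing of $z_n$ says nothing about it. The criterion happens to give the right thresholds $n=-1$ and $n=p+q$, but not for the reason you give.

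The paper fills this gap by doing precisely the ``explicit description of the cell decomposition'' you anticipate, and it is short: one checks that the parallelogram containing the basepoint $(\varepsilon,\varepsilon)$ has its four corner $x$-coordinates equal to $x(\widetilde{\theta}_{p-1}),x(\widetilde{\theta}_p),x(\widetilde{\theta}_{p+q-1}),x(\widetilde{\theta}_{p+q})$ (bottom edge from $\theta_{p-1}$ to $\theta_0$, top edge shifted right by $q/p$). Since the basepoint starts just to the right of $\widetilde{\beta}_\infty^0$, moving right it is trapped exactly when $\widetilde{\beta}_\infty^0$ reaches the rightmost corner $x(\widetilde{\theta}_{p+q})$; moving left it is trapped exactly when $\widetilde{\beta}_\infty^1=\widetilde{\beta}_\infty^0+(1,0)$ reaches the leftmost corner $x(\widetilde{\theta}_{p-1})$, i.e.\ when $\widetilde{\beta}_\infty^0$ reaches $x(\widetilde{\theta}_{-1})$. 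This replaces your second paragraph entirely and is simpler than routing through Lemma~\ref{lem:51}.
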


\begin{proof}
First, that it is possible is equivalent to the following: for each
connected component $\widetilde{\beta}_{\infty}^{i}$ of $\pi^{-1}(\beta_{\infty})$
and each parallelogram $P$ bounded by $\pi^{-1}(\beta_{0}\cup\beta_{r})$
and a point $\widetilde{z}\in\pi^{-1}(z)\cap P$, (1) if the point
$\widetilde{z}$ is on the right of $\widetilde{\beta}_{\infty}^{i}$,
then $\widetilde{\beta}_{\infty}^{i}$ does not pass through the rightmost
point of the parallelogram $P$, and (2) if the point $\widetilde{z}$
is on the left of $\widetilde{\beta}_{\infty}^{i}$, then $\widetilde{\beta}_{\infty}^{i}$
does not pass through the leftmost point of the parallelogram $P$.
Since there is a symmetry given by translating by $\mathbb{Z}^{2}$,
it is sufficient to check the above for one parallelogram whose vertices
have the same $x$-coordinates as $\widetilde{\theta}_{p-1},\widetilde{\theta}_{p},\widetilde{\theta}_{p+q-1},\widetilde{\theta}_{p+q}$.
Hence, the condition is equivalent to that $\widetilde{\beta}_{\infty}^{0}$
stays on the left side of $\widetilde{\theta}_{p+q}$ and $\widetilde{\beta}_{\infty}^{1}=\widetilde{\beta}_{\infty}^{0}+(1,0)$
stays on the right side of $\widetilde{\theta}_{p-1}$. Thus the claim
follows.
\end{proof}
\begin{figure}[h]
\begin{centering}
\includegraphics[scale=0.7]{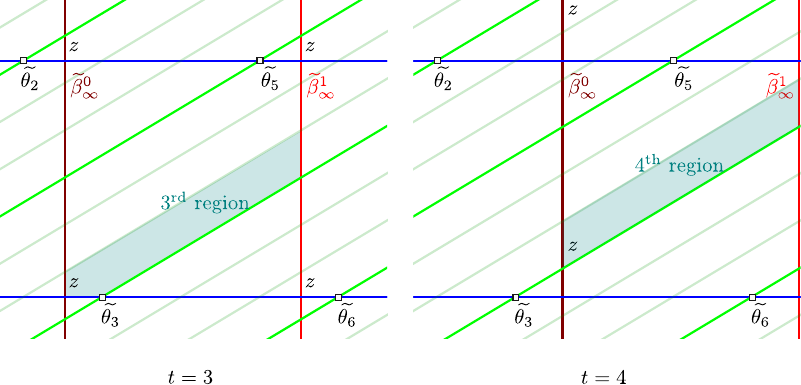}
\par\end{centering}
\caption{\label{fig:parallelograms-1}Left: $(p,q)=(3,5)$, when $\pi^{-1}(\beta_{\infty})$
is between $\widetilde{\theta}_{2}$ and $\widetilde{\theta}_{3}$.
Right: $(p,q)=(3,5)$, when $\pi^{-1}(\beta_{\infty})$ is between
$\widetilde{\theta}_{3}$ and $\widetilde{\theta}_{4}$}
\end{figure}

Now, we claim that after the above operation, we can move the basepoints
to the center of the squares bounded by $\pi^{-1}(\beta_{0})$ and
$\pi^{-1}(\beta_{\infty})$ without crossing $\pi^{-1}(\beta_{0}\cup\beta_{r}\cup\beta_{\infty})$
(see Figure~\ref{fig:triangles}). In the process of moving $\pi^{-1}(\beta_{\infty})$,
say that $\widetilde{\beta}_{\infty}^{0}$ is currently between $\widetilde{\theta}_{t-1}$
and $\widetilde{\theta}_{t}$. Then, $\pi^{-1}(\beta_{r})$ divides
each smallest square bounded by $\pi^{-1}(\beta_{0}\cup\beta_{\infty})$
into $p+q+1$ regions. Order them from bottom right ($0$th region)
to top left ($(p+q)$th region): see Figure~\ref{fig:parallelograms-1}.
Then, the basepoints are in the $t$th region. Hence, if $p+q$ is
even, then the basepoints are in the middle region when $\widetilde{\beta}_{\infty}^{0}$
goes through the midpoints of $\widetilde{\theta}_{(p+q)/2-1}$ and
$\widetilde{\theta}_{(p+q)/2}$.

If $p+q$ is odd, then if (A) $\widetilde{\beta}_{\infty}^{0}$ is
between $\widetilde{\theta}_{(p+q-1)/2-1}$ and $\widetilde{\theta}_{(p+q-1)/2}$,
then the basepoints are in the $(p+q-1)/2$th region. If (B) $\widetilde{\beta}_{\infty}^{0}$
goes through $\widetilde{\theta}_{(p+q-1)/2}$, then $\pi^{-1}(\beta_{r})$
divides each smallest square bounded by $\pi^{-1}(\beta_{0}\cup\beta_{\infty})$
into $p+q$ regions. Order them from bottom right ($0$th region)
to top left ($(p+q-1)$th region) as before. In the process of moving
$\widetilde{\beta}_{\infty}^{0}$ to the right, from (A) to (B), the
$(p+q)$th region disappears. The basepoints are still in the $(p+q-1)/2$th
region, which is the middle region. 

Now, the rotation $R:\widetilde{\mathbb{T}^{2}}\to\widetilde{\mathbb{T}^{2}}$
by $\pi$ about the point $M$ is our wanted involution. First, $R$
fixes $\widetilde{\beta}_{r}^{0}$ since it is rotation by $\pi$
and swaps $\widetilde{\theta}_{0}$ and $\widetilde{\theta}_{p+q-1}$,
which are both on $\widetilde{\beta}_{r}^{0}$. It fixes $\pi^{-1}(\beta_{0})$
and $\pi^{-1}(\beta_{\infty})$ since $M\in\pi^{-1}(\beta_{0}\cap\beta_{\infty})+\frac{1}{2}\mathbb{Z}^{2}$.
Since it swaps $\widetilde{\theta}_{0}$ and $\widetilde{\theta}_{p+q-1}$,
it should swap $\widetilde{\theta}_{n}$ and $\widetilde{\theta}_{p+q-1-n}$,
and hence swaps $\widetilde{T}_{n}$ and $\widetilde{T}_{p+q-1-n}$.
This shows $z_{n}=z_{p+q-n-1}$. That $s_{i}=s_{q-i-1}$ also follows
since we could have defined special basis elements after moving $\pi^{-1}(\beta_{\infty})$,
and the set of intersection points that are lifts of special basis
elements is invariant under $R$ since the definition is invariant
under $R$.

\subsubsection{\label{subsec:zk}Equation~(\ref{eq:triangle-basepoint})}

Equation~(\ref{eq:triangle-basepoint}) also has a satisfying picture
proof. Let us use the same notations as in Subsubsection~\ref{subsec:The-involution}
and proceed similarly: move $\pi^{-1}(\beta_{\infty})$ to the left
so that $\widetilde{\beta}_{\infty}^{0}$ intersects the $x$-axis
between $\widetilde{\theta}_{p-k-1}$ and $\widetilde{\theta}_{p-k}$,
and call it $\pi^{-1}(\beta_{\infty})_{k}$ (see Figure~\ref{fig:diagramt2bspts}~(b)).
Also, move the basepoints $\pi^{-1}(z)$  so that they do not cross
$\pi^{-1}(\beta_{0}\cup\beta_{r}\cup\beta_{\infty})$, which is possible
by Claim~\ref{claim:While-moving-,}. Let $\widetilde{T}_{n,k}$
be the triangle $\widetilde{T}_{n}$ in this new diagram. Then, we
have 
\begin{equation}
n_{\pi^{-1}(z)}(\widetilde{T}_{n,k})=n_{\pi^{-1}(z)}(\widetilde{T}_{n}).\label{eq:tnktn}
\end{equation}

\begin{figure}[h]
\begin{centering}
\includegraphics[scale=0.5]{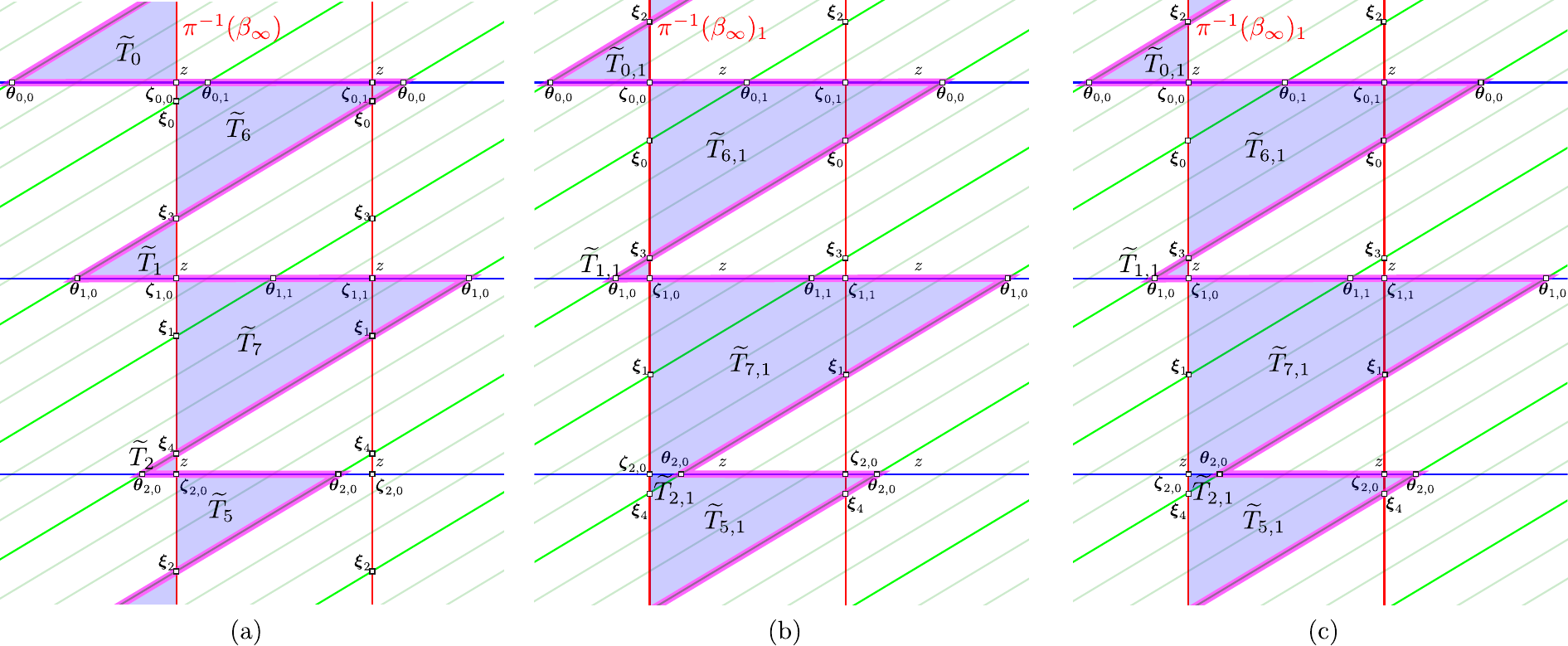}
\par\end{centering}
\caption{\label{fig:diagramt2bspts}Diagrams for the proof of Proposition \ref{prop:general-k-same},
for $(p,q)=(3,5)$. (a): original diagram ($k=0$) (b): $k=1$, after
moving $\pi^{-1}(\beta_{\infty})$ (c): $k=1$, after moving $\pi^{-1}(\beta_{\infty})$
and the basepoints (compare the middle diagram of Figure~\ref{fig:tildet2-1-1})}
\end{figure}

By Subsubsection~\ref{subsec:The-involution}, the basepoints are
in the $(p-k)$th region of each smallest square bounded by $\pi^{-1}(\beta_{0}\cup\beta_{\infty})$.
Since $p-k\le p$, it is possible to move the basepoints to either
the $0$th or $p$th region without crossing $\pi^{-1}(\beta_{0}\cup\beta_{\infty})\cup\widetilde{\beta}_{r}^{0}$,
hence to either directly on the right of $\pi^{-1}(\beta_{\infty})_{k}$
or directly on the left of $\pi^{-1}(\beta_{\infty})_{k}$ (see Figure~\ref{fig:diagramt2bspts}~(c)).
Denote the set of these new basepoints as $\pi^{-1}(z)_{k}$. The
main observation is the following claim:
\begin{claim}
\label{claim:There-exists-a}There exists a translation of $\widetilde{\mathbb{T}^{2}}$
that maps
\begin{equation}
(\widetilde{\mathbb{T}^{2}},\pi^{-1}(\beta_{0}),\widetilde{\beta}_{r}^{0},\pi^{-1}(\beta_{\infty})_{k},\pi^{-1}(z)_{k})\to(\widetilde{\mathbb{T}^{2}},\pi^{-1}(\beta_{0}),\widetilde{\beta}_{r}^{0},\pi^{-1}(\beta_{\infty})_{0},\widetilde{z(k)}).\label{eq:translation-r2}
\end{equation}
Moreover, this maps $\widetilde{T}_{n,k}$ to $\widetilde{T}_{n+k}$.
\end{claim}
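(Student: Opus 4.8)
The plan is to prove Claim~\ref{claim:There-exists-a} by writing down the translation explicitly and verifying it preserves the four pieces of data (the two fixed attaching curves $\pi^{-1}(\beta_0)$ and $\widetilde\beta_r^0$, the moved curve $\pi^{-1}(\beta_\infty)_k$, and the basepoint set $\pi^{-1}(z)_k$). First I would note that both diagrams in Equation~(\ref{eq:translation-r2}) have $\pi^{-1}(\beta_0)$ and $\widetilde\beta_r^0$ as two of their curves, and since $\pi^{-1}(\beta_0)$ is invariant under horizontal translation by any amount and under vertical translation by $\mathbb Z$, while $\widetilde\beta_r^0$ (the connected component of $\pi^{-1}(\beta_r)$ through the origin on the $0$th row) is invariant under translation by the lattice vectors of slope $r$ that fix it, the candidate translations form a coset; I would pin down the unique one sending $\widetilde\theta_{p-k}$ (the intersection point of $\widetilde\beta_r^0$ with the $x$-axis just to the right of where $\pi^{-1}(\beta_\infty)_k$ meets it) to $\widetilde\theta_p$ (the analogous point for $\pi^{-1}(\beta_\infty)_0$). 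Concretely, this is the translation $\tau$ by the vector taking $\widetilde\theta_{p-k}$ to $\widetilde\theta_p$; one checks $\tau(\widetilde\theta_n) = \widetilde\theta_{n+k}$ for all $n$ because the $\widetilde\theta_n$ are evenly spaced along $\widetilde\beta_r^0$ (this is immediate from Definition~\ref{def:theta-tilde} and Lemma~\ref{lem:thetanrow}), and hence $\tau$ carries the intersection $\widetilde\beta_r^0 \cap \pi^{-1}(\beta_\infty)_k$ to $\widetilde\beta_r^0 \cap \pi^{-1}(\beta_\infty)_0$, so combined with invariance of $\pi^{-1}(\beta_\infty)$ under vertical $\mathbb Z$-translation, $\tau$ maps $\pi^{-1}(\beta_\infty)_k$ to $\pi^{-1}(\beta_\infty)_0$.

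Next I would handle the basepoints. By construction in Subsubsection~\ref{subsec:zk}, the basepoints $\pi^{-1}(z)_k$ sit directly to the left or right of $\pi^{-1}(\beta_\infty)_k$ in each row $b$, with the side determined by whether the basepoint was moved through the $0$th or the $p$th region; the side is exactly dictated by which region ($0$th or $p$th) is reachable without crossing $\pi^{-1}(\beta_0 \cup \beta_\infty) \cup \widetilde\beta_r^0$, and working this out row by row one sees the side for row $b$ is ``left'' precisely when the corresponding index $i$ (with $b \equiv -iq^{-1} \bmod p$) lies in $\{0,\dots,k-1\}$ after the shift — matching exactly the definition of $\widetilde{z(u)} = \widetilde{z(k)}$ in Definition~\ref{def:actual-local-system} via Equation~(\ref{eq:uc}). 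So applying $\tau$, which takes $\pi^{-1}(\beta_\infty)_k$ to $\pi^{-1}(\beta_\infty)_0$ and permutes rows by adding $k$ to the index, carries $\pi^{-1}(z)_k$ to exactly $\widetilde{z(k)}$. I would present this row-by-row bookkeeping carefully but compactly, citing Lemma~\ref{lem:zigzag-basepoint} and Corollary~\ref{cor:u-unique} for the identification of which region corresponds to which power of $U$.

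For the final sentence of the claim — that $\tau$ maps $\widetilde T_{n,k}$ to $\widetilde T_{n+k}$ — I would argue that $\widetilde T_{n,k}$ is the triangle in $(\widetilde{\mathbb T^2}, \pi^{-1}(\beta_0), \widetilde\beta_r^0, \pi^{-1}(\beta_\infty)_k)$ with two vertices at $\widetilde\theta_n$ and at the intersection of $\pi^{-1}(\beta_\infty)_k$ with $\pi^{-1}(\beta_0)$ in the appropriate row (the analogue of the origin), Maslov index $0$; since $\tau$ is an isometry of the diagram sending $\widetilde\theta_n \mapsto \widetilde\theta_{n+k}$ and matching up all three curves with those of the $k=0$ diagram, it sends this triangle to the triangle $\widetilde T_{n+k}$ by uniqueness of the Maslov index $0$ triangle with the prescribed vertices. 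Then Equation~(\ref{eq:triangle-basepoint}) follows by combining Equation~(\ref{eq:tnktn}) with the claim:
\[
n_{\widetilde{z(k)}}(\widetilde T_{n+k}) = n_{\tau(\pi^{-1}(z)_k)}(\tau(\widetilde T_{n,k})) = n_{\pi^{-1}(z)_k}(\widetilde T_{n,k}) = n_{\pi^{-1}(z)}(\widetilde T_{n,k}) = n_{\pi^{-1}(z)}(\widetilde T_n),
\]
where the third equality holds because the basepoints were moved without crossing $\widetilde T_{n,k}$'s boundary curves (the moves stay within the regions cut out by $\pi^{-1}(\beta_0 \cup \beta_r \cup \beta_\infty)$). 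The main obstacle I anticipate is the careful verification that the side (left versus right of $\pi^{-1}(\beta_\infty)_k$) on which each basepoint lands matches the prescription in Equation~(\ref{eq:uc}) with the correct index shift by $k$; this is where an off-by-one or a misidentification of $q^{-1}$ conventions could creep in, so I would double-check it against the worked example $(p,q,k)=(3,5,1)$ in Figure~\ref{fig:diagramt2bspts} and the middle diagram of Figure~\ref{fig:tildet2-1-1}.
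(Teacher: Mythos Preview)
Your approach is correct and largely parallel to the paper's: you identify the same translation (the paper calls it $\Phi_k$, characterized by $\widetilde\theta_0\mapsto\widetilde\theta_k$, which is the same map as your $\widetilde\theta_{p-k}\mapsto\widetilde\theta_p$), and you verify it carries $\pi^{-1}(\beta_0)$, $\widetilde\beta_r^0$, and $\pi^{-1}(\beta_\infty)_k$ to the right targets in the same way. The triangle statement $\widetilde T_{n,k}\mapsto\widetilde T_{n+k}$ is also handled identically.

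The one substantive difference is in the basepoint verification. You propose an explicit row-by-row computation matching the left/right side of each basepoint against Equation~(\ref{eq:uc}), and you correctly flag this as the place where index bookkeeping could go wrong. The paper sidesteps this entirely with a uniqueness argument: both $\pi^{-1}(z)_k$ and $\widetilde{z(k)}$ are invariant under $(1,0)\mathbb{Z}$-translation, so each is determined by which side of $\pi^{-1}(\beta_\infty)$ it lies on in a single column; by Corollary~\ref{cor:u-unique} that side is \emph{uniquely} determined by the condition that the special triangles miss the basepoints. Since $\Phi_k$ maps $\widetilde T_{n,k}$ to $\widetilde T_{n+k}$ and the special triangles for the source diagram are $\widetilde T_{0,k},\dots,\widetilde T_{p+q-1,k}$ while those for the target are $\widetilde T_k,\dots,\widetilde T_{k+p+q-1}$, the two basepoint sets must correspond under $\Phi_k$. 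This avoids the explicit calculation altogether and is immune to the off-by-one issues you were worried about.
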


\begin{proof}
Consider the translation $\Phi_{k}:\widetilde{\mathbb{T}^{2}}\to\widetilde{\mathbb{T}^{2}}$
that maps $\widetilde{\theta}_{0}$ to $\widetilde{\theta}_{k}$.
Then, $\Phi_{k}(\widetilde{\theta}_{i})=\widetilde{\theta}_{i+k}$,
and it is immediate that $\pi^{-1}(\beta_{0}),\widetilde{\beta}_{r}^{0},\pi^{-1}(\beta_{\infty})_{k}$
map to $\pi^{-1}(\beta_{0}),\widetilde{\beta}_{r}^{0},\pi^{-1}(\beta_{\infty})_{0}$,
respectively. Hence, $\Phi_{k}$ maps $\widetilde{T}_{n,k}$ to $\widetilde{T}_{n+k}$.
We are left to check that $\pi^{-1}(z)_{k}$ maps to $\widetilde{z(k)}$.
Since both $\pi^{-1}(z)_{k}$ and $\widetilde{z(k)}$ are invariant
under translation by $(1,0)\mathbb{Z}$, their positions (i.e.\ whether
for each row, they are on the left or right of $\pi^{-1}(\beta_{\infty})_{k}$
and $\pi^{-1}(\beta_{\infty})_{0}$) are determined by their positions
on the zeroth column, and hence, by Corollary~\ref{cor:u-unique},
they are determined by the condition that $\widetilde{T}_{0,k},\widetilde{T}_{1,k},\cdots,\widetilde{T}_{p+q-1,k}$
do not intersect $\pi^{-1}(z)_{k}$ and that $\widetilde{T}_{k},\widetilde{T}_{k+1},\cdots,\widetilde{T}_{k+p+q-1}$
do not intersect $\widetilde{z(k)}$. Since $\Phi_{k}$ maps $\widetilde{T}_{n,k}$
to $\widetilde{T}_{n+k}$, we have that it maps $\pi^{-1}(z)_{k}$
to $\widetilde{z(k)}$.
\end{proof}
Now, we have
\[
n_{\widetilde{z(k)}}(\widetilde{T}_{n+k})=n_{\pi^{-1}(z)}(\widetilde{T}_{n,k})=n_{\pi^{-1}(z)}(\widetilde{T}_{n}).
\]
where the first equality follows from that $\widetilde{T}_{n,k}$
maps to $\widetilde{T}_{n+k}$ under the translation (\ref{eq:translation-r2}),
and the second equality is Equation~(\ref{eq:tnktn}).

\section{\label{sec:Examples}Examples}

In this section, we demonstrate Theorem~\ref{thm:rational-surgery-kgeneral}
for $(p,q)=(5,3)$ and $k=0,1,2$, for both the unknot $O$ and the
right handed trefoil $T$ with the Seifert framing. These cases can
be computed using purely formal arguments involving the absolute $\mathbb{Q}$-grading,
as in \cite[Section~8]{MR1957829}.

\subsection{\label{subsec:530}Case $k=0$}

First, let us restate Theorem~\ref{thm:rational-surgery-k=00003D0}
for the special case where $p\ge q$, $Y=S^{3}$, and the framing
$\lambda$ is the Seifert framing.
\begin{cor}[Main theorem for $p\ge q$, $k=0$, and a $0$-framed knot in $S^{3}$]
\label{cor:seifert}Let $K$ be a knot in $S^{3}$. Then, for all
coprime positive integers $p\ge q$, there is an $\mathbb{F}\llbracket U\rrbracket$-linear
exact triangle 
\[\begin{tikzcd}[ampersand replacement=\&]
	{\bigoplus_{j=0}^{q-1} \boldsymbol{HF}^{-}(S_{0}^3 (K))} \&\& {\boldsymbol{HF}^-(S_{p/q} ^3 (K))} \\
	\& {\bigoplus_{i=0}^{p-1} \mathbb{F} \llbracket U\rrbracket }
	\arrow[from=1-1, to=1-3]
	\arrow[from=1-3, to=2-2]
	\arrow[from=2-2, to=1-1]
\end{tikzcd}\]
\end{cor}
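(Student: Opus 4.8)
The plan is to deduce Corollary~\ref{cor:seifert} directly from Theorem~\ref{thm:rational-surgery-k=00003D0}\,(1), specialized to $Y=S^{3}$ with $\lambda$ the Seifert framing; the entire content of the corollary is then the identification of the three vertices of the triangle. Since $p\ge q$, case (1) of Theorem~\ref{thm:rational-surgery-k=00003D0} applies with no modification, producing an $\mathbb{F}\llbracket U\rrbracket$-linear exact triangle with vertices (A) $\bigoplus_{j=0}^{q-1}\boldsymbol{HF}^{-}(S^{3}_{\lambda}(K))$, (B) $\boldsymbol{HF}^{-}(S^{3}_{p\mu+q\lambda}(K))$, and (C) $\underline{\boldsymbol{HF}^{-}}(S^{3};\mathbb{F}[\mathbb{Z}/p\mathbb{Z}])$ twisted by $[K]$.

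First I would record the surgery-theoretic identifications of (A) and (B). By definition of the Seifert framing, $S^{3}_{\lambda}(K)=S^{3}_{0}(K)$, so (A) is $\bigoplus_{j=0}^{q-1}\boldsymbol{HF}^{-}(S^{3}_{0}(K))$, as in the statement. For (B), recall that in the standard meridian--Seifert longitude basis $(\mu,\lambda)$ of the boundary torus of a tubular neighborhood of $K$, the curve $p\mu+q\lambda$ is precisely the slope $p/q$ --- the hypothesis $\gcd(p,q)=1$ is exactly what makes this a well-defined primitive slope --- so $S^{3}_{p\mu+q\lambda}(K)=S^{3}_{p/q}(K)$. Hence (B) becomes $\boldsymbol{HF}^{-}(S^{3}_{p/q}(K))$.

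Next I would simplify the twisted-coefficient vertex (C). Since $H_{1}(S^{3})=0$, the homology class $[K]\in H_{1}(S^{3})$ is trivial, so by the Remark immediately following Theorem~\ref{thm:rational-surgery-k=00003D0} the group $\underline{\boldsymbol{HF}^{-}}(S^{3};\mathbb{F}[\mathbb{Z}/p\mathbb{Z}])$ is just $p$ copies of $\boldsymbol{HF}^{-}(S^{3})$; together with the Ozsv\'ath--Szab\'o computation $\boldsymbol{HF}^{-}(S^{3})\cong\mathbb{F}\llbracket U\rrbracket$ this gives $\underline{\boldsymbol{HF}^{-}}(S^{3};\mathbb{F}[\mathbb{Z}/p\mathbb{Z}])\cong\bigoplus_{i=0}^{p-1}\mathbb{F}\llbracket U\rrbracket$. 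Substituting the three identifications above into the exact triangle of Theorem~\ref{thm:rational-surgery-k=00003D0}\,(1) yields the corollary. There is essentially no obstacle here, since all the real difficulty is already absorbed into Theorem~\ref{thm:rational-surgery-k=00003D0} (and hence into the main local computation, Theorem~\ref{thm:gen-local-comp}); the only point requiring care is the bookkeeping of surgery conventions, namely that ``$p/q$-surgery on $K\subset S^{3}$'' in the usual sense is Dehn filling along $p\mu+q\lambda_{\mathrm{Seifert}}$, which matches the curve $p\mu+q\lambda$ of Theorem~\ref{thm:rational-surgery-k=00003D0} once $\lambda=\lambda_{\mathrm{Seifert}}$.
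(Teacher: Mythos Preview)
Your proposal is correct and matches the paper's treatment: the paper presents Corollary~\ref{cor:seifert} simply as a restatement of Theorem~\ref{thm:rational-surgery-k=00003D0}\,(1) for $Y=S^{3}$ with the Seifert framing, without a separate proof, and your identifications of the three vertices (using $S^{3}_{\lambda}(K)=S^{3}_{0}(K)$, $S^{3}_{p\mu+q\lambda}(K)=S^{3}_{p/q}(K)$, and the triviality of $[K]\in H_{1}(S^{3})$ together with $\boldsymbol{HF}^{-}(S^{3})\cong\mathbb{F}\llbracket U\rrbracket$) are exactly what is needed.
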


\begin{figure}[h]
\begin{centering}
\raisebox{-0.5\height}{\includegraphics[scale=0.87]{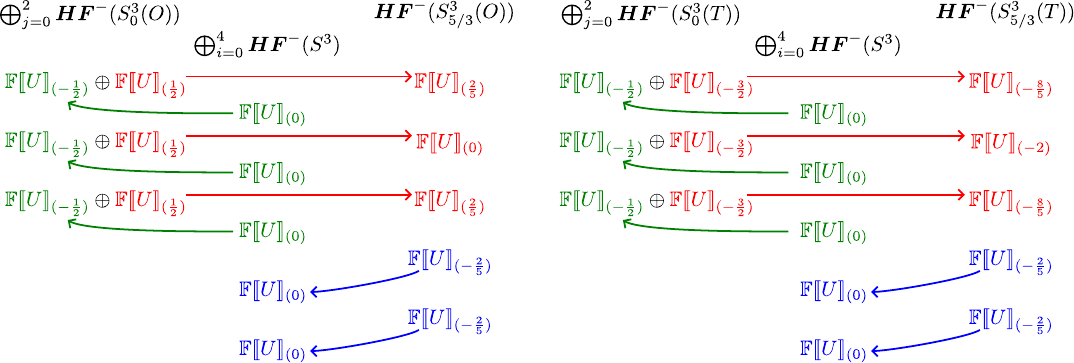}}
\par\end{centering}
\caption{\label{fig:exact530}The rational surgery exact triangle for the unknot
$O\subset S^{3}$ and the right handed trefoil $T\subset S^{3}$,
for $(p,q,k)=(5,3,0)$.}
\end{figure}

The case $(p,q,k)=(5,3,0)$ is shown in Figure~\ref{fig:exact530}.
Here, $\mathbb{F}\llbracket U\rrbracket_{(d)}$ means that the top
grading element $1\in\mathbb{F}\llbracket U\rrbracket$ is supported
in grading $d$. For each color (red, blue, green), the arrows in
that color are isomorphisms between the groups in that color. Note
that these maps are in general not homogeneous with respect to the
homological grading but only filtered: they are multiplication by
a unit in $\mathbb{F}\llbracket U\rrbracket$. Note that for each
arrow on the left side of Figure~\ref{fig:exact530}, there is a
corresponding arrow in the right side of Figure~\ref{fig:exact530};
their filtration degrees are the same.

Observe that the exact triangles in Figure~\ref{fig:exact530} split
into five exact triangles; in fact, similarly to \cite[Section~9]{MR2113020},
we can show purely combinatorially that our rational surgery exact
triangles split into $p$ many exact triangles, given by the ${\rm Spin}^{c}$-splitting.
\begin{cor}[${\rm Spin}^{c}$-splitting of Corollary~\ref{cor:seifert}]
Let $K$ be a knot in $S^{3}$, and let $p\ge q$ be coprime positive
integers. Then, there exist identifications ${\rm Spin}^{c}(S_{0}^{3}(K))\cong\mathbb{Z}$
and ${\rm Spin}^{c}(S_{p/q}^{3}(K))\cong\mathbb{Z}/p\mathbb{Z}$ such
that for each $i=0,\cdots,p-1$, there is an $\mathbb{F}\llbracket U\rrbracket$-linear
exact triangle 
\[\begin{tikzcd}[ampersand replacement=\&]
	{\bigoplus_{-qt \in \{i, i+1 , \cdots ,i+q-1\}\bmod p }\boldsymbol{HF}^{-}(S_{0}^3 (K);t)} \&\& {\boldsymbol{HF}^-(S_{p/q}^3 (K);i)} \\
	\& { \mathbb{F} \llbracket U\rrbracket }
	\arrow[from=1-1, to=1-3]
	\arrow[from=1-3, to=2-2]
	\arrow[from=2-2, to=1-1]
\end{tikzcd}\]
\end{cor}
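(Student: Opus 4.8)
The plan is to refine the exact triangle~(\ref{eq:exact-triangle34}) along ${\rm Spin}^{c}$ structures, following the scheme Ozsv\'{a}th and Szab\'{o} use for their integer surgery triangles in \cite[Section~9]{MR2113020} (compare \cite[Section~8]{MR1957829}). First I would specialize the Heegaard quadruple diagram of Subsection~\ref{subsec:inter-local} to $k=0$, $Y=S^{3}$, and $\lambda$ the Seifert framing, so that the three terms of~(\ref{eq:exact-triangle34}) become (A) $\bigoplus_{i'=0}^{q-1}\boldsymbol{HF}^{-}(S^{3}_{0}(K))$, with the $i'$-th summand coming from the basis vector $x_{i',0}$ of $E_{0}^{0}$; (B) $\boldsymbol{HF}^{-}(S^{3}_{p/q}(K))$; and (C) $\bigoplus_{a=0}^{p-1}\mathbb{F}\llbracket U\rrbracket$, with the $a$-th summand coming from $y_{a}\in E_{\infty}^{0}$ (these $p$ summands split off because the twist $[K]\in H_{1}(S^{3})$ is trivial). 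Then I would record the two structural facts that make the refinement go through: each of the underlying chain complexes $\boldsymbol{CF}^{-}(\boldsymbol{\alpha},\boldsymbol{\beta}_{0}^{E_{0}^{0}})$, $\boldsymbol{CF}^{-}(\boldsymbol{\alpha},\boldsymbol{\beta}_{r})$, $\boldsymbol{CF}^{-}(\boldsymbol{\alpha},\boldsymbol{\beta}_{\infty}^{E_{\infty}^{0}})$ carries a direct sum decomposition — by $\{0,\dots,q-1\}\times{\rm Spin}^{c}(S^{3}_{0}(K))$ with ${\rm Spin}^{c}(S^{3}_{0}(K))$ affinely $\mathbb{Z}$, by ${\rm Spin}^{c}(S^{3}_{p/q}(K))$ affinely $\mathbb{Z}/p\mathbb{Z}$, and by $\{0,\dots,p-1\}$ (the $y_{a}$-summands), respectively — and the three maps of~(\ref{eq:exact-triangle34}), together with the null-homotopies and ``identity'' relations supplied by the triangle detection lemma \cite[Lemma~4.2]{MR2141852}, are all assembled from $\mu_{2}$'s and $\mu_{3}$'s in the quadruple, and hence are block-diagonal with respect to a single correspondence between these three index sets, determined by the ${\rm Spin}^{c}$-assignments of the polygons (as in \cite[Section~8]{MR2113020}). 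Granting this, the triangle detection lemma applies within each block, and the entire problem reduces to pinning down the correspondence.

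The correspondence would be computed in the genus-$1$ diagram. After the reduction of Subsection~\ref{subsec:inter-local} and Appendix~\ref{sec:Reduction-of-Theorem}, the polygons feeding the three maps have, in the genus-$1$ factor, $\mathbb{T}^{2}$-shadows equal to the triangles $T_{n}$ of Definition~\ref{def:theta-tilde}, and by Proposition~\ref{prop:triangle} (and its mild stabilized variant; compare Propositions~\ref{prop:lift-mu2} and~\ref{prop:main-mu3}) the lift $\widetilde{T}_{n}+(d,0)$ has vertices the standard basis elements $\boldsymbol{\theta}_{i,\ell+d}$, $\boldsymbol{\xi}_{j-p}$, $\boldsymbol{\zeta}_{i,d}$, where $i=n\bmod p$, $j=n\bmod q$, $n=\ell p+i$. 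I would then establish two things. (i) The ${\rm Spin}^{c}$ structure on $S^{3}_{p/q}(K)$ of any generator whose $\beta_{r}$-component is $\theta_{i}$ depends only on $i\bmod p$: the periodic domains of $(\mathbb{T}^{2},\beta_{0},\beta_{r})$ joining $\theta_{i}$ to $\theta_{i'}$ change the relevant evaluation of $c_{1}$ by a multiple of the generator of $H^{2}(S^{3}_{p/q}(K))\cong\mathbb{Z}/p\mathbb{Z}$, and the $p$ points $\theta_{0},\dots,\theta_{p-1}$ realize the $p$ distinct classes (the standard computation for the lens-space-type piece of the surgery, cf.\ \cite[Section~9]{MR2113020}); this is what fixes the identification ${\rm Spin}^{c}(S^{3}_{p/q}(K))\cong\mathbb{Z}/p\mathbb{Z}$. (ii) The ${\rm Spin}^{c}$ structure $t\in\mathbb{Z}$ on $S^{3}_{0}(K)$ attached to the $\beta_{0}$-vertex of $\widetilde{T}_{n}$ is recorded by the column of $\widetilde{\theta}_{n}$ (which lies in row $-nq^{-1}\bmod p$ of $\widetilde{\mathbb{T}^{2}}$ by Lemma~\ref{lem:thetanrow}) together with the stabilization data; letting $n$ range over a fixed residue class of $i\bmod p$ but over all residues of $j=n\bmod q$ (possible since $\gcd(p,q)=1$), the $\zeta$-vertex always involves the $y_{a}$-summand with $a=-iq^{-1}\bmod p$, and $t$ sweeps out exactly the integers with $-qt$ congruent to one of $i,i+1,\dots,i+q-1$ modulo $p$ — the width $q$ of the window being tied to the $q$ intersection points $\beta_{r}\cap\beta_{\infty}$, and the precise offset coming from a direct computation along the lines of the proof of Proposition~\ref{prop:triangle}.

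Combining (i) and (ii), the $(i',t)$-block of (A) is matched with the ${\rm Spin}^{c}$-$i$ block of (B) and with the $y_{a}$-block of (C) for $a=-iq^{-1}\bmod p$ precisely when $-qt\in\{i,i+1,\dots,i+q-1\}\bmod p$; since every pair (residue of $t$ modulo $p$, integer lift) arises for exactly $q$ values of $i$, this reproduces the count of $q$ copies of $\boldsymbol{HF}^{-}(S^{3}_{0}(K))$ in Corollary~\ref{cor:seifert}. Running the triangle detection lemma in the ${\rm Spin}^{c}$-$i$ block, with ${\rm Spin}^{c}(S^{3}_{0}(K))\cong\mathbb{Z}$ chosen compatibly, then yields the stated exact triangle for each $i=0,\dots,p-1$. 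I expect the main obstacle to be the ${\rm Spin}^{c}$ bookkeeping in (i) and (ii) — above all producing the window $\{i,\dots,i+q-1\}$ with the correct offset, where the combinatorics of the cover $\widetilde{\mathbb{T}^{2}}$ do the work — whereas the exactness itself should follow formally from Theorem~\ref{thm:gen-local-comp} and the established ${\rm Spin}^{c}$-refined triangle detection argument.
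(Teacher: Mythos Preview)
Your proposal is correct and matches the paper's approach: the paper does not give a detailed proof of this corollary but simply remarks that ``similarly to \cite[Section~9]{MR2113020}, we can show purely combinatorially that our rational surgery exact triangles split into $p$ many exact triangles, given by the ${\rm Spin}^{c}$-splitting.'' Your sketch is a faithful fleshing-out of exactly that standard argument, using the genus-$1$ combinatorics of Proposition~\ref{prop:triangle} and the triangle detection lemma blockwise.
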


\subsection{\label{subsec:5312}Cases $(p,q,k)=(5,3,1)$ and $(5,3,2)$}

Conveniently, in these cases, $\boldsymbol{HFK}_{p,q,k}^{-}(S^{3},K)$
is isomorphic to the $2k/p$-modified knot Floer homology $\frac{2k}{p}\boldsymbol{HFK}^{-}(S^{3},K)$
(Definition~\ref{def:t-knotfloer}). Indeed, if we view $\mathbb{F}\llbracket U^{1/5}\rrbracket$
as an $\mathbb{F}\llbracket W,Z\rrbracket$-module by letting $W$
act as $U^{k/p}$ and $Z$ act as $U^{1-(k/p)}$, and view $E_{\infty}^{k}=\bigoplus_{i=0}^{4}y_{i}\mathbb{F}\llbracket U\rrbracket$
as an $\mathbb{F}\llbracket W,Z\rrbracket$-module by letting $W$
act as $\phi_{\infty}^{k}$ and $Z$ act as $U(\phi_{\infty}^{k})^{-1}$,
then the $\mathbb{F}\llbracket U\rrbracket$-module isomorphism 
\[
E_{\infty}^{k}\to\mathbb{F}\llbracket U^{1/5}\rrbracket:y_{i}\mapsto U^{ik/p}
\]
is an $\mathbb{F}\llbracket W,Z\rrbracket$-module isomorphism, and
this isomorphism induces isomorphisms
\[
\boldsymbol{HFK}_{5,3,1}^{-}(S^{3},K)\cong\frac{2}{5}\boldsymbol{HFK}^{-}(S^{3},K),\ \boldsymbol{HFK}_{5,3,2}^{-}(S^{3},K)\cong\frac{4}{5}\boldsymbol{HFK}^{-}(S^{3},K).
\]

Let us describe the absolute $\mathbb{Q}$-grading on these homology
groups. The knot Floer chain complex ${\cal CFK}^{-}(S^{3},K)$ is
equipped with two homological (Maslov) gradings, ${\rm gr}_{w}$ and
${\rm gr}_{z}$. Define the absolute $\mathbb{Q}$-grading on $t\boldsymbol{CFK}^{-}(S^{3},K)$
as $(t/2){\rm gr}_{w}+(1-t/2){\rm gr}_{z}$. Then, $U$ has grading
$-2$ since $U=WZ$, and the differential is homogeneous of grading
$-1$.

For the cases $(p,q,k)=(5,3,1),(5,3,2)$ and $K=O$, we have $\boldsymbol{HFK}_{5,3,k}^{-}(S^{3},O)\cong\mathbb{F}\llbracket U^{1/5}\rrbracket$,
and the element $1\in\mathbb{F}\llbracket U^{1/5}\rrbracket$ has
absolute $\mathbb{Q}$-grading $0$. We view this as $5$ copies of
$\mathbb{F}\llbracket U\rrbracket$, where the top grading elements
are supported in absolute $\mathbb{Q}$-gradings $0,-2/5,-4/5,-6/5,-8/5$,
respectively.

Let us consider the right handed trefoil $T$. The chain complex ${\cal CFK}^{-}(S^{3},T)$
is freely generated by three basis elements, say $a,b,c$ over $\mathbb{F}\llbracket W,Z\rrbracket$,
and the differential is given by $\partial a=Wb+Zc$. Hence, for the
case $(p,q,k)=(5,3,1)$, $\frac{2}{5}\boldsymbol{CFK}^{-}(S^{3},T)$
is freely generated over $\mathbb{F}\llbracket U^{1/5}\rrbracket$
by $a,b,c$, whose absolute $\mathbb{Q}$-gradings are $-1,-8/5,-2/5$,
respectively, and the differential is given by $\partial a=U^{1/5}b+U^{4/5}c$.
Hence, its homology is $\mathbb{F}\llbracket U^{1/5}\rrbracket\oplus\mathbb{F}\llbracket U^{1/5}\rrbracket/U^{1/5}$,
where the $\mathbb{F}\llbracket U^{1/5}\rrbracket$ summand is generated
by $c$, and the $\mathbb{F}\llbracket U^{1/5}\rrbracket/U^{1/5}$
summand is generated by $b+U^{3/5}c$, and hence
\[
\frac{2}{5}\boldsymbol{HFK}^{-}(S^{3},T)\cong(\mathbb{F}\llbracket U^{1/5}\rrbracket)_{(-2/5)}\oplus(\mathbb{F}\llbracket U^{1/5}\rrbracket/U^{1/5})_{(-8/5)}.
\]
(Here, the subscript $(d)$ means that the top grading element is
supported in absolute $\mathbb{Q}$-grading $d$.) We view this as
the direct sum of $\mathbb{F}\llbracket U\rrbracket_{(-2/5)}$, $\mathbb{F}\llbracket U\rrbracket_{(-4/5)}$,
$\mathbb{F}\llbracket U\rrbracket_{(-6/5)}$, $\mathbb{F}\llbracket U\rrbracket_{(-8/5)}\oplus(\mathbb{F}\llbracket U\rrbracket/U)_{(-8/5)}$,
and $\mathbb{F}\llbracket U\rrbracket_{(-2)}$. In fact, the chain
complex $\frac{2}{5}\boldsymbol{CFK}^{-}(S^{3},T)$ itself splits
into five chain complexes over $\mathbb{F}\llbracket U\rrbracket$,
whose homologies are the above, respectively.

For the case $(p,q,k)=(5,3,2)$, $\frac{4}{5}\boldsymbol{CFK}^{-}(S^{3},T)$
is freely generated over $\mathbb{F}\llbracket U^{1/5}\rrbracket$
by $a,b,c$, whose absolute $\mathbb{Q}$-gradings are $-1,-6/5,-4/5$,
respectively, and the differential is given by $\partial a=U^{2/5}b+U^{3/5}c$.
Its homology is 
\[
(\mathbb{F}\llbracket U^{1/5}\rrbracket)_{(-4/5)}\oplus(\mathbb{F}\llbracket U^{1/5}\rrbracket/U^{2/5})_{(-6/5)},
\]
where the two summands are generated by $c$ and $b+U^{1/5}c$, respectively.
We view this as the direct sum of $\mathbb{F}\llbracket U\rrbracket_{(-4/5)}$,
$\mathbb{F}\llbracket U\rrbracket_{(-6/5)}\oplus(\mathbb{F}\llbracket U\rrbracket/U)_{(-6/5)}$,
$\mathbb{F}\llbracket U\rrbracket_{(-8/5)}\oplus(\mathbb{F}\llbracket U\rrbracket/U)_{(-8/5)}$,
$\mathbb{F}\llbracket U\rrbracket_{(-2)}$, and $\mathbb{F}\llbracket U\rrbracket_{(-12/5)}$.
In fact, the chain complex $\frac{4}{5}\boldsymbol{CFK}^{-}(S^{3},T)$
itself splits into five chain complexes over $\mathbb{F}\llbracket U\rrbracket$,
whose homologies are the above, respectively.

The rational surgery exact triangle (Theorem~\ref{thm:rational-surgery-kgeneral})
for the cases $(p,q,k)=(5,3,1)$ and $(5,3,2)$ are shown in Figures~\ref{fig:exact531}~and~\ref{fig:exact532},
respectively. These are color coded similarly to Figure~\ref{fig:exact530}.
Observe that these exact triangles also split into $5$ exact triangles.
Similarly to \cite[Section~9]{MR2113020}, we can show purely combinatorially
that for the case where $Y$ is $S^{3}$ and $\lambda$ is the Seifert
framing, our rational surgery exact triangles split into $p$ many
exact triangles.

\begin{figure}[h]
\begin{centering}
\raisebox{-0.5\height}{\includegraphics[scale=0.87]{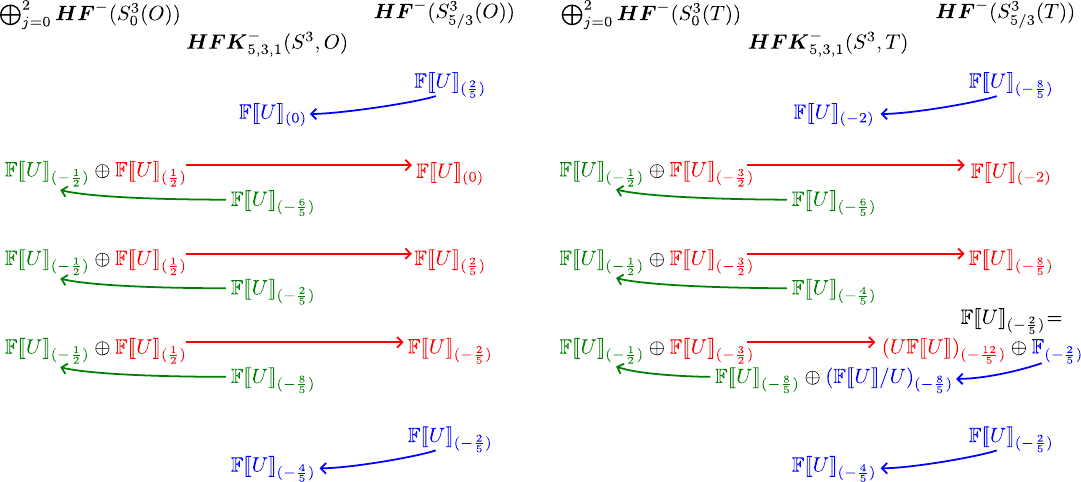}}
\par\end{centering}
\caption{\label{fig:exact531}The rational surgery exact triangle for the unknot
$O\subset S^{3}$ and the right handed trefoil $T\subset S^{3}$,
for $(p,q,k)=(5,3,1)$. }
\end{figure}

\begin{figure}[h]
\begin{centering}
\raisebox{-0.5\height}{\includegraphics[scale=0.87]{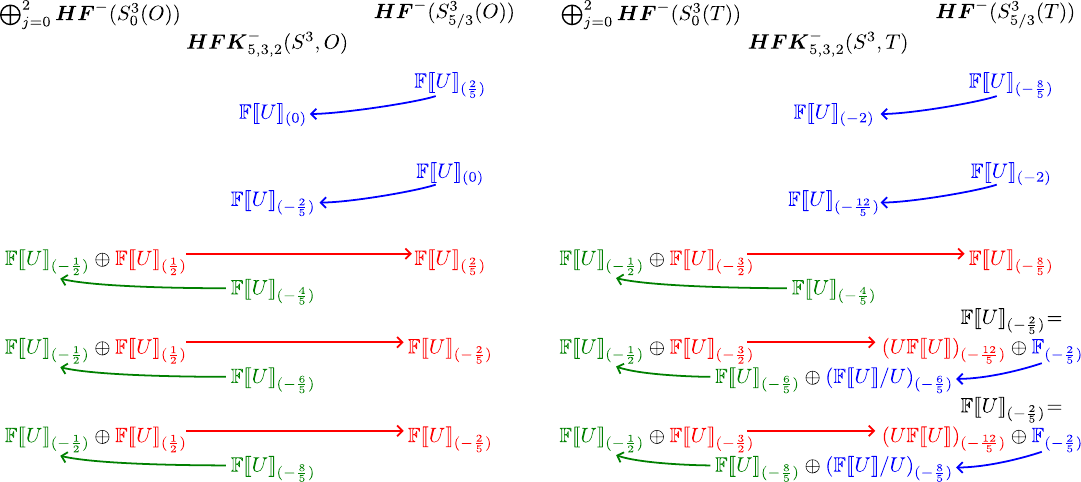}}
\par\end{centering}
\caption{\label{fig:exact532}The rational surgery exact triangle for the unknot
$O\subset S^{3}$ and the right handed trefoil $T\subset S^{3}$,
for $(p,q,k)=(5,3,2)$.}
\end{figure}

\appendix

\section{\label{sec:Checks-for-Subsubsection}Computations for Subsection~\ref{subsec:triangle-k=00003D0}}

Let $p$ and $q$ be coprime positive integers. As in Subsection~\ref{subsec:triangle-k=00003D0},
let $\ell,i,j\in\mathbb{Z}$ always denote the integers such that
 $i:=n\bmod p$, $j:=n\bmod q$, and $n=\ell p+i$. Recall from Definition~\ref{def:theta-tilde}
that $z_{n}:=n_{z}(T_{n})$ is the number of basepoints in the $n$th
triangle $T_{n}$.
\begin{lem}
\label{lem:a0}We have $z_{n}=z_{p+q-1-n}$.
\end{lem}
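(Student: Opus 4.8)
The plan is to lift everything to the universal cover $\mathbb{R}^{2}$, express $z_{n}$ as the number of lattice translates of the basepoint inside an explicit right triangle, extract a closed formula for $z_{n}$, and note that this formula is manifestly invariant under $n\mapsto p+q-1-n$.

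First I would fix coordinates. With $\overline{\pi}\colon\mathbb{R}^{2}\to\mathbb{T}^{2}$ the universal covering (Subsection~\ref{subsec:The-setup}), the description of the genus~$1$ diagram gives $\overline{\pi}^{-1}(\beta_{0})=\{y\in\mathbb{Z}\}$, $\overline{\pi}^{-1}(\beta_{\infty})=\{x\in\mathbb{Z}\}$, $\overline{\pi}^{-1}(\beta_{r})=\{px-qy\in\tfrac12+\mathbb{Z}\}$, $\overline{\pi}^{-1}(z)=(\varepsilon,\varepsilon)+\mathbb{Z}^{2}$, and the lifts of $\zeta$, $\theta_{i}$, $\xi_{j}$ are $\mathbb{Z}^{2}$, $(\tfrac{2i+1}{2p},0)+\mathbb{Z}^{2}$, $(0,-\tfrac{2j+1}{2q})+\mathbb{Z}^{2}$. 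Since $z_{n}=n_{z}(T_{n})=n_{\pi^{-1}(z)}(\widetilde T_{n})$ and $\overline T_{n}\subset\mathbb{R}^{2}$ is a lift of $\widetilde T_{n}$ (Subsection~\ref{subsec:Lifts-quad}), we get $z_{n}=\#\bigl(\overline T_{n}\cap((\varepsilon,\varepsilon)+\mathbb{Z}^{2})\bigr)$.

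Next I would pin down $\overline T_{n}$. Its boundary consists of a horizontal segment (in $\overline{\pi}^{-1}(\beta_{0})$), a vertical segment (in $\overline{\pi}^{-1}(\beta_{\infty})$), and a segment of slope $p/q$ (in $\overline{\pi}^{-1}(\beta_{r})$), so $\overline T_{n}$ is the right triangle cut out by these three lines. After translating by a lattice vector so that its $\zeta$-vertex is the origin, its vertices are $(0,0)$, $(c/p,0)$, $(0,-c/q)$, with hypotenuse on $\{px-qy=c\}$ for some $c\in\tfrac12+\mathbb{Z}$. Matching vertices to labels determines $c$: the $\theta$-vertex is a lift of $\theta_{n\bmod p}$, so $c\equiv n+\tfrac12\pmod p$; the $\xi$-vertex is a lift of $\xi_{(n-p)\bmod q}$ (Proposition~\ref{prop:triangle}, or directly from the diagram), so $c\equiv n-p+\tfrac12\pmod q$; and the ``$\ell$th column'' normalisation of $\widetilde\theta_{n}$ in Definition~\ref{def:theta-tilde} (with $n=\ell p+i$) fixes the residue of $c$ modulo $pq$. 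Together these give $c=n-p+\tfrac12$. Now the count of points $(\varepsilon,\varepsilon)+(m_{1},m_{2})$ inside the triangle $(0,0),(c/p,0),(0,-c/q)$ is elementary — one splits into the case $c>0$ (a fourth-quadrant triangle, forcing $m_{1}\ge0$, $m_{2}\le-1$) and the case $c<0$ (a second-quadrant triangle, forcing $m_{1}\le-1$, $m_{2}\ge0$), using that $px-qy=c$ has no integer solution so the $\varepsilon$-perturbation never lands on the hypotenuse — and yields, writing $\mathcal N(m):=\#\{(a,b)\in\mathbb{Z}_{\ge0}^{2}:pa+qb\le m\}$ (so $\mathcal N(m)=0$ for $m<0$),
\[
z_{n}=\mathcal N(n-p-q)+\mathcal N(-n-1),
\]
where at most one of the summands is nonzero. (In particular this re-derives the first assertion of Lemma~\ref{lem:51}, since $\mathcal N(n-p-q)>0\iff n\ge p+q$ and $\mathcal N(-n-1)>0\iff n\le-1$.) Replacing $n$ by $p+q-1-n$ swaps the two arguments $n-p-q$ and $-n-1$, hence $z_{p+q-1-n}=\mathcal N(-n-1)+\mathcal N(n-p-q)=z_{n}$, which is the lemma.

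I expect the one genuinely fiddly step to be the identification of $c$: correctly translating the combinatorial conventions of Definition~\ref{def:theta-tilde} — the index $n\bmod p$, the column $\ell$, and the choice of $\xi$-vertex — into coordinates, and in particular ruling out the $pq$-periodic ambiguity so that $c$ is exactly $n-p+\tfrac12$. Once that is done, the lattice count and the symmetry are routine; as a sanity check the formula reproduces the values $z_{7},\dots,z_{21}=0,1,1,1,2,2,3,4,4,5,6,7,8,9,10$ recorded for $(p,q)=(5,3)$ in Example~\ref{exa:61example}.
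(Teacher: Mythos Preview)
Your approach is correct and parallels the paper's: both lift to $\mathbb{R}^2$ and count basepoints in the lifted triangle explicitly. The paper counts row by row, obtaining $z_{-n+p-1}=\sum_{t\ge0}\max(\lfloor(n-tq)/p\rfloor,0)$ and $z_{n+p+q-1}=\sum_{t\ge0}\max(\lceil(n-tq)/p\rceil,0)$ and then matching them via $\lfloor x/p\rfloor=\lceil(x-p+1)/p\rceil$; your closed formula $z_n=\mathcal N(n-p-q)+\mathcal N(-n-1)$ repackages the same count so that the involution $n\mapsto p+q-1-n$ is manifest.

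One clarification on the step you flag as fiddly: as stated, your three conditions (the $\theta$-label mod $p$, the $\xi$-label mod $q$, and ``the column condition fixes $c$ mod $pq$'') only determine $c$ modulo $pq$, so they do not yet give $c=n-p+\tfrac12$. What actually resolves the ambiguity is that the column condition together with $i=n\bmod p$ gives the $x$-coordinate of $\widetilde\theta_n$ in $\widetilde{\mathbb{T}^2}$ \emph{exactly} as $(n-p+\tfrac12)/p$ (not merely modulo $1$), while Definition~\ref{def:theta-tilde} puts the $\zeta$-vertex of $\widetilde T_n$ on $\widetilde\beta_\infty^0=\{x=0\}$. Hence, after translating in $\mathbb{R}^2$ so the $\zeta$-vertex is at the origin, the $\theta$-vertex sits at $((n-p+\tfrac12)/p,0)$ and $c=n-p+\tfrac12$ on the nose. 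With this adjustment your argument is complete.
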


\begin{proof}
First, the lemma holds for $n=0,\cdots,p+q-1$ since $z_{n}=0$ for
such $n$. For $n\notin\{0,\cdots,p+q-1\}$, let us express $z_{n}$
as a sum. Let $\overline{T}_{n}$ be a lift of $\widetilde{T}_{n}$
to the universal cover $\mathbb{R}^{2}$, and let $\overline{z}\subset\mathbb{R}^{2}$
be the inverse image of $z\in\mathbb{T}^{2}$. Let $b_{n}\in\mathbb{Z}$
be such that the side of $\overline{T}_{n}$ parallel to the $x$-axis
is contained in $\mathbb{R}\times\{b_{n}\}$. We count the number
of basepoints $|\overline{T}_{n}\cap\overline{z}|$ in each row $\mathbb{R}\times\{b_{n}+t+\varepsilon\}$
separately, for each $t\in\mathbb{Z}$.

For $n\ge0$, we have 
\begin{gather}
|\overline{T}_{-n+p-1}\cap\overline{z}\cap(\mathbb{R}\times\{b_{-n+p-1}+t+\varepsilon\})|=\begin{cases}
\max\left(\left\lfloor \frac{n-tq}{p}\right\rfloor ,0\right) & {\rm if}\ t\ge0\\
0 & {\rm if}\ t<0
\end{cases},\label{eq:-np-1}\\
|\overline{T}_{n+p+q-1}\cap\overline{z}\cap(\mathbb{R}\times\{b_{n+p+q-1}-1-t+\varepsilon\})|=\begin{cases}
\max\left(\left\lceil \frac{n-tq}{p}\right\rceil ,0\right) & {\rm if}\ t\ge0\\
0 & {\rm if}\ t<0
\end{cases}.\label{eq:npq-1}
\end{gather}
Since $\left\lfloor x/p\right\rfloor =\left\lceil (x-(p-1))/p\right\rceil $,
we have $z_{-n+p-1}=z_{(n-p+1)+p+q-1}=z_{n+q}$ for $n\ge p-1$.
\end{proof}
\begin{lem}
\label{lem:nzk}Let $k=0,\cdots,p-1$, and let $\widetilde{z(k)}$
be the set $\widetilde{z(u)}\subset\widetilde{\mathbb{T}^{2}}$ from
Subsection~\ref{subsec:cover} that corresponds to the monodromy
of $E_{\infty}^{k}$ (Definition~\ref{def:actual-local-system}).
Then
\begin{equation}
n_{\widetilde{z(k)}}(\widetilde{T}_{n+k})=z_{n}.\label{eq:a4}
\end{equation}
\end{lem}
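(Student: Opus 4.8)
The plan is to prove (\ref{eq:a4}) by the same explicit row-by-row lattice-point count used for Lemma~\ref{lem:a0}, now keeping track of the monodromy data. Fix $k$ and $n$, lift $\widetilde{T}_{n+k}$ to a region $\overline{T}_{n+k}\subset\mathbb{R}^{2}$, and let $\overline{z(k)}\subset\mathbb{R}^{2}$ be the preimage of $\widetilde{z(k)}$. By the definition of $\widetilde{z(k)}$ (Definition~\ref{def:actual-local-system} together with the formula for $\widetilde{z(u)}$ in Subsection~\ref{subsec:cover}), in each row indexed by $b\in\mathbb{Z}/p\mathbb{Z}$ the points of $\overline{z(k)}$ sit just to the right of the integer lattice points when $u_{b\bmod p}=1$ and just to the left when $u_{b\bmod p}=U$; and $u_{b\bmod p}=U$ happens exactly when $b\equiv -iq^{-1}-1\bmod p$ for some $i\in\{0,\dots,k-1\}$.

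First dispose of the easy case: if $n\in\{0,\dots,p+q-1\}$ then $n_{\widetilde{z(k)}}(\widetilde{T}_{n+k})=0$ by Lemma~\ref{lem:We-have-} (applied with our specific $u$, for which $\widetilde{z(u)}=\widetilde{z(k)}$), while $z_{n}=0$ by Lemma~\ref{lem:51}. For the remaining $n$ one reduces --- exactly as in Lemma~\ref{lem:a0}, and using $z_{n}=z_{p+q-1-n}$ where convenient --- to the ranges in which $\overline{T}_{n+k}$ is a thin wedge, and expresses $z_{n}$ as a sum over rows of terms $\max(\lfloor(N-tq)/p\rfloor,0)$ or $\max(\lceil(N-tq)/p\rceil,0)$ (where $N$ depends on $n$), the $t$-th term counting $\pi^{-1}(z)$-basepoints in the $t$-th row of the corresponding $k=0$ wedge. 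The heart of the argument is the analogous count for $\overline{T}_{n+k}$ against $\overline{z(k)}$: in each row met by the wedge the number of $\overline{z(k)}$-basepoints is again the floor or ceiling of the same linear function of the row index, but the floor/ceiling choice is \emph{switched}, relative to the $k=0$ count, precisely in the rows with $u_{b\bmod p}=U$. Two effects compensate: shifting the subscript of $\widetilde{T}$ by $k$ translates the row-indexing by $-kq^{-1}\bmod p$, moving the wedge across exactly the $k$ rows that get ``$U$-ified'', and the identity $\lfloor x/p\rfloor=\lceil(x-(p-1))/p\rceil$ (already used in Lemma~\ref{lem:a0}) converts each switched floor into the matching ceiling and conversely. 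Carrying this out term by term yields (\ref{eq:a4}).

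The main obstacle is the bookkeeping in that last step: describing the rows and intervals met by the wedge $\overline{T}_{n+k}$, recording for each such row whether $u_{b\bmod p}$ equals $1$ or $U$, and checking that the floor/ceiling switches line up with the $k$-shift across the various parity sub-cases and the cases $p\ge q$ versus $p<q$ --- the same case analysis as in Lemma~\ref{lem:a0}. Conceptually this cancellation is just the combinatorial shadow of the translation $\Phi_{k}$ of Claim~\ref{claim:There-exists-a}, so one could instead simply invoke that picture argument; the point of Lemma~\ref{lem:nzk} is to have the purely arithmetic verification. As a possible shortcut, one could first establish a ``quadratic-in-level'' recursion, e.g.\ $z_{pqm+(p+q-1)+s}=\tfrac{pqm^{2}+(2s+p+q-1)m}{2}+z_{(p+q-1)+s}$ for $m\ge0$ and $s\in\{0,\dots,pq-1\}$ (generalizing Equation~(\ref{eq:z53})), together with its analogue for $n\mapsto n_{\widetilde{z(k)}}(\widetilde{T}_{n})$, reducing (\ref{eq:a4}) to finitely many base cases.
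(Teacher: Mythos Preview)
Your plan is essentially the paper's algebraic proof: a row-by-row count of $\overline{z(k)}$-points in the two wedge types of Lemma~\ref{lem:a0}, matched to the $k=0$ count via a floor identity. One simplification worth noting: no parity or $p\gtrless q$ case-split is needed---the paper handles each wedge type uniformly via the single identity $\lfloor(n-tq)/p\rfloor=\lfloor(n-k-tq)/p\rfloor+[\,(n-tq)\bmod p<k\,]$ (and its ceiling analogue for the other wedge), which directly equates the $\overline{z(k)}$-row-count for $\overline{T}_{-(n-k)+p-1}$ with the $\pi^{-1}(z)$-row-count for $\overline{T}_{-n+p-1}$, leaving only the middle range $n+k\in\{p,\dots,p+q-2\}$ (where both sides vanish) to be checked separately.
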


\begin{proof}
This algebraic proof is routine, albeit quite complicated. Note that
we give a ``picture proof'' that we find more satisfying in Subsubsection~\ref{subsec:zk}.

Let us work in $\mathbb{R}^{2}$ similarly to the proof of Lemma~\ref{lem:a0}.
Let $\overline{z(k)}$ be the inverse image of $\widetilde{z(k)}\subset\widetilde{\mathbb{T}^{2}}$
in $\mathbb{R}^{2}$. Then, we have
\begin{align*}
\overline{z(k)} & =\{(a+\varepsilon,b+\varepsilon):a,b\in\mathbb{Z},\ qb\in\{k,k+1,\cdots,p-1\}\mod p\}\\
 & \cup\{(a-\varepsilon,b+\varepsilon):a,b\in\mathbb{Z},\ qb\in\{0,1,\cdots,k-1\}\mod p\}.
\end{align*}
Also, let $b_{n}$ be such that the $\partial_{\pi^{-1}(\beta_{0})}\overline{T}_{n}$
is contained in $\mathbb{R}\times\{b_{n}\}$. Note that $qb_{n}\equiv n$
mod~$p$.

For $n\ge0$, we have 
\begin{multline*}
|\overline{T}_{-n+p-1}\cap\overline{z(k)}\cap(\mathbb{R}\times\{b_{-n+p-1}+t+\varepsilon\})|\\
=\begin{cases}
\max\left(\left\lfloor \frac{n-tq}{p}\right\rfloor ,0\right) & {\rm if}\ t\ge0\ {\rm and}\ -n-1+tq\in\{k,k+1,\cdots,p-1\}\mod p\\
\max\left(1+\left\lfloor \frac{n-tq}{p}\right\rfloor ,0\right) & {\rm if}\ t\ge0\ {\rm and}\ -n-1+tq\in\{0,1,\cdots,k-1\}\mod p\\
0 & {\rm if}\ t<0
\end{cases}.
\end{multline*}
Hence for $n\ge k$, we have $n_{\widetilde{z(k)}}(\widetilde{T}_{-n+p-1+k})=z_{-n+p-1}$
by Equation~(\ref{eq:-np-1}) since 
\[
\left\lfloor \frac{n-tq}{p}\right\rfloor =\begin{cases}
\left\lfloor \frac{n-k-tq}{p}\right\rfloor  & {\rm if}\ n-tq\in\{k,k+1,\cdots,p-1\}\mod p\\
1+\left\lfloor \frac{n-k-tq}{p}\right\rfloor  & {\rm if}\ n-tq\in\{0,1,\cdots,k-1\}\mod p
\end{cases}
\]
and
\[
n-tq\in\{k,k+1,\cdots,p-1\}\mod p\iff(k-1)-(n-tq)\in\{k,k+1,\cdots,p-1\}\mod p.
\]
This shows Equation~(\ref{eq:a4}) for $n\le p-k-1$.

Similarly, for $n\ge0$, we also have 
\begin{multline*}
|\overline{T}_{n+p+q-1}\cap\overline{z(k)}\cap(\mathbb{R}\times\{b_{n+p+q-1}-1-t+\varepsilon\})|\\
=\begin{cases}
\max\left(\left\lceil \frac{n-tq}{p}\right\rceil ,0\right) & {\rm if}\ t\ge0\ {\rm and}\ n-1-tq\in\{k,k+1,\cdots,p-1\}\mod p\\
\max\left(-1+\left\lceil \frac{n-tq}{p}\right\rceil ,0\right) & {\rm if}\ t\ge0\ {\rm and}\ n-1-tq\in\{0,1,\cdots,k-1\}\mod p\\
0 & {\rm if}\ t<0
\end{cases}.
\end{multline*}
Note that Equation~(\ref{eq:npq-1}) in fact holds for all $n\ge-p-q+1$
since $z_{0}=\cdots=z_{p+q-1}=0$. Hence for $n\ge-k$, we have $n_{\widetilde{z(k)}}(\widetilde{T}_{n+k+p+q-1})=z_{n+p+q-1}$
by Equation~(\ref{eq:npq-1}) since 
\[
\left\lceil \frac{n-tq}{p}\right\rceil =\begin{cases}
\left\lceil \frac{n+k-tq}{p}\right\rceil  & {\rm if}\ n-tq\in\{1,\cdots,p-k\}\mod p\\
-1+\left\lceil \frac{n+k-tq}{p}\right\rceil  & {\rm if}\ n-tq\in\{p-k+1,\cdots,p\}\mod p
\end{cases}
\]
and
\[
n-tq\in\{1,\cdots,p-k\}\mod p\iff n+k-1-tq\in\{k,k+1,\cdots,p-1\}\mod p.
\]
This shows Equation~(\ref{eq:a4}) for $n\ge p+q-k-1$.

We are left to show that $n_{\widetilde{z(k)}}(\widetilde{T}_{n})=0$
for $n=p,p+1,\cdots,p+q-2$. This follows since $T_{n}$ is disjoint
from $(\varepsilon,\varepsilon),(-\varepsilon,\varepsilon)\in\mathbb{T}^{2}$.
\end{proof}
Let $u,v\in\mathbb{Z}$ be such that $q=up+v$ and $v\in\{0,\cdots,p-1\}$;
then we have 
\begin{equation}
s_{0}=\cdots=s_{v-1}=u+1,\ s_{v}=\cdots=s_{p-1}=u.\label{eq:cv}
\end{equation}

\begin{lem}
\label{lem:a1}Let $n=\ell p+i$, $p+q-1-n=\overline{\ell}p+\overline{i}$
for $i,\overline{i},\ell,\overline{\ell}\in\mathbb{Z}$, $i,\overline{i}\in\{0,\cdots,p-1\}$.
Then, 
\[
\overline{i}\equiv q-i-1\mod p,\ s_{i}=s_{\overline{i}},\ {\rm and}\ \overline{\ell}\equiv-\ell\mod{s_{i}}.
\]
\end{lem}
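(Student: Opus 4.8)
The plan is a short, elementary computation directly from the definitions of $s_i$ (Equation~(\ref{eq:cv})) and of the integers $\ell,i,\overline\ell,\overline i$. Write $q = up+v$ with $v \in \{0,\ldots,p-1\}$, so that $s_i = u+1$ for $i \in \{0,\ldots,v-1\}$ and $s_i = u$ for $i \in \{v,\ldots,p-1\}$. The whole argument splits according to which of these two blocks contains $i$.

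First I would record the congruence for $\overline i$: reducing $p+q-1-n$ modulo $p$ and using $n \equiv i$, $q \equiv v \pmod p$ gives $\overline i \equiv q-i-1 \equiv v-1-i \pmod p$, which is the first assertion. Since $\overline i$ is by definition the residue in $\{0,\ldots,p-1\}$, this pins down $\overline i = v-1-i$ when $i \le v-1$, and $\overline i = v-1-i+p$ when $i \ge v$.

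Next, for $s_i = s_{\overline i}$ I would note that the involution $i \mapsto (v-1-i)\bmod p$ preserves each of the blocks $\{0,\ldots,v-1\}$ and $\{v,\ldots,p-1\}$: if $0 \le i \le v-1$ then $v-1-i$ already lies in $\{0,\ldots,v-1\}$, and if $v \le i \le p-1$ then $v-1-i \in \{v-p,\ldots,-1\}$, so $\overline i = v-1-i+p \in \{v,\ldots,p-1\}$. Since $s$ is constant (equal to $u+1$) on the first block and constant (equal to $u$) on the second, $s_{\overline i} = s_i$ follows.

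Finally, for the relation between $\ell$ and $\overline\ell$ I would add the two defining identities $n = \ell p + i$ and $p+q-1-n = \overline\ell p + \overline i$ to get $p+q-1 = (\ell+\overline\ell)p + (i+\overline i)$. In the first block $i+\overline i = v-1$, so $q = (\ell+\overline\ell-1)p + v$; in the second $i+\overline i = p+v-1$, so $q = (\ell+\overline\ell)p+v$. Comparing with $q = up+v$ and invoking uniqueness of the quotient--remainder decomposition (legitimate since $0 \le v < p$), we obtain $\ell+\overline\ell = u+1 = s_i$ in the first case and $\ell+\overline\ell = u = s_i$ in the second; either way $\ell+\overline\ell = s_i$, which is in fact slightly stronger than the claimed $\overline\ell \equiv -\ell \pmod{s_i}$. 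There is no genuine obstacle here — the only thing requiring care is the case bookkeeping of which block $\overline i$ falls in (including the degenerate case $v=0$, where the first block is empty) and noting that the two expressions for $q$ may be compared precisely because $v$ is the residue of $q$ modulo $p$.
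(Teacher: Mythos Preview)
Your proof is correct and follows essentially the same approach as the paper: both split into the two cases $i\in\{0,\ldots,v-1\}$ and $i\in\{v,\ldots,p-1\}$, compute $\overline{i}$ and $\overline{\ell}$ explicitly in each case, and read off the claims. Your observation that one in fact gets the equality $\ell+\overline{\ell}=s_i$ (not just the congruence) matches what the paper's computation implicitly shows.
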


\begin{proof}
That $\overline{i}\equiv q-i-1\bmod p$ is straightforward. For the
rest, notice that
\begin{equation}
p+q-1-n=\begin{cases}
(v-1-i)+p(u+1-\ell) & {\rm if}\ i\in\{0,\cdots,v-1\}\\
(p+v-1-i)+p(u-\ell) & {\rm if}\ i\in\{v,\cdots,p-1\}
\end{cases}.\label{eq:a1}
\end{equation}
Hence, 
\[
\overline{i}=\begin{cases}
v-1-i & {\rm if}\ i\in\{0,\cdots,v-1\}\\
p+v-1-i & {\rm if}\ i\in\{v,\cdots,p-1\}
\end{cases}
\]
and so $i\in\{0,\cdots,v-1\}$ if and only if $\overline{i}\in\{0,\cdots,v-1\}$.
Hence, $s_{i}=s_{\overline{i}}$ by Equation~(\ref{eq:cv}). Also,
we have $\overline{\ell}\equiv-\ell\mod{s_{i}}$ by Equations~(\ref{eq:cv})~and~(\ref{eq:a1}).
\end{proof}
\begin{lem}
\label{lem:a2}Assume that $q$ is odd, let $q=2q'+1$, and let $u',v'\in\mathbb{Z}$
be such that $q'=u'p+v'$ and $v'\in\{0,\cdots,p-1\}$. Then, $s_{q'}=s_{v'}=2u'+1$
is odd, and hence is in particular nonzero.
\end{lem}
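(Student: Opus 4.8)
The plan is to reduce the statement to the defining relation $q=up+v$ with $v\in\{0,\dots,p-1\}$ from Definition~\ref{def:si}, which (via Equation~(\ref{eq:cv})) records that $s_i=u+1$ for $i\in\{0,\dots,v-1\}$ and $s_i=u$ for $i\in\{v,\dots,p-1\}$. First I would observe that, since $q'=u'p+v'$, we have $q'\equiv v'\bmod p$, so $s_{q'}=s_{v'}$ because the indices of $s$ are interpreted modulo $p$; this disposes of the first equality in the statement and leaves me to compute $s_{v'}$.

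Next I would substitute the hypothesis $q=2q'+1$, writing $q=2u'p+(2v'+1)$, and then split into two cases according to whether $2v'+1$ already lies in $\{0,\dots,p-1\}$. This case split is the heart of the argument. In the case $2v'+1\le p-1$, the canonical division is $q=(2u')p+(2v'+1)$, so $u=2u'$ and $v=2v'+1$; since $v'<2v'+1=v$, we are in the range giving $s_{v'}=u+1=2u'+1$. In the case $2v'+1\ge p$ — where $0\le 2v'+1-p\le p-1$ because $v'\le p-1$ — the canonical division is $q=(2u'+1)p+(2v'+1-p)$, so $u=2u'+1$ and $v=2v'+1-p$; since $v'\ge 2v'+1-p=v$ (equivalently $v'+1\le p$), we are in the range giving $s_{v'}=u=2u'+1$. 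In either case $s_{v'}=2u'+1$, which is odd and in particular nonzero, and $s_{v'}=s_{q'}$ by the previous paragraph.

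I do not expect any genuine obstacle; the whole thing is elementary bookkeeping. The only points requiring a little care are that the two subcases genuinely partition the possibilities (according to $2v'+1\le p-1$ versus $2v'+1\ge p$, with no overlap and no gap since $2v'+1$ is an integer) and that throughout one interprets $s_{q'}$ as $s_{q'\bmod p}=s_{v'}$. With those in hand the parity conclusion is immediate.
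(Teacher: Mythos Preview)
Your proof is correct and follows essentially the same approach as the paper's own proof: both arguments write $q=2q'+1=2u'p+(2v'+1)$ and split into the two cases $2v'+1<p$ and $2v'+1\ge p$, then use Equation~(\ref{eq:cv}) to conclude $s_{v'}=2u'+1$ in each case. You additionally spell out that $s_{q'}=s_{v'}$ follows from $q'\equiv v'\bmod p$, which the paper leaves implicit.
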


\begin{proof}
To use Equation~(\ref{eq:cv}), let us compare $v'$ with $v$. We
divide into two cases: 
\begin{equation}
2q'+1=\begin{cases}
2u'p+(2v'+1) & {\rm if}\ 2v'+1<p\\
(2u'+1)p+(2v'+1-p) & {\rm if}\ 2v'+1\ge p
\end{cases}.\label{eq:a2}
\end{equation}
If $2v'+1<p$, then $v=2v'+1>v'$, and if $2v'+1\ge p$, then $v=2v'+1-p\le v'$.
Now, in both cases, we have $s_{v'}=2u'+1$ by Equations~(\ref{eq:cv})~and~(\ref{eq:a2}).
\end{proof}
\begin{lem}
\label{lem:a3}Assume that $p+q$ is odd, and let $\ell_{0},i_{0}\in\mathbb{Z}$
be such that $(p+q-1)/2=\ell_{0}p+i_{0}$ and $i_{0}\in\{0,\cdots,p-1\}$.
Then, $s_{(p+q-1)/2}=s_{i_{0}}=2\ell_{0}$ is even. In particular,
$p>q$ if and only if $s_{(p+q-1)/2}=0$.
\end{lem}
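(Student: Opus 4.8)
The plan is to compute, in terms of the data $q = up + v$ (with $v \in \{0,\dots,p-1\}$) of Definition~\ref{def:si}, the quotient $\ell_0$ and remainder $i_0$ of the division of $(p+q-1)/2$ by $p$, and then simply read off $s_{i_0}$ from the description $s_0 = \cdots = s_{v-1} = u+1$, $s_v = \cdots = s_{p-1} = u$. First I would note that since the indices of $s$ are taken modulo $p$ and $(p+q-1)/2 \equiv i_0 \bmod p$, the symbol $s_{(p+q-1)/2}$ denotes $s_{i_0}$, so it is enough to pin down $\ell_0$ and $i_0$.

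The key step is to isolate the identity
\[
(u+1-2\ell_0)\,p \;=\; 2 i_0 - v + 1,
\]
obtained from $(u+1)p + (v-1) = p+q-1 = 2\bigl(\tfrac{p+q-1}{2}\bigr) = (2\ell_0)p + 2i_0$. Since $0 \le i_0 \le p-1$ and $0 \le v \le p-1$, the right-hand side lies strictly between $-p$ and $2p$, which forces the integer $u+1-2\ell_0$ to be $0$ or $1$; as it has the parity of $u+1$, the value $0$ occurs exactly when $u$ is odd and the value $1$ exactly when $u$ is even. If $u$ is odd, I then read off $2\ell_0 = u+1$ and $2i_0 = v-1$, so $v \ge 1$ and $i_0 = (v-1)/2 \in \{0,\dots,v-1\}$, whence $s_{i_0} = u+1 = 2\ell_0$. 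If $u$ is even, I get $2\ell_0 = u$ and $2i_0 = p+v-1$, so $i_0 = (p+v-1)/2 \in \{v,\dots,p-1\}$, whence $s_{i_0} = u = 2\ell_0$. Either way $s_{(p+q-1)/2} = 2\ell_0$ is even, which is the first assertion. For the last sentence: when $u$ is odd, $\ell_0 = (u+1)/2 \ge 1$ so $s_{(p+q-1)/2} \ne 0$; when $u$ is even, $s_{(p+q-1)/2} = u$, which vanishes precisely when $u = 0$. Hence $s_{(p+q-1)/2} = 0$ if and only if $u = 0$, if and only if $q < p$, i.e.\ $p > q$.

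I do not expect a real obstacle here — it is a short piece of bookkeeping once the displayed identity is written down. The only points requiring mild care are that the case split should be organized by the parity of $u$ (equivalently of $u+1-2\ell_0$) rather than, say, by whether $p > q$, and that the degenerate case $v = 0$ — which by coprimality of $p$ and $q$ forces $p = 1$ — needs no separate treatment, being automatically subsumed in the $u$-even branch.
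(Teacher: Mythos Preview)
Your proof is correct and follows essentially the same approach as the paper: both extract the identity $p+q-1=2\ell_0 p+2i_0$, rewrite it to identify $u$ and $v$ in terms of $\ell_0$ and $i_0$, and then read off $s_{i_0}$ from Definition~\ref{def:si}. The only cosmetic difference is that the paper splits into the cases $2i_0+1<p$ versus $2i_0+1\ge p$, whereas you split by the parity of $u$; these are the same two cases, since $2i_0+1<p$ gives $u=2\ell_0-1$ odd and $2i_0+1\ge p$ gives $u=2\ell_0$ even.
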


\begin{proof}
We divide into two cases:
\begin{equation}
q=\begin{cases}
(2\ell_{0}-1)p+(2i_{0}+1) & {\rm if}\ 2i_{0}+1<p\\
2\ell_{0}p+(2i_{0}+1-p) & {\rm if}\ 2i_{0}+1\ge p
\end{cases}.\label{eq:a3}
\end{equation}
If $2i_{0}+1<p$, then $v=2i_{0}+1>i_{0}$, and if $2i_{0}+1\ge p$,
then $v=2i_{0}+1-p\le i_{0}$. Now, in both cases, we have $s_{(p+q-1)/2}=s_{i_{0}}=2\ell_{0}$
by Equations~(\ref{eq:cv})~and~(\ref{eq:a3}).
\end{proof}
\begin{lem}
\label{lem:a4}Let $X=\{0,1,\cdots,\min(p,q)-1\}\subset\mathbb{Z}/q\mathbb{Z}$.
Then, the map 
\[
{\cal Z}:X\to X:j\mapsto s_{j}p+j,
\]
where $j\in\{0,\cdots,\min(p,q)-1\}$, is a well-defined bijection
with one orbit.
\end{lem}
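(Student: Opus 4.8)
The plan is to compute the map ${\cal Z}$ completely explicitly and recognize it, under an identification of $X$ with a cyclic group, as translation by a unit; the lemma then follows because such a translation is a bijection with a single orbit. I would split along the dichotomy of Definition~\ref{def:si}, according to whether $p\le q$ (so $\lfloor q/p\rfloor\ge 1$) or $p>q$ (so $\lfloor q/p\rfloor=0$), the cases $p=1$ or $q=1$ — where $X$ is a singleton — being trivial. Note first that ${\cal Z}$ is at least well-posed, in that $s_j$ is defined for every $j\in X$ since $j\le\min(p,q)-1\le p-1$.

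In the case $p\le q$ we have $\min(p,q)=p$ and $X=\{0,\dots,p-1\}$. Write $q=up+v$ with $v\in\{0,\dots,p-1\}$, and record the key arithmetic fact $\gcd(p,v)=\gcd(p,q)=1$. Using $s_j=u+1$ for $j<v$ and $s_j=u$ for $j\ge v$, one computes (splitting into $j<v$, which gives $s_jp+j=q+(p-v+j)$, and $j\ge v$, which gives $s_jp+j=q+(j-v)$) that
\[
s_jp+j=q+\bigl((j-v)\bmod p\bigr),\qquad 0\le (j-v)\bmod p\le p-1<q,
\]
so that ${\cal Z}(j)=(j-v)\bmod p$. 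Hence ${\cal Z}$ lands in $X$ and equals translation by $-v$ on $\mathbb{Z}/p\mathbb{Z}\cong X$; it is a bijection, and since $\gcd(v,p)=1$ its iterates exhaust $\mathbb{Z}/p\mathbb{Z}$, i.e.\ there is a single orbit.

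In the case $p>q$ we have $\min(p,q)=q$, $X=\{0,\dots,q-1\}$, and in Definition~\ref{def:si} one gets $u=0$ and $v=q$, so $s_j=1$ for all $j\in X$. Then ${\cal Z}(j)=(p+j)\bmod q$ is translation by $p\bmod q$ on $\mathbb{Z}/q\mathbb{Z}\cong X$; this is a bijection, and since $\gcd(p\bmod q,q)=\gcd(p,q)=1$ it again has a single orbit. Combining the two cases proves the lemma.

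There is no genuine obstacle here beyond bookkeeping: the one point that requires attention is that coprimality of $p$ and $q$ must be pushed through to coprimality of $p$ with $v$ (respectively of $p\bmod q$ with $q$), which is exactly what forces the relevant translation to generate the whole cyclic group rather than a proper subgroup; and one must be slightly careful with the off-by-one reductions modulo $q$ in the formula for $s_jp+j$.
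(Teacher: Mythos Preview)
Your proof is correct and follows essentially the same approach as the paper: both split into the cases $p>q$ and $p<q$, compute $\mathcal{Z}$ explicitly as translation by $p$ on $\mathbb{Z}/q\mathbb{Z}$ (resp.\ by $-v$ on $\{0,\dots,p-1\}$ viewed as $\mathbb{Z}/p\mathbb{Z}$), and conclude using $\gcd(p,q)=\gcd(v,p)=1$. Your extra line verifying $s_jp+j=q+((j-v)\bmod p)<2q$ to justify the reduction modulo $q$ is a nice explicit touch that the paper leaves implicit.
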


\begin{proof}
Let us divide into two cases. If $p>q$, then $X=\mathbb{Z}/q\mathbb{Z}$
and $\mathcal{Z}(j)=j+p$. The lemma follows since $p$ and $q$ are
coprime.

If $p<q$, then $X=\{0,1,\cdots,p-1\}$ and since $up\equiv-v$ modulo
$q$, we have
\[
{\cal Z}(j)=\begin{cases}
j+(u+1)p=j-v+p & {\rm if}\ j<v\\
j+up=j-v & {\rm if}\ j\ge v
\end{cases},
\]
which is the residue of $j-v$ modulo $p$. Since $q$ and $p$ are
coprime, $v$ and $p$ are coprime as well. Hence, the lemma follows.
\end{proof}
\begin{rem}
In fact, Lemma~\ref{lem:a4} also follows from Lemma~\ref{lem:For-each-}:
first, $X$ is the set of $j=0,\cdots,q-1$ such that $s_{j}\neq0$.
Say $H_{j}$ and $H_{j'}$ (recall Definition~\ref{def:hi-li}) are
consecutive, i.e.\ there are no other $H_{t}$'s as one goes from
$H_{j}$ to $H_{j'}$ in the $-y$-direction. The two endpoints of
$H_{j}$ are $\widetilde{\theta}_{j}$ and $\widetilde{\theta}_{s_{j}p+j}$,
and so $L_{j'}$ connects $\widetilde{\theta}_{s_{j}p+j}$ and $\widetilde{\theta}_{j'}$.
Hence, $j'\equiv s_{j}p+j\bmod q$, and so $\mathcal{Z}(j)=j'$.
\end{rem}

\section{\label{sec:Reduction-of-Theorem}Reduction of Theorem~\ref{thm:rational-surgery-kgeneral}
to Theorem~\ref{thm:gen-local-comp}}

It is standard that Theorem~\ref{thm:rational-surgery-kgeneral}
follows from Theorem~\ref{thm:gen-local-comp}, by a neck-stretching
argument and the triangle detection lemma \cite[Lemma 4.2]{MR2141852}.
In this appendix, we recall this argument.

\subsection{The triangle detection lemma}

We use the following triangle detection lemma to construct our exact
triangle.
\begin{lem}[{Triangle detection lemma \cite[Lemma 4.2]{MR2141852}}]
\label{lem:triangle-det}Let $(C_{i},\partial_{i})$ for $i=-2,-1,0,1,2,3$
be chain complexes, let $f_{i}:C_{i}\to C_{i+1}$ for $i=-2,-1,0,1,2$
be chain maps, and let $H_{i}:C_{i}\to C_{i+2}$ for $i=-2,-1,0,1$
be maps that satisfy the following properties:
\begin{enumerate}
\item \label{enu:nullhomotopic}For $i=-2,-1,0,1$, the map $H_{i}:C_{i}\to C_{i+2}$
is a nullhomotopy of $f_{i+1}\circ f_{i}$.
\item For $i=-2,-1,0$, the chain map
\[
f_{i+2}\circ H_{i}+H_{i+1}\circ f_{i}:C_{i}\to C_{i+3}
\]
is a quasi-isomorphism.
\end{enumerate}
Then, the \emph{iterated mapping cone $(M,\partial_{M})$ }defined
as
\[
M:=C_{0}\oplus C_{1}\oplus C_{2},\ \partial_{M}:=\begin{pmatrix}\partial_{0} & 0 & 0\\
f_{0} & \partial_{1} & 0\\
H_{0} & f_{1} & \partial_{2}
\end{pmatrix}
\]
has trivial homology, i.e.\ $H(M)=0$.
\end{lem}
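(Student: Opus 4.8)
The statement to prove is the triangle detection lemma (Lemma~\ref{lem:triangle-det}), which is quoted from \cite[Lemma 4.2]{MR2141852}; since the paper cites it rather than reproving it, a self-contained proof is still worth sketching. The plan is to exhibit an explicit filtration on the iterated mapping cone $M = C_0 \oplus C_1 \oplus C_2$ and run a spectral sequence argument, or equivalently to build an explicit chain homotopy contracting $M$.

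The cleanest approach is the filtration/spectral-sequence one. Filter $M$ by $F^0 = M \supseteq F^1 = C_1 \oplus C_2 \supseteq F^2 = C_2 \supseteq 0$; this is a filtration by subcomplexes because $\partial_M$ is lower-triangular. The associated graded is $C_0 \oplus C_1 \oplus C_2$ with the internal differentials $\partial_0, \partial_1, \partial_2$, so the $E_1$ page is $H(C_0) \oplus H(C_1) \oplus H(C_2)$. The $d_1$ differential on $E_1$ is induced by the next-to-diagonal terms, namely $(f_0)_*$ and $(f_1)_*$; hypothesis~(\ref{enu:nullhomotopic}) for $i=0$ (that $H_0$ is a nullhomotopy of $f_1 f_0$) guarantees $(f_1)_* \circ (f_0)_* = 0$, so $(E_1, d_1)$ is genuinely a complex $H(C_0) \xrightarrow{(f_0)_*} H(C_1) \xrightarrow{(f_1)_*} H(C_2)$. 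Then $E_2$ in the three positions is $\ker (f_0)_*$, $\ker (f_1)_* / \operatorname{im}(f_0)_*$, and $\operatorname{coker}(f_1)_*$. The $d_2$ differential is the ``zig-zag'' map induced by $H_0$ (together with $f_0, f_1$); the key computation is that this $d_2\colon E_2^{(0)} \to E_2^{(2)}$ is precisely the map induced on homology by $f_2 \circ H_0 + H_1 \circ f_0$ restricted appropriately — this is where hypothesis~(2) for $i=0$ enters. One checks that because $f_2 H_0 + H_1 f_0$ is a quasi-isomorphism $C_0 \to C_3$ and $f_1 H_0 + H_2 f_1$, etc., the composed $d_2$'s are isomorphisms onto/from the relevant subquotients, forcing $E_3 = E_\infty = 0$ in all three spots. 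Since the filtration is finite (length $3$), $E_\infty = 0$ implies $H(M) = 0$.

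The main obstacle — and the step deserving real care rather than hand-waving — is identifying the higher differential $d_2$ with the map induced by $f_2 \circ H_0 + H_1 \circ f_0$, and then checking the arithmetic of subquotients: one must verify that surjectivity of this quasi-isomorphism kills $E_2^{(0)} = \ker (f_0)_*$ (via the $C_0 \to C_2$ corner), that it simultaneously forces $E_2^{(2)} = \operatorname{coker}(f_1)_*$ to die, and that the middle term $E_2^{(1)}$ is squeezed out by the combination of $(f_0)_*$ surjecting appropriately and $(f_1)_*$ injecting on the relevant quotient. Concretely, I would instead avoid the spectral sequence bookkeeping and produce a direct nullhomotopy: set $g_i \colon C_{i+3} \to C_i$ to be a homotopy inverse of the quasi-isomorphism $\Phi_i := f_{i+2} H_i + H_{i+1} f_i$, and assemble from the $g_i$'s, the $f_i$'s and the $H_i$'s a map $D\colon M \to M$ with $D \partial_M + \partial_M D = \operatorname{id}_M$ up to a term that is itself nilpotent-correctable; this is a finite, if fiddly, linear-algebra computation using the identities $\partial f = f\partial$, $\partial H + H \partial = f_{i+1} f_i$, and the homotopy equations for the $g_i$. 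Either route works; the spectral-sequence route is shorter to write but the homotopy route is more transparent and is what I would present.

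Since this lemma is quoted verbatim from \cite[Lemma 4.2]{MR2141852} and its proof is standard homological algebra, in the paper itself I would simply cite the reference; the sketch above indicates how one reconstructs it if needed.

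\begin{rem}
In fact, for the application in this paper, all six maps $\Phi_i = f_{i+2} H_i + H_{i+1} f_i$ ($i=-2,-1,0$) will be quasi-isomorphisms because the corresponding quadrilateral counting maps are identities modulo $U$ (Theorem~\ref{thm:gen-local-comp}~(\ref{enu:quadrilateral})) and one can then invoke Nakayama-type reasoning over $\mathbb{F}\llbracket U\rrbracket$; compare Remark~\ref{rem:nakayama}.
\end{rem}
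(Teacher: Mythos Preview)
The paper does not prove this lemma at all; it simply quotes \cite[Lemma 4.2]{MR2141852} without argument. Your final recommendation---to cite the reference---therefore matches the paper exactly.

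That said, the spectral-sequence sketch you offer contains a genuine error. You claim that the $d_2$ differential of the filtration spectral sequence on $M$ ``is precisely the map induced on homology by $f_2\circ H_0+H_1\circ f_0$''. This cannot be: $\partial_M$ is built only from $\partial_0,\partial_1,\partial_2,f_0,f_1,H_0$, so no differential of the spectral sequence of $M$ can involve the external maps $f_2$ or $H_1$. The actual $d_2\colon\ker(f_0)_*\to\operatorname{coker}(f_1)_*$ is the zig-zag coming from $H_0$ alone and lands in a subquotient of $H(C_2)$, not in $H(C_3)$; there is no identification with $\Phi_0\colon C_0\to C_3$. More seriously, in a three-step filtration the middle term $E_2^{(1)}=\ker(f_1)_*/\operatorname{im}(f_0)_*$ supports no $d_r$ for $r\ge2$, so for $E_\infty=0$ it must already vanish at $E_2$; that is exactly exactness of $H(C_0)\to H(C_1)\to H(C_2)$ at the middle, which does not follow from the internal structure of $M$ and must be extracted from the external data $\Phi_{-2},\Phi_{-1},\Phi_0$. (The expression ``$f_1H_0+H_2f_1$'' you wrote also does not type-check, and there is no $H_2$ among the hypotheses.)

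The argument in \cite{MR2141852} brings in the external data via the chain maps $\Psi_i\colon\operatorname{Cone}(f_i)\to C_{i+2}$, $(x,y)\mapsto H_ix+f_{i+1}y$, and $\Xi_i\colon C_i\to\operatorname{Cone}(f_{i+1})$, $x\mapsto(f_ix,H_ix)$; one has $M\cong\operatorname{Cone}(\Psi_0)\cong\operatorname{Cone}(\Xi_0)$ and $\Psi_i\circ\Xi_{i-1}=\Phi_{i-1}$, and a short diagram chase with the three quasi-isomorphisms forces $(\Xi_0)_*$ to be an isomorphism. Your ``direct nullhomotopy'' alternative faces the same obstacle: the homotopy inverses $g_i\colon C_{i+3}\to C_i$ you propose involve $C_{-2},C_{-1},C_3$, none of which are summands of $M$, so one cannot assemble a contraction of $M$ from the $g_i$ without first passing through comparison maps of the above kind.
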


\begin{rem}
\label{rem:exact-triangle-derived}That $H(M)=0$ implies that there
exists some map $K:H(C_{2})\to H(C_{0})$ such that the following
is exact:
\[\begin{tikzcd}[ampersand replacement=\&]
	{H(C_0 )} \&\& {H(C_1 )} \\
	\& {H(C_2 )}
	\arrow["{H(f_0 )}", from=1-1, to=1-3]
	\arrow["{H(f_1 )}", from=1-3, to=2-2]
	\arrow["K", from=2-2, to=1-1]
\end{tikzcd}\]In particular, $H(C_{0})\xrightarrow{H(f_{0})}H(C_{1})\xrightarrow{H(f_{1})}H(C_{2})$
is exact.
\end{rem}

In fact, in our case, we can use Nakayama's lemma (Lemma~\ref{lem:derived-nakayama})
to weaken the assumptions of Lemma~\ref{lem:triangle-det} to Corollary~\ref{cor:triangle-detection-nakayama}:
in short, we only need Condition~(\ref{enu:nullhomotopic}) in full
for $i=0$, and over $\mathbb{F}=\mathbb{F}\llbracket U\rrbracket/U$
for $i=-2,-1,1$. Note that this does not simplify our solution to
the combinatorial problem (see Remark~\ref{rem:nakayama}).
\begin{lem}[Derived Nakayama]
\label{lem:derived-nakayama}Let $C$ be a finite, free chain complex
over $\mathbb{F}\llbracket U\rrbracket$. If $H(C\otimes_{\mathbb{F}\llbracket U\rrbracket}\mathbb{F})=0$,
then $H(C)=0$.

Let $f:C\to C'$ be an $\mathbb{F}\llbracket U\rrbracket$-linear
chain map between finite, free chain complexes $C,C'$ over $\mathbb{F}\llbracket U\rrbracket$.
If $f\otimes_{\mathbb{F}\llbracket U\rrbracket}\mathbb{F}$ is a quasi-isomorphism,
then $f$ is a quasi-isomorphism.
\end{lem}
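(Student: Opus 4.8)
The plan is to reduce both assertions to the ordinary Nakayama lemma applied to the homology modules, using that $R:=\mathbb{F}\llbracket U\rrbracket$ is a Noetherian local ring with maximal ideal $(U)$ and residue field $\mathbb{F}=R/U$.

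First I would treat the statement about a single complex $C$. Since $C$ is a bounded complex of finitely generated free $\mathbb{F}\llbracket U\rrbracket$-modules, it is flat and $U$-torsion-free, so tensoring the short exact sequence $0\to R\xrightarrow{\,U\,}R\to\mathbb{F}\to0$ with $C$ yields a short exact sequence of chain complexes
\[
0\longrightarrow C\xrightarrow{\,U\,}C\longrightarrow C\otimes_{\mathbb{F}\llbracket U\rrbracket}\mathbb{F}\longrightarrow0,
\]
in which multiplication by $U$ is a chain map because the differential is $\mathbb{F}\llbracket U\rrbracket$-linear. The induced long exact sequence in homology gives, in each degree $n$, an exact segment $H_{n}(C)\xrightarrow{\,U\,}H_{n}(C)\to H_{n}(C\otimes_{\mathbb{F}\llbracket U\rrbracket}\mathbb{F})$, so the vanishing of $H(C\otimes_{\mathbb{F}\llbracket U\rrbracket}\mathbb{F})$ forces $U\cdot H_{n}(C)=H_{n}(C)$ for all $n$. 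Each $H_{n}(C)$ is a finitely generated $\mathbb{F}\llbracket U\rrbracket$-module (a subquotient of a finitely generated module over the Noetherian ring $\mathbb{F}\llbracket U\rrbracket$), so Nakayama's lemma forces $H_{n}(C)=0$; hence $H(C)=0$.

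For the statement about a chain map $f\colon C\to C'$, I would pass to the mapping cone $\operatorname{Cone}(f)$, which is again a bounded complex of finitely generated free $\mathbb{F}\llbracket U\rrbracket$-modules. Because the base change $-\otimes_{\mathbb{F}\llbracket U\rrbracket}\mathbb{F}$ is exact on the free complexes $C$ and $C'$ and is compatible with the differential of the cone, one has $\operatorname{Cone}(f)\otimes_{\mathbb{F}\llbracket U\rrbracket}\mathbb{F}\cong\operatorname{Cone}(f\otimes_{\mathbb{F}\llbracket U\rrbracket}\mathbb{F})$. A chain map is a quasi-isomorphism exactly when its cone is acyclic, so the hypothesis that $f\otimes_{\mathbb{F}\llbracket U\rrbracket}\mathbb{F}$ is a quasi-isomorphism says precisely that $H\bigl(\operatorname{Cone}(f)\otimes_{\mathbb{F}\llbracket U\rrbracket}\mathbb{F}\bigr)=0$; applying the first part to $\operatorname{Cone}(f)$ gives $H(\operatorname{Cone}(f))=0$, i.e.\ $f$ is a quasi-isomorphism.

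There is no serious obstacle in this argument — it is the standard ``derived Nakayama'' package. The only points that deserve an explicit line are that the finiteness and freeness hypotheses guarantee the homology modules are finitely generated (so ordinary Nakayama applies) and that the mapping cone of a map of free complexes is free and commutes with reduction modulo $U$; both are routine. One could alternatively avoid the cone in the second part by running the same long-exact-sequence and five-lemma argument on $f$ directly, but the cone reduction is shortest.
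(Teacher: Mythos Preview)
Your argument is correct and is the standard self-contained proof of derived Nakayama. The paper itself does not give a proof here; it simply cites \cite[Lemma~2.37]{nahm2025unorientedskeinexacttriangle}. So your proposal supplies precisely the details the paper defers to that reference, via the expected route (the long exact sequence from $0\to C\xrightarrow{U}C\to C/U\to 0$ plus ordinary Nakayama, then reduction of the second part to the first via the mapping cone).
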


\begin{proof}
For instance, see \cite[Lemma~2.37]{nahm2025unorientedskeinexacttriangle}.
\end{proof}
In the following corollary, if $f:C\to C'$ is an $\mathbb{F}\llbracket U\rrbracket$-linear
chain map, then we abuse notation and denote $f\otimes_{\mathbb{F}\llbracket U\rrbracket}\mathbb{F}:C\otimes_{\mathbb{F}\llbracket U\rrbracket}\mathbb{F}\to C'\otimes_{\mathbb{F}\llbracket U\rrbracket}\mathbb{F}$
as $f$ as well. 
\begin{cor}[Triangle detection lemma, stronger version]
\label{cor:triangle-detection-nakayama}Let $(C_{i},\partial_{i})$
for $i=-2,-1,0,1,2,3$ be finite, free chain complexes over $\mathbb{F}\llbracket U\rrbracket$,
let $f_{i}:C_{i}\to C_{i+1}$ for $i=-2,-1,0,1,2$ be chain maps,
and let $H_{0}:C_{0}\to C_{1}$ and $H_{i}:C_{i}\otimes_{\mathbb{F}\llbracket U\rrbracket}\mathbb{F}\to C_{i+2}\otimes_{\mathbb{F}\llbracket U\rrbracket}\mathbb{F}$
for $i=-2,-1,1$ be maps that satisfy the following properties:
\begin{enumerate}
\item \label{enu:The-composition-}The map $H_{0}:C_{0}\to C_{1}$ is a
nullhomotopy of $f_{1}\circ f_{0}$.
\item For $i=-2,-1,1$, the map $H_{i}:C_{i}\otimes_{\mathbb{F}\llbracket U\rrbracket}\mathbb{F}\to C_{i+2}\otimes_{\mathbb{F}\llbracket U\rrbracket}\mathbb{F}$
is a nullhomotopy of $f_{i+1}\circ f_{i}$.
\item For $i=-2,-1,0$, the chain map
\[
f_{i+2}\circ H_{i}+H_{i+1}\circ f_{i}:C_{i}\otimes_{\mathbb{F}\llbracket U\rrbracket}\mathbb{F}\to C_{i+3}\otimes_{\mathbb{F}\llbracket U\rrbracket}\mathbb{F}
\]
is a quasi-isomorphism.
\end{enumerate}
Then, the iterated mapping cone $(M,\partial_{M})$ defined as in
Lemma~\ref{lem:triangle-det} has trivial homology.
\end{cor}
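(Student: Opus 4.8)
The plan is to reduce the entire statement modulo $U$ and then combine the original triangle detection lemma (Lemma~\ref{lem:triangle-det}) with derived Nakayama (Lemma~\ref{lem:derived-nakayama}). First I would check that $\partial_{M}$ really is a differential on $M=C_{0}\oplus C_{1}\oplus C_{2}$: expanding $\partial_{M}^{2}$, the diagonal entries vanish because the $\partial_{i}$ are differentials, the subdiagonal entries $f_{0}\partial_{0}+\partial_{1}f_{0}$ and $f_{1}\partial_{1}+\partial_{2}f_{1}$ vanish because $f_{0}$ and $f_{1}$ are chain maps, and the remaining corner entry is $\partial_{2}H_{0}+H_{0}\partial_{0}+f_{1}f_{0}$, which vanishes precisely because $H_{0}$ is a nullhomotopy of $f_{1}\circ f_{0}$ over $\mathbb{F}\llbracket U\rrbracket$. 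This is the one place where Condition~(\ref{enu:The-composition-}) is used in full strength over $\mathbb{F}\llbracket U\rrbracket$ rather than merely mod $U$, and it is exactly why that hypothesis (unlike the other two) is stated over $\mathbb{F}\llbracket U\rrbracket$. Since each $C_{i}$ is finite and free, so is $M$.

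Next I would identify $M\otimes_{\mathbb{F}\llbracket U\rrbracket}\mathbb{F}$, viewed as a chain complex over the field $\mathbb{F}$, with the iterated mapping cone of Lemma~\ref{lem:triangle-det} built from the reduced data: the complexes $C_{i}\otimes_{\mathbb{F}\llbracket U\rrbracket}\mathbb{F}$ for $i=-2,\dots,3$, the reductions of the chain maps $f_{i}$ modulo $U$, the reduction of $H_{0}$ in degree $0$, and the already-given maps $H_{-2},H_{-1},H_{1}$ in the other degrees. Then I would check that the two hypotheses of Lemma~\ref{lem:triangle-det} hold over $\mathbb{F}$: its nullhomotopy hypothesis~(\ref{enu:nullhomotopic}) for $i=0$ is Condition~(\ref{enu:The-composition-}) reduced mod $U$, and for $i=-2,-1,1$ it is the second hypothesis of the corollary; its quasi-isomorphism hypothesis for $i=-2,-1,0$ is the third hypothesis of the corollary verbatim (for $i=0$ the relevant map is $f_{2}\circ H_{0}+H_{1}\circ f_{0}$ read over $\mathbb{F}$). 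Lemma~\ref{lem:triangle-det} then yields $H(M\otimes_{\mathbb{F}\llbracket U\rrbracket}\mathbb{F})=0$, and since $M$ is a finite free chain complex over $\mathbb{F}\llbracket U\rrbracket$, Lemma~\ref{lem:derived-nakayama} upgrades this to $H(M)=0$, which is the assertion.

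I do not expect any substantial obstacle here: once Lemmas~\ref{lem:triangle-det} and~\ref{lem:derived-nakayama} are granted, the argument is pure bookkeeping about which maps are defined over $\mathbb{F}\llbracket U\rrbracket$ and which are defined only over $\mathbb{F}$. The two points to keep straight are that Condition~(\ref{enu:The-composition-}) must be retained over $\mathbb{F}\llbracket U\rrbracket$ so that $M$ is a genuine chain complex, and that finiteness and freeness of the $C_{i}$ are exactly what the derived Nakayama step consumes; everything else follows by reducing the original triangle detection lemma modulo $U$.
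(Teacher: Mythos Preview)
Your proposal is correct and follows exactly the paper's approach: use Condition~(\ref{enu:The-composition-}) to see that $(M,\partial_M)$ is a finite free chain complex over $\mathbb{F}\llbracket U\rrbracket$, apply Lemma~\ref{lem:triangle-det} to the reductions $C_i\otimes_{\mathbb{F}\llbracket U\rrbracket}\mathbb{F}$ to get $H(M\otimes_{\mathbb{F}\llbracket U\rrbracket}\mathbb{F})=0$, and then invoke Lemma~\ref{lem:derived-nakayama}. The paper's proof is a three-sentence summary of precisely the argument you spelled out.
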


\begin{proof}
Condition~(\ref{enu:The-composition-}) implies that $(M,\partial_{M})$
is a finite, free chain complex over $\mathbb{F}\llbracket U\rrbracket$.
Lemma~\ref{lem:triangle-det} applied to the chain complexes $C_{i}\otimes_{\mathbb{F}\llbracket U\rrbracket}\mathbb{F}$
shows that $M\otimes_{\mathbb{F}\llbracket U\rrbracket}\mathbb{F}$
has trivial homology, and so $M$ has trivial homology by Nakayama's
Lemma (Lemma~\ref{lem:derived-nakayama}).
\end{proof}
Corollary~\ref{cor:triangle-ainf} is the form of the triangle detection
lemma that we will use.
\begin{cor}[Triangle detection lemma, $A_{\infty}$ version]
\label{cor:triangle-ainf}Let $(\Sigma,\boldsymbol{\alpha},\boldsymbol{\beta}_{-2},\boldsymbol{\beta}_{-1},\cdots,\boldsymbol{\beta}_{3},z)$
be a weakly admissible Heegaard diagram, equip $\boldsymbol{\alpha}$
with a local system $(E,\phi,A)$, and equip $\boldsymbol{\beta}_{i}$
with a local system $(E_{i},\phi_{i},A_{i})$ for $i=-2,\cdots,3$.
Let $\psi_{i}\in\boldsymbol{CF}^{-}(\boldsymbol{\beta}_{i}^{E_{i}},\boldsymbol{\beta}_{i+1}^{E_{i+1}})$
for $i=-2,-1,0,1,2$ be cycles that satisfy the following properties:
\begin{enumerate}
\item \label{enu:mu20}For $i=-2,-1,0,1$, we have 
\[
\mu_{2}(\psi_{i}\otimes\psi_{i+1})=0\in\boldsymbol{CF}^{-}(\boldsymbol{\beta}_{0}^{E_{0}},\boldsymbol{\beta}_{2}^{E_{2}}).
\]
\item For $i=-2,-1,0$, the above Condition~(\ref{enu:mu20}) implies that
$\mu_{3}(\psi_{i}\otimes\psi_{i+1}\otimes\psi_{i+2})\in\boldsymbol{CF}^{-}(\boldsymbol{\beta}_{i}^{E_{i}},\boldsymbol{\beta}_{i+3}^{E_{i+3}})$
is a cycle. Let us use the same notation for its image in $\widehat{CF}(\boldsymbol{\beta}_{i}^{E_{i}},\boldsymbol{\beta}_{i+3}^{E_{i+3}})$.
Then, the chain map
\[
\mu_{2}(-\otimes\mu_{3}(\psi_{i}\otimes\psi_{i+1}\otimes\psi_{i+2})):\widehat{CF}(\boldsymbol{\alpha}^{E},\boldsymbol{\beta}_{i}^{E_{i}})\to\widehat{CF}(\boldsymbol{\alpha}^{E},\boldsymbol{\beta}_{i+3}^{E_{i+3}})
\]
is a quasi-isomorphism.
\end{enumerate}
Then, the iterated mapping cone $(M,\partial_{M})$ defined as 
\begin{gather*}
M:=\boldsymbol{CF}^{-}(\boldsymbol{\alpha}^{E},\boldsymbol{\beta}_{0}^{E_{0}})\oplus\boldsymbol{CF}^{-}(\boldsymbol{\alpha}^{E},\boldsymbol{\beta}_{1}^{E_{1}})\oplus\boldsymbol{CF}^{-}(\boldsymbol{\alpha}^{E},\boldsymbol{\beta}_{2}^{E_{2}}),\\
\partial_{M}:=\begin{pmatrix}\mu_{1} & 0 & 0\\
\mu_{2}(-\otimes\psi_{0}) & \mu_{1} & 0\\
\mu_{3}(-\otimes\psi_{0}\otimes\psi_{1}) & \mu_{2}(-\otimes\psi_{1}) & \mu_{1}
\end{pmatrix}
\end{gather*}
has trivial homology. Hence, 
\[
\widehat{M}:=M\otimes_{\mathbb{F}\llbracket U\rrbracket}\mathbb{F}\llbracket U\rrbracket/U,\ M^{+}:=M\otimes_{\mathbb{F}\llbracket U\rrbracket}(U^{-1}\mathbb{F}\llbracket U\rrbracket/\mathbb{F}\llbracket U\rrbracket),\ M^{\infty}:=M\otimes_{\mathbb{F}\llbracket U\rrbracket}U^{-1}\mathbb{F}\llbracket U\rrbracket
\]
also have trivial homology.
\end{cor}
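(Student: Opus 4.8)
The plan is to deduce this corollary from the ``stronger version'' of the triangle detection lemma, Corollary~\ref{cor:triangle-detection-nakayama}, by packaging the polygon maps into the data required there. Set, for $i=-2,\dots,3$,
\[
C_i:=\boldsymbol{CF}^{-}(\boldsymbol{\alpha}^{E},\boldsymbol{\beta}_i^{E_i}),
\]
a finite, free chain complex over $\mathbb{F}\llbracket U\rrbracket$ with differential $\mu_1$ (well-defined since the diagram is weakly admissible and, granting the positivity discussion of Appendix~\ref{sec:Weak-admissibility-and}, the $\mu_d$'s are honest $\mathbb{F}\llbracket U\rrbracket$-linear maps satisfying the $A_\infty$ relations). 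Define $f_i:=\mu_2(-\otimes\psi_i)\colon C_i\to C_{i+1}$ for $i=-2,\dots,2$ and $H_i:=\mu_3(-\otimes\psi_i\otimes\psi_{i+1})\colon C_i\to C_{i+2}$ for $i=-2,-1,0,1$. Because $\mu_1\psi_i=0$, the Leibniz part of the $A_\infty$ relations gives $\mu_1 f_i+f_i\mu_1=0$, so each $f_i$ is a chain map; and the iterated mapping cone $(M,\partial_M)$ in the statement is exactly the one built by Corollary~\ref{cor:triangle-detection-nakayama} from the $C_i$, $f_i$, $H_0$, and the reductions of $H_{-2},H_{-1},H_1$ modulo $U$. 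So the proof reduces to checking the three hypotheses of that corollary.

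First I would verify that $H_i$ is a nullhomotopy of $f_{i+1}\circ f_i$ for $i=-2,-1,0,1$. Feeding $(a,\psi_i,\psi_{i+1})$ into the $A_\infty$ relation for $\mu_3$ and using $\mu_1\psi_i=\mu_1\psi_{i+1}=0$ together with the hypothesis $\mu_2(\psi_i\otimes\psi_{i+1})=0$ (Condition~(1) of Corollary~\ref{cor:triangle-ainf}), all terms collapse to
\[
\mu_1 H_i+H_i\mu_1=\mu_2\bigl(\mu_2(-\otimes\psi_i)\otimes\psi_{i+1}\bigr)=f_{i+1}\circ f_i .
\]
For $i=0$ this is precisely the relation forcing $\partial_M^2=0$, so $M$ is a finite free $\mathbb{F}\llbracket U\rrbracket$-complex; it is Condition~(1) of Corollary~\ref{cor:triangle-detection-nakayama} for $H_0$, and its reduction modulo $U$ is Condition~(2) for $H_{-2},H_{-1},H_1$. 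The same $\mu_3$ relation with all three inputs equal to $\psi_i,\psi_{i+1},\psi_{i+2}$ gives $\mu_1\mu_3(\psi_i\otimes\psi_{i+1}\otimes\psi_{i+2})=0$, so this element is a cycle and $\mu_2(-\otimes\mu_3(\psi_i\otimes\psi_{i+1}\otimes\psi_{i+2}))$ is a chain map, as asserted in Condition~(2) of Corollary~\ref{cor:triangle-ainf}.

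Next, for Condition~(3) of Corollary~\ref{cor:triangle-detection-nakayama} I would use the $A_\infty$ relation for $\mu_4$ with inputs $(a,\psi_i,\psi_{i+1},\psi_{i+2})$, for $i=-2,-1,0$. Using $\mu_1\psi_\bullet=0$ and $\mu_2(\psi_i\otimes\psi_{i+1})=\mu_2(\psi_{i+1}\otimes\psi_{i+2})=0$, every summand vanishes except
\[
\mu_1\mu_4^{\psi}+\mu_4^{\psi}\mu_1+f_{i+2}\circ H_i+H_{i+1}\circ f_i+\mu_2\bigl(-\otimes\mu_3(\psi_i\otimes\psi_{i+1}\otimes\psi_{i+2})\bigr)=0,
\]
where $\mu_4^{\psi}:=\mu_4(-\otimes\psi_i\otimes\psi_{i+1}\otimes\psi_{i+2})$. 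Thus $f_{i+2}\circ H_i+H_{i+1}\circ f_i$ is chain homotopic, via $\mu_4^{\psi}$, to $\mu_2(-\otimes\mu_3(\psi_i\otimes\psi_{i+1}\otimes\psi_{i+2}))$; after reducing modulo $U$, the latter is a quasi-isomorphism on $\widehat{CF}$ by Condition~(2) of Corollary~\ref{cor:triangle-ainf}, hence so is the former. All three hypotheses of Corollary~\ref{cor:triangle-detection-nakayama} now hold, so $H(M)=0$.

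Finally, the hat, plus, and infinity statements are formal: $M$ is a bounded complex of finite free $\mathbb{F}\llbracket U\rrbracket$-modules with $H(M)=0$, so (as $\mathbb{F}\llbracket U\rrbracket$ is a principal ideal domain, or simply because a bounded acyclic complex of projectives is contractible) $M$ is chain homotopy equivalent to the zero complex; therefore $M\otimes_{\mathbb{F}\llbracket U\rrbracket}N$ is acyclic for every $\mathbb{F}\llbracket U\rrbracket$-module $N$, and taking $N=\mathbb{F}\llbracket U\rrbracket/U$, $N=U^{-1}\mathbb{F}\llbracket U\rrbracket/\mathbb{F}\llbracket U\rrbracket$, and $N=U^{-1}\mathbb{F}\llbracket U\rrbracket$ gives $H(\widehat{M})=H(M^{+})=H(M^{\infty})=0$. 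The only delicate point in the whole argument is the standing bookkeeping that the $\mu_d$ are genuinely $\mathbb{F}\llbracket U\rrbracket$-linear and satisfy the $A_\infty$ relations in the present setting of local systems with possibly negative powers of $U$; this is exactly what Section~\ref{sec:Heegaard-Floer-homology} and Appendix~\ref{sec:Weak-admissibility-and} arrange, and once it is granted everything above is formal, with all signs trivial over $\mathbb{F}$.
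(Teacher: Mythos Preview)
Your proof is correct and follows the same approach as the paper: set $C_i=\boldsymbol{CF}^{-}(\boldsymbol{\alpha}^{E},\boldsymbol{\beta}_i^{E_i})$, $f_i=\mu_2(-\otimes\psi_i)$, $H_i=\mu_3(-\otimes\psi_i\otimes\psi_{i+1})$, and verify the hypotheses of Corollary~\ref{cor:triangle-detection-nakayama} via the $A_\infty$-relations. The paper compresses your unpacking of the $\mu_3$- and $\mu_4$-relations into the single phrase ``the $A_\infty$-relations imply that the conditions of Corollary~\ref{cor:triangle-detection-nakayama} are satisfied''; for the hat, plus, and infinity versions it cites an external lemma for $\widehat{M}$ and $M^\infty$ and then deduces $H(M^+)=0$ from the short exact sequence $0\to M\to M^\infty\to M^+\to 0$, whereas your contractibility argument (a bounded acyclic complex of free $\mathbb{F}\llbracket U\rrbracket$-modules is null-homotopic, hence stays acyclic after any tensor product) handles all three at once.
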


\begin{proof}
We use Corollary~\ref{cor:triangle-detection-nakayama}: let $C_{i}=\boldsymbol{CF}^{-}(\boldsymbol{\alpha}^{E},\boldsymbol{\beta}_{i}^{E_{i}})$,
$f_{i}=\mu_{2}(-\otimes\psi_{i})$, and $H_{i}=\mu_{3}(-\otimes\psi_{i}\otimes\psi_{i+1})$.
Then, the $A_{\infty}$-relations imply that the conditions of Corollary~\ref{cor:triangle-detection-nakayama}
are satisfied. Hence, $M$ has trivial homology. That $\widehat{M},M^{+},M^{\infty}$
have trivial homology are formal consequences of this: for instance,
see \cite[Lemma~2.37]{nahm2025unorientedskeinexacttriangle} for $\widehat{M}$
and $M^{\infty}$. That $H(M^{+})=0$ follows from considering the
following short exact sequence of chain complexes 
\[
0\to M\to M^{\infty}\to M^{+}\to0.\qedhere
\]
\end{proof}

\subsection{\label{subsec:Proof-of-Theorem}Proof of Theorem~\ref{thm:rational-surgery-kgeneral}
assuming Theorem~\ref{thm:gen-local-comp}}

In this subsection, we use a standard neck-stretching argument to
finish the proof of Theorem~\ref{thm:rational-surgery-kgeneral}.
Roughly speaking, for each polygon that we are interested in, if the
neck is sufficiently long, then the holomorphic curve count becomes
combinatorial. One subtlety now is that there are infinitely many
polygons that we have to consider, and so it is not clear whether
we can work in a fixed almost complex structure. There are a few ways
to handle this subtlety: here, we present a proof that uses a pinched
almost complex structure \cite[Section~8]{2011.00113}; the neck-stretching
result we use is \cite[Proposition~6.5]{2201.12906} (or the special
case \cite[Proposition~10.2]{2011.00113}). Another proof uses increasingly
long necks and an approximation argument: see Subsubsection~\ref{subsec:approximation}.

Recall the Heegaard diagram $(\Sigma,\boldsymbol{\alpha},\boldsymbol{\beta}_{0},\boldsymbol{\beta}_{r},\boldsymbol{\beta}_{\infty},z)$
from Subsection~\ref{subsec:inter-local}. To simplify the notation,
denote 
\[
\boldsymbol{\gamma}_{0}^{F_{0}}:=\boldsymbol{\beta}_{0}^{E_{0}^{k}},\ \boldsymbol{\gamma}_{1}^{F_{1}}:=\boldsymbol{\beta}_{r}^{\mathbb{F}\llbracket U\rrbracket},\ \boldsymbol{\gamma}_{2}^{F_{2}}:=\boldsymbol{\beta}_{\infty}^{E_{\infty}^{k}},\ \psi_{0}:=S(\psi_{0r}^{k}),\ \psi_{1}:=S(\psi_{r\infty}^{k}),\ \psi_{2}:=S(\psi_{\infty0}^{k}),
\]
and let us focus on the minus version. Note that we will be using
Corollary~\ref{cor:triangle-ainf}, and so the statements for the
hat, plus, and infinity versions (Remark~\ref{rem:hpi}) follow as
well.

In this notation, the triangle (\ref{eq:exact-triangle34}) that we
want to show is exact is 
\begin{equation}\label{eq:exact-trianglep}
\begin{tikzcd}[ampersand replacement=\&]
	{\boldsymbol{HF}^{-}(\boldsymbol{\alpha},\boldsymbol{\gamma}_{0}^{F_0})} \&\& {\boldsymbol{HF}^{-}(\boldsymbol{\alpha},\boldsymbol{\gamma}_{1}^{F_1})} \\
	\& {\boldsymbol{HF}^{-}(\boldsymbol{\alpha},\boldsymbol{\gamma}_{2}^{F_2})}
	\arrow["{\mu_{2}(-\otimes \psi_{0})}", from=1-1, to=1-3]
	\arrow["{\mu_{2}(-\otimes \psi_{1})}"{description}, from=1-3, to=2-2]
	\arrow["{\mu_{2}(-\otimes \psi_{2})}"{description}, from=2-2, to=1-1]
\end{tikzcd}
\end{equation}Our goal is to show that $\psi_{i}\in\boldsymbol{CF}^{-}(\boldsymbol{\gamma}_{i}^{F_{i}},\boldsymbol{\gamma}_{(i+1)\bmod3}^{F_{(i+1)\bmod3}})$
are cycles for $i=0,1,2$, and that (\ref{eq:exact-trianglep}) is
exact. Below, we show that (\ref{eq:exact-trianglep}) is exact at
$\boldsymbol{HF}^{-}(\boldsymbol{\alpha},\boldsymbol{\gamma}_{1}^{F_{1}})$;
that it is exact at the two other groups follows by the same argument.

Consider the Heegaard diagram $(\Sigma,\boldsymbol{\alpha},\boldsymbol{\gamma}_{-2},\cdots,\boldsymbol{\gamma}_{3},z)$
where $\boldsymbol{\gamma}_{i}$ is a standard translate of $\boldsymbol{\gamma}_{i\bmod3}$
for $i=-2,-1,3$. Let $(F_{i},\phi_{i},A_{i}):=(F_{i\bmod3},\phi_{i\bmod3},A_{i\bmod3})$
for $i=-2,-1,3$, and let $\psi_{i}\in\boldsymbol{CF}^{-}(\boldsymbol{\gamma}_{i}^{F_{i}},\boldsymbol{\gamma}_{i+1}^{F_{i+1}})$
be the image of
\[
\psi_{i\bmod3}\in\boldsymbol{CF}^{-}(\boldsymbol{\gamma}_{i\bmod3}^{F_{i\bmod3}},\boldsymbol{\gamma}_{(i+1)\bmod3}^{F_{(i+1)\bmod3}})
\]
under the nearest point map (Subsection~\ref{subsec:Standard-translates})
for $i=-2,-1,2$. Note that the chains $\psi_{i}$ are invariant under
the chain maps given by changing the almost complex structure, since
there are no nontrivial Maslov index $0$ two-chains. Hence, it is
sufficient to show that the elements $\psi_{i}$ satisfy the conditions
of Corollary~\ref{cor:triangle-ainf} in an almost complex structure
with sufficiently long neck.

First, if we let $\Theta^{+}\in\mathbb{T}_{\boldsymbol{\gamma}_{i}}\cap\mathbb{T}_{\boldsymbol{\gamma}_{i+3}}$
be the intersection point in the top homological grading, then we
can upgrade our genus $1$ computations (Theorem~\ref{thm:gen-local-comp}~(\ref{enu:bigon})~and~(\ref{enu:quadrilateral}))
to 
\[
\mu_{1}(\psi_{i})=0\in\boldsymbol{CF}^{-}(\boldsymbol{\gamma}_{i}^{F_{i}},\boldsymbol{\gamma}_{i+1}^{F_{i+1}}),\ \mu_{3}(\psi_{i}\otimes\psi_{i+1}\otimes\psi_{i+2})={\rm Id}_{F_{i}}\Theta^{+}\in\widehat{CF}(\boldsymbol{\gamma}_{i}^{F_{i}},\boldsymbol{\gamma}_{i+3}^{F_{i+3}})
\]
in any almost complex structure with sufficiently long neck, since
there are only finitely many two-chains that can contribute.

Now, we are left to show
\begin{equation}
\mu_{2}(\psi_{i}\otimes\psi_{i+1})=0\in\boldsymbol{CF}^{-}(\boldsymbol{\gamma}_{i}^{F_{i}},\boldsymbol{\gamma}_{i+2}^{F_{i+2}}).\label{eq:mu1mu2}
\end{equation}
By \cite[Proposition~6.5]{2201.12906} (or the special case \cite[Proposition~10.2]{2011.00113}),
we can upgrade our genus $1$ computation (Theorem~\ref{thm:gen-local-comp}~(\ref{enu:triangle}))
to Equation~(\ref{eq:mu1mu2}) in a pinched almost complex structure;
let us denote the chain complexes for this pinched almost complex
structure as $\boldsymbol{CF}_{p}^{-}$.

Now, we can formally deduce Equation~(\ref{eq:mu1mu2}) (for any
almost complex structure) from it for a pinched complex structure.
Modifying the almost complex structure from the pinched almost complex
structure to an unpinched one induces maps
\begin{gather*}
{\cal F}_{1}:\boldsymbol{CF}_{p}^{-}(\boldsymbol{\gamma}_{i}^{F_{i}},\boldsymbol{\gamma}_{j}^{F_{j}})\to\boldsymbol{CF}^{-}(\boldsymbol{\gamma}_{i}^{F_{i}},\boldsymbol{\gamma}_{j}^{F_{j}}),\\
{\cal F}_{2}:\boldsymbol{CF}_{p}^{-}(\boldsymbol{\gamma}_{i}^{F_{i}},\boldsymbol{\gamma}_{i+1}^{F_{i+1}})\otimes\boldsymbol{CF}_{p}^{-}(\boldsymbol{\gamma}_{i+1}^{F_{i+1}},\boldsymbol{\gamma}_{i+2}^{F_{i+2}})\to\boldsymbol{CF}^{-}(\boldsymbol{\gamma}_{i}^{F_{i}},\boldsymbol{\gamma}_{i+2}^{F_{i+2}}),
\end{gather*}
such that ${\cal F}_{1}$ is a chain map and they satisfy the following
$A_{\infty}$-relation:
\begin{equation}
\mu_{1}({\cal F}_{2}(\psi_{i}\otimes\psi_{i+1}))+\mu_{2}(\mathcal{F}_{1}(\psi_{i})\otimes{\cal F}_{1}(\psi_{i+1}))+{\cal F}_{1}(\mu_{2}(\psi_{i}\otimes\psi_{i+1}))=0.\label{eq:ainfrelf}
\end{equation}

As we already observed, ${\cal F}_{1}(\psi_{i})=\psi_{i}$ since there
are no nontrivial Maslov index $0$ two-chains. Furthermore, the chain
complex $\boldsymbol{CF}^{-}(\boldsymbol{\gamma}_{i}^{F_{i}},\boldsymbol{\gamma}_{i+2}^{F_{i+2}})$
has trivial differential (i.e.\ $\mu_{1}\equiv0$) in any almost
complex structure since we can compute $\boldsymbol{HF}^{-}(\boldsymbol{\gamma}_{i}^{F_{i}},\boldsymbol{\gamma}_{i+2}^{F_{i+2}})$
in an almost complex structure with sufficiently long neck, and show
that it is a free $\mathbb{F}\llbracket U\rrbracket$-module and 
\[
{\rm rank}_{\mathbb{F}\llbracket U\rrbracket}\boldsymbol{HF}^{-}(\boldsymbol{\gamma}_{i}^{F_{i}},\boldsymbol{\gamma}_{i+2}^{F_{i+2}})={\rm rank}_{\mathbb{F}\llbracket U\rrbracket}\boldsymbol{CF}^{-}(\boldsymbol{\gamma}_{i}^{F_{i}},\boldsymbol{\gamma}_{i+2}^{F_{i+2}}).
\]

Now, Equation~(\ref{eq:mu1mu2}) follows from the $A_{\infty}$-relation
(\ref{eq:ainfrelf}) together with that $\mu_{1}\equiv0$ and ${\cal F}_{1}(\psi_{i})=\psi_{i}$,
and this finishes the proof of Theorems~\ref{thm:rational-surgery-k=00003D0}~and~\ref{thm:rational-surgery-kgeneral}.

\subsubsection{\label{subsec:approximation}An approximation argument}

Alternatively, we can use an approximation argument. More precisely,
consider the following definition.
\begin{defn}
If $C$ is a chain complex over $\mathbb{F}\llbracket U\rrbracket$,
then define $C_{N}:=C\otimes_{\mathbb{F}\llbracket U\rrbracket}\mathbb{F}\llbracket U\rrbracket/U^{N}$.
Similarly, let $HF_{N}(\boldsymbol{\alpha}^{E_{\boldsymbol{\alpha}}},\boldsymbol{\beta}^{E_{\boldsymbol{\beta}}})$
be the homology of $\boldsymbol{CF}^{-}(\boldsymbol{\alpha}^{E_{\boldsymbol{\alpha}}},\boldsymbol{\beta}^{E_{\boldsymbol{\beta}}})\otimes_{\mathbb{F}\llbracket U\rrbracket}\mathbb{F}\llbracket U\rrbracket/U^{N}$.

If $C$ and $D$ are chain complexes over $\mathbb{F}\llbracket U\rrbracket$
and $f:C\to D$ is an $\mathbb{F}\llbracket U\rrbracket$-linear chain
map, then define $f_{N}:=f\otimes_{\mathbb{F}\llbracket U\rrbracket}\mathbb{F}\llbracket U\rrbracket/U^{N}:C_{N}\to D_{N}$.
\end{defn}

Then, by Lemma~\ref{lem:approximation}, it is sufficient to prove
that Equation~(\ref{eq:exact-trianglep}) is exact for $HF_{N}$
(instead of $\boldsymbol{HF}^{-}$) for infinitely many $N$. Now,
to prove Equation~(\ref{eq:exact-trianglep}) is exact for $HF_{N}$,
we use the same argument as above, but work in an almost complex structure
with a sufficiently long neck instead of a pinched almost complex
structure.
\begin{lem}[Approximation]
\label{lem:approximation}Let $f:C\to D$, $g:D\to E$ be $\mathbb{F}\llbracket U\rrbracket$-linear
chain maps between finite chain complexes over $\mathbb{F}\llbracket U\rrbracket$.
Then, 
\[
H(C)\xrightarrow{H(f)}H(D)\xrightarrow{H(g)}H(E)
\]
is exact if 
\begin{equation}
H(C_{N})\xrightarrow{H(f_{N})}H(D_{N})\xrightarrow{H(g_{N})}H(E_{N})\label{eq:exactN}
\end{equation}
is exact for infinitely many $N$.
\end{lem}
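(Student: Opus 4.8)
The plan is purely homological-algebraic, exploiting that $\mathbb{F}\llbracket U\rrbracket$ is a complete Noetherian local ring with maximal ideal $(U)$ and that $C,D,E$ are bounded complexes of finitely generated $\mathbb{F}\llbracket U\rrbracket$-modules. First I would fix a cofinal subsequence of indices $N$ for which the sequence $H(C_N)\xrightarrow{H(f_N)}H(D_N)\xrightarrow{H(g_N)}H(E_N)$ is exact; since inverse limits over a cofinal subsequence coincide with those over all $N$, there is no loss in working only with these $N$. For such $N$ the transition maps $C_{N+1}\to C_N$ are the obvious surjections $C/U^{N+1}C\twoheadrightarrow C/U^{N}C$ (recall $C_N=C\otimes_{\mathbb{F}\llbracket U\rrbracket}\mathbb{F}\llbracket U\rrbracket/U^{N}=C/U^{N}C$), and by the Krull intersection theorem together with completeness of finitely generated modules over a complete Noetherian local ring, the natural map $C\to\varprojlim_N C_N$ is an isomorphism of chain complexes; likewise for $D$ and $E$, and $f=\varprojlim_N f_N$, $g=\varprojlim_N g_N$.

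Next I would record the key finiteness input: each $C_N$ is a finitely generated module over the Artinian ring $\mathbb{F}\llbracket U\rrbracket/U^{N}$, hence has finite length, and therefore so does every subquotient, in particular each $H_k(C_N)$; the same holds for $D$ and $E$. Consequently every tower of $\mathbb{F}\llbracket U\rrbracket$-modules assembled from these groups is automatically Mittag--Leffler --- descending chains of submodules of a finite-length module stabilize --- so all the relevant $\varprojlim^1$ terms vanish. Feeding this into the Milnor exact sequence $0\to\varprojlim{}^1 H_{k+1}(C_N)\to H_k(\varprojlim_N C_N)\to\varprojlim_N H_k(C_N)\to 0$ for the (levelwise surjective) tower $\{C_N\}$, and using $\varprojlim_N C_N=C$, yields natural isomorphisms $H_k(C)\cong\varprojlim_N H_k(C_N)$, and similarly for $D$ and $E$; under these identifications $H(f)=\varprojlim_N H(f_N)$ and $H(g)=\varprojlim_N H(g_N)$.

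It then remains to check that $\varprojlim$ applied to a tower of sequences of finite-length modules that are exact in the middle produces a sequence exact in the middle. Writing $I_N:=\operatorname{im}H(f_N)=\ker H(g_N)$ and $K_N:=\ker H(f_N)$, left-exactness of $\varprojlim$ gives $\ker H(g)=\varprojlim_N I_N$ as a submodule of $H(D)=\varprojlim_N H(D_N)$; applying $\varprojlim$ to $0\to K_N\to H(C_N)\to I_N\to 0$ and invoking $\varprojlim^1 K_N=0$ shows the induced map $H(C)=\varprojlim_N H(C_N)\to\varprojlim_N I_N$ is surjective, whence $\operatorname{im}H(f)=\varprojlim_N I_N=\ker H(g)$. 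This is exactness at $H(D)$; exactness at $H(C)$ and $H(E)$ for the triangle in the proof of Theorem~\ref{thm:rational-surgery-kgeneral} is the same statement applied to the two other cyclic rotations.

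I do not expect a genuine obstacle here: no geometric input enters, and the only point requiring care is the bookkeeping that reduces the claim to exactness of $\varprojlim$ on finite-length towers --- completeness of $C,D,E$, levelwise surjectivity of the transition maps, and the finite-length observation that kills every $\varprojlim^1$. Once those are in place the argument is formal.
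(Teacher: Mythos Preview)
Your argument is correct, but it takes a substantially different route from the paper's. The paper proves the lemma by a direct element chase using Krull's intersection theorem: for $c\in C$, exactness of $(C_N\to D_N\to E_N)$ forces $g(f(c))\in\partial E+U^N E$ for infinitely many $N$, hence $g(f(c))\in\bigcap_N(\partial E+U^N E)=\partial E$; and for $y\in D$ with $[g(y)]=0$, the same reasoning gives $y\in\bigcap_N(f(C)+\partial D+U^N D)=f(C)+\partial D$. That is the entire proof --- two applications of $\bigcap_N(M+U^N D)=M$ for a submodule $M$ of a finite $\mathbb{F}\llbracket U\rrbracket$-module $D$.

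Your approach instead passes to inverse limits: you identify $C\cong\varprojlim C_N$ by completeness, use finite length of each $C_N$ to obtain the Mittag--Leffler condition and hence $H(C)\cong\varprojlim H(C_N)$, and then deduce middle-exactness from left-exactness of $\varprojlim$ together with vanishing of $\varprojlim^1$ on finite-length towers. This is sound and arguably more conceptual, but it invokes considerably more machinery (the Milnor sequence, $\varprojlim^1$) than the paper's two-line appeal to Krull. The paper's argument is more elementary and more easily checked by hand; yours would generalize more readily to other complete Noetherian base rings or to statements phrased in derived-category language.
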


\begin{proof}
First, let us show $H(g)\circ H(f)=0$. This is equivalent to showing
that for $c\in C$, $g(f(c))\in\partial E$. That (\ref{eq:exactN})
is exact implies that $g(f(c))\in\partial E+U^{N}E$. Since this holds
for infinitely many $N$, $g(f(c))\in\partial E$ by Krull's intersection
theorem (Lemma~\ref{lem:krull}).

Now, let us show ${\rm ker}H(g)\le{\rm Im}H(f)$, i.e.\ if $y\in D$
is such that $H(g)([y])=0$, then $y\in f(C)+\partial D$. That (\ref{eq:exactN})
is exact implies that $y\in f(C)+\partial D+U^{N}D$. Since this holds
for infinitely many $N$, $y\in f(C)+\partial D$ by Krull's intersection
theorem (Lemma~\ref{lem:krull}).
\end{proof}
We recall Krull's intersection theorem.
\begin{lem}[Krull's intersection theorem]
\label{lem:krull}Let $D$ be a finite module over $\mathbb{F}\llbracket U\rrbracket$,
and let $C\le D$ be a submodule. Then, the decreasing intersection
\[
\bigcap_{N\ge1}(C+U^{N}D)=C.
\]
\end{lem}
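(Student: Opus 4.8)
The plan is to reduce immediately to the case $C=0$ and then invoke the structure theory of finitely generated modules over the complete discrete valuation ring $\mathbb{F}\llbracket U\rrbracket$. First I would pass to the quotient $M:=D/C$, which is again a finite $\mathbb{F}\llbracket U\rrbracket$-module; writing $\pi\colon D\to M$ for the quotient map, one has $\pi^{-1}(U^{N}M)=C+U^{N}D$ for every $N$, and since each $C+U^{N}D$ contains $\ker\pi=C$ it follows that $\bigcap_{N\ge 1}(C+U^{N}D)=\pi^{-1}\bigl(\bigcap_{N\ge 1}U^{N}M\bigr)$. Hence the assertion $\bigcap_{N\ge 1}(C+U^{N}D)=C$ is equivalent to $\bigcap_{N\ge 1}U^{N}M=0$.

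Next I would use that $\mathbb{F}\llbracket U\rrbracket$ is a principal ideal domain whose unique nonzero prime ideal is $(U)$, so the classification of finitely generated modules over a PID yields a decomposition
\[
M\cong\mathbb{F}\llbracket U\rrbracket^{a}\oplus\bigoplus_{j=1}^{b}\mathbb{F}\llbracket U\rrbracket/(U^{n_{j}})
\]
for some integers $a,b\ge 0$ and $n_{j}\ge 1$. Since $U^{N}$ acts diagonally with respect to this decomposition and intersections commute with finite direct sums, it suffices to check $\bigcap_{N\ge 1}U^{N}(\,\cdot\,)=0$ on each summand. On a free summand this is the statement that a power series lying in $(U^{N})$ for every $N$ must vanish, which is immediate from comparing coefficients; on a torsion summand $\mathbb{F}\llbracket U\rrbracket/(U^{n_{j}})$ one has $U^{N}\cdot\bigl(\mathbb{F}\llbracket U\rrbracket/(U^{n_{j}})\bigr)=0$ as soon as $N\ge n_{j}$. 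Combining these two observations gives $\bigcap_{N\ge 1}U^{N}M=0$, which completes the proof.

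There is essentially no serious obstacle here; the only point worth flagging is that finiteness of $D$ is genuinely used (through the structure theorem), and the conclusion does fail without it, for instance for $D=U^{-1}\mathbb{F}\llbracket U\rrbracket$, where $U^{N}D=D$ for all $N$. Alternatively, one could simply cite the Krull intersection theorem for Noetherian local rings applied to the ideal $(U)\subset\mathbb{F}\llbracket U\rrbracket$, together with the observation that every element of $1+(U)$ is a unit so that the standard ``$(1+a)x=0$'' description of $\bigcap_{N}(U)^{N}M$ forces this intersection to be zero; but the explicit decomposition argument above is self-contained and avoids invoking the general theorem.
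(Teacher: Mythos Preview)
Your proof is correct and follows the same initial reduction as the paper: pass to the quotient $M=D/C$ and observe $\bigcap_{N}(C+U^{N}D)=\pi^{-1}\bigl(\bigcap_{N}U^{N}M\bigr)$. The difference is in how the vanishing of $\bigcap_{N}U^{N}M$ is justified: the paper simply cites the general Krull intersection theorem from the Stacks Project, whereas you give a self-contained argument via the structure theorem for finitely generated modules over the PID $\mathbb{F}\llbracket U\rrbracket$. Your version buys independence from external references and makes the proof elementary, at the cost of a few more lines; the paper's version is shorter but relies on a standard (and more general) result. You even anticipate the paper's route in your final paragraph, so there is no substantive divergence.
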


\begin{proof}
Let $\pi:D\to D/C$ be the quotient map. Then, by Krull's intersection
theorem \cite[\href{https://stacks.math.columbia.edu/tag/00IP}{Tag~00IP}]{stacks-project},
\[
\bigcap_{N\ge1}(C+U^{N}D)=\pi^{-1}\left(\bigcap_{N\ge1}U^{N}(D/C)\right)=C.\qedhere
\]
\end{proof}

\section{\label{sec:Weak-admissibility-and}Weak admissibility and positivity}

In this section, we prove that the attaching curves equipped with
local systems that appear in this paper form an $A_{\infty}$-category,
and that changing the almost complex structure induces an $A_{\infty}$-functor
(compare \cite[Appendices~A~and~B]{2308.15658}, \cite[Subsection~2.3]{nahm2025unorientedskeinexacttriangle}).
More precisely, we show that these hold under some condition on the
Heegaard diagram and the local systems, which we state in Subsection~\ref{subsec:The-condition}.

As we mentioned in Subsection~\ref{subsec:Weak-admissibility-and},
the subtlety is that we considered local systems $(E,\phi,A)$ such
that $\phi$ is not invertible in ${\rm Hom}_{\mathbb{F}\llbracket U\rrbracket}(E,E)$
and only invertible in ${\rm Hom}_{\mathbb{F}\llbracket U\rrbracket}(U^{-1}E,U^{-1}E)$.
We split the proof into two parts, weak admissibility and positivity;
roughly speaking, weak admissibility ensures that the curve counts
are finite, and positivity ensures that we never get negative powers
of $U$.

More precisely: say the attaching curve $\boldsymbol{\alpha}$ is
equipped with $(E_{\boldsymbol{\alpha}},\phi_{\boldsymbol{\alpha}},A_{\boldsymbol{\alpha}})$
and $\boldsymbol{\beta}$ is equipped with $(E_{\boldsymbol{\beta}},\phi_{\boldsymbol{\beta}},A_{\boldsymbol{\beta}})$.
For positivity, we will define an $\mathbb{F}\llbracket U\rrbracket$-submodule
(Definition~\ref{def:filtered-chain})
\[
\boldsymbol{CF}_{fil}^{-}(\boldsymbol{\alpha}^{E_{\boldsymbol{\alpha}}},\boldsymbol{\beta}^{E_{\boldsymbol{\beta}}})\le\boldsymbol{CF}^{-}(\boldsymbol{\alpha}^{E_{\boldsymbol{\alpha}}},\boldsymbol{\beta}^{E_{\boldsymbol{\beta}}}),
\]
which we call the submodule of \emph{filtered chains}, show that $(\boldsymbol{CF}_{fil}^{-}(\boldsymbol{\alpha}^{E_{\boldsymbol{\alpha}}},\boldsymbol{\beta}^{E_{\boldsymbol{\beta}}}),\partial)$
is a chain complex, and more generally, we will show the following
proposition.
\begin{prop}[Positivity]
\label{prop:keypos}For $i=0,\cdots,d$, let $\boldsymbol{\alpha}_{i}$
be an attaching curve equipped with $(E_{i},\phi_{i},A_{i})$. Assume
that these satisfy Condition~\ref{cond:c}. Let $e_{i}\in{\rm Hom}_{\mathbb{F}\llbracket U\rrbracket}(E_{i-1},E_{i})$
and ${\bf x}_{i}\in\mathbb{T}_{\boldsymbol{\alpha}_{i-1}}\cap\mathbb{T}_{\boldsymbol{\alpha}_{i}}$
for $i=1,\cdots,d$, and let ${\bf y}\in\mathbb{T}_{\boldsymbol{\alpha}_{0}}\cap\mathbb{T}_{\boldsymbol{\alpha}_{d}}$.
Let ${\cal D}\in D({\bf x}_{1},\cdots,{\bf x}_{d},{\bf y})$ be a
nonnegative two-chain, and let 
\[
g:=U^{n_{z}({\cal D})}\rho({\cal D})(e_{1}\otimes\cdots\otimes e_{d}).
\]
If $e_{i}{\bf x}_{i}\in\boldsymbol{CF}_{fil}^{-}(\boldsymbol{\alpha}_{i-1}^{E_{i-1}},\boldsymbol{\alpha}_{i}^{E_{i}})$
for $i=1,\cdots,d$, then $g{\bf y}\in\boldsymbol{CF}_{fil}^{-}(\boldsymbol{\alpha}_{0}^{E_{0}},\boldsymbol{\alpha}_{d}^{E_{d}})$.
\end{prop}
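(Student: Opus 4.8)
The plan is to reduce the assertion to a single geometric identity for arc-crossing numbers---the higher-polygon version of the computation in the proof of Lemma~\ref{lem:local-system-simple-lemma}---and then to track powers of $U$ through the composition $\rho({\cal D})$, organizing the argument along the lines of \cite[Appendices~A~and~B]{2308.15658} and \cite[Subsection~2.3]{nahm2025unorientedskeinexacttriangle}. First I would unwind the notation: by Condition~\ref{cond:c} each oriented arc $A_i$ has initial point $z$ and, apart from $\boldsymbol{\alpha}_i$ (together with standard translates of $\boldsymbol{\alpha}_i$, which carry the same local system), is disjoint from all attaching curves; write $w_i$ for the terminal point of $A_i$. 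Recall from Definition~\ref{def:filtered-chain} that each $E_i$ is an $\mathbb{F}\llbracket W_i,Z_i\rrbracket$-module with $W_i$ acting by $\phi_i$ and $Z_i$ by $U\phi_i^{-1}$, so that $W_i$ and $Z_i$ act by honest $\mathbb{F}\llbracket U\rrbracket$-endomorphisms whose product is $U$, and that $\boldsymbol{CF}_{fil}^{-}$ is the submodule cut out by requiring the relevant $W$- and $Z$-exponents to be nonnegative; equivalently, as in Lemma~\ref{lem:local-system-simple-lemma}, $\boldsymbol{CF}_{fil}^{-}(\boldsymbol{\alpha}_0^{E_0},\boldsymbol{\alpha}_d^{E_d})$ is identified with a polygon complex in the diagram with the same curves but with $w_0$ and $w_d$ adjoined as basepoints.

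The computational heart is the arc identity
\[
\#(A_i\cap\partial_{\boldsymbol{\alpha}_i}{\cal D})=n_{w_i}({\cal D})-n_z({\cal D}),\qquad i=0,\dots,d.
\]
To prove it I would traverse $A_i$ from $z$ to $w_i$: the local multiplicity of ${\cal D}$ equals $n_z({\cal D})$ at the initial endpoint and $n_{w_i}({\cal D})$ at the terminal endpoint, is locally constant away from $\boldsymbol{\alpha}_i$ (as $A_i$ meets no other attaching curve), and jumps by exactly the local intersection sign at each crossing of $\boldsymbol{\alpha}_i$, so telescoping the jumps gives the identity; the standard-translate case is handled as in Subsection~\ref{subsec:Standard-translates}. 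Since ${\cal D}$ is nonnegative we have $n_{w_i}({\cal D})\ge 0$ and $n_z({\cal D})\ge 0$, and the identity gives, as operators on $E_i$,
\[
U^{n_z({\cal D})}\,\phi_i^{\#(A_i\cap\partial_{\boldsymbol{\alpha}_i}{\cal D})}=\phi_i^{\,n_{w_i}({\cal D})}\,(U\phi_i^{-1})^{\,n_z({\cal D})}=W_i^{\,n_{w_i}({\cal D})}\,Z_i^{\,n_z({\cal D})},
\]
a product of nonnegative powers of honest operators.

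With this in hand I would expand $U^{n_z({\cal D})}\rho({\cal D})(e_1\otimes\cdots\otimes e_d)=U^{n_z({\cal D})}\,\phi_d^{m_d}\circ e_d\circ\cdots\circ e_1\circ\phi_0^{m_0}$ with $m_i:=\#(A_i\cap\partial_{\boldsymbol{\alpha}_i}{\cal D})$, distribute the single factor $U^{n_z({\cal D})}$ over the $d+1$ monodromy factors using the displayed identity, apply the result to ${\bf y}$, and check term by term that each filtered generator $e_i{\bf x}_i$ is carried to a filtered chain; a convenient device is induction on $d$ with base case $d=1$ (which is precisely the computation in the proof of Lemma~\ref{lem:local-system-simple-lemma}), using additivity of $n_{w_i}(-)$ and of $\#(A_i\cap\partial_{\boldsymbol{\alpha}_i}(-))$ over two-chains to peel off the outermost curve. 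The step I expect to be the main obstacle is exactly the allocation of the single factor $U^{n_z({\cal D})}$: one must verify that it suffices to absorb the negative $\phi_i^{-1}$-powers at all $d+1$ arcs simultaneously, and in particular that the intermediate contributions $W_i^{n_{w_i}({\cal D})}Z_i^{n_z({\cal D})}$ for $0<i<d$---which are invisible to the target pair $(\boldsymbol{\alpha}_0,\boldsymbol{\alpha}_d)$---can be absorbed into the adjacent filtered generators $e_i{\bf x}_i$ and $e_{i+1}{\bf x}_{i+1}$; this is a statement about the filtered sub-bimodules alone, and is where Definition~\ref{def:filtered-chain} and the fact (Condition~\ref{cond:c}) that all arcs emanate from the single basepoint $z$ are used essentially. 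Weak admissibility is a logically separate matter---needed only to make the relevant sums finite---and would be treated independently.
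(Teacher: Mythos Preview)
Your proposal has a genuine gap in the main case, which is precisely the case of two non-$U$-standard curves $\boldsymbol{\alpha}_s,\boldsymbol{\alpha}_t$ (standard translates both equipped with $(E_u,\phi_u,A_u)$). The arc identity you invoke,
\[
\#(A_i\cap\partial_{\boldsymbol{\alpha}_i}{\cal D})=n_{w_i}({\cal D})-n_z({\cal D}),
\]
fails for $i=s$ and $i=t$ individually: the arc $A_u$ crosses \emph{both} $\boldsymbol{\alpha}_s$ and $\boldsymbol{\alpha}_t$ (this is exactly Figure~\ref{fig:local-system-lemma}~(a)), so your telescoping argument only yields the sum $N_s+N_t=n_w({\cal D})-n_z({\cal D})$. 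The paper instead introduces auxiliary points $a,b$ between the two crossings and obtains $N_s=n_b({\cal D})-n_z({\cal D})$ and $N_t=n_a({\cal D})-n_b({\cal D})$ separately; the intermediate multiplicity $n_b({\cal D})$ is then essential to the case analysis. Your handwave that ``the standard-translate case is handled as in Subsection~\ref{subsec:Standard-translates}'' does not address this---that subsection only defines translates and nearest-point maps.

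A second and related gap is your description of $\boldsymbol{CF}_{fil}^-$. You write that it ``is identified with a polygon complex\ldots with $w_0$ and $w_d$ adjoined as basepoints'' and that each $E_i$ is an $\mathbb{F}\llbracket W_i,Z_i\rrbracket$-module; neither is what Definition~\ref{def:filtered-chain} says. In the two-curve case, $\boldsymbol{CF}_{fil}^-(\boldsymbol{\alpha}_s^{E_u},\boldsymbol{\alpha}_t^{E_u})$ is cut out by the submodule ${\rm Hom}_{fil}(E_u,E_u)=\phi_u{\rm Hom}(E_u,E_u)\phi_u^{-1}\cap{\rm Hom}(E_u,E_u)$ on generators containing $\theta^+$, but only by ${\rm Hom}(E_u,E_u)$ on generators containing $\theta^-$. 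The paper's proof turns on this distinction: it computes $n_a+n_z-n_b$ in terms of the $\theta^\pm$-content of ${\bf x}_i$ and ${\bf y}$ (Equations~(\ref{eq:n+})--(\ref{eq:n-})), and then runs a detailed casework (Equations~(\ref{eq:00})--(\ref{eq:-11})) on the signs of $N_s,N_t$ and on which of $\theta^\pm$ appears in the input and output. Your single factor $U^{n_z({\cal D})}$ genuinely must be split between the two monodromy factors $\phi_u^{N_s}$ and $\phi_u^{N_t}$, and whether this splitting succeeds depends on exactly these $\theta^\pm$ data; the step you flag as ``the main obstacle'' is not a bookkeeping detail but the whole content of the argument.
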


Note that in most cases, we will have 
\begin{equation}
\boldsymbol{CF}_{fil}^{-}(\boldsymbol{\alpha}^{E_{\boldsymbol{\alpha}}},\boldsymbol{\beta}^{E_{\boldsymbol{\beta}}})=\boldsymbol{CF}^{-}(\boldsymbol{\alpha}^{E_{\boldsymbol{\alpha}}},\boldsymbol{\beta}^{E_{\boldsymbol{\beta}}}).\label{eq:fileq}
\end{equation}
Indeed, Equation~(\ref{eq:fileq}) holds if $(E_{\boldsymbol{\alpha}},\phi_{\boldsymbol{\alpha}},A_{\boldsymbol{\alpha}})$
or $(E_{\boldsymbol{\beta}},\phi_{\boldsymbol{\beta}},A_{\boldsymbol{\beta}})$
is $U$-standard (Definition~\ref{def:u-standard}).
\begin{rem}[Subtlety in defining the $A_{\infty}$-category]
Recall that in Heegaard Floer homology, we do not define $\boldsymbol{CF}^{-}(\boldsymbol{\alpha}^{E_{\boldsymbol{\alpha}}},\boldsymbol{\alpha}^{E_{\boldsymbol{\alpha}}})$;
hence, to define the $A_{\infty}$-categories that we work in, we
will always have an implicit total ordering on the attaching curves
and we define 
\begin{equation}
{\rm Hom}(\boldsymbol{\alpha}^{E_{\boldsymbol{\alpha}}},\boldsymbol{\beta}^{E_{\boldsymbol{\beta}}}):=\begin{cases}
\boldsymbol{CF}_{fil}^{-}(\boldsymbol{\alpha}^{E_{\boldsymbol{\alpha}}},\boldsymbol{\beta}^{E_{\boldsymbol{\beta}}}) & {\rm if\ }\boldsymbol{\alpha}<\boldsymbol{\beta}\\
0 & {\rm otherwise}
\end{cases}.\label{eq:ainf}
\end{equation}
\end{rem}

\subsection{\label{subsec:The-condition}The condition}

First, let us state precisely the conditions on the Heegaard diagrams
and the local systems that ensure weak admissibility and positivity.
Recall that we assumed that the underlying module $E$ of a local
system $(E,\phi,A)$ is always a finite, free $\mathbb{F}\llbracket U\rrbracket$-module.
\begin{defn}
\label{def:u-standard}A local system $(E,\phi,A)$ is \emph{$U$-standard}
if there exists a finite dimensional $\mathbb{F}$-vector space $E_{\mathbb{F}}$
and an $\mathbb{F}$-linear isomorphism $\phi_{\mathbb{F}}:E_{\mathbb{F}}\to E_{\mathbb{F}}$
such that 
\[
E=E_{\mathbb{F}}\otimes_{\mathbb{F}}\mathbb{F}\llbracket U\rrbracket,\ \phi=\phi_{\mathbb{F}}\otimes_{\mathbb{F}}\mathbb{F}\llbracket U\rrbracket.
\]
In particular, $\phi$ is invertible as an element of ${\rm Hom}_{\mathbb{F}\llbracket U\rrbracket}(E,E)$.
\end{defn}

Note that the trivial local system and the local system $(E_{0},\phi_{0},A_{0})$
from Subsection~\ref{subsec:Local-systems} are $U$-standard.

\begin{figure}[h]
\begin{centering}
\includegraphics[scale=1.5]{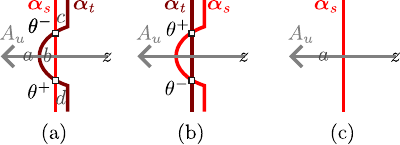}
\par\end{centering}
\caption{\label{fig:local-system-lemma}Local diagrams for the Heegaard diagram
near the oriented arc $A_{u}$. In (a), auxiliary points $a,b,c,d$
are shown; in (c), an auxiliary point $a$ is shown.}
\end{figure}

As in Subsection~\ref{subsec:Local-systems}, for $u=(u_{0},\cdots,u_{p-1})$
such that $u_{i}\in\{1,U\}$ for each $i$, define
\[
E_{u}:=\bigoplus_{i=0}^{p-1}y_{i}\mathbb{F}\llbracket U\rrbracket,\ \phi_{u}:=\sum_{i=0}^{p-1}u_{i}y_{i+1}y_{i}^{\ast}\in{\rm Hom}_{\mathbb{F}\llbracket U\rrbracket}(E_{u},E_{u}).
\]
Note that 
\[
U\phi_{u}^{-1}=\sum_{i=0}^{p-1}Uu_{i}^{-1}y_{i}y_{i+1}^{\ast}\in{\rm Hom}_{\mathbb{F}\llbracket U\rrbracket}(E_{u},E_{u}).
\]

\begin{condition}[Condition for weak admissibility and positivity]
\label{cond:c}We assume that our Heegaard diagram and the local
systems satisfy the following conditions.
\begin{enumerate}
\item All but at most two attaching curves are equipped with $U$-standard
local systems, and the others are equipped with the local system $(E_{u},\phi_{u},A_{u})$,
where (A) there exists some $i$ such that $u_{i}\neq U$, and (B)
$A_{u}$ is an oriented arc such that its initial point is the basepoint
$z$ and $A_{u}$ is disjoint from all the attaching curves that are
equipped with $U$-standard local systems.
\item If there are exactly two attaching curves (call them $\boldsymbol{\alpha}_{s}$,
$\boldsymbol{\alpha}_{t}$ with $\boldsymbol{\alpha}_{s}<\boldsymbol{\alpha}_{t}$)
equipped with $(E_{u},\phi_{u},A_{u})$, then assume that $\boldsymbol{\alpha}_{s}$
and $\boldsymbol{\alpha}_{t}$ are standard translates. Recall from
Subsection~\ref{subsec:Standard-translates} that the Heegaard diagram
looks like Figure~\ref{fig:local-system-lemma}~(a) or (b) near
the oriented arc $A_{u}$.
\item \label{enu:weakadm}Every cornerless two-chain ${\cal D}$ with $n_{z}({\cal D})=0$
has both positive and negative local multiplicities (Definition~\ref{def:weakly-admissible}).
\end{enumerate}
\end{condition}

In this paper, we only need to consider the case Figure~\ref{fig:local-system-lemma}~(a),
and so we will only write down the proof of positivity for this case.
Note that the case Figure~\ref{fig:local-system-lemma}~(b) is similar.

\subsection{\label{subsec:Admissibility}Weak admissibility}

Weak admissibility guarantees that the attaching curves equipped with
local systems form an $A_{\infty}$-category for $\boldsymbol{CF}^{\infty}:=U^{-1}\boldsymbol{CF}^{-}$,
i.e.\ (\ref{eq:ainf}) but where $\boldsymbol{CF}_{fil}^{-}$ is
replaced by $\boldsymbol{CF}^{\infty}$, and that changing the almost
complex structure induces an $A_{\infty}$-functor. These follow from
Proposition~\ref{prop:weak-admissibility}, which is the main proposition
of this subsection.

For simplicity, if $E$ is a finite, free $\mathbb{F}\llbracket U\rrbracket$-module,
denote the localization $U^{-1}E$ as $E^{\infty}$.
\begin{defn}
Let $E$ be a finite, free $\mathbb{F}\llbracket U\rrbracket$-module.
A \emph{grading data} for $E$ is a basis $b_{0},\cdots,b_{n-1}$
of $E$ over $\mathbb{F}\llbracket U\rrbracket$ together with rational
numbers $v_{0},\cdots,v_{n-1}\in\mathbb{Q}$.

Given a grading data $b_{0},\cdots,b_{n-1},v_{0},\cdots,v_{n-1}$
for $E$, a nonzero element $e\in E^{\infty}$ is \emph{homogeneous}
if there exists some $d\in\mathbb{Q}$ such that $e$ is an $\mathbb{F}$-linear
combination of $U^{d-v_{i}}b_{i}$ for $i$ such that $d-v_{i}\in\mathbb{Z}$.
In this case, write ${\rm gr}_{U}e=d$.

Let $E_{0}$ and $E_{1}$ be finite, free $\mathbb{F}\llbracket U\rrbracket$-modules
with grading data $b_{0},\cdots,b_{n-1},v_{0},\cdots,v_{n-1}$ and
$c_{0},\cdots,c_{m-1},w_{0},\cdots,w_{m-1}$, respectively. A nonzero
element $e\in{\rm Hom}_{\mathbb{F}\llbracket U\rrbracket}(E_{0}^{\infty},E_{1}^{\infty})$
is\emph{ homogeneous} if there exists some $d\in\mathbb{Q}$ such
that $e$ is an $\mathbb{F}$-linear combination of $U^{d-w_{j}+v_{i}}c_{j}b_{i}^{\ast}$
for $i,j$ such that $d-w_{j}+v_{i}\in\mathbb{Z}$. In this case,
write ${\rm gr}_{U}e=d$.
\end{defn}

Let us record the following observations.
\begin{lem}
\label{lem:obss}Let $E_{i}$ be a finite, free $\mathbb{F}\llbracket U\rrbracket$-module
with a grading data for $i=0,1,2$.
\begin{enumerate}
\item Let $e\in E_{0}^{\infty}$ or $e\in{\rm Hom}_{\mathbb{F}\llbracket U\rrbracket}(E_{0}^{\infty},E_{1}^{\infty})$.
If $e$ is homogeneous and ${\rm gr}_{U}e=d$, then for any $N\in\mathbb{Z}$,
$U^{N}e$ is also homogeneous and ${\rm gr}_{U}(U^{N}e)=N+d$.
\item Let $e_{i}\in{\rm Hom}_{\mathbb{F}\llbracket U\rrbracket}(E_{i}^{\infty},E_{i+1}^{\infty})$
be homogeneous for $i=0,1$. If $e_{1}e_{0}\in{\rm Hom}_{\mathbb{F}\llbracket U\rrbracket}(E_{0}^{\infty},E_{2}^{\infty})$
is nonzero, then it is homogeneous and $\mathrm{gr}_{U}(e_{1}e_{0})=\mathrm{gr}_{U}(e_{1})+\mathrm{gr}_{U}(e_{0})$.
\end{enumerate}
\end{lem}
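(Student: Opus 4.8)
The plan is to unwind the definition of homogeneity in coordinates and do the elementary exponent bookkeeping; both parts are pure linear algebra over $\mathbb{F}\llbracket U\rrbracket$ and localizations, with no geometric input.

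For part (1), I would fix the grading data $(b_0,\dots,b_{n-1},v_0,\dots,v_{n-1})$ for $E_0$ (and $(c_0,\dots,c_{m-1},w_0,\dots,w_{m-1})$ for $E_1$ in the Hom case) and write the homogeneous element explicitly: $e=\sum_i a_i U^{d-v_i}b_i$ with the sum over those $i$ with $d-v_i\in\mathbb{Z}$, or $e=\sum_{i,j}a_{ij}U^{d-w_j+v_i}c_jb_i^{\ast}$ with the sum over those $i,j$ with $d-w_j+v_i\in\mathbb{Z}$. Multiplying by $U^N$ shifts every exponent by the integer $N$, and the condition $d-v_i\in\mathbb{Z}$ (resp.\ $d-w_j+v_i\in\mathbb{Z}$) holds if and only if $(N+d)-v_i\in\mathbb{Z}$ (resp.\ $(N+d)-w_j+v_i\in\mathbb{Z}$), so $U^N e$ is visibly of the required form with $\mathrm{gr}_U(U^N e)=N+d$. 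The two cases are the identical computation.

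For part (2), I would fix grading data $(b_i,v_i),(c_j,w_j),(f_k,x_k)$ for $E_0,E_1,E_2$, set $d_0=\mathrm{gr}_U(e_0)$, $d_1=\mathrm{gr}_U(e_1)$, and expand $e_0=\sum a_{ij}U^{d_0-w_j+v_i}c_jb_i^{\ast}$ and $e_1=\sum b_{jk}U^{d_1-x_k+w_j}f_kc_j^{\ast}$, each sum over indices making the displayed exponent integral. Using $(f_kc_j^{\ast})\circ(c_{j'}b_i^{\ast})=\delta_{jj'}f_kb_i^{\ast}$, the composite $e_1\circ e_0$ is an $\mathbb{F}$-linear combination of terms $U^{(d_0-w_j+v_i)+(d_1-x_k+w_j)}f_kb_i^{\ast}=U^{(d_0+d_1)-x_k+v_i}f_kb_i^{\ast}$, and for any surviving term the exponent $(d_0+d_1)-x_k+v_i$ is a sum of two integers, hence an integer. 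Thus every $\mathbb{F}$-combination of such monomials lies in the degree-$(d_0+d_1)$ homogeneous subspace, so $e_1\circ e_0$ is homogeneous with $\mathrm{gr}_U(e_1\circ e_0)=d_0+d_1$ as soon as it is nonzero, which is precisely the hypothesis. (Part (1) is used implicitly here when absorbing scalar powers of $U$ into monomials.)

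The only point that needs a word of care — and the thing I would flag as the ``main obstacle,'' though it is really just a caveat rather than a difficulty — is that over $\mathbb{F}=\mathbb{Z}/2\mathbb{Z}$ the composition can undergo cancellation, so ``homogeneous'' must be understood as ``lies in the homogeneous subspace of the stated degree'' and not ``has a nonzero coefficient on every monomial of that degree.'' Once one records that all monomials occurring before cancellation already share the degree $d_0+d_1$, the conclusion is immediate; the nonvanishing hypothesis just rules out the element becoming $0$. Everything else is a one-line count, so I do not expect any surprises.
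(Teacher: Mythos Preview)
Your proof is correct and is essentially the paper's approach: the paper's proof is the single word ``Immediate,'' and you have simply written out the coordinate bookkeeping that makes this immediacy explicit.
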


\begin{proof}
Immediate.
\end{proof}
Let us choose grading data for our local systems. For $U$-standard
local systems $(E,\phi,A)$, we consider a grading data such that
$\phi$ is homogeneous and ${\rm gr}_{U}\phi={\rm gr}_{U}\phi^{-1}=0$:
indeed, identify $E=E_{\mathbb{F}}\otimes_{\mathbb{F}}\mathbb{F}\llbracket U\rrbracket$
and $\phi=\phi_{\mathbb{F}}\otimes_{\mathbb{F}}\mathbb{F}\llbracket U\rrbracket$.
Let $b_{0},\cdots,b_{n-1}$ be an $\mathbb{F}$-basis of $E_{\mathbb{F}}$;
view this also as an $\mathbb{F}\llbracket U\rrbracket$-basis of
$E$. Then, $v_{0}=\cdots=v_{n-1}=0$ works.

For the local system $(E_{u},\phi_{u},A_{u})$, let $b_{i}:=y_{i}$
and define $v_{i}$ recursively as in the proof of Lemma~\ref{lem:interpret-local-system}:
let $k$ be the number of $i$'s such that $u_{i}=U$. Then, $k\in\{0,\cdots,p-1\}$
since there exists some $i$ such that $u_{i}\neq U$. Let $v_{0}=0$
and 
\[
v_{i+1}:=\begin{cases}
v_{i}+\frac{k}{p} & {\rm if}\ u_{i}=1\\
v_{i}+\frac{k}{p}-1 & {\rm if}\ u_{i}=U
\end{cases}.
\]
Then, $\phi_{u}$ is homogeneous and ${\rm gr}_{U}\phi_{u}=k/p\in[0,1)$.
Also, $\phi_{u}$ is invertible in ${\rm Hom}_{\mathbb{F}\llbracket U\rrbracket}(E_{u}^{\infty},E_{u}^{\infty})$,
$\phi_{u}^{-1}$ is also homogeneous, and ${\rm gr}_{U}\phi_{u}^{-1}=-k/p=-{\rm gr}_{U}\phi_{u}$.

\begin{prop}[Weak admissibility]
\label{prop:weak-admissibility}For $i=0,\cdots,d$, let $\boldsymbol{\alpha}_{i}$
be an attaching curve equipped with $(E_{i},\phi_{i},A_{i})$. Assume
that these satisfy Condition~\ref{cond:c}. Let $e_{i}\in{\rm Hom}_{\mathbb{F}\llbracket U\rrbracket}(E_{i-1}^{\infty},E_{i}^{\infty})$
and ${\bf x}_{i}\in\mathbb{T}_{\boldsymbol{\alpha}_{i-1}}\cap\mathbb{T}_{\boldsymbol{\alpha}_{i}}$
for $i=1,\cdots,d$, and let ${\bf y}\in\mathbb{T}_{\boldsymbol{\alpha}_{0}}\cap\mathbb{T}_{\boldsymbol{\alpha}_{d}}$.
For each integer $N$, there are only finitely many nonnegative two-chains
${\cal D}\in D({\bf x}_{1},\cdots,{\bf x}_{d},{\bf y})$ such that
\[
U^{n_{z}({\cal D})}\rho({\cal D})(e_{1}\otimes\cdots\otimes e_{d})\notin U^{N}{\rm Hom}_{\mathbb{F}\llbracket U\rrbracket}(E_{0},E_{d}),
\]
where we consider $\rho({\cal D})(e_{1}\otimes\cdots\otimes e_{d})$
as an element of ${\rm Hom}_{\mathbb{F}\llbracket U\rrbracket}(E_{0}^{\infty},E_{d}^{\infty})$,
and consider $U^{N}{\rm Hom}_{\mathbb{F}\llbracket U\rrbracket}(E_{0},E_{d})$
as an $\mathbb{F}\llbracket U\rrbracket$-submodule of ${\rm Hom}_{\mathbb{F}\llbracket U\rrbracket}(E_{0}^{\infty},E_{d}^{\infty})$.
\end{prop}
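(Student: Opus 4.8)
The plan is to reduce the statement to the classical finiteness property of weakly admissible Heegaard multi-diagrams by tracking, for each two-chain, the $U$-grading of its monodromy contribution. First I would reduce to the case that every $e_i$ is homogeneous with respect to the grading data fixed above. Writing $e_i=\sum_\delta e_{i,\delta}$ as a (possibly infinite) sum of its homogeneous components, the components all have $U$-grading bounded below (since $E_i^\infty$ is a localization of a finite free $\mathbb{F}\llbracket U\rrbracket$-module), while the computation of the next paragraph will show that for a nonnegative two-chain $\mathcal{D}$ the $U$-grading of $U^{n_z(\mathcal{D})}\rho(\mathcal{D})(e_{1,\delta_1}\otimes\cdots\otimes e_{d,\delta_d})$, when nonzero, is at least $\delta_1+\cdots+\delta_d$; hence only finitely many tuples $(\delta_1,\dots,\delta_d)$ can ever produce a summand outside $U^N{\rm Hom}_{\mathbb{F}\llbracket U\rrbracket}(E_0,E_d)$, and it is enough to treat homogeneous $e_i$. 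For homogeneous $e_i$, Lemma~\ref{lem:obss} shows that $g:=\rho(\mathcal{D})(e_1\otimes\cdots\otimes e_d)$, when nonzero, is homogeneous, and there is a constant $c_0$ depending only on the grading data of $E_0$ and $E_d$ such that every homogeneous element of ${\rm Hom}_{\mathbb{F}\llbracket U\rrbracket}(E_0^\infty,E_d^\infty)$ of $U$-grading at least $N+c_0$ lies in $U^N{\rm Hom}_{\mathbb{F}\llbracket U\rrbracket}(E_0,E_d)$. Thus (two-chains with $g=0$ being harmless) it suffices to bound the number of nonnegative $\mathcal{D}\in D({\bf x}_1,\dots,{\bf x}_d,{\bf y})$ with
\[ E(\mathcal{D}):=n_z(\mathcal{D})+{\rm gr}_U\big(\rho(\mathcal{D})(e_1\otimes\cdots\otimes e_d)\big)<M:=N+c_0 . \]

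Next I would compute $E(\mathcal{D})$. Since ${\rm gr}_U$ is additive (Lemma~\ref{lem:obss}) and ${\rm gr}_U\phi_i=0$ for the $U$-standard local systems, only the at most two non-$U$-standard attaching curves contribute to ${\rm gr}_U(\rho(\mathcal{D})(e_1\otimes\cdots\otimes e_d))$, and by Condition~\ref{cond:c} these all carry the same local system $(E_u,\phi_u,A_u)$ with ${\rm gr}_U\phi_u=k/p\in[0,1)$, where $k<p$ is the number of indices with $u_i=U$ (the hypothesis that some $u_i\ne U$ is exactly what gives $k<p$). Let $w$ be the terminal point of $A_u$, a fixed point of $\Sigma$ away from all attaching curves. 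Since $A_u$ has initial point $z$ and is disjoint from every $U$-standard attaching curve, tracking the jumps of the local multiplicity of $\mathcal{D}$ along $A_u$ — exactly as in the proof of Lemma~\ref{lem:local-system-simple-lemma}, summing over the one or two crossings, and in the two-curve case reading off the local picture from Figure~\ref{fig:local-system-lemma}(a) — gives $\sum\#(A_u\cap\partial_{\boldsymbol{\alpha}}\mathcal{D})=n_w(\mathcal{D})-n_z(\mathcal{D})$, the sum running over the non-$U$-standard curves. Therefore
\[ E(\mathcal{D})=\Big(1-\tfrac{k}{p}\Big)\,n_z(\mathcal{D})+\tfrac{k}{p}\,n_w(\mathcal{D})+C , \]
where $C:=\sum_i{\rm gr}_U(e_i)$ is a constant (and $E(\mathcal{D})=n_z(\mathcal{D})+C$ if there is no non-$U$-standard curve). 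Since $0\le k/p<1$ and $\mathcal{D}$ is nonnegative, this in particular justifies the inequality ${\rm gr}_U(U^{n_z}g)\ge\delta_1+\cdots+\delta_d$ invoked above; moreover $E(\mathcal{D})<M$ forces $n_z(\mathcal{D})\le M'$ for a bound $M'$ depending only on $M$, $k/p$, and $C$.

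Finally I would invoke the standard consequence of weak admissibility: by Condition~\ref{cond:c}(\ref{enu:weakadm}) (every cornerless two-chain with $n_z=0$ has both positive and negative local multiplicities, cf.\ Definition~\ref{def:weakly-admissible}), there are only finitely many nonnegative $\mathcal{D}\in D({\bf x}_1,\dots,{\bf x}_d,{\bf y})$ with $n_z(\mathcal{D})\le M'$. Indeed, fixing one such $\mathcal{D}_0$, any other has the form $\mathcal{D}_0+\mathcal{P}$ with $\mathcal{P}$ a cornerless two-chain, and an infinite family would, by the usual cone/compactness argument in the finite-rank lattice of cornerless two-chains, produce a nonzero nonnegative cornerless two-chain with $n_z=0$, contradicting weak admissibility. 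Combined with the previous step, this bounds the set in the statement of the proposition.

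I expect the main obstacle to be the sign- and geometry-bookkeeping in the middle paragraph: establishing $\sum\#(A_u\cap\partial_{\boldsymbol{\alpha}}\mathcal{D})=n_w(\mathcal{D})-n_z(\mathcal{D})$ with the correct conventions in the two-translate case of Figure~\ref{fig:local-system-lemma}(a), where one must check that the auxiliary local configuration near $A_u$ introduces no further crossings and that both curves contribute with the \emph{same} coefficient $k/p$ (they carry the same monodromy), so that the contributions telescope into $n_w-n_z$. The remaining inputs — the grading data with ${\rm gr}_U\phi_u\in[0,1)$ coming from the hypothesis that some $u_i\ne U$, and the classical finiteness for weakly admissible diagrams — are routine.
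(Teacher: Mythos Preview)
Your proposal is correct and follows essentially the same argument as the paper: reduce to homogeneous $e_i$, compute $\mathrm{gr}_U$ of the contribution as $(1-\mathrm{gr}_U\phi_u)\,n_z(\mathcal{D})+(\mathrm{gr}_U\phi_u)\,n_w(\mathcal{D})+\sum_i\mathrm{gr}_Ue_i$ via the telescoping identity $\sum\#(A_u\cap\partial_{\boldsymbol{\alpha}}\mathcal{D})=n_w(\mathcal{D})-n_z(\mathcal{D})$, and then use $\mathrm{gr}_U\phi_u\in[0,1)$ together with $n_w(\mathcal{D})\ge0$ to bound $n_z(\mathcal{D})$ and invoke the classical finiteness for weakly admissible diagrams. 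Your treatment of the reduction to homogeneous $e_i$ is in fact slightly more careful than the paper's (which simply asserts the reduction), and the paper cites \cite[Lemma~4.13]{MR2113019} for the final finiteness step rather than sketching the cone argument as you do.
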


\begin{proof}
First, since each $e_{i}$ is an $\mathbb{F}\llbracket U\rrbracket$-linear
combination of homogeneous elements, we only have to show the proposition
for the case where $e_{i}$ is homogeneous for all $i$. Let us denote
$g:=U^{n_{z}({\cal D})}\rho({\cal D})(e_{1}\otimes\cdots\otimes e_{d})$.

Let us denote the terminal point of the arc $A_{u}$ as $w$. Then,
we have the following claim.
\begin{claim}
\label{claim:degg}If $g\neq0$, then $g$ is homogeneous, and 
\begin{equation}
{\rm gr}_{U}g=({\rm gr}_{U}\phi_{u})n_{w}({\cal D})+(1-{\rm gr}_{U}\phi_{u})n_{z}({\cal D})+\sum_{i=1}^{d}{\rm gr}_{U}e_{i}.\label{eq:cdeg}
\end{equation}
\end{claim}

\begin{proof}[Proof of Claim~\ref{claim:degg}]
For notational simplicity, let $N_{i}:=\#(A_{i}\cap\partial_{\boldsymbol{\alpha}_{i}}({\cal D}))$
be the signed intersection number. Recall that 
\[
g=U^{n_{z}({\cal D})}\phi_{d}^{N_{d}}\circ e_{d}\circ\phi_{d-1}^{N_{d-1}}\circ e_{d-1}\circ\cdots\circ\phi_{1}^{N_{1}}\circ e_{1}\circ\phi_{0}^{N_{0}}.
\]
Since $\phi_{0},\cdots,\phi_{d},e_{1},\cdots,e_{d}$ are homogeneous,
by Lemma~\ref{lem:obss}, $g$ is homogeneous and 
\begin{equation}
{\rm gr}_{U}g=n_{z}({\cal D})+\sum_{i=0}^{d}N_{i}{\rm gr}_{U}\phi_{i}+\sum_{i=1}^{d}{\rm gr}_{U}e_{i}.\label{eq:c4}
\end{equation}

By Condition~\ref{cond:c}, if $(E_{i},\phi_{i},A_{i})\neq(E_{u},\phi_{u},A_{u})$,
then ${\rm gr}_{U}\phi_{i}=0$ and $A_{u}\cap\boldsymbol{\alpha}_{i}=\emptyset$.
Hence, 
\begin{equation}
\sum_{i=0}^{d}N_{i}{\rm gr}_{U}\phi_{i}=\left(\sum_{i=0}^{d}\#(A_{u}\cap\partial_{\boldsymbol{\alpha}_{i}}({\cal D}))\right){\rm gr}_{U}\phi_{u}.\label{eq:c5}
\end{equation}
By considering the ends of $A_{u}\cap{\cal D}$, we have 
\begin{equation}
\sum_{i=0}^{d}\#(A_{u}\cap\partial_{\boldsymbol{\alpha}_{i}}({\cal D}))=n_{w}({\cal D})-n_{z}({\cal D}).\label{eq:c6}
\end{equation}
Hence, we have 
\begin{align*}
{\rm gr}_{U}g & =n_{z}({\cal D})+\sum_{i=0}^{d}N_{i}{\rm gr}_{U}\phi_{i}+\sum_{i=1}^{d}{\rm gr}_{U}e_{i} & \mathrm{(\ref{eq:c4})}\\
 & =n_{z}({\cal D})+\left(\sum_{i=0}^{d}\#(A_{u}\cap\partial_{\boldsymbol{\alpha}_{i}}({\cal D}))\right){\rm gr}_{U}\phi_{u}+\sum_{i=1}^{d}{\rm gr}_{U}e_{i} & \mathrm{(\ref{eq:c5})}\\
 & =n_{z}({\cal D})+(n_{w}({\cal D})-n_{z}({\cal D})){\rm gr}_{U}\phi_{u} & \mathrm{(\ref{eq:c6})}\\
 & =({\rm gr}_{U}\phi_{u})n_{w}({\cal D})+(1-{\rm gr}_{U}\phi_{u})n_{z}({\cal D})+\sum_{i=1}^{d}{\rm gr}_{U}e_{i}. & \qedhere
\end{align*}
\end{proof}
Let $b_{i,0},\cdots,b_{i,n_{i}-1},v_{i,0},\cdots,v_{i,n_{i}-1}$ be
the grading data for $E_{i}$. If $g\neq0$, then $g$ is homogeneous
by Claim~\ref{claim:degg}. Thus, $g$ is an $\mathbb{F}$-linear
combination of $U^{{\rm gr}_{U}g-v_{d,j}+v_{0,k}}b_{d,j}b_{0,k}^{\ast}$
for some $j,k$, and~so 
\begin{equation}
g\in U^{\left\lfloor {\rm gr}_{U}g-\max(v_{d,0},\cdots,v_{d,n_{d}-1})+\min(v_{0,0},\cdots,v_{0,n_{0}-1})\right\rfloor }{\rm Hom}_{\mathbb{F}\llbracket U\rrbracket}(E_{0},E_{d}).\label{eq:deg2}
\end{equation}

By Equations~(\ref{eq:cdeg})~and~(\ref{eq:deg2}), and since $0\le{\rm gr}_{U}\phi_{u}<1$
and $n_{w}({\cal D})\ge0$, for each $N$, there exists some integer
$M$ such that if $n_{z}({\cal D})>M$, then $g\in U^{N}{\rm Hom}_{\mathbb{F}\llbracket U\rrbracket}(E_{0},E_{d})$.
The proof of \cite[Lemma 4.13]{MR2113019} shows that there are only
finitely many nonnegative two-chains ${\cal D}\in D({\bf x}_{1},\cdots,{\bf x}_{d},{\bf y})$
such that $n_{z}({\cal D})\le M$. Hence, the lemma follows.
\end{proof}

\subsection{\label{subsec:Positivity}Positivity}

In Subsection~\ref{subsec:Admissibility}, we showed that the attaching
curves equipped with local systems form an $A_{\infty}$-category
for $\boldsymbol{CF}^{\infty}$ and that changing the almost complex
structure induces an $A_{\infty}$-functor. In this subsection, we
define the submodule of \emph{filtered chains} 
\[
\boldsymbol{CF}_{fil}^{-}(\boldsymbol{\alpha}^{E_{\boldsymbol{\alpha}}},\boldsymbol{\beta}^{E_{\boldsymbol{\beta}}})\le\boldsymbol{CF}^{-}(\boldsymbol{\alpha}^{E_{\boldsymbol{\alpha}}},\boldsymbol{\beta}^{E_{\boldsymbol{\beta}}})\le\boldsymbol{CF}^{\infty}(\boldsymbol{\alpha}^{E_{\boldsymbol{\alpha}}},\boldsymbol{\beta}^{E_{\boldsymbol{\beta}}})
\]
and show Proposition~\ref{prop:keypos}, which in particular says
that the higher multiplication maps and the maps induced by changing
the almost complex structure restrict to 
\[
\boldsymbol{CF}_{fil}^{-}(\boldsymbol{\alpha}_{0}^{E_{\boldsymbol{\alpha}_{0}}},\boldsymbol{\alpha}_{1}^{E_{\boldsymbol{\alpha}_{1}}})\otimes\cdots\otimes\boldsymbol{CF}_{fil}^{-}(\boldsymbol{\alpha}_{d-1}^{E_{\boldsymbol{\alpha}_{d-1}}},\boldsymbol{\alpha}_{d}^{E_{\boldsymbol{\alpha}_{d}}})\to\boldsymbol{CF}_{fil}^{-}(\boldsymbol{\alpha}_{0}^{E_{\boldsymbol{\alpha}_{0}}},\boldsymbol{\alpha}_{d}^{E_{\boldsymbol{\alpha}_{d}}}),
\]
and hence that we can upgrade the statements for $\boldsymbol{CF}^{\infty}$
to $\boldsymbol{CF}_{fil}^{-}$.
\begin{defn}[Filtered maps]
View ${\rm Hom}_{\mathbb{F}\llbracket U\rrbracket}(E_{u},E_{u})$
as a submodule of ${\rm Hom}_{\mathbb{F}\llbracket U\rrbracket}(U^{-1}E_{u},U^{-1}E_{u})$
and $\phi_{u}^{-1}$ as an element of ${\rm Hom}_{\mathbb{F}\llbracket U\rrbracket}(U^{-1}E_{u},U^{-1}E_{u})$.
Define the space of \emph{filtered maps} ${\rm Hom}_{fil}(E_{u},E_{u})\le{\rm Hom}_{\mathbb{F}\llbracket U\rrbracket}(E_{u},E_{u})$
as follows:
\begin{enumerate}
\item If we are in case Figure~\ref{fig:local-system-lemma}~(a), let
\[
{\rm Hom}_{fil}(E_{u},E_{u}):=\left(\phi_{u}{\rm Hom}_{\mathbb{F}\llbracket U\rrbracket}(E_{u},E_{u})\phi_{u}^{-1}\right)\cap{\rm Hom}_{\mathbb{F}\llbracket U\rrbracket}(E_{u},E_{u}).
\]
\item If we are in case Figure~\ref{fig:local-system-lemma}~(b), let
\[
{\rm Hom}_{fil}(E_{u},E_{u}):=\left(\phi_{u}^{-1}{\rm Hom}_{\mathbb{F}\llbracket U\rrbracket}(E_{u},E_{u})\phi_{u}\right)\cap{\rm Hom}_{\mathbb{F}\llbracket U\rrbracket}(E_{u},E_{u}).
\]
\end{enumerate}
\end{defn}

\begin{defn}[Filtered chains]
\label{def:filtered-chain}Say $\boldsymbol{\alpha}$ is equipped
with $(E_{\boldsymbol{\alpha}},\phi_{\boldsymbol{\alpha}},A_{\boldsymbol{\alpha}})$
and $\boldsymbol{\beta}$ is equipped with $(E_{\boldsymbol{\beta}},\phi_{\boldsymbol{\beta}},A_{\boldsymbol{\beta}})$.
Let us define the submodule of \emph{filtered chains} $\boldsymbol{CF}_{fil}^{-}(\boldsymbol{\alpha}^{E_{\boldsymbol{\alpha}}},\boldsymbol{\beta}^{E_{\boldsymbol{\beta}}})\le\boldsymbol{CF}^{-}(\boldsymbol{\alpha}^{E_{\boldsymbol{\alpha}}},\boldsymbol{\beta}^{E_{\boldsymbol{\beta}}})$.
\begin{enumerate}
\item If $(E_{\boldsymbol{\alpha}},\phi_{\boldsymbol{\alpha}},A_{\boldsymbol{\alpha}})$
or $(E_{\boldsymbol{\beta}},\phi_{\boldsymbol{\beta}},A_{\boldsymbol{\beta}})$
is $U$-standard, let $\boldsymbol{CF}_{fil}^{-}(\boldsymbol{\alpha}^{E_{\boldsymbol{\alpha}}},\boldsymbol{\beta}^{E_{\boldsymbol{\beta}}}):=\boldsymbol{CF}^{-}(\boldsymbol{\alpha}^{E_{\boldsymbol{\alpha}}},\boldsymbol{\beta}^{E_{\boldsymbol{\beta}}})$.
\item Otherwise, define $\boldsymbol{CF}_{fil}^{-}(\boldsymbol{\alpha}^{E_{u}},\boldsymbol{\beta}^{E_{u}})\le\boldsymbol{CF}^{-}(\boldsymbol{\alpha}^{E_{u}},\boldsymbol{\beta}^{E_{u}})$
as the $\mathbb{F}\llbracket U\rrbracket$-linear subspace spanned
by
\begin{enumerate}
\item $e{\bf x}$ for $e\in{\rm Hom}_{fil}(E_{u},E_{u})$ and ${\bf x}\in\mathbb{T}_{\boldsymbol{\alpha}}\cap\mathbb{T}_{\boldsymbol{\beta}}$
such that $\theta^{+}\in{\bf x}$, and
\item $e{\bf x}$ for $e\in{\rm Hom}_{\mathbb{F}\llbracket U\rrbracket}(E_{u},E_{u})$
and ${\bf x}\in\mathbb{T}_{\boldsymbol{\alpha}}\cap\mathbb{T}_{\boldsymbol{\beta}}$
such that $\theta^{-}\in{\bf x}$.
\end{enumerate}
\end{enumerate}
\end{defn}

\begin{rem}
Note that $y_{i}y_{i}^{\ast}\in{\rm Hom}_{fil}(E_{u},E_{u})$. Hence,
$y_{i}y_{i}^{\ast}\Theta^{+}\in\boldsymbol{CF}_{fil}^{-}(\boldsymbol{\alpha}^{E_{u}},\boldsymbol{\beta}^{E_{u}})$
if $\boldsymbol{\alpha}$ and $\boldsymbol{\beta}$ are equipped with
$(E_{u},\phi_{u},A_{u})$.
\end{rem}

\begin{lem}
We have 
\[
U\boldsymbol{CF}^{-}(\boldsymbol{\alpha}^{E_{u}},\boldsymbol{\beta}^{E_{u}})\le\boldsymbol{CF}_{fil}^{-}(\boldsymbol{\alpha}^{E_{u}},\boldsymbol{\beta}^{E_{u}}).
\]
\end{lem}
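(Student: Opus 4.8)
The plan is to reduce the stated inclusion to the purely algebraic fact that $U\cdot\mathrm{Hom}_{\mathbb{F}\llbracket U\rrbracket}(E_u,E_u)\subseteq\mathrm{Hom}_{fil}(E_u,E_u)$, and then to check this fact directly from the explicit formulas for $\phi_u$ and $U\phi_u^{-1}$.

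First I would dispose of the trivial case. If either of the two local systems is $U$-standard, then $\boldsymbol{CF}_{fil}^{-}(\boldsymbol{\alpha}^{E_u},\boldsymbol{\beta}^{E_u})=\boldsymbol{CF}^{-}(\boldsymbol{\alpha}^{E_u},\boldsymbol{\beta}^{E_u})$ by Definition~\ref{def:filtered-chain}, so there is nothing to prove. Hence assume both $\boldsymbol{\alpha}$ and $\boldsymbol{\beta}$ carry $(E_u,\phi_u,A_u)$ and are standard translates, so that (by Definition~\ref{def:hf-chain-cpx}) $\boldsymbol{CF}^{-}(\boldsymbol{\alpha}^{E_u},\boldsymbol{\beta}^{E_u})$ is the $\mathbb{F}\llbracket U\rrbracket$-span of the elements $e\mathbf{x}$ with $e\in\mathrm{Hom}_{\mathbb{F}\llbracket U\rrbracket}(E_u,E_u)$ and $\mathbf{x}\in\mathbb{T}_{\boldsymbol{\alpha}}\cap\mathbb{T}_{\boldsymbol{\beta}}$, and every such $\mathbf{x}$ contains exactly one of $\theta^{+},\theta^{-}$. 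It therefore suffices to show $Ue\mathbf{x}\in\boldsymbol{CF}_{fil}^{-}$ for every such $e\mathbf{x}$. If $\theta^{-}\in\mathbf{x}$ then $e\mathbf{x}$ already lies in $\boldsymbol{CF}_{fil}^{-}$, hence so does $Ue\mathbf{x}$. If $\theta^{+}\in\mathbf{x}$, the problem becomes showing $Ue\in\mathrm{Hom}_{fil}(E_u,E_u)$ for an arbitrary $e\in\mathrm{Hom}_{\mathbb{F}\llbracket U\rrbracket}(E_u,E_u)$.

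For the core step, work in case Figure~\ref{fig:local-system-lemma}~(a), where $\mathrm{Hom}_{fil}(E_u,E_u)=\bigl(\phi_u\,\mathrm{Hom}_{\mathbb{F}\llbracket U\rrbracket}(E_u,E_u)\,\phi_u^{-1}\bigr)\cap\mathrm{Hom}_{\mathbb{F}\llbracket U\rrbracket}(E_u,E_u)$, with all compositions taken in $\mathrm{End}_{\mathbb{F}\llbracket U\rrbracket}(U^{-1}E_u)$, inside which $\phi_u$ is invertible. Given $e\in\mathrm{Hom}_{\mathbb{F}\llbracket U\rrbracket}(E_u,E_u)$, set $f:=\phi_u^{-1}(Ue)\phi_u$; since $U$ is central, $f=(U\phi_u^{-1})\,e\,\phi_u$, and each of the three factors $U\phi_u^{-1}$, $e$, $\phi_u$ lies in $\mathrm{Hom}_{\mathbb{F}\llbracket U\rrbracket}(E_u,E_u)$ — the first by the formula $U\phi_u^{-1}=\sum_i Uu_i^{-1}y_iy_{i+1}^{\ast}$ with $Uu_i^{-1}\in\{1,U\}$, the last because $\phi_u=\sum_i u_iy_{i+1}y_i^{\ast}$ with $u_i\in\{1,U\}$ — so $f\in\mathrm{Hom}_{\mathbb{F}\llbracket U\rrbracket}(E_u,E_u)$. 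As $\phi_u\phi_u^{-1}=\mathrm{Id}$ in $\mathrm{End}_{\mathbb{F}\llbracket U\rrbracket}(U^{-1}E_u)$, we have $Ue=\phi_u f\phi_u^{-1}$, so $Ue\in\phi_u\,\mathrm{Hom}_{\mathbb{F}\llbracket U\rrbracket}(E_u,E_u)\,\phi_u^{-1}$; combined with the obvious $Ue\in\mathrm{Hom}_{\mathbb{F}\llbracket U\rrbracket}(E_u,E_u)$ this gives $Ue\in\mathrm{Hom}_{fil}(E_u,E_u)$. Case Figure~\ref{fig:local-system-lemma}~(b) is identical after interchanging the roles of $\phi_u$ and $\phi_u^{-1}$, again using that $U\phi_u^{-1}\in\mathrm{Hom}_{\mathbb{F}\llbracket U\rrbracket}(E_u,E_u)$.

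I do not expect any real obstacle; the only thing demanding care is the bookkeeping of conventions — specifically, that $\phi_u^{-1}$ need not preserve $E_u$ while $U\phi_u^{-1}$ does, which is precisely what lets multiplication by $U$ absorb the failure of $\phi_u$-conjugation to stay within $\mathrm{Hom}_{\mathbb{F}\llbracket U\rrbracket}(E_u,E_u)$ — together with the observation that the identity $Ue=\phi_u f\phi_u^{-1}$ is legitimate because it is an equality in $\mathrm{End}_{\mathbb{F}\llbracket U\rrbracket}(U^{-1}E_u)$, where $\phi_u$ is genuinely invertible.
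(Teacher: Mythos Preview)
Your proof is correct and follows essentially the same approach as the paper: both reduce to showing $U\,\mathrm{Hom}_{\mathbb{F}\llbracket U\rrbracket}(E_u,E_u)\subseteq\mathrm{Hom}_{fil}(E_u,E_u)$ via the key facts $\phi_u,\,U\phi_u^{-1}\in\mathrm{Hom}_{\mathbb{F}\llbracket U\rrbracket}(E_u,E_u)$. The paper simply records the two set-level inclusions $\mathrm{Hom}(E_u,E_u)\le\mathrm{Hom}(E_u,E_u)\phi_u^{-1}$ and $U\,\mathrm{Hom}(E_u,E_u)\le\phi_u\,\mathrm{Hom}(E_u,E_u)$, while you spell out the same computation elementwise via $f=(U\phi_u^{-1})e\phi_u$.
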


\begin{proof}
We have 
\[
{\rm Hom}_{\mathbb{F}\llbracket U\rrbracket}(E_{u},E_{u})\le{\rm Hom}_{\mathbb{F}\llbracket U\rrbracket}(E_{u},E_{u})\phi_{u}^{-1}\ {\rm and}\ U{\rm Hom}_{\mathbb{F}\llbracket U\rrbracket}(E_{u},E_{u})\le\phi_{u}{\rm Hom}_{\mathbb{F}\llbracket U\rrbracket}(E_{u},E_{u}).\qedhere
\]
\end{proof}

\begin{proof}[Proof of Proposition~\ref{prop:keypos}]
For notational simplicity, let $N_{i}:=\#(A_{i}\cap\partial_{\boldsymbol{\alpha}_{i}}({\cal D}))$
be the signed intersection number. Recall that 
\[
g=U^{n_{z}({\cal D})}\phi_{d}^{N_{d}}\circ e_{d}\circ\phi_{d-1}^{N_{d-1}}\circ e_{d-1}\circ\cdots\circ\phi_{1}^{N_{1}}\circ e_{1}\circ\phi_{0}^{N_{0}}.
\]
Let us start with a warm-up, where there is at most one $i$ such
that $(E_{i},\phi_{i},A_{i})$ is not $U$-standard. First, if all
the local systems $(E_{i},\phi_{i},A_{i})$ are $U$-standard, then
we have $\boldsymbol{CF}_{fil}^{-}(\boldsymbol{\alpha}_{0}^{E_{0}},\boldsymbol{\alpha}_{d}^{E_{d}})=\boldsymbol{CF}^{-}(\boldsymbol{\alpha}_{0}^{E_{0}},\boldsymbol{\alpha}_{d}^{E_{d}})$
and $\phi_{i}^{N_{i}}\in{\rm Hom}_{\mathbb{F}\llbracket U\rrbracket}(E_{i},E_{i})$
for all $i$. Hence, $g\in{\rm Hom}_{\mathbb{F}\llbracket U\rrbracket}(E_{0},E_{d})$
and so $g{\bf y}\in\boldsymbol{CF}^{-}(\boldsymbol{\alpha}_{0}^{E_{0}},\boldsymbol{\alpha}_{d}^{E_{d}})$.

Consider the case where there is \textbf{exactly one} $i$ (say $i=s$)
such that $(E_{i},\phi_{i},A_{i})$ is not $U$-standard. Note that
the Heegaard diagram looks like Figure~\ref{fig:local-system-lemma}~(c)
near $A_{u}$. Since we again have $\boldsymbol{CF}_{fil}^{-}(\boldsymbol{\alpha}_{0}^{E_{0}},\boldsymbol{\alpha}_{d}^{E_{d}})=\boldsymbol{CF}^{-}(\boldsymbol{\alpha}_{0}^{E_{0}},\boldsymbol{\alpha}_{d}^{E_{d}})$
and $\phi_{i}^{N_{i}}\in{\rm Hom}_{\mathbb{F}\llbracket U\rrbracket}(E_{i},E_{i})$
for $i\neq s$, it is sufficient to show 
\begin{equation}
U^{n_{z}({\cal D})}\phi_{s}^{N_{s}}\in{\rm Hom}_{\mathbb{F}\llbracket U\rrbracket}(E_{u},E_{u}).\label{eq:wts1}
\end{equation}
First, if $N_{s}\ge0$, then Equation~(\ref{eq:wts1}) holds. If
$N_{s}<0$, then 
\[
U^{n_{z}({\cal D})}\phi_{s}^{N_{s}}=U^{n_{z}({\cal D})+N_{s}}(U\phi_{s}^{-1})^{-N_{s}},
\]
and if we let $a$ be the point on the Heegaard diagram drawn in Figure~\ref{fig:local-system-lemma}~(c),
then we have
\[
n_{z}({\cal D})+N_{s}=n_{a}({\cal D})\ge0
\]
by considering the ends of $A_{u}\cap{\cal D}$, and so Equation~(\ref{eq:wts1})
holds.

Now, let us consider the main case, where there are \textbf{exactly
two $i$} (say $i=s,t$, $s<t$) such that $(E_{i},\phi_{i},A_{i})$
is not $U$-standard. Let us consider the case where the Heegaard
diagram looks like Figure~\ref{fig:local-system-lemma}~(a) near
$A_{u}$; the other case follows similarly. Let $a,b,c,d$ be the
points on the Heegaard diagram drawn in Figure~\ref{fig:local-system-lemma}~(a).

First, by considering the ends of $A_{u}\cap{\cal D}$, we have 
\begin{equation}
N_{s}=n_{b}({\cal D})-n_{z}({\cal D}),\ N_{t}=n_{a}({\cal D})-n_{b}({\cal D}).\label{eq:nst}
\end{equation}
Furthermore, let 
\[
n_{\pm,{\rm in}}:=\begin{cases}
1 & {\rm if}\ \exists i\ {\rm such\ that}\ \theta^{\pm}\in{\bf x}_{i}\\
0 & {\rm otherwise}
\end{cases},\ n_{\pm,{\rm out}}:=\begin{cases}
1 & {\rm if}\ \theta^{\pm}\in{\bf y}\\
0 & {\rm otherwise}
\end{cases},\ n_{\pm}:=n_{\pm,{\rm in}}-n_{\pm,{\rm out}}.
\]
Then, by examining Figure~\ref{fig:local-system-lemma}~(a), we
have
\begin{gather}
n_{a}({\cal D})+n_{z}({\cal D})=n_{b}({\cal D})+n_{d}({\cal D})-n_{+},\label{eq:n+}\\
n_{a}({\cal D})+n_{z}({\cal D})=n_{b}({\cal D})+n_{c}({\cal D})+n_{-}.\label{eq:n-}
\end{gather}

We will also refer to the following casework, where we used Equation~(\ref{eq:nst}):
\begin{align}
n_{z}({\cal D})+\min(0,N_{s})+\min(0,N_{t}) & =\begin{cases}
n_{z}({\cal D}) & {\rm if}\ N_{s}\ge0,\ N_{t}\ge0\\
n_{b}({\cal D}) & {\rm if}\ N_{s}\le0,\ N_{t}\ge0\\
n_{z}({\cal D})+n_{a}({\cal D})-n_{b}({\cal D}) & {\rm if}\ N_{s}\ge0,\ N_{t}\le0\\
n_{a}({\cal D}) & {\rm if}\ N_{s}\le0,\ N_{t}\le0
\end{cases},\label{eq:00}\\
n_{z}({\cal D})+\min(0,N_{s}+1)+\min(0,N_{t}-1) & =\begin{cases}
n_{z}({\cal D}) & {\rm if}\ N_{s}\ge-1,\ N_{t}\ge1\\
n_{b}({\cal D})+1 & {\rm if}\ N_{s}\le-1,\ N_{t}\ge1\\
n_{z}({\cal D})+n_{a}({\cal D})-n_{b}({\cal D})-1 & {\rm if}\ N_{s}\ge-1,\ N_{t}\le1\\
n_{a}({\cal D}) & {\rm if}\ N_{s}\le-1,\ N_{t}\le1
\end{cases},\label{eq:1-1}\\
n_{z}({\cal D})+\min(0,N_{s}-1)+\min(0,N_{t}+1) & =\begin{cases}
n_{z}({\cal D}) & {\rm if}\ N_{s}\ge1,\ N_{t}\ge-1\\
n_{b}({\cal D})-1 & {\rm if}\ N_{s}\le1,\ N_{t}\ge-1\\
n_{z}({\cal D})+n_{a}({\cal D})-n_{b}({\cal D})+1 & {\rm if}\ N_{s}\ge1,\ N_{t}\le-1\\
n_{a}({\cal D}) & {\rm if}\ N_{s}\le1,\ N_{t}\le-1
\end{cases}.\label{eq:-11}
\end{align}

Let us divide into a few cases.

\textbf{Case $d\ge2$}: in this case, either $n_{+}=0$ or $n_{-}=0$,
and so $n_{a}({\cal D})+n_{z}({\cal D})\ge n_{b}({\cal D})$ by Equations~(\ref{eq:n+})~and~(\ref{eq:n-}).
Hence,
\[
n_{z}({\cal D})-\max(-N_{s},0)-\max(-N_{t},0)=n_{z}({\cal D})+\min(0,N_{s})+\min(0,N_{t})\ge0
\]
 by Equation~(\ref{eq:00}), and so we have
\begin{multline}
U^{n_{z}({\cal D})}\phi_{u}^{N_{t}}{\rm Hom}_{\mathbb{F}\llbracket U\rrbracket}(E_{u},E_{u})\phi_{u}^{N_{s}}\\
=U^{n_{z}({\cal D})-\max(-N_{s},0)-\max(-N_{t},0)}(U^{\max(-N_{t},0)}\phi_{u}^{N_{t}}){\rm Hom}_{\mathbb{F}\llbracket U\rrbracket}(E_{u},E_{u})(U^{\max(-N_{s},0)}\phi_{u}^{N_{s}})\\
\le{\rm Hom}_{\mathbb{F}\llbracket U\rrbracket}(E_{u},E_{u}).\label{eq:argument}
\end{multline}
Hence, we have $g\in{\rm Hom}_{\mathbb{F}\llbracket U\rrbracket}(E_{u},E_{u})$
since $\boldsymbol{CF}_{fil}^{-}(\boldsymbol{\alpha}_{i}^{E_{i}},\boldsymbol{\alpha}_{i+1}^{E_{i+1}})\le\boldsymbol{CF}^{-}(\boldsymbol{\alpha}_{i}^{E_{i}},\boldsymbol{\alpha}_{i+1}^{E_{i+1}})$.
\begin{enumerate}
\item Case $(s,t)\neq(0,d)$: $g{\bf y}$ is filtered since $\boldsymbol{CF}_{fil}^{-}(\boldsymbol{\alpha}_{0}^{E_{0}},\boldsymbol{\alpha}_{d}^{E_{d}})=\boldsymbol{CF}^{-}(\boldsymbol{\alpha}_{0}^{E_{0}},\boldsymbol{\alpha}_{d}^{E_{d}})$.
\item Case $(s,t)=(0,d)$: let us further divide into two cases.
\begin{enumerate}
\item If $\theta^{-}\in{\bf y}$, then $g{\bf y}$ is filtered since $\boldsymbol{CF}_{fil}^{-}(\boldsymbol{\alpha}_{0}^{E_{0}},\boldsymbol{\alpha}_{d}^{E_{d}})=\boldsymbol{CF}^{-}(\boldsymbol{\alpha}_{0}^{E_{0}},\boldsymbol{\alpha}_{d}^{E_{d}})$.
\item If $\theta^{+}\in{\bf y}$, then we need to show $g\in{\rm Hom}_{fil}(E_{u},E_{u})$.
By Equation~(\ref{eq:n+}), we have $n_{a}({\cal D})+n_{z}({\cal D})\ge n_{b}({\cal D})+1$,
and so by arguing similarly to (\ref{eq:argument}) but using Equation~(\ref{eq:1-1})
instead of Equation~(\ref{eq:00}), we have
\[
U^{n_{z}({\cal D})}\phi_{u}^{N_{t}}{\rm Hom}_{\mathbb{F}\llbracket U\rrbracket}(E_{u},E_{u})\phi_{u}^{N_{s}}\le\phi_{u}{\rm Hom}_{\mathbb{F}\llbracket U\rrbracket}(E_{u},E_{u})\phi_{u}^{-1},
\]
and so $g\in\phi_{u}{\rm Hom}_{\mathbb{F}\llbracket U\rrbracket}(E_{u},E_{u})\phi_{u}^{-1}$.
Hence, $g\in{\rm Hom}_{fil}(E_{u},E_{u})$.
\end{enumerate}
\end{enumerate}
\textbf{Case $d=1$, $(s,t)=(0,1)$:} we divide into four cases.
\begin{enumerate}
\item Case $\theta^{+}\in{\bf x}_{1}$, $\theta^{+}\in{\bf y}$: we have
$n_{a}({\cal D})+n_{z}({\cal D})\ge n_{b}({\cal D})$ by Equation~(\ref{eq:n+}).
We want to show 
\[
U^{n_{z}({\cal D})}\phi_{u}^{N_{t}}{\rm Hom}_{fil}(E_{u},E_{u})\phi_{u}^{N_{s}}\le{\rm Hom}_{fil}(E_{u},E_{u}).
\]
This follows from the following (which are equivalent), which are
consequences of Equation~(\ref{eq:00}).
\begin{gather*}
U^{n_{z}({\cal D})}\phi_{u}^{N_{t}}{\rm Hom}_{\mathbb{F}\llbracket U\rrbracket}(E_{u},E_{u})\phi_{u}^{N_{s}}\le{\rm Hom}_{\mathbb{F}\llbracket U\rrbracket}(E_{u},E_{u}),\\
U^{n_{z}({\cal D})}\phi_{u}^{N_{t}+1}{\rm Hom}_{\mathbb{F}\llbracket U\rrbracket}(E_{u},E_{u})\phi_{u}^{N_{s}-1}\le\phi_{u}{\rm Hom}_{\mathbb{F}\llbracket U\rrbracket}(E_{u},E_{u})\phi_{u}^{-1}.
\end{gather*}
\item Case $\theta^{+}\in{\bf x}_{1}$, $\theta^{-}\in{\bf y}$: we have
$n_{a}({\cal D})+n_{z}({\cal D})\ge n_{b}({\cal D})-1$ by Equation~(\ref{eq:n+}).
We want to show
\[
U^{n_{z}({\cal D})}\phi_{u}^{N_{t}}{\rm Hom}_{fil}(E_{u},E_{u})\phi_{u}^{N_{s}}\le{\rm Hom}_{\mathbb{F}\llbracket U\rrbracket}(E_{u},E_{u}).
\]
If $n_{z}({\cal D})+n_{a}({\cal D})\ge n_{b}({\cal D})$, then 
\[
U^{n_{z}({\cal D})}\phi_{u}^{N_{t}}{\rm Hom}_{\mathbb{F}\llbracket U\rrbracket}(E_{u},E_{u})\phi_{u}^{N_{s}}\le{\rm Hom}_{\mathbb{F}\llbracket U\rrbracket}(E_{u},E_{u})
\]
by Equation~(\ref{eq:00}). Hence, assume that $n_{z}({\cal D})+n_{a}({\cal D})-n_{b}({\cal D})<0$;
in particular $n_{b}({\cal D})\ge1$. Then, Equation~(\ref{eq:-11})
implies 
\[
U^{n_{z}({\cal D})}\phi_{u}^{N_{t}+1}{\rm Hom}_{\mathbb{F}\llbracket U\rrbracket}(E_{u},E_{u})\phi_{u}^{N_{s}-1}\le{\rm Hom}_{\mathbb{F}\llbracket U\rrbracket}(E_{u},E_{u}).
\]
\item Case $\theta^{-}\in{\bf x}_{1}$, $\theta^{+}\in{\bf y}$: we have
$n_{a}({\cal D})+n_{z}({\cal D})\ge n_{b}({\cal D})+1$ by Equation~(\ref{eq:n+}).
We want to show
\[
U^{n_{z}({\cal D})}\phi_{u}^{N_{t}}{\rm Hom}_{\mathbb{F}\llbracket U\rrbracket}(E_{u},E_{u})\phi_{u}^{N_{s}}\le{\rm Hom}_{fil}(E_{u},E_{u}).
\]
This holds since Equations~(\ref{eq:00})~and~(\ref{eq:1-1}),
respectively, imply
\begin{gather*}
U^{n_{z}({\cal D})}\phi_{u}^{N_{t}}{\rm Hom}_{\mathbb{F}\llbracket U\rrbracket}(E_{u},E_{u})\phi_{u}^{N_{s}}\le{\rm Hom}_{\mathbb{F}\llbracket U\rrbracket}(E_{u},E_{u}),\\
U^{n_{z}({\cal D})}\phi_{u}^{N_{t}}{\rm Hom}_{\mathbb{F}\llbracket U\rrbracket}(E_{u},E_{u})\phi_{u}^{N_{s}}\le\phi_{u}{\rm Hom}_{\mathbb{F}\llbracket U\rrbracket}(E_{u},E_{u})\phi_{u}^{-1}.
\end{gather*}
\item Case $\theta^{-}\in{\bf x}_{1}$, $\theta^{-}\in{\bf y}$: we have
$n_{a}({\cal D})+n_{z}({\cal D})\ge n_{b}({\cal D})$ by Equation~(\ref{eq:n+}).
We want to show
\[
U^{n_{z}({\cal D})}\phi_{u}^{N_{t}}{\rm Hom}_{\mathbb{F}\llbracket U\rrbracket}(E_{u},E_{u})\phi_{u}^{N_{s}}\le{\rm Hom}_{\mathbb{F}\llbracket U\rrbracket}(E_{u},E_{u});
\]
this follows from Equation~(\ref{eq:00}).\qedhere
\end{enumerate}
\end{proof}

\bibliographystyle{alpha}
\bibliography{/Users/gheehyun/Documents/writings/bib}

\end{document}